\newtheorem{cor}[subsubsection]{Corollary}
\newtheorem{lem}[subsubsection]{Lemma}
\newtheorem{prop}[subsubsection]{Proposition}
\newtheorem{thm}[subsubsection]{Theorem}
\theoremstyle{definition}
\theoremstyle{remark}
\newtheorem{rem}[subsubsection]{Remark}
\newcommand{\thmref}[1]{Theorem~\ref{#1}}
\newcommand{\secref}[1]{Sect.~\ref{#1}}
\newcommand{\lemref}[1]{Lemma~\ref{#1}}
\newcommand{\propref}[1]{Proposition~\ref{#1}}
\newcommand{\corref}[1]{Corollary~\ref{#1}}
\numberwithin{equation}{section}
\newcommand{\nc}{\newcommand}
\nc{\renc}{\renewcommand}
\nc{\ssec}{\subsection}
\nc{\sssec}{\subsubsection}
\nc{\on}{\operatorname}
\nc\ol{\overline}
\nc\wt{\widetilde}
\nc\tboxtimes{\wt{\boxtimes}}
\nc\tstar{\wt{\star}}
\nc{\alp}{a}
\nc{\ZZ}{{\mathbb Z}}
\nc{\NN}{{\mathbb N}}
\nc{\OO}{{\mathbb O}}
\renc{\SS}{{\mathbb S}}
\nc{\DD}{{\mathbb D}}
\nc{\GG}{{\mathbb G}}
\nc{\Fq}{{\mathbb F}_q}
\nc{\Fqb}{\ol{{\mathbb F}_q}}
\nc{\Ql}{\ol{{\mathbb Q}_\ell}}
\nc{\id}{\text{id}}
\nc\X{\mathcal X}
\nc{\Hom}{\on{Hom}}
\nc{\Lie}{\on{Lie}}
\nc{\Loc}{\on{Loc}}
\nc{\Pic}{\on{Pic}}
\nc{\Bun}{\on{Bun}}
\nc{\IC}{\on{IC}}
\nc{\Fls}{\on{Fl}^{\frac{\infty}{2}}}
\nc{\ICs}{\on{IC}^{\frac{\infty}{2}}}
\nc{\ICsl}{\on{IC}^{\lambda+\frac{\infty}{2}}}
\nc{\ICslm}{\on{IC}^{\lambda+\frac{\infty}{2},-}}
\nc{\ICsm}{\on{IC}^{\frac{\infty}{2},-}}
\nc{\Aut}{\on{Aut}}
\nc{\rk}{\on{rk}}
\nc{\Sh}{\on{Sh}}
\nc{\Perv}{\on{Perv}}
\nc{\pos}{{\on{pos}}}
\nc{\Conv}{\on{Conv}}
\nc{\Sph}{\on{Sph}}
\nc{\Sat}{\on{Sat}}
\nc{\Sym}{\on{Sym}}
\nc{\BunBb}{\overline{\Bun}_B}
\nc{\BunNb}{\overline{\Bun}_N}
\nc{\BunTb}{\overline{\Bun}_T}
\nc{\BunBbm}{\overline{\Bun}_{B^-}}
\nc{\BunBbel}{\overline{\Bun}_{B,el}}
\nc{\BunBbmel}{\overline{\Bun}_{B^-,el}}
\nc{\Buno}{\overset{o}{\Bun}}
\nc{\BunPb}{{\overline{\Bun}_P}}
\nc{\BunBM}{\Bun_{B(M)}}
\nc{\BunBMb}{\overline{\Bun}_{B(M)}}
\nc{\BunPbw}{{\widetilde{\Bun}_P}}
\nc{\BunBP}{\widetilde{\Bun}_{B,P}}
\nc{\GUb}{\overline{G/U}}
\nc{\GUPb}{\overline{G/U(P)}}
\nc{\Hhom}{\underline{\on{Hom}}}
\nc\syminfty{\on{Sym}^{\infty}}
\nc\lal{\ol{\kappa_x}}
\nc\xl{\ol{x}}
\nc\thl{\ol{\theta}}
\nc\nul{\ol{\nu}}
\nc\mul{\ol{\mu}}
\nc{\oX}{\overset{o}{X}{}}
\nc{\hl}{\overset{\leftarrow}h{}}
\nc{\hr}{\overset{\rightarrow}h{}}
\nc{\M}{{\mathcal M}}
\nc{\N}{{\mathcal N}}
\nc{\F}{{\mathcal F}}
\nc{\D}{{\mathcal D}}
\nc{\Q}{{\mathcal Q}}
\nc{\Y}{{\mathcal Y}}
\nc{\G}{{\mathcal G}}
\nc{\E}{{\mathcal E}}
\nc{\CalC}{{\mathcal C}}
\nc\Dh{\widehat{\D}}
\nc{\C}{{\mathcal C}}
\nc{\K}{{\mathcal K}}
\renewcommand{\H}{{\mathcal H}}
\nc{\T}{{\mathcal T}}
\nc{\V}{{\mathcal V}}
\renc{\P}{{\mathcal P}}
\nc{\A}{{\mathcal A}}
\nc{\B}{{\mathcal B}}
\nc{\U}{{\mathcal U}}
\nc{\Gr}{{\on{Gr}}}
\nc{\frn}{{\check{\mathfrak u}(P)}}
\nc{\fC}{\mathfrak C}
\nc{\p}{\mathfrak p}
\nc{\q}{\mathfrak q}
\nc\f{{\mathfrak f}}
\nc{\qo}{{\mathfrak q}}
\nc{\po}{{\mathfrak p}}
\nc{\s}{{\mathfrak s}}
\nc\w{\text{w}}
\renewcommand{\mod}{{\on{-mod}}}
\nc\Spec{\on{Spec}}
\nc\Mod{\on{Mod}}
\nc{\tw}{\widetilde{\mathfrak t}}
\nc{\pw}{\widetilde{\mathfrak p}}
\nc{\qw}{\widetilde{\mathfrak q}}
\nc{\jw}{\widetilde j}
\nc{\grb}{\overline{\Gr}}
\nc{\I}{\mathcal I}
\nc{\kappach}{{\check\kappa_x}}
\nc{\Lambdach}{{\check\Lambda}{}}
\nc{\much}{{\check\mu}}
\nc{\omegach}{{\check\omega}}
\nc{\nuch}{{\check\nu}}
\nc{\etach}{{\check\eta}}
\nc{\alphach}{{\checka}}
\nc{\oblvtach}{{\check\oblvta}}
\nc{\pich}{{\check\pi}}
\nc{\ch}{{\check h}}
\nc{\Hb}{\overline{\H}}
\nc{\BA}{{\mathbb{A}}}
\nc{\BC}{{\mathbb{C}}}
\nc{\BE}{{\mathbb{E}}}
\nc{\BF}{{\mathbb{F}}}
\nc{\BG}{{\mathbb{G}}}
\nc{\BM}{{\mathbb{M}}}
\nc{\BO}{{\mathbb{O}}}
\nc{\BD}{{\mathbb{D}}}
\nc{\BN}{{\mathbb{N}}}
\nc{\BP}{{\mathbb{P}}}
\nc{\BQ}{{\mathbb{Q}}}
\nc{\BR}{{\mathbb{R}}}
\nc{\BZ}{{\mathbb{Z}}}
\nc{\BS}{{\mathbb{S}}}
\nc{\CA}{{\mathcal{A}}}
\nc{\CB}{{\mathcal{B}}}
\nc{\CE}{{\mathcal{E}}}
\nc{\CF}{{\mathcal{F}}}
\nc{\CG}{{\mathcal{G}}}
\nc{\CH}{{\mathcal{H}}}
\nc{\CL}{{\mathcal{L}}}
\nc{\CC}{{\mathcal{C}}}
\nc{\CM}{{\mathcal{M}}}
\nc{\CN}{{\mathcal{N}}}
\nc{\cCN}{\check{{\mathcal{N}}}}
\nc{\CK}{{\mathcal{K}}}
\nc{\CO}{{\mathcal{O}}}
\nc{\CP}{{\mathcal{P}}}
\nc{\CQ}{{\mathcal{Q}}}
\nc{\CR}{{\mathcal{R}}}
\nc{\CS}{{\mathcal{S}}}
\nc{\CT}{{\mathcal{T}}}
\nc{\CU}{{\mathcal{U}}}
\nc{\CV}{{\mathcal{V}}}
\nc{\CW}{{\mathcal{W}}}
\nc{\CX}{{\mathcal{X}}}
\nc{\CY}{{\mathcal{Y}}}
\nc{\CZ}{{\mathcal{Z}}}
\nc{\CI}{{\mathcal{I}}}
\nc{\CJ}{{\mathcal{J}}}
\nc{\csM}{{\check{\mathcal A}}{}}
\nc{\oM}{{\overset{\circ}{\mathcal M}}{}}
\nc{\obM}{{\overset{\circ}{\mathbf M}}{}}
\nc{\oCA}{{\overset{\circ}{\mathcal A}}{}}
\nc{\obA}{{\overset{\circ}{\mathbf A}}{}}
\nc{\ooM}{{\overset{\circ}{M}}{}}
\nc{\osM}{{\overset{\circ}{\mathsf M}}{}}
\nc{\vM}{{\overset{\bullet}{\mathcal M}}{}}
\nc{\nM}{{\underset{\bullet}{\mathcal M}}{}}
\nc{\oD}{{\overset{\circ}{\mathcal D}}{}}
\nc{\obD}{{\overset{\circ}{\mathbf D}}{}}
\nc{\oA}{{\overset{\circ}{\mathbb A}}{}}
\nc{\op}{{\overset{\bullet}{\mathbf p}}{}}
\nc{\cp}{{\overset{\circ}{\mathbf p}}{}}
\nc{\oU}{{\overset{\bullet}{\mathcal U}}{}}
\nc{\oZ}{{\overset{\circ}{\mathcal Z}}{}}
\nc{\ofZ}{{\overset{\circ}{\mathfrak Z}}{}}
\nc{\oF}{{\overset{\circ}{\fF}}}
\nc{\fa}{{\mathfrak{a}}}
\nc{\fb}{{\mathfrak{b}}}
\nc{\fd}{{\mathfrak{d}}}
\nc{\ff}{{\mathfrak{f}}}
\nc{\fg}{{\mathfrak{g}}}
\nc{\fgl}{{\mathfrak{gl}}}
\nc{\fh}{{\mathfrak{h}}}
\nc{\fj}{{\mathfrak{j}}}
\nc{\fl}{{\mathfrak{l}}}
\nc{\fm}{{\mathfrak{m}}}
\nc{\fn}{{\mathfrak{n}}}
\nc{\fu}{{\mathfrak{u}}}
\nc{\fp}{{\mathfrak{p}}}
\nc{\fr}{{\mathfrak{r}}}
\nc{\fs}{{\mathfrak{s}}}
\nc{\ft}{{\mathfrak{t}}}
\nc{\fz}{{\mathfrak{z}}}
\nc{\fsl}{{\mathfrak{sl}}}
\nc{\hsl}{{\widehat{\mathfrak{sl}}}}
\nc{\hgl}{{\widehat{\mathfrak{gl}}}}
\nc{\hg}{{\widehat{\mathfrak{g}}}}
\nc{\chg}{{\widehat{\mathfrak{g}}}{}^\vee}
\nc{\hn}{{\widehat{\mathfrak{n}}}}
\nc{\chn}{{\widehat{\mathfrak{n}}}{}^\vee}
\nc{\fA}{{\mathfrak{A}}}
\nc{\fB}{{\mathfrak{B}}}
\nc{\fD}{{\mathfrak{D}}}
\nc{\fE}{{\mathfrak{E}}}
\nc{\fF}{{\mathfrak{F}}}
\nc{\fG}{{\mathfrak{G}}}
\nc{\fK}{{\mathfrak{K}}}
\nc{\fL}{{\mathfrak{L}}}
\nc{\fM}{{\mathfrak{M}}}
\nc{\fN}{{\mathfrak{N}}}
\nc{\fP}{{\mathfrak{P}}}
\nc{\fU}{{\mathfrak{U}}}
\nc{\fV}{{\mathfrak{V}}}
\nc{\fZ}{{\mathfrak{Z}}}
\nc{\ba}{{\mathbf{a}}}
\nc{\bb}{{\mathbf{b}}}
\nc{\bc}{{\mathbf{c}}}
\nc{\bd}{{\mathbf{d}}}
\nc{\bbf}{{\mathbf{f}}}
\nc{\be}{{\mathbf{e}}}
\nc{\bi}{{\mathbf{i}}}
\nc{\bj}{{\mathbf{j}}}
\nc{\bn}{{\mathbf{n}}}
\nc{\bo}{{\mathbf{o}}}
\nc{\bp}{{\mathbf{p}}}
\nc{\bq}{{\mathbf{q}}}
\nc{\bu}{{\mathbf{u}}}
\nc{\bv}{{\mathbf{v}}}
\nc{\bx}{{\mathbf{x}}}
\nc{\bs}{{\mathbf{s}}}
\nc{\by}{{\mathbf{y}}}
\nc{\bw}{{\mathbf{w}}}
\nc{\bA}{{\mathbf{A}}}
\nc{\bK}{{\mathbf{K}}}
\nc{\bB}{{\mathbf{B}}}
\nc{\bF}{{\mathbf{F}}}
\nc{\bC}{{\mathbf{C}}}
\nc{\bG}{{\mathbf{G}}}
\nc{\bD}{{\mathbf{D}}}
\nc{\bE}{{\mathbf{E}}}
\nc{\bH}{{\mathbf{H}}}
\nc{\bI}{{\mathbf{I}}}
\nc{\bM}{{\mathbf{M}}}
\nc{\bN}{{\mathbf{N}}}
\nc{\bO}{{\mathbf{O}}}
\nc{\bV}{{\mathbf{V}}}
\nc{\bW}{{\mathbf{W}}}
\nc{\bX}{{\mathbf{X}}}
\nc{\bZ}{{\mathbf{Z}}}
\nc{\bS}{{\mathbf{S}}}
\nc{\sA}{{\mathsf{A}}}
\nc{\sB}{{\mathsf{B}}}
\nc{\sC}{{\mathsf{C}}}
\nc{\sD}{{\mathsf{D}}}
\nc{\sF}{{\mathsf{F}}}
\nc{\sK}{{\mathsf{K}}}
\nc{\sM}{{\mathsf{M}}}
\nc{\sO}{{\mathsf{O}}}
\nc{\sW}{{\mathsf{W}}}
\nc{\sQ}{{\mathsf{Q}}}
\nc{\sP}{{\mathsf{P}}}
\nc{\sZ}{{\mathsf{Z}}}
\nc{\sr}{{\mathsf{r}}}
\nc{\bk}{{\mathsf{k}}}
\nc{\sg}{{\mathsf{g}}}
\nc{\sff}{{\mathsf{f}}}
\nc{\sfe}{{\mathsf{e}}}
\nc{\sfj}{{\mathsf{j}}}
\nc{\sfb}{{\mathsf{b}}}
\nc{\sfc}{{\mathsf{c}}}
\nc{\sd}{{\mathsf{d}}}
\nc{\sv}{{\mathsf{v}}}
\nc{\BK}{{\bar{K}}}
\nc{\tA}{{\widetilde{\mathbf{A}}}}
\nc{\tB}{{\widetilde{\mathcal{B}}}}
\nc{\tg}{{\widetilde{\mathfrak{g}}}}
\nc{\tG}{{\widetilde{G}}}
\nc{\TM}{{\widetilde{\mathbb{M}}}{}}
\nc{\tO}{{\widetilde{\mathsf{O}}}{}}
\nc{\tU}{{\widetilde{\mathfrak{U}}}{}}
\nc{\TZ}{{\tilde{Z}}}
\nc{\tx}{{\tilde{x}}}
\nc{\tbv}{{\tilde{\bv}}}
\nc{\tfP}{{\widetilde{\mathfrak{P}}}{}}
\nc{\tz}{{\tilde{\zeta}}}
\nc{\tmu}{{\tilde{\mu}}}
\nc{\urho}{\underline{\pi}}
\nc{\uB}{\underline{B}}
\nc{\uC}{{\underline{\mathbb{C}}}}
\nc{\ui}{\underline{i}}
\nc{\uj}{\underline{j}}
\nc{\ofP}{{\overline{\mathfrak{P}}}}
\nc{\oB}{{\overline{\mathcal{B}}}}
\nc{\og}{{\overline{\mathfrak{g}}}}
\nc{\oI}{{\overline{I}}}
\nc{\eps}{\varepsilon}
\nc{\hrho}{{\hat{\pi}}}
\nc{\one}{{\mathbf{1}}}
\nc{\two}{{\mathbf{t}}}
\nc{\Rep}{{\mathop{\operatorname{\rm Rep}}}}
\nc{\Tot}{{\mathop{\operatorname{\rm Tot}}}}
\nc{\Ker}{{\mathop{\operatorname{\rm Ker}}}}
\nc{\Hilb}{{\mathop{\operatorname{\rm Hilb}}}}
\nc{\End}{{\mathop{\operatorname{\rm End}}}}
\nc{\Ext}{{\mathop{\operatorname{\rm Ext}}}}
\nc{\CHom}{{\mathop{\operatorname{{\mathcal{H}}\it om}}}}
\nc{\GL}{{\mathop{\operatorname{\rm GL}}}}
\nc{\gr}{{\mathop{\operatorname{\rm gr}}}}
\nc{\Id}{{\mathop{\operatorname{\rm Id}}}}
\nc{\de}{{\mathop{\operatorname{\rm def}}}}
\nc{\length}{{\mathop{\operatorname{\rm length}}}}
\nc{\supp}{{\mathop{\operatorname{\rm supp}}}}
\nc{\Cliff}{{\mathsf{Cliff}}}
\nc{\Fl}{\on{Fl}}
\nc{\Fib}{{\mathsf{Fib}}}
\nc{\Coh}{{\mathsf{Coh}}}
\nc{\QCoh}{{\on{QCoh}}}
\nc{\IndCoh}{{\on{IndCoh}}}
\nc{\FCoh}{{\mathsf{FCoh}}}
\nc{\reg}{{\text{\rm reg}}}
\nc{\cplus}{{\mathbf{C}_+}}
\nc{\cminus}{{\mathbf{C}_-}}
\nc{\cthree}{{\mathbf{C}_*}}
\nc{\Qbar}{{\bar{Q}}}
\nc\Eis{\on{Eis}}
\nc\Eisb{\ol\Eis{}}
\nc\Eisr{\on{Eis}^{rat}{}}
\nc\wh{\widehat}
\nc{\Def}{\on{Def_{\check{\fb}}(E)}}
\nc{\barZ}{\overline{Z}{}}
\nc{\barbarZ}{\overline{\barZ}{}}
\nc{\barpi}{\overline\iota}
\nc{\barbarpi}{\overline\barpi}
\nc{\barpip}{\overline\iota{}^+}
\nc{\barpim}{\overline\iota{}^-}
\nc{\fq}{\mathfrak q}
\nc{\fqb}{\ol{\fq}{}}
\nc{\fpb}{\ol{\fp}{}}
\nc{\fpr}{{\fp^{rat}}{}}
\nc{\fqr}{{\fq^{rat}}{}}
\nc{\hattimes}{\wh\otimes}
\nc{\bh}{{\bar{h}}}
\nc{\bOmega}{{\overline{\Omega(\check \fn)}}}
\nc{\seq}[1]{\stackrel{#1}{\sim}}
\nc{\cT}{{\check{T}}}
\nc{\cG}{{\check{G}}}
\nc{\cM}{{\check{M}}}
\nc{\cB}{{\check{B}}}
\nc{\cN}{{\check{N}}}
\nc{\ct}{{\check{\mathfrak t}}}
\nc{\cg}{{\check{\fg}}}
\nc{\cb}{{\check{\fb}}}
\nc{\cn}{{\check{\fn}}}
\nc{\cLambda}{{\check\Lambda}}
\nc{\cla}{{\check\kappa_x}}
\nc{\cmu}{{\check\mu}}
\nc{\clambda}{{\check\lambda}}
\nc{\cnu}{{\check\nu}}
\nc{\ceta}{{\check\eta}}
\nc{\DefbE}{{\on{Def}_{\cB}(E_\cT)}}
\nc{\imathb}{{\ol{\imath}}}
\nc{\rlr}{\overset{\longrightarrow}{\underset{\longrightarrow}\longleftarrow}}
\nc{\KG}{K\backslash G}
\nc{\comult}{{co\text{-}mult}}
\nc{\counit}{{co\text{-}unit}}
\nc{\uHom}{{\underline{\Maps}}}
\nc{\dgSch}{\on{Sch}}
\nc{\Sch}{\on{Sch}}
\nc{\affdgSch}{\on{Sch}^{\on{aff}}}
\nc{\affSch}{\on{Sch}^{\on{aff}}}
\nc{\Groupoids}{\on{Grpd}}
\nc{\inftygroup}{\on{Spc}}
\nc{\inftyCat}{\infty\on{-Cat}}
\nc{\StinftyCat}{\inftyCat^{\on{St}}}
\nc{\MoninftyCat}{\infty\on{-Cat}^{\on{Mon}}}
\nc{\SymMoninftyCat}{\infty\on{-Cat}^{\on{SymMon}}}
\nc{\SymMonStinftyCat}{\on{DGCat}^{\on{SymMon}}}
\nc{\MonStinftyCat}{\on{DGCat}^{\on{Mon}}}
\nc{\inftystack}{\on{Stk}}
\nc{\inftystackalg}{Stk^{1\text{-}alg}}
\nc{\inftyprestack}{\on{PreStk}}
\nc{\inftydgnearstack}{\on{NearStk}}
\nc{\inftydgstack}{\on{Stk}}
\nc{\inftydgstackalg}{DGStk^{1\text{-}alg}}
\nc{\inftydgprestack}{\on{PreStk}}
\nc{\dgindSch}{\on{indSch}}
\nc{\indSch}{{}^{\on{cl}}\!\on{indSch}}
\nc{\infSch}{\on{infSch}}
\nc{\dr}{{\on{dR}}}
\nc{\mmod}{{\on{-}\!{\mathbf{mod}}}}
\nc{\starr}{\text{\dh}}
\nc{\Spectra}{\on{Spectra}}
\nc{\Crys}{\on{Crys}}
\nc{\oblv}{{\mathbf{oblv}}}
\nc{\ind}{{\mathbf{ind}}}
\nc{\coind}{{\mathbf{coind}}}
\nc{\inv}{{\mathbf{inv}}}
\nc{\triv}{{\mathbf{triv}}}
\nc{\CMaps}{{\mathcal Maps}}
\nc{\Maps}{\on{Maps}}
\nc{\bMaps}{\mathbf{Maps}}
\nc{\BMaps}{\ul{\on{Maps}}}
\nc{\Grid}{\on{Grid}}
\nc{\hGrid}{\on{Grid}^{\geq\,\on{dgnl}}}
\nc{\Diag}{\on{Diag}}
\nc{\bDelta}{\mathbf{\Delta}}
\nc{\tCateg}{(\infty\on{-2)-Cat}}
\nc{\ul}{\underline}
\nc{\Seg}{\on{Seq}}
\nc{\biSeg}{\on{bi-Seq}}
\nc{\triSeg}{\on{tri-Seq}}
\nc{\quadSeg}{\on{quad-Seq}}
\nc{\nSeg}{\on{n-Seq}}
\nc{\Segm}{\on{Seg}^{\on{mkd}}}
\nc{\fLm}{\fL^{\on{mkd}}}
\nc{\inftyCatm}{\inftyCat^{\on{mkd}}}
\nc{\Blocks}{\mathbf{Blocks}}
\nc{\Snakes}{\mathbf{Snakes}}
\nc{\bifL}{\on{bi-}\!\fL}
\nc{\Sets}{\on{Sets}}
\nc{\Ran}{\on{Ran}}
\nc{\Vect}{\on{Vect}}
\nc{\Shv}{\on{Shv}}
\nc{\unn}{\mathbf{union}}
\nc{\Spc}{\on{Spc}}
\nc{\ppart}{(\!(t)\!)}
\nc{\qqart}{[\![t]\!]}
\nc{\Dmod}{\on{D-mod}}
\nc{\cD}{\mathcal D}
\nc{\ocD}{\overset{\circ}{\cD}}
\nc{\sfp}{\mathsf{p}}
\nc{\sfq}{\mathsf{q}}
\nc{\DGCat}{\on{DGCat}}
\renc{\det}{\on{det}}
\nc{\Conf}{\on{Conf}}
\nc{\Whit}{\on{Whit}}
\nc{\Reg}{\on{Reg}}
\nc{\Res}{\on{Res}}
\nc{\BunNbox}{(\overline\Bun_N^{\omega^\rho})_{\infty\cdot x}} 
\nc{\BunNmbox}{(\overline\Bun_{N^-}^{\omega^\rho})_{\infty\cdot x}}
\nc{\bHecke}{\overset{\bullet}{\on{Hecke}}}
\nc{\Hecke}{\on{Hecke}}
\nc{\bCZ}{\ol\CZ}
\nc{\oCZ}{\overset{\circ}\CZ} 
\nc{\boCZ}{\ol{\oCZ}}
\nc{\sotimes}{\overset{!}\otimes}
\nc{\semiinf}{\on{SI}}
\nc{\coInd}{\on{coInd}}
\nc{\bCM}{\overset{\bullet}\CM{}}
\nc{\bCF}{\overset{\bullet}\CF{}}
\nc{\SI}{\on{SI}}
\nc{\KL}{\on{KL}}
\begin{document}

\author{D. Gaitsgory}

\title[The semi-infinite IC sheaf]{The semi-infinite intersection cohomology sheaf}

\dedicatory{To David Kazhdan, with gratitude and affection} 

\begin{abstract}
We introduce the semi-infinite category of sheaves on the affine Grassmannian,
and construct a particular object in it, which we call the the semi-infinite intersection 
cohomology sheaf. We relate it to several other entities naturally appearing in the
geometric Langlands theory.
\end{abstract} 

\date{\today}

\maketitle

\tableofcontents

\section*{Introduction}

\ssec{Our goals}

\sssec{}

This paper deals with sheaves on infinite-dimensional algebro-geometric objects. Specifically, we
consider the \emph{affine Grassmannian} of a reductive group $G$,
$$\Gr_G:=G\ppart/G\qqart,$$ 
and we consider sheaves that are equivariant with respect to the action of the group $N\ppart$. 
We denote this category by $\SI(\Gr_G)$.
Since $N\ppart$-orbits on $\Gr_G$ are all infinite-dimensional, objects from $\SI(\Gr_G)$ necessarily
have infinite-dimensional support. 

\medskip

We refer the reader to \secref{sss:sheaves intro}, where we explain what we mean by sheaves 
on infinite-dimensional objects such as $\Gr_G$, so that the category of $N\ppart$-equivariant sheaves
makes sense. 
Let us mention, however, that in order to set up such a theory we need to work from the start with the derived category
of sheaves (or, more precisely, its DG model). There is no hope to develop a rich enough theory while staying within an abelian
category (of either sheaves or perverse sheaves). 

\medskip

The word ``semi-infinite" in the title of the paper refers to the fact that 
$N\ppart$-orbits on $\Gr_G$ have also an infinite codimension. We recall that they are parameterized by coweights of $G$:
$$S^\lambda=N\ppart\cdot t^\lambda.$$
The orbit $S^\mu$ lies in the closure $\ol{S}{}{}^\lambda$ of $S^\lambda$ if and only if $\lambda-\mu$ lies in the ``wide cone" $\Lambda^{\on{pos}}$,
i.e., is the sum of simple positive roots with non-negative integral coefficients. 

\sssec{}

The goal of this paper is to construct and describe a particular object in $\SI(\Gr_G)$ which we call the 
\emph{semi-infinite intersection cohomology sheaf}, and denote by $\ICs$. Notionally, $\ICs$ is the
intersection cohomology sheaf of $\ol{S}{}^0$, the closure of the $N\qqart$-orbit $S^0$. In what precise
sense our $\ICs$ really is an intersection cohomology sheaf will be discussed in this introduction.

\medskip

It is obviously not the case the we have randomly decided to construct one particular object in $\SI(\Gr_G)$
and call it $\ICs$. Our construction is motivated (or, rather, necessitated) by the role that $\ICs$ is
supposed to perform in and around the geometric Langlands theory. We will list a certain number
of such roles in \secref{ss:relations}. 

\sssec{}  \label{sss:Lus}

The initial cue for what $\ICs$ should be was provided by \cite{Lus}. Namely, we want 
our semi-infinite intersection cohomology sheaf to be such that its fibers are given by the \emph{periodic}
(or \emph{stable}) affine Kazhdan-Lusztig polynomials. 

\medskip

These Kazhdan-Lusztig polynomials are defined combinatorially, and the challenge was to find a geometry
(i.e., a space and a sheaf on it) that realizes it. In fact, what the space should be is dictated by the initial
setting of Lusztig's: it is the double quotient 
$$N\ppart\backslash G\ppart/G\qqart.$$

So, the sought-for sheaf should be an object of $\SI(\Gr_G)$, and our $\ICs$ is designed so that it has the
desired fibers. Namely, its (!)-restriction to the orbit $S^\mu$ equals the constant (more precisely, dualizing) sheaf
tensored with the cohomologically graded vector space
\begin{equation} \label{e:Lus answer}
\Sym(\cn^-[-2])(\mu),
\end{equation}
where $\cn^-$ is the unipotent radical of the (negative) Borel in the Langlands dual Lie algebra $\cg$. 

\sssec{}

However, !-fibers do not determine a sheaf, and one may want to seek further confirmation as to why
our $\ICs$ is the ``right thing".  Such further confirmation comes from the paper \cite{FFKM}. 

\medskip

Namely, the authors of {\it loc. cit.} considered a certain \emph{finite-dimensional} algebro-geometric
object, namely, the algebraic stack $\BunBb$ (the definition of an object, equivalent to $\BunBb$ from the point of
view of singularities, denoted $\BunNb$, is recalled in \secref{ss:BunNb}). 

\medskip

We choose a projective curve $X$, 
and consider the stacks $\Bun_B$ and $\Bun_G$ that classify $B$-bundles and $G$-bundles on $X$, respectively.
The stack $\BunBb$ is Drinfeld's relative compactification of $\Bun_B$ along the fibers of the projection
$\Bun_B\to \Bun_G$. 

\medskip

The stack $\BunBb$ also has strata numbered by elements of the negative wide cone $-\Lambda^{\on{pos}}$,
and the stratum with index $\mu$, denoted $(\BunBb)_{=\mu}$, is isomorphic to 
$$\Bun_B\times X^\mu,$$
where $X^\mu$ is the spaces of $(-\Lambda^{\on{pos}})$-valued divisors on $X$ of total degree $\mu$.

\medskip 

Let $\ICs_{\on{glob}}$ denote the intersection cohomology sheaf of $\BunBb$. The subscript
``glob" meant to signify that $\ICs_{\on{glob}}$ is global in nature, i.e., its definition involves 
a global curve $X$. Pick a point $x\in X$, and let us identify the completed local ring of $X$ at $x$
with the formal power series ring $k\qqart$ that we used in the definition of $G\qqart,G\ppart$, etc. 

\medskip

The main result of \cite{FFKM} can be interpreted as saying that if we take the !-restriction of 
$\ICs_{\on{glob}}$ to the stratum $(\BunBb)_{=\mu}$ \emph{and the further} !-restriction to
$$\Bun_B\simeq \Bun_B\times \{\mu\cdot x\}\subset \Bun_B\times X^\mu=(\BunBb)_{=\mu},$$
the result 
is isomorphic to the intersection cohomology sheaf of $\Bun_B$ (which is constant, because $\Bun_B$ is smooth)
tensored with \eqref{e:Lus answer}.

\sssec{}

Thus, the geometry of $\BunBb$ does reproduce Lusztig's answer, but with the following caveats:

\medskip

\noindent{(i)} $\BunBb$ is not the same as $N\ppart\backslash \ol{S}{}^0$;

\medskip

\noindent{(ii)} The strata in $\BunBb$ are ``bigger" than just copies of $\Bun_B$: we have all those 
floating points of $X$ that we need to assemble to $x$ in order to get a copy of $\Bun_B$.

\medskip

That said, we do have a map
$$\pi:\ol{S}{}^0\to \BunBb.$$

We prove that our $\ICs$ is isomorphic to the !-pullback of $\ICs_{\on{glob}}$. This provides another
piece of evidence that $\ICs$ that we define is a sensible object. 

\medskip

\begin{rem}
Of course, one could have simply defined $\ICs$ as the !-pullback of  $\ICs_{\on{glob}}$ along the map $\pi$. 
However, the drawback of this approach is that such a definition
is not intrinsically local: for the applications we have in mind we wish to define $\ICs$ purely in terms of $\Gr_G$ and 
the $N\ppart$-action on it. 
\end{rem} 

\sssec{}

Being of infinite dimension and infinite codimension, $N\ppart$-orbits on $\Gr_G$ do not have an intrinsically defined
dimension function. However, one can talk about their
relative dimension. Namely, we set the relative dimension of $S^\mu$ and $S^\lambda$ to be (the negative of) the 
relative dimension of the stabilizer subgroups of points on these orbits.  The latter works out to be
$$\on{dim.rel.}(\on{Ad}_{t^{-\lambda}}(N\qqart),\on{Ad}_{t^{-\mu}}(N\qqart))=\langle \lambda-\mu,2\check\rho\rangle.$$

\medskip

This notion of relative dimension leads to a t-structure on $\SI(\Gr_G)$. The inevitable question that one asks is the following: 
is our $\ICs$ \emph{the} minimal extension of the constant sheaf on $S^0$?  The answer is ``no, but..."

\medskip

It is true that $\ICs$ lies in the heart of this t-structure. However, as a curious feature of our situation (which reflects the fact
that it is substantially infinite-dimensional) is that the minimal extension of the constant sheaf on $S^0$ is \emph{the !-extension}. 

\medskip

So, the ``no" aspect of the answer is that our $\ICs$ is \emph{not} the minimal extension. 
The ``but" aspect is the following: 

\medskip

Instead of the affine Grassmannian $\Gr_G$ (considered as attached to a point of a curve $X$)
we can consider its version over the Ran space $\Ran(X)$ of $X$. Denote the corresponding objects by 
$\Gr_{G,\Ran}, S^0_{\Ran}$, etc. In this situation one can also introduce a t-structure and consider the 
minimal extension of the constant sheaf on $S^0_{\Ran}$; denote it by $\ICs_{\Ran}$.  Then it will be true that our $\ICs$
is the !-restriction of $\ICs_{\Ran}$ to 
$$\Gr_G=\{x\}\underset{\Ran(X)}\times \Gr_{G,\Ran}.$$

This will be performed in the forthcoming paper \cite{Ga1}. 

\medskip

This, we can say that our $\ICs$ is not the intersection cohomology sheaf in any t-structure, as long as we do not
allow the point $x\in X$ to move. However, it is the restriction of the IC sheaf of a more global object in at least 
two different contexts ($\BunBb$ or $\Ran(X)$), in both of which $x$ moves along $X$. A related fact is that in both of 
these models, the codimension of the $\mu$-stratum is $\langle \mu,\check\rho\rangle$, while on $\Gr_G$ itself, this codimension
is twice that amount. 

\ssec{Relation to the work of Bouthier-Kazhdan}

In the paper \cite{BK} another approach to the construction of the semi-infinite IC sheaf is taken. 

\sssec{}

In \cite{BK}, the authors consider the affine closure $\ol{G/N}$ of $G/N$ and the scheme of arcs 
$$\ol{G/N}\qqart.$$

Inside $\ol{G/N}\qqart$ one considers the ``good" open part $\overset{\circ}{\ol{G/N}}\qqart$, whose field valued-points are those maps
$$\Spec(k\qqart)\to \ol{G/N},$$
for which the composite
$$\Spec(k\ppart)\to \Spec(k\qqart)\to \ol{G/N}$$
lands in $G/N$.  

\sssec{}

Note that at the set-theoretic level, the quotients $N\ppart\backslash \ol{S}{}^0$ and $G\qqart\backslash \overset{\circ}{\ol{G/N}}\qqart$
are isomorphic. 

\medskip

The main difference of the two approaches is that our basic geometric object is $\ol{S}{}^0$ (which is the quotient of an
appropriate closed ind-subscheme of $G\ppart$ by $G\qqart$); it is an ind-scheme of \emph{ind-finite type} (i.e., a rising union
of schemes of finite type under closed embeddings), and we consider the category of sheaves on it, which is built 
out of categories of sheaves on schemes of finite type that comprise $\ol{S}{}^0$. 

\medskip 

The authors of \cite{BK} consider $\overset{\circ}{\ol{G/N}}\qqart$, which is a \emph{scheme}, but \emph{of infinite type}.
There are several ways to define the category of sheaves on arbitrary schemes, but in general this category will be 
quite ill-behaved; in particular, one would not be able to define the intersection cohomology sheaf of a subscheme. 

\medskip

However, the authors of \cite{BK} show, that in the case of $\overset{\circ}{\ol{G/N}}\qqart$ (or more generally,
for $\ol{G/N}$ replaced by an affine scheme $Z$ with a smooth open $\overset{\circ}Z\subset Z$), it can be 
approximated by finite-dimensional schemes in the pro-smooth topology so that one obtains 
a reasonably behaved category of sheaves, equipped with a Verdier auto-duality functor and a t-structure. 
In particular, one can consider the intersection cohomology sheaf of the open subscheme 
$$G/N\qqart\subset \overset{\circ}{\ol{G/N}}\qqart.$$

It follows from the results of \cite{BK} that the !-fibers of their intersection cohomology sheaf identify
with those of our $\ICs$. 

\sssec{}

At the moment, the author does not know what is the relationship between the $G\qqart$-equivariant part
of the Bouthier-Kazhdan category and the portion of our $\SI(\Gr_G)$ supported on $\ol{S}{}^0$. 

\medskip

Most probably, they are \emph{not} equivalent; further, it is possible that the Bouthier-Kazhdan category is
equivalent to the category $\Shv(\Fls)$ mentioned in \secref{sss:true semiinf} below.  

\sssec{}

Thus, the Bouthier-Kazhdan approach produces the right intersection cohomology sheaf, and probably
the entire category. However, for the needs of the geometric Langlands theory (see \secref{ss:relations}) 
we still the realization of these objects via the affine Grassmannian, as developed in the present paper. 

\ssec{What is done in this paper?}

\sssec{}

We want to define $\ICs$ so it its !-fibers are given by the stable affine Kashdan-Lusztig polynomials. By definition,
the value of the latter on $\mu\in -\Lambda^{\on{pos}}$ is the direct limit over $\lambda$ (ranging over the
poset of dominant coweights of $G$, see \secref{sss:bizarre order}) of the !-fibers of
$\IC_{\ol\Gr^\lambda_G}$ at $t^{\lambda+\mu}$, with the cohomological shift $[\langle \lambda+\mu,2\check\rho\rangle]$.

\medskip

With this in mind, we let $\ICs$ be the direct limit over $\lambda$ of the objects
$$t^{-\lambda}\cdot \IC_{\ol\Gr^\lambda_G}[\langle \lambda,2\check\rho\rangle],$$
where $g\cdot -$ means translation by an element $g\in G\ppart$. 

\medskip

It is almost immediate to see that $\ICs$ defined in this way is indeed $N\ppart$-equivariant, i.e., is an object of $\SI(\Gr_G)$.

\medskip

One advantage of the above definition of $\ICs$ is that it is easy to see that it 
\emph{factorizes} over the Ran space, see \secref{sss:factorizes}. 

\sssec{}

Next, we consider the map
$$\pi:\ol{S}{}^0\to \BunNb$$
and show that $\ICs$ is canonically isomorphic (up to a cohomological shift) to the !-pullback of the intersection
cohomology sheaf on $\BunNb$. 

\medskip

Among the rest, this isomorphism provides a geometric way to construct an isomorphism between the stable 
fibers of the perverse sheaves $\IC_{\ol\Gr^\lambda_G}$ and the fibers of the intersection cohomology sheaf on $\BunNb$,
something that the authors of \cite{BFGM} (including the author of the present paper) did not see how to do
previously. 

\sssec{}

Third, we reproduce an argument from \cite{Ras} showing that the category $\SI(\Gr_G)$ is canonically equivalent to 
a category of finite-dimensional nature, namely, $\Shv(\Gr)^{I^0}$, where $I^0$ is the unipotent radical of the
Iwahori subgroup.

\medskip 

In terms of this equivalence, our $\ICs$ corresponds to a remarkable object of $\Shv(\Gr)^{I^0}$, known as
the (dual) \emph{baby Verma object}.  This object appeared among the rest in \cite{ABBGM} and \cite{FG}
in the geometric descriptions of the (dual) baby Verma module over the small quantum group 
and the Wakimoto module at the critical level, respectively. 

\ssec{What do we need $\ICs$ for?}  \label{ss:relations}

This subsection plays a motivational role only, and many of the objects mentioned here do not
have proper references. 

\sssec{}  \label{sss:factorizes}

We should say right away that the object of crucial importance is not so much $\ICs$ itself, but rather
is \emph{factorizable} version. Namely, the ind-scheme $\Gr_G$ (resp. the group ind-scheme $N\ppart$), 
when viewed as attached to a point $x\in X$, admits a natural factorization structure, and so the category
$\SI(\Gr_G)$ has a natural structure of factorization category over the Ran space $\Ran(X)$.

\medskip

It is a key feature of $\ICs$ that it carries a canonical structure of \emph{factorization algebra} in 
$\SI(\Gr_G)_{\Ran}$; denote it by $\ICs_{\Ran}$. 

\medskip

Here are some of the roles that $\ICs$  and $\ICs_{\Ran}$ are supposed to perform.

\sssec{}  \label{sss:true semiinf}

It was a dream since the late 1980's, put forth by B.~Feigin and E.~Frenkel, that there should
exist a \emph{category of sheaves/D-modules on the semi-infinite flag space} 
$$\Fls:= G\ppart/N\ppart.$$ 
In the context of D-modules, it was expected that there should exist a functor
of global sections from $\Shv(\Fls)$ to the category of modules over the Kac-Moody algebra
at the critical level. 

\medskip

The problem is that $\Fls$ is ``badly'' infinite-dimensional and cannot be approximated by
finite-dimensional algebro-geometric objects. So at the time, it was not clear how to even define the
category $\Shv(\Fls)$. 

\medskip

However, nowadays one can give \emph{a} definition: namely, the category $\Shv(G\ppart)$
is well-defined, and one can formally take the corresponding equivariant category 
$$\Shv(G\ppart)^{N\ppart},$$
where we consider the $N\ppart$-action on $G\ppart$ by right translations. 

\medskip

For example, the category of spherical (i.e., $G\qqart$-equivariant) objects in $\Shv(G\ppart)^{N\ppart}$
identifies by definition with our category $\SI(\Gr_G)$.

\medskip

The problem is that the resulting category is \emph{not} the desired category, i.e., it has
different Hom spaces for some basic objects, than what is expected from \emph{the}
category $\Shv(\Fls)$. 

\medskip

For example, the category of Iwahori-equivariant objects in 
$\Shv(\Fls)$ is supposed to be equivalent to the category of ind-coherent sheaves
on 
$$(\wt\cCN\underset{\cg}\times \{0\})/\cT,$$
while the category of Iwahori-equivariant objects in $\Shv(G\ppart)^{N\ppart}$ is that on 
$$(\wt\cCN\underset{\cg}\times \wt\cg)/\cG,$$
where $\wt\cCN\to \cg$ and $\wt\cg\to \cg$ are the Spring and 
Grothendieck-Springer resolutions, respectively, for the Langlands dual Lie algebra.  

\medskip

Here is how our object $\ICs$ (or rather, its close relative) is supposed to provide a remedy. Instead of 
just $\Shv(G\ppart)^{N\ppart}$, which is a factorization module category over $\Shv(\Gr_G)^{N\ppart}=\SI(\Gr_G)$,
we consider the category of factorization $\ICs_{\Ran}$-modules in $\Shv(\Gr_G)^{N\ppart}$. 

\medskip

Calculations performed by the author indicate that the above category of factorization modules
may well be the desired category $\Shv(\Fls)$, and that in the case of D-modules, it does admit the desired functor 
of global sections to Kac-Moody modules at the critical level. 

\sssec{}

Another place where the object $\ICs$ appears is the paper \cite{Ga}. There
one considers the Kazhdan-Lusztig category $\KL(G)_{\kappa}$. This is the category of $G\qqart$-integrable
representations of the Kac-Moody Lie algebra attached to $\fg$ at a (negative integral) level $\kappa$.
According to Kazhdan and Lusztig, there is a canonical equivalence between $\KL(G)_{\kappa}$
and the category of modules over the quantum group $U_q(G)$, where $q$ is determined by $\kappa$.

\medskip

We wish to describe the functor 
\begin{equation} \label{e:KL functor}
\KL(G)_{\kappa} \to \KL(T)_{\kappa}
\end{equation}
that makes the following diagram commutative
\begin{equation} \label{e:KL diag}
\CD
\KL(G)_{\kappa}   @>{\sim}>>  U_q(G)\mod \\
@VVV @VVV  \\
\KL(T)_{\kappa}   @>{\sim}>>  U_q(T)\mod,
\endCD
\end{equation} 
where the right vertical arrow is the functor of cohomology with respect to $\fu_q(N^+)$, the positive 
part of the \emph{small} quantum group.

\medskip

Now, a general formalism says that objects of 
$$\Shv(\Gr_G)^{N\ppart\cdot T\qqart}$$
give rise to functors \eqref{e:KL functor} (namely, convolve and take semi-infinite cohomology with respect to $\fn\ppart$).
It turns out that our object $\ICs$ gives rise to the functor that we need for the commutativity of the diagram \eqref{e:KL diag}.

\sssec{}

There is an analogous situation to the one above, where instead of $\KL(G)_{\kappa}$ we consider the
\emph{metaplectic Whittaker category} for the dual group, denoted $\Whit_q(\Gr_{\cG})$. According to the
conjecture of J.~Lurie and the author, we still have an equivalence
$$\Whit_q(\Gr_{\cG})\simeq U_q(G)\mod.$$

We wish to have a functor
\begin{equation} \label{e:Whit functor}
\Whit_q(\Gr_{\cG})\to \Whit_q(\Gr_{\cT})
\end{equation} 
that makes the diagram
$$
\CD
\Whit_q(\Gr_{\cG})  @>{\sim}>>  U_q(G)\mod \\
@VVV @VVV  \\
\Whit_q(\Gr_{\cT})  @>{\sim}>>  U_q(T)\mod,
\endCD
$$
commutative, where the right vertical arrow is the same as in \eqref{e:KL diag}. 

\medskip

Again, any object in the metaplectic $\SI_q(\Gr_{\cG})$ version of $\SI(\Gr_{\cG})$ gives to a functor
in \eqref{e:Whit functor}, and the sought-for functor is given by the appropriate metaplectic version 
$\ICs_q\in \SI_q(\Gr_{\cG})$ of $\ICs$. This is the subject of the forthcoming paper \cite{GLys}. 

\ssec{Background and conventions}

\sssec{}

This paper unavoidably uses higher category theory. It appears in the very definition of our
basic object of study, the semi-infinite category on the affine Grassmannian.

\medskip

Thus, we will assume that the reader is familiar with the basics tenets of the theory. The fundamental
reference is \cite{Lu}, but shorter expositions (or user guides) exist as well, for example, the first
chapter of \cite{GR}. 

\sssec{}

Our algebraic geometry happens over an arbitrary algebraically closed ground field $k$. Our
geometric objects are classical, i.e., non-derived (see, however, \secref{sss:dual geom}). 

\medskip

By a prestack we mean an arbitrary (accessible) functor
\begin{equation} \label{e:prestack}
(\affSch)^{\on{op}}\to \on{Groupoids}
\end{equation}
(we de not need to consider higher groupoids). 

\medskip

We shall say that a prestack $\CY$ is locally of finite type if the corresponding functor \eqref{e:prestack} takes
filtered limits of affine schemes to colimits of groupoids. 
Equivalently, $\CY$ is locally of finite type if it is the left Kan extension of a functor
$$(\affSch_{\on{ft}})^{\on{op}}\to \on{Groupoids}$$
along the fully faithful embedding
$$(\affSch_{\on{ft}})^{\on{op}}\hookrightarrow (\affSch)^{\on{op}}.$$

Still equivalently, $\CY$ is locally of finite type if for any $S\in \affSch$ and $y:S\to \CY$,
the category of factorizations of $y$ as
$$S\to S'\to \CY,$$
where $S'\in \affSch_{\on{ft}}$, is \emph{contractible}. 

\medskip

We let $\on{PreStk}_{\on{lft}}$ denote the category of prestacks locally of finite type. We can identify it with the category 
of all functors
\begin{equation} 
(\affSch_{\on{ft}})^{\on{op}}\to \on{Groupoids}.
\end{equation}

\sssec{}

We let $G$ be a reductive group over $k$. We fix a Borel subgroup $B\subset G$ and the opposite Borel $B^-\subset G$.
Let $N\subset B$ and $N^-\subset B^-$ denote their respective unipotent radicals.

\medskip

Set $T=B\cap B^-$; this is a Cartan subgroup of $G$. We use it to identify the quotients
$$B/N \simeq T \simeq B^-/N^-.$$

\medskip

We let $\Lambda$ denote the coweight lattice of $G$, i.e., the lattice of cocharacters of $T$.
We let $\Lambda^{\on{pos}}\subset \Lambda$ denote the sub-monoid consisting of 
linear combinations of positive simple roots with non-negative integral coefficients. 
We let $\Lambda^+\subset \Lambda$ denote the sub-monoid of \emph{dominant coweights}.

\sssec{}

For an affine scheme $Z$, we let $\fL(Z)$ (resp., $\fL^+(Z)$) denote the scheme (resp., ind-scheme), whose $S$-points for $S=\Spec(A)$
are given by $\Hom(\Spec(A\ppart),Z)$ (resp., $\Hom(\Spec(A\qqart),Z)$).  Thus, we replace the notations
$$Z\ppart\rightsquigarrow \fL(Z), \,\, Z\qqart\rightsquigarrow \fL^+(Z)$$
from earlier in the introduction. 

\medskip

For $Z=G$, we consider the \'etale (equivalently, fppf) quotient $\Gr_G:=\fL(G)/\fL^+(G)$. This is the affine Grassmannian of $G$.
It is known to be an ind-scheme of ind-finite type. 

\sssec{}

While our geometry happens over a field $k$, the representation-theoretic categories
that we study are DG categories over another field, denoted $\sfe$ (assumed algebraically closed
and of characteristic $0$). 

\medskip

All our DG categories are assumed presentable. When considering functors,
we will only consider functors that preserve colimits. We denote the $\infty$-category of DG categories
by $\DGCat$.  It carries a symmetric monoidal structure (i.e., one can consider tensor products
of DG categories). The unit object is the DG category of complexes of $\sfe$-vector spaces,
denoted $\Vect$. 

\medskip

For a pair of objects $c_0,c_1$ in a DG category $\CC$ we will denote by $\CHom_\CC(c_0,c_1)$
the corresponding object of $\Vect$. 

\medskip

We will use the notion of t-structure on a DG category. Given a t-structure on $\CC$, we will denote by
$\CC^{\leq 0}$ the corresponding subcategory of cohomologically connective objects, and by $\CC^{>0}$
its right orthogonal. We let $\CC^\heartsuit$ denote the heart $\CC^{\leq 0}\cap \CC^{\geq 0}$. 

\sssec{}  \label{sss:sheaf theory}

The source of DG categories will be a \emph{sheaf theory}, which is a functor
$$\Shv:(\affSch_{\on{ft}})^{\on{op}}\to \DGCat, \quad S\mapsto \Shv(S).$$

For a morphism of affine schemes $f:S_0\to S_1$, the corresponding functor
$$\Shv(S_1)\to \Shv(S_0)$$
is the !-pullback $f^!$.  

\medskip

The main examples of sheaf theories are:

\medskip

\noindent{(i)} We take $\sfe=\ol\BQ_\ell$ and we take $\Shv(S)$ to be the ind-completion of the
(small) DG category of constructible $\ol\BQ_\ell$-sheaves.

\medskip

\noindent{(ii)} When $k=\BC$ and $\sfe$ arbitrary, we take $\Shv(S)$ to be the ind-completion of the
(small) DG category of constructible $\sfe$-sheaves on $S(\BC)$ in the analytic topology. 

\medskip

\noindent{(iii)} If $k$ has characteristic $0$, we take $\sfe=k$ and we take $\Shv(S)$ to be the DG category
of D-modules on $S$.

\medskip

In examples (i) and (ii), the functor $f^!$ always has a left adjoint, denoted $f_!$. In example (iii) this is not
the case. However, the partially defined left adjoint $f_!$ is defined on holonomic objects.  It is 
defined on the entire category if $f$ is proper. 

\sssec{Sheaves on prestacks}   \label{sss:sheaves intro} 

We apply the procedure of right Kan extension along the embedding
$$(\affSch_{\on{ft}})^{\on{op}}\hookrightarrow (\on{PreStk}_{\on{lft}})^{\on{op}}$$
to the functor $\Shv$, and thus obtain a functor (denoted by the same symbol)
$$\Shv:(\on{PreStk}_{\on{lft}})^{\on{op}}\to \DGCat.$$

By definition, for $\CY\in \on{PreStk}_{\on{lft}}$ we have
\begin{equation} \label{e:shv on prestack}
\Shv(\CY)=\underset{S\in \affSch_{\on{ft}},y:S\to \CY}{\on{lim}}\, \Shv(S),
\end{equation} 
where the transition functors in the formation of the limit are the 
!-pullbacks\footnote{Note that even though the index category (i.e., $(\affSch_{\on{ft}})_{/\CY}$) is ordinary, the above limit
is formed in the $\infty$-category $\DGCat$. This is how $\infty$-categories appear in this paper.}.

\medskip

For a map of prestacks $f:\CY_0\to \CY_1$ we thus have a well-defined pullback functor
$$f^!:\Shv(\CY_1)\to \Shv(\CY_0).$$

\medskip

We denote by $\omega_\CY$ the dualizing sheaf on $\CY$, i.e., the pullback of 
$$\sfe\in \Vect\simeq \Shv(\on{pt})$$
along the tautological map $\CY\to \on{pt}$. 

\sssec{}  \label{sss:dual geom}

This paper is closely related to the geometric Langlands theory, and the geometry of the Langlands dual group
$\cG$ makes it appearance. 

\medskip

By definition, $\cG$ is a reductive group over $\sfe$ and geometric objects constructed out of $\cG$ give
rise to $\sfe$-linear DG categories by considering quasi-coherent (resp., ind-coherent) sheaves on them.

\medskip

However, unlike our geometric objects over $k$, the objects of this dual geometry are often \emph{derived}. 
A typical example is the derived enhancement of the flag variety:
$$\wt\cg\underset{\cg}\times \on{pt}.$$

\ssec{Acknowledgements} This paper is dedicated to David Kazhdan, for his guidance and for being a source of 
inspiration for more than 20 years. 

\medskip

The author learned about the problem of semi-infinite flags from M.~Finkelberg back in 1995. 
I wish to thank him for igniting my interest in the subject.  

\medskip

I also wish to thank Sam Raskin for many stimulating discussions, and very helpful suggestions regarding this
paper. 

\section{The semi-infinite category on the affine Grassmannian}

\ssec{The category of sheaves on the affine Grassmannian}


\sssec{}  \label{sss:sheaves}

Recall that for any prestack $\CY$ locally of finite type, we define the category $\Shv(\CY)$ by \eqref{e:shv on prestack}. 

\medskip

Let now $\CY$ be an ind-scheme of ind-finite type, i.e., a prestack that can be written as
\begin{equation} \label{e:indsch}
\underset{i\in I}{\on{colim}}\, Y_i,
\end{equation}
where $I$ is a filtered index category, $Y_i$'s are schemes of finite type, and for every $(i\to i')\in I$, the corresponding map
$$Y_i \overset{f_{i,i'}}\longrightarrow Y_{i'}$$
is a closed embedding\footnote{In this case each $Y_i$ is a closed subfunctor of $\CY$, and $\CY$ admits a universal 
presentation as in \eqref{e:indsch}, where the index category is taken to be that of \emph{all} closed subfunctors of $\CY$
that are representable by schemes of finite type. This category is automatically filtered, and the initial category $I$ is cofinal
in the universal one.}. 

\medskip

In this case we have
$$\Shv(\CY)\simeq \underset{i\in I}{\on{lim}}\, \Shv(Y_i),$$
where for $(i\to i')\in I$, the corresponding functor $\Shv(Y_{i'})\to \Shv(Y_i)$ is $f_{i,i'}^!$. 

\sssec{} \label{sss:t ind-sch}

If $\CY$ is an ind-scheme, the category $\Shv(\CY)$ has a t-structure. It is uniquely characterized by the
following (equivalent) conditions:

\medskip

\noindent(i) An object $\CF\in \Shv(\CY)$ belongs to $(\Shv(\CY))^{\geq 0}$ if and only if its (!)-pullback
to any $Y_i$ belongs to $(\Shv(Y_i))^{\geq 0}$.

\medskip

\noindent(ii) The subcategory $(\Shv(\CY))^{\leq 0}$ is generated under filtered colimits by direct images
of objects $\CF_i\in (\Shv(Y_i))^{\leq 0}$.

\sssec{}

We apply the above discussion to $\CY=\Gr_G$, and thus obtain a well-defined category $\Shv(\Gr_G)$,
equipped with a t-structure.  

\ssec{Definition of the semi-infinite subcategory}  \label{ss:semiinf cat}

We note that the ind-scheme $\Gr_G$ is acted on by the group ind-scheme $\fL(N)$. We define the  
category 
$$\SI(\Gr_G):=\Shv(\Gr_G)^{\fL(N)}$$ as the full subcategory of objects $\Shv(\Gr_G)$ that are
$\fL(N)$-invariant.

\medskip

By definition, this means the following.

\sssec{}  \label{sss:N alpha}

First off, since $N$ is unipotent, $\fL(N)$ naturally comes from an ind-object in the category of group-schemes
\begin{equation} \label{e:loop N as colim}
\fL(N)\simeq \underset{\alpha\in A}{\on{colim}}\, N_\alpha,
\end{equation} 
where $A$ is a filtered category.

\medskip

We set
\begin{equation} \label{e:intersect alpha} 
\Shv(\Gr_G)^{\fL(N)}:=\underset{\alpha}{\on{lim}}\, \Shv(\Gr_G)^{N_\alpha},
\end{equation} 
where each $\Shv(\Gr_G)^{N_\alpha}$ is a full subcategory of $\Shv(\Gr_G)$ and for $(\alpha\to \alpha')\in A$, we have
$$\Shv(\Gr_G)^{N_{\alpha'}}\subset \Shv(\Gr_G)^{N_\alpha}$$
as full subcategories in $\Shv(\Gr_G)$. Note that the limit in \eqref{e:intersect alpha} amounts to the intersection
$$\underset{\alpha}\cap\, \Shv(\Gr_G)^{N_\alpha}$$
as full subcategories in $\Shv(\Gr_G)$.

\medskip

Let us now explain what the subcategories 
$$\Shv(\Gr_G)^{N_\alpha}\subset \Shv(\Gr_G)$$
are. 

\sssec{}

For a fixed $\alpha$, the prestack $\Gr_G$, when viewed as equipped with an action of $N_\alpha$,
is naturally an ind-object in the category of schemes equipped with an action of $N_\alpha$.

\medskip

I.e., we can represent $\Gr_G$ as 
$$\underset{i\in I}{\on{colim}}\, Y_i$$
where each $Y_i$ is a closed subscheme of $\Gr_G$, stable under the $N_\alpha$-action. 

\medskip

We set
$$\Shv(\Gr_G)^{N_\alpha}:=\underset{i\in I}{\on{lim}}\, \Shv(Y_i)^{N_\alpha},$$
viewed as a full subcategory of 
$$\Shv(\Gr_G)\simeq \underset{i\in I}{\on{lim}}\, \Shv(Y_i).$$

Thus, it remains to explain what we mean by
$$\Shv(Y_i)^{N_\alpha}\subset \Shv(Y_i)$$
for each $\alpha$ and $i$, and why for $(i\to i')$, the corresponding functor 
$$\Shv(Y_{i'}) \overset{f_{i,i'}^!}\longrightarrow \Shv(Y_i)$$
sends $\Shv(Y_{i'})^{N_\alpha}$ to $\Shv(Y_i)^{N_\alpha}$.

\sssec{}

The group-scheme $N_\alpha$, can be naturally written as
$$\underset{\beta\in B_{\alpha,i}}{\on{lim}}\, N_{\alpha,\beta},$$
where:

\medskip

\noindent(i) $B_{\alpha,i}$ is a filtered category;

\smallskip

\noindent(ii) Each $N_{\alpha,\beta}$ is a unipotent algebraic group (of finite type);

\smallskip

\noindent(iii) For every $(\beta\to \beta')\in B_{\alpha,i}$ the corresponding map
$N_{\alpha,\beta'}\to N_{\alpha,\beta}$
is surjective;

\smallskip

\noindent(iv) The action of $N_\alpha$ on $Y_i$ comes from a compatible family of actions of $N_{\alpha,\beta}$'s on $Y_i$.

\medskip

For any $\beta\in B_{\alpha,i}$, we can consider the corresponding equivariant category
$\Shv(Y_i)^{N_{\alpha,\beta}}$. Since $N_{\alpha,\beta}$ is unipotent, the forgetful functor
$$\Shv(Y_i)^{N_{\alpha,\beta}}\to \Shv(Y_i)$$ is fully faithful, and  
for every $(\beta\to \beta')\in B_{\alpha,i}$, we have the inclusion of subcategories
$$\Shv(Y_i)^{N_{\alpha,\beta}}=\Shv(Y_i)^{N_{\alpha,\beta'}}$$
as subcategories of $\Shv(Y_i)$. 

\medskip

We set $\Shv(Y_i)^{N_\alpha}\subset \Shv(Y_i)$ to be $\Shv(Y_i)^{N_{\alpha,\beta}}$ 
for some/any $\beta\in B_{\alpha,i}$. 

\sssec{}

Going back, it is clear that for a map $(i\to i')\in I$, the corresponding functor
$$\Shv(Y_{i'}) \overset{f_{i,i'}^!}\longrightarrow \Shv(Y_i)$$
sends $\Shv(Y_{i'})^{N_\alpha}$ to $\Shv(Y_i)^{N_\alpha}$.

\medskip

It is also clear that for a map $(\alpha\to \alpha')\in A$, we have
$$\Shv(\Gr)^{N_{\alpha'}}\subset \Shv(\Gr)^{N_\alpha}$$
as full subcategories of $\Shv(\Gr)$.

\sssec{} \label{sss:!-av}

Consider the forgetful functor
$$\SI(\Gr_G):=\Shv(\Gr_G)^{\fL(N)}\to \Shv(\Gr_G).$$

As any functor, it admits a partially defined left adjoint, to be denoted 
$\on{Av}^{\fL(N)}_!$.

\medskip

Explicitly, for $\fL(N)$ written as \eqref{e:loop N as colim}, we have
$$\on{Av}^{\fL(N)}_!:=\underset{\alpha}{\on{colim}}\, \on{Av}^{N_\alpha}_!,$$
where $\on{Av}^{N_\alpha}_!$ is the partially defined left adjoint to the forgetful functor
$$\Shv(\Gr_G)^{N_\alpha}\to \Shv(\Gr_G).$$

\sssec{}

Recall (see \secref{sss:t ind-sch}) that the category $\Shv(\Gr_G)$ is equipped with a t-structure. In 
\secref{sss:proof of invis} we will prove: 

\begin{prop} \label{p:invis}
Every $\CF\in \Shv(\Gr)^{\fL(N)}$ is \emph{infinitely connective}, i.e., lies in $(\Shv(\Gr_G))^{-\leq n}$ for every $n$.
\end{prop}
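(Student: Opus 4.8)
The plan is to reduce the statement to a computation on a single finite-type stratum, using the $\fL(N)$-equivariance to control restrictions to all the pieces $Y_i$ making up $\Gr_G$. First I would fix $\CF \in \Shv(\Gr_G)^{\fL(N)}$ and recall that, by the definition of the t-structure in \secref{sss:t ind-sch}(i), it suffices to show that for every closed finite-type subscheme $Y_i \subset \Gr_G$ the $!$-restriction $\CF|_{Y_i}$ lies in $(\Shv(Y_i))^{\leq -n}$ for every $n$. Since each such $Y_i$ is a finite union of $\fL(N)$-orbits (the $N\ppart$-orbits $S^\lambda$ are locally closed, their closures are unions of such orbits, and any finite-type closed subscheme meets only finitely many), a standard d\'evissage along the stratification reduces us to proving the following: the $!$-restriction of $\CF$ to a single orbit $S^\lambda$ is infinitely connective. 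Here I would be slightly careful that $S^\lambda$ is itself infinite-dimensional, so "restriction to $S^\lambda$" should be understood via the presentation of $S^\lambda$ as an increasing union of $N_\alpha$-stable finite-type subschemes, exactly as in \secref{ss:semiinf cat}.

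Next I would exploit equivariance on a fixed orbit. On $S^\lambda \cong \fL(N)/\mathrm{Stab}$, an $\fL(N)$-equivariant sheaf is pulled back from a point, i.e.\ it is (a shift of) the dualizing sheaf $\omega_{S^\lambda}$ tensored with a fixed complex of $\sfe$-vector spaces. The key point is the cohomological shift: the dualizing sheaf on an $N_\alpha$-torsor over a finite-type base is the constant sheaf shifted by twice the relative dimension, and as $\alpha$ grows this relative dimension tends to $+\infty$ because the groups $N_\alpha$ exhausting $\fL(N)$ have dimension going to infinity. Concretely, writing $S^\lambda$ as $\mathrm{colim}_i\, Y_i^\lambda$ with $Y_i^\lambda$ an $N_{\alpha}$-bundle over something of bounded dimension, the $!$-restriction of $\CF$ to $Y_i^\lambda$ is a shift of a lisse complex by an amount that goes to $-\infty$ with $i$. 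But by \secref{sss:t ind-sch}(i) again (now applied inside $S^\lambda$), membership in $(\Shv(S^\lambda))^{\leq -n}$ is tested on all the $Y_i^\lambda$; since the shifts diverge, $\CF|_{S^\lambda}$ is connective of every amplitude, i.e.\ infinitely connective.

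The step I expect to be the main obstacle is making the "shift goes to $-\infty$" argument precise at the level of the $\infty$-categorical limits involved, rather than for a single sheaf on a single scheme. One has to check that the relevant relative-dimension function is genuinely unbounded along the exhaustion — this is where the "semi-infinite" geometry enters, via $\on{dim.rel.}(\on{Ad}_{t^{-\lambda}}(\fL^+(N)), \on{Ad}_{t^{-\mu}}(\fL^+(N))) = \langle \lambda - \mu, 2\check\rho\rangle$ together with the fact that $\fL(N)/\fL^+(N)$-type quotients are infinite-dimensional — and to confirm that the $!$-pullback functors in the limit presentation of $\Shv(S^\lambda)$ are compatible with these shifts, so that connectivity estimates assemble correctly. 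Once that bookkeeping is in place, combining it with the d\'evissage along the finitely many orbits in each $Y_i$ yields the proposition. A cleaner packaging of the same idea: the partially-defined functor $\on{Av}^{\fL(N)}_! = \mathrm{colim}_\alpha \on{Av}^{N_\alpha}_!$ from \secref{sss:!-av} shifts connectivity by an unbounded amount, and every object of $\Shv(\Gr_G)^{\fL(N)}$ is in the image of this colimit in a suitable sense; this is the perspective I would try first if the direct stratum-by-stratum estimate gets unwieldy.
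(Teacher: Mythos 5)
The central gap is in your first reduction, where you invoke \secref{sss:t ind-sch}(i) to claim that it suffices to check that the $!$-pullback of $\CF$ to each closed finite-type $Y_i\subset\Gr_G$ lies in $\Shv(Y_i)^{\leq -n}$ for all $n$. But (i) characterizes the \emph{coconnective} part $\Shv(\Gr_G)^{\geq 0}$ by $!$-pullback; the connective part is characterized by (ii), as generated under filtered colimits by $*$-pushforwards, and it is \emph{not} detected by $!$-pullback ($!$-pullback along a closed embedding is left t-exact, not right t-exact). Your condition is sufficient for infinite connectivity of $\CF$, but it is strictly stronger, and it is \emph{false} for the objects under consideration: one has $\bi_0^!(\Delta^0)\simeq\omega_{S^0}$, so the $!$-fiber of $\Delta^0$ at the point $1\in\Gr_G$ is $\sfe$ in degree $0$, which is not in $\Shv(\{1\})^{\leq -1}$, even though $\Delta^0$ is infinitely connective by \lemref{l:invis} (similarly $\bi_0^!(\ICs)\simeq\omega_{S^0}$ by \propref{p:! fibers of IC}). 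The later d\'evissage step has the same defect: even granting that $\bi_\lambda^!(\CF)\simeq\omega_{S^\lambda}\otimes V_\lambda$ is infinitely connective in $\Shv(S^\lambda)$, its $!$-restriction to the finite-dimensional closed piece $Y_i\cap S^\lambda$ is $\omega_{Y_i\cap S^\lambda}\otimes V_\lambda$, which is bounded whenever $V_\lambda$ is. So the statement you reduce to is not achievable.

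The paper's proof never tests connectivity by $!$-pullback. It exhibits $\Delta^\lambda=(\bi_\lambda)_!(\omega_{S^\lambda})$ as a filtered colimit over $\alpha$ of $!$-extensions of $\omega_{N_\alpha\cdot t^\lambda}$ along the locally closed embeddings $N_\alpha\cdot t^\lambda\hookrightarrow\Gr_G$; since $!$-extension along a locally closed embedding is right t-exact, the cohomological degree of $\omega_{N_\alpha\cdot t^\lambda}$ tends to $-\infty$ as $\dim(N_\alpha\cdot t^\lambda)\to\infty$, and $\Shv(\Gr_G)^{\leq -n}$ is closed under filtered colimits, $\Delta^\lambda$ is infinitely connective. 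The second ingredient, missing from your proposal, is \lemref{l:! gener}: the compacts $\Delta^\lambda$ generate $\SI(\Gr_G)$, and the infinitely connective objects form a subcategory closed under shifts, cones, retracts and colimits, hence equal to all of $\SI(\Gr_G)$. Your closing remark about $\on{Av}^{\fL(N)}_!=\underset{\alpha}{\on{colim}}\,\on{Av}^{N_\alpha}_!$ together with $\Delta^\lambda\simeq\on{Av}^{\fL(N)}_!(\delta_{t^\lambda,\Gr_G})$ from \eqref{e:Av Delta} is essentially this mechanism; combined with the generation lemma it would give a correct proof, but the ``test on $Y_i$ by $!$-restriction'' framing has to be dropped.
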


The above proposition says that objects of the category $\SI(\Gr_G)=\Shv(\Gr)^{\fL(N)}$ are invisible from the point of
view of the heart of $\Shv(\Gr_G)$. However, in \secref{ss:t} we will show that $\SI(\Gr_G)$ carries its own
t-structure with meaningful features. 

\ssec{Semi-infinite orbits and standard functors}

In this subsection we begin to investigate the structure of $\SI(\Gr_G)$. 

\sssec{}

For a coweight $\lambda$ let $S^\lambda$ denote the $\fL(N)$-orbit of the point $t^\lambda\in \Gr_G$, 
and let $\ol{S}{}^\lambda$ be its closure. We have
$$S^\mu\subset \ol{S}{}^\lambda\, \Leftrightarrow\, \lambda-\mu \in \Lambda^{\on{pos}}.$$

\medskip

The denote the corresponding maps by
$$S^\lambda \overset{\bj_\lambda}\hookrightarrow \ol{S}{}^\lambda,$$
$$\ol{S}{}^\lambda \overset{\ol\bi_\lambda}\hookrightarrow \Gr_G,$$
$$S^\lambda \overset{\bi_\lambda}\hookrightarrow \Gr_G,$$
so that $\bi_\lambda=\ol\bi_\lambda\circ \bj_\lambda$. 

\medskip

The map $\bj_\lambda$ (resp., $\ol\bi_\lambda$, $\bi_\lambda$) 
is an open (resp., closed, locally closed) embedding of prestacks, i.e., its base change by an affine scheme
yields a map of schemes with the corresponding property. In particular, $S^\lambda$ and $\ol{S}{}^\lambda$ are ind-schemes. 

\sssec{}

We have the corresponding functors
\begin{equation} \label{e:standard functors !}
(\ol\bi_\lambda)^!: \Shv(\Gr_G)\to \Shv(\ol{S}{}^\lambda),\quad (\bj_\lambda)^!: \Shv(\ol{S}{}^\lambda)\to \Shv(S^\lambda),\quad 
(\bi_\lambda)^!: \Shv(\ol{S}{}^\lambda)\to \Shv(S^\lambda)
\end{equation} 
and 
\begin{equation} \label{e:standard functors *}
(\ol\bi_\lambda)_*: \Shv(\ol{S}{}^\lambda)\to \Shv(\Gr_G),\quad (\bj_\lambda)_*:\Shv(S^\lambda)\to  \Shv(\ol{S}{}^\lambda),\quad 
(\bi_\lambda)_*: \Shv(S^\lambda)\to \Shv(\ol{S}{}^\lambda).
\end{equation} 

\medskip

Since the map $\bj_\lambda$ is an open embedding we will also use the notation $(\bj_\lambda)^*:=(\bj_\lambda)^!$. The functors
$((\bj_\lambda)^*,(\bj_\lambda)_*)$ form an adjoint pair with $(\bj_\lambda)_*$ being fully faithful. 

\medskip

Since the map $\ol\bi_\lambda$ is a closed embedding we will also use the notation $(\ol\bi_\lambda)_!:=(\ol\bi_\lambda)_*$; it is
fully faithful. The functors $((\ol\bi_\lambda)_!,(\ol\bi_\lambda)^!)$ form an adjoint pair. 

\medskip

In particular, the functor $(\bi_\lambda)_*\simeq (\ol\bi_\lambda)_*\circ (\bj_\lambda)_*$ is also fully faithful. 

\sssec{}

The indschemes $S^\lambda$ and $\ol{S}{}^\lambda$ are acted on by $\fL(N)$, 
and the construction in \secref{ss:semiinf cat} applies to them as well. We denote
$$\SI(\Gr_G)_{\leq \lambda}:=\Shv(\ol{S}{}^\lambda)^{\fL(N)}$$
and
$$\SI(\Gr_G)_{=\lambda}:=\Shv(S^\lambda)^{\fL(N)}.$$

\medskip

The functors of \eqref{e:standard functors !} and \eqref{e:standard functors *} send the corresponding subcategories
$$\SI(\Gr_G)_{\leq \lambda}\subset \Shv(\ol{S}{}^\lambda), \quad \SI(\Gr_G)_{=\lambda}\subset \Shv(S^\lambda), \quad
\SI(\Gr_G)\subset \Shv(\Gr_G)$$
to one another and the same adjunctions hold. 

\sssec{}

We claim:

\begin{prop} \label{p:on orbit}
The functor of taking the !-fiber at $t^\lambda$ defines an equivalence
$$\SI(\Gr_G)_{=\lambda}\simeq\Vect.$$
\end{prop}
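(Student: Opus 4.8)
The plan is to show that the orbit $S^\lambda$, together with its $\fL(N)$-action, is equivariantly a ``point'' in the appropriate sense, so that the equivariant category collapses to $\Vect$. Concretely, $S^\lambda = \fL(N)\cdot t^\lambda$, and the stabilizer of $t^\lambda$ in $\fL(N)$ is $\on{Ad}_{t^\lambda}(\fL^+(N)) \cap \fL(N)$, which is a group ind-scheme; quotienting, $S^\lambda$ is a torsor over $\fL(N)$ modulo this stabilizer. The first step is therefore to identify $S^\lambda$ with a homogeneous space for $\fL(N)$ and to record that the stabilizer group, call it $\fL(N)_\lambda := \on{Ad}_{t^\lambda}(\fL^+(N)) \cap \fL(N)$, is ``cohomologically contractible'' in the relevant sheaf-theoretic sense: it is an extension built out of (pro-)affine spaces, being a limit/colimit of successive extensions by $\mathbb{G}_a$'s. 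For such a group, taking invariants is the identity functor on $\Vect$.

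Next I would make precise the descent statement: for any prestack $Y$ with a transitive action of $\fL(N)$ (in the ind-scheme sense used in \secref{ss:semiinf cat}) such that the stabilizer $H$ of a point is of the above ``unipotent pro-finite-type'' shape, the forgetful functor $\Shv(Y)^{\fL(N)} \to \Vect$ given by $!$-fiber at the chosen point is an equivalence. The mechanism is: $\Shv(Y)^{\fL(N)} \simeq \Shv(\on{pt})^{H} \simeq \Vect^{H}$, and since $H$ is a filtered limit of surjections of finite-type unipotent groups $N_{\alpha,\beta}$ (the analogue of the presentation in \secref{sss:N alpha} applied to the stabilizer), and each $\Vect^{N_{\alpha,\beta}} \simeq \Vect$ because $N_{\alpha,\beta}$ is a unipotent algebraic group and invariants agree with coinvariants and with the underlying complex, the limit over $\alpha,\beta$ is again $\Vect$. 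The inverse functor sends $V \in \Vect$ to $\on{Av}_*^{\fL(N)}$ (or the $!$-averaging, via \secref{sss:!-av}) of the constant sheaf $V \otimes \omega_{t^\lambda}$; one checks the two composites are the identity by unwinding the adjunctions $((\bi_\lambda)_!, (\bi_\lambda)^!)$ and the equivariance.

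The main obstacle — and the place that needs genuine care rather than formal manipulation — is the infinite-dimensionality of the stabilizer $\fL(N)_\lambda$: it is neither pro-algebraic nor ind-algebraic in a naive way, but a mixed object. I would handle it by choosing an exhaustion of $\fL(N)$ by the group-schemes $N_\alpha$ as in \eqref{e:loop N as colim}, intersecting with a compatible exhaustion of the point's orbit by finite-type closed subschemes $Y_i \subset \ol{S}{}^\lambda$ (in fact $S^\lambda$ here), and noting that on each $Y_i$ the relevant group acting is the finite-type quotient $N_{\alpha,\beta}$ from \secref{sss:N alpha}; the $!$-fiber at $t^\lambda$ then factors through $\Shv(Y_i)^{N_{\alpha,\beta}} \simeq \Vect$ compatibly in $i$, $\alpha$, $\beta$, and passing to the limit gives the claim. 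The key technical input is that for a transitive action of a finite-type unipotent group on a finite-type scheme, the equivariant category is $\Vect$ — this is standard (unipotent groups are ``cohomologically trivial'' for the purposes of equivariance) and is presumably used elsewhere in the paper; everything else is assembling limits. I would also remark that one can alternatively deduce the statement from \propref{p:invis}-style reasoning combined with the fact that $S^\lambda$ is a single orbit, but the exhaustion argument above is the cleanest and most self-contained.
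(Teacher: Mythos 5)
Your proposal is correct and, in its concrete form (the exhaustion argument), is essentially the paper's proof: present $\Shv(S^\lambda)^{\fL(N)}$ as a limit indexed by pieces of the orbit and by the group-schemes $N_\alpha$, and reduce to the statement that a finite-type unipotent group acting transitively on a finite-type scheme has equivariant category $\Vect$ (since the stabilizer at $t^\lambda$ is again unipotent). One step that deserves to be made explicit, and which the paper handles carefully: it is \emph{not} true that $\Shv(Y_i)^{N_{\alpha,\beta}}\simeq\Vect$ for arbitrary $(i,\alpha,\beta)$ — you need $N_\alpha$ to act transitively on the chosen piece $Y_i$, i.e.\ $Y_i = N_\alpha\cdot t^\lambda$. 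The paper writes the equivariant category as the double limit $\underset{\alpha\to\alpha'}{\on{lim}}\,\Shv(N_{\alpha'}\cdot t^\lambda)^{N_\alpha}$ and then invokes cofinality of the diagonal $\alpha'=\alpha$ to reduce to $\underset{\alpha}{\on{lim}}\,\Shv(N_\alpha\cdot t^\lambda)^{N_\alpha}$, where each term is a transitive $N_\alpha$-space; your phrase ``compatibly in $i,\alpha,\beta$'' glosses over exactly this reduction, so you should state the cofinality observation rather than asserting the pointwise equivalence over the full double index.
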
  

\begin{proof}

Let us write $\fL(N)$ as in \eqref{e:loop N as colim}. For each $\alpha$, set the index category $I$ to be equal to $A_{\alpha/}$,
and for $\alpha'\in A_{\alpha/}$ set 
$$Y_{\alpha'}=N_{\alpha'}\cdot t^\lambda.$$

Then $\Shv(S^\lambda)^{\fL(N)}$ is the limit
$$\underset{\alpha\to \alpha'}{\on{lim}}\, \Shv(Y_{\alpha'})^{N_\alpha}.$$

But cofinal in the above index category is the subcategory where $\alpha'=\alpha$. Thus, the above limit
maps isomorphically to
$$\underset{\alpha}{\on{lim}}\, \Shv(Y_{\alpha})^{N_\alpha}.$$

Now, for every $\alpha$, the functor of taking the fiber at $t^\lambda$ defines an equivalence
$$\Shv(Y_{\alpha})^{N_\alpha}\to \Vect;$$
since $\on{Stab}_{N_\alpha}(t^\lambda)$ is unipotent.

\end{proof} 

\begin{cor} \label{c:omega}
The inverse equivalence to that of \propref{p:on orbit} sends $\sfe\in \Vect$ to the dualizing object $\omega_{S^\lambda}\in \Shv(S^\lambda)$. 
\end{cor}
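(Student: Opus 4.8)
The statement is essentially a bookkeeping assertion: having identified $\SI(\Gr_G)_{=\lambda}$ with $\Vect$ via $!$-fiber at $t^\lambda$, we must check that $\sfe\in\Vect$ goes, under the inverse equivalence, to $\omega_{S^\lambda}$. I would unwind the proof of \propref{p:on orbit}: the equivalence there is assembled from the equivalences $\Shv(Y_\alpha)^{N_\alpha}\xrightarrow{\sim}\Vect$ where $Y_\alpha=N_\alpha\cdot t^\lambda$ and $N_\alpha$ acts with unipotent (hence connected, hence cohomologically trivial) stabilizer $\on{Stab}_{N_\alpha}(t^\lambda)$. So the whole matter reduces, compatibly in $\alpha$, to the case of a single unipotent group $H$ acting transitively on a scheme $Y$ with unipotent stabilizer: one must check that the $H$-equivariant object of $\Shv(Y)$ whose $!$-fiber at the base point is $\sfe$ is precisely $\omega_Y$.

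\medskip

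\noindent\textbf{Key steps.} First, reduce to the finite-type situation just described, using that the limit defining $\SI(\Gr_G)_{=\lambda}$ is, by the argument in \propref{p:on orbit}, computed by the cofinal subcategory with $\alpha'=\alpha$, and that $!$-pullback along the closed/open maps $Y_\alpha\hookrightarrow Y_{\alpha'}$ takes dualizing sheaf to dualizing sheaf. Second, in the finite-type case $H$ acting transitively on $Y$ (a homogeneous space for a unipotent group, hence an affine space once we choose a base point, since $\on{Stab}_H(t^\lambda)$ is unipotent and the quotient $H/\on{Stab}$ is an iterated affine-space bundle): the forgetful functor $\Shv(Y)^{H}\to\Shv(Y)$ is fully faithful, $\Shv(Y)^H\simeq\Vect$ via $!$-fiber, and under this $\sfe$ corresponds to the unique (up to the equivalence) $H$-equivariant object. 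Since $\omega_Y$ is manifestly $H$-equivariant (the dualizing sheaf is canonical, hence carries a canonical equivariant structure for any group action), and its $!$-fiber at $t^\lambda$ is $\omega_{\on{pt}}=\sfe$ — because $Y$ is smooth, so $\omega_Y\simeq\sfe_Y[2\dim Y]$, but the $!$-fiber at a point of $\omega_Y$ is $\sfe$ with no shift, by definition of the dualizing sheaf as $!$-pullback of $\sfe\in\Vect\simeq\Shv(\on{pt})$ along $Y\to\on{pt}$ composed with $\on{pt}\to Y$ — we conclude that $\omega_Y$ is the object corresponding to $\sfe$. Third, assemble: the $\omega_{Y_\alpha}$ are compatible under $!$-pullback, hence glue to $\omega_{S^\lambda}\in\Shv(S^\lambda)$, which lies in the subcategory $\SI(\Gr_G)_{=\lambda}$ and has $!$-fiber $\sfe$ at $t^\lambda$, so it is the image of $\sfe$ under the inverse equivalence.

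\medskip

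\noindent\textbf{The main subtlety.} The only genuinely non-formal point is the claim ``the $!$-fiber at $t^\lambda$ of $\omega_{S^\lambda}$ is $\sfe$'': one must not confuse the $!$-fiber (which is $i_{t^\lambda}^!\omega_{S^\lambda}$, and $\omega$ is stable under $!$-pullback, so this is $\omega_{\on{pt}}=\sfe$) with the $*$-fiber (which would be $\sfe[2\dim]$ or a pro-object in the infinite-dimensional setting). Since \propref{p:on orbit} uses the $!$-fiber functor, everything is consistent, but it is worth spelling out. A secondary point is checking that the canonical equivariant structure on $\omega$ is compatible across the tower — but this is automatic from functoriality of $\omega$ under the $!$-pullbacks $f_{i,i'}^!$, which are the transition functors in the limit. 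I expect no real obstacle beyond this; the corollary is a consistency check on the proof of the proposition.
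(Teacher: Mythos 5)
Your proof is correct and takes essentially the same approach as the paper: one checks that $\omega_{S^\lambda}$ is $\fL(N)$-equivariant and that its $!$-fiber at $t^\lambda$ is $\sfe$, which the paper asserts in one line. Your unpacking of these two facts (functoriality of $\omega$ under $!$-pullback giving the equivariant structure, and stability of $\omega$ under $!$-pullback giving $i_{t^\lambda}^!\omega_{S^\lambda}=\omega_{\on{pt}}=\sfe$) is exactly what makes the paper's terse proof tick, including the correctly flagged $!$-fiber versus $*$-fiber distinction.
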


\begin{proof}
It is clear that $\omega_{S^\lambda}$ belongs to the subcategory $\Shv(S^\lambda)^{\fL(N)}$. Now the assertion
follows from the fact that its !-fiber at $t^\lambda$ identifies with $\sfe$. 
\end{proof}

\ssec{Structure of the semi-infinite category}

In this subsection we will show that the functors introduced in the preceding section admit all the expected
adjoints. 

\sssec{}

We claim:

\begin{prop} \label{p:! exists}
The partially defined left adjoint $(\bj_\lambda)_!$ of 
$(\bj_\lambda)^!:\Shv(\ol{S}{}^\lambda)\to \Shv(S^\lambda)$
is defined on the full subcategory $\SI(\Gr_G)_{=\lambda}\subset S^\lambda$
and takes values in $\SI(\Gr_G)_{\leq \lambda}\subset \ol{S}{}^\lambda$. 
Moreover, \begin{equation} \label{e:Av Delta}
(\bi_\lambda)_!(\omega_{S^\lambda})\simeq \on{Av}^{\fL(N)}_!(t^\lambda).
\end{equation} 
\end{prop}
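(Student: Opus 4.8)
The plan is to prove the two assertions in tandem, using the interpretation of $\SI(\Gr_G)_{=\lambda}$ afforded by \propref{p:on orbit} and \corref{c:omega}. The starting observation is that, by \corref{c:omega}, the category $\SI(\Gr_G)_{=\lambda}$ is generated by the single object $\omega_{S^\lambda}$, so to establish that $(\bj_\lambda)_!$ is defined on all of $\SI(\Gr_G)_{=\lambda}$ it suffices to show it is defined on $\omega_{S^\lambda}$ (partial left adjoints, when defined on a set of generators and compatible with colimits, extend to the generated subcategory). Thus the real content is to produce $(\bj_\lambda)_!(\omega_{S^\lambda})$ as an object of $\Shv(\ol S{}^\lambda)$ satisfying the expected adjunction, to check it lies in the equivariant subcategory $\SI(\Gr_G)_{\leq\lambda}$, and to identify it with $\on{Av}^{\fL(N)}_!(t^\lambda)$ after pushforward along $\ol\bi_\lambda$.

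First I would set up the identification in \eqref{e:Av Delta} as the true definition and derive everything from it. Recall $\on{Av}^{\fL(N)}_!=\on{colim}_\alpha\on{Av}^{N_\alpha}_!$, where $\on{Av}^{N_\alpha}_!$ is the partial left adjoint to the forgetful functor $\Shv(\Gr_G)^{N_\alpha}\to\Shv(\Gr_G)$. I would apply this to the object $(\bi_\lambda)_*(\delta_{t^\lambda})$ — or rather, since we are told $(\bi_\lambda)_*$ is fully faithful, to the skyscraper-type generator of $\Shv(S^\lambda)$ — and show that each $\on{Av}^{N_\alpha}_!$ is defined on it. Here one uses that for each $\alpha$ there is $\alpha'\geq\alpha$ with $N_{\alpha'}\cdot t^\lambda$ of finite type, that averaging a skyscraper over a finite-dimensional unipotent orbit is just $!$-pushforward from that orbit (up to a shift by its dimension), and that these orbits are affine spaces so the pushforwards are well-behaved; the colimit over $\alpha$ then converges because the orbits $N_\alpha\cdot t^\lambda$ exhaust $S^\lambda$. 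This produces $\on{Av}^{\fL(N)}_!(t^\lambda)$ as a well-defined object of $\SI(\Gr_G)$, set-theoretically supported on $S^\lambda$ and hence (being a sheaf on an ind-scheme) on $\ol S{}^\lambda$; call its corestriction to $\ol S{}^\lambda$ the candidate for $(\bj_\lambda)_!(\omega_{S^\lambda})$.

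Next I would verify the adjunction property: for $\CG\in\SI(\Gr_G)_{\leq\lambda}$ one must show $\CHom_{\Shv(\ol S{}^\lambda)}((\bj_\lambda)_!(\omega_{S^\lambda}),\CG)\simeq\CHom_{\Shv(S^\lambda)}(\omega_{S^\lambda},(\bj_\lambda)^!\CG)$. Using the equivariance of $\CG$, both sides are computed as limits over $\alpha$ of $N_\alpha$-equivariant Hom spaces, and the $N_\alpha$-level statement is the ordinary $(j_!,j^!)$ adjunction on finite-type schemes combined with the adjunction $(\on{Av}^{N_\alpha}_!,\on{oblv})$; passing to the limit over $\alpha$ gives the claim. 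This simultaneously shows the candidate object genuinely represents the partial left adjoint and that it lands in $\SI(\Gr_G)_{\leq\lambda}$ (equivariance is inherited from $\on{Av}^{\fL(N)}_!(t^\lambda)\in\SI(\Gr_G)$ together with the support estimate). Finally, $(\bi_\lambda)_!=(\ol\bi_\lambda)_!\circ(\bj_\lambda)_!=(\ol\bi_\lambda)_*\circ(\bj_\lambda)_!$, and pushing the candidate forward along $\ol\bi_\lambda$ recovers exactly $\on{Av}^{\fL(N)}_!(t^\lambda)$ by construction, giving \eqref{e:Av Delta}.

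The main obstacle I anticipate is the convergence and well-definedness of the colimit $\on{colim}_\alpha\on{Av}^{N_\alpha}_!$ on this generator — i.e. checking that the partial left adjoints $\on{Av}^{N_\alpha}_!$ are actually defined at each stage and that the transition maps stabilize enough for the colimit to exist in the presentable DG category $\Shv(\Gr_G)$. This is where the infinite-dimensionality genuinely bites: one must argue that although $\fL(N)$ is an increasing union of finite-dimensional unipotent groups, averaging a fixed finite-type sheaf over the growing orbits $N_\alpha\cdot t^\lambda$ only changes the sheaf by extending it along larger and larger (but still finite-type at each stage) affine-space fibrations, so the answer at stage $\alpha$ already agrees with the final answer on the part of $\ol S{}^\lambda$ it sees. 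In the D-module case one additionally needs that these $!$-pushforwards exist despite $f_!$ being only partially defined there — but the relevant maps are affine (open embeddings of orbit closures, or the embeddings of affine-space orbits), so holonomicity or properness-type arguments suffice. Once this local-to-global bookkeeping is in place, the adjunction and the equivariance are formal.
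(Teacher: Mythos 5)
Your proposal lands on the same underlying computation as the paper---present $\omega_{S^\lambda}$ as a colimit over finite-dimensional pieces, $!$-extend each piece, take the colimit---but it inverts the logical order and that inversion creates a gap in the adjunction step. The paper first produces $(\bj_\lambda)_!(\omega_{S^\lambda})$ directly, by observing that for any open embedding $\bj\colon\overset{\circ}\CY\hookrightarrow\CY$ of ind-schemes one has $\omega_{\overset{\circ}\CY}\simeq\on{colim}_i\,\omega_{\overset{\circ}Y_i}$ and $\bj_!(\omega_{\overset{\circ}\CY})\simeq\on{colim}_i\,(\bj_i)_!(\omega_{\overset{\circ}Y_i})$; since each $(\bj_i)_!$ is a genuine (non-equivariant) partial left adjoint defined on the holonomic object $\omega_{\overset{\circ}Y_i}$, the colimit automatically satisfies the $(\bj_\lambda)_!,(\bj_\lambda)^!$ adjunction against \emph{all} of $\Shv(\ol S{}^\lambda)$. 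Equivariance is then a separate, cheap observation (choose $N_\alpha$-invariant $Y_i$), and \eqref{e:Av Delta} falls out by corepresentability: both $(\bi_\lambda)_!(\omega_{S^\lambda})$ and $\on{Av}^{\fL(N)}_!(\delta_{t^\lambda})$ represent the functor $\CF\mapsto(\bk_\lambda)^!(\CF)$ on $\SI(\Gr_G)$.

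The gap in your version: you take $\on{Av}^{\fL(N)}_!(\delta_{t^\lambda})$ as the primitive object and then "verify the adjunction property for $\CG\in\SI(\Gr_G)_{\leq\lambda}$." But $(\bj_\lambda)_!$ is the partial left adjoint of $(\bj_\lambda)^!\colon\Shv(\ol S{}^\lambda)\to\Shv(S^\lambda)$, \emph{not} of its restriction to the equivariant subcategories. Checking the adjunction only against equivariant $\CG$ establishes a left adjoint to the restricted functor, which is a weaker statement; it does not by itself show that $(\bj_\lambda)_!(\omega_{S^\lambda})$ exists in the sense the proposition asserts. The fix is to observe---as the paper does---that your candidate, unwound, is precisely $\on{colim}_\alpha(\bj_{\alpha})_!(\omega_{N_\alpha\cdot t^\lambda})$ where each term is a bona fide $!$-extension from a finite-type scheme, and that such a colimit of genuine partial left adjoints is again a genuine partial left adjoint (against arbitrary $\CG$). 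Once this is supplied, the rest of your argument goes through, and the equivariance and the identification \eqref{e:Av Delta} come out essentially for free from your choice of starting object. So the route is sound, but the adjunction check must be run against all of $\Shv(\ol S{}^\lambda)$, not just the equivariant part.
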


\begin{proof}

By \corref{c:omega}, to prove the existence of $(\bj_\lambda)_!$ it is enough to show that $(\bj_\lambda)_!$
is defined on the object $\omega_{S^\lambda}$, and that the result is $N_\alpha$-equivariant for any $\alpha$.

\medskip

Let $\overset{\circ}\CY\overset{\bj}\longrightarrow \CY$ be an arbitrary open embedding of ind-schemes. 
Write $\CY$ as in \eqref{e:indsch}, and set $\overset{\circ}Y_i:=\overset{\circ}\CY\underset{\CY}\times Y_i$
$$\overset{\circ}{Y}_i \overset{\bj_i}\longrightarrow Y_i.$$

Then 
$$\omega_{\overset{\circ}\CY}\simeq \underset{i}{\on{colim}}\, \omega_{\overset{\circ}{Y}_i}$$
where we think of $\omega_{\overset{\circ}{Y}_i}$ as an object of $\Shv(\overset{\circ}{\CY})$ via the fully faithful embedding 
$\Shv(\overset{\circ}{Y}_i)\hookrightarrow \Shv(\overset{\circ}{\CY})$, and 
$$(\bj_!)(\omega_{\overset{\circ}\CY})\simeq \underset{i}{\on{colim}}\, (\bj_i)_!(\omega_{\overset{\circ}{Y}_i}).$$

\medskip

This implies the existence of $(\bj_\lambda)_!(\omega_{S^\lambda})$. The $N_\alpha$-equivariance
follows from the above description: for a given $\alpha$, take $Y_i$ to be $N_\alpha$-invariant subschemes of $\ol{S}{}^\lambda$. 

\medskip

The isomorphism \eqref{e:Av Delta} follows from the fact that $\CHom$ from either side to an object of $\SI(\Gr_G)$ amounts to taking the !-fiber of
that object at $t^\lambda$. 

\end{proof}

\begin{lem}
The functor $(\bj_\lambda)_!:\SI(\Gr_G)_{=\lambda}\to \SI(\Gr_G)_{\leq \lambda}$ is fully faithful.
\end{lem}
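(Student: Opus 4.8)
The plan is to show that the unit map $\CF \to (\bj_\lambda)^! (\bj_\lambda)_! \CF$ is an isomorphism for every $\CF \in \SI(\Gr_G)_{=\lambda}$. Since $\SI(\Gr_G)_{=\lambda} \simeq \Vect$ by \propref{p:on orbit}, with the inverse equivalence sending $\sfe$ to $\omega_{S^\lambda}$ by \corref{c:omega}, it suffices to treat the single generating object $\CF = \omega_{S^\lambda}$ and check that $(\bj_\lambda)^! (\bj_\lambda)_! (\omega_{S^\lambda}) \simeq \omega_{S^\lambda}$ compatibly with the unit; full faithfulness on all of $\SI(\Gr_G)_{=\lambda}$ then follows because $(\bj_\lambda)_!$ and $(\bj_\lambda)^!$ both commute with colimits (the former as a left adjoint, the latter because $\bj_\lambda$ is an open embedding, so $(\bj_\lambda)^!=(\bj_\lambda)^*$ preserves colimits) and $\CHom$ in the source is computed by the !-fiber at $t^\lambda$.

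First I would invoke the explicit description of $(\bj_\lambda)_!(\omega_{S^\lambda})$ from the proof of \propref{p:! exists}: writing $\ol{S}{}^\lambda = \underset{i}{\on{colim}}\, Y_i$ with $Y_i$ closed $N_\alpha$-stable subschemes and $\overset{\circ}{Y}_i := S^\lambda \underset{\ol{S}{}^\lambda}\times Y_i$, one has
\begin{equation} \label{e:colim descr}
(\bj_\lambda)_!(\omega_{S^\lambda}) \simeq \underset{i}{\on{colim}}\, (\bj_i)_!(\omega_{\overset{\circ}{Y}_i}).
\end{equation}
Applying $(\bj_\lambda)^!$, which commutes with this colimit, reduces the claim to the finite-type statement that $(\bj_i)^! (\bj_i)_! (\omega_{\overset{\circ}{Y}_i}) \simeq \omega_{\overset{\circ}{Y}_i}$ for each $i$ — but this is just the standard fact that $(\bj_i)_!$ is fully faithful for an open embedding $\bj_i$ of schemes of finite type, i.e. the counit $\bj_i^! (\bj_i)_! \to \id$ is an isomorphism. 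Taking the colimit over $i$ and identifying $\underset{i}{\on{colim}}\, \omega_{\overset{\circ}{Y}_i} \simeq \omega_{S^\lambda}$ gives the result, and the unit map is tracked through these identifications since everything is natural.

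Alternatively — and perhaps more cleanly — I would argue via Hom-spaces: for $\CF_1, \CF_2 \in \SI(\Gr_G)_{=\lambda}$, adjunction gives $\CHom_{\SI(\Gr_G)_{\leq\lambda}}((\bj_\lambda)_! \CF_1, (\bj_\lambda)_! \CF_2) \simeq \CHom_{\Shv(S^\lambda)}(\CF_1, (\bj_\lambda)^!(\bj_\lambda)_! \CF_2)$, and then one only needs that $(\bj_\lambda)^!(\bj_\lambda)_! \CF_2 \simeq \CF_2$ inside $\Shv(S^\lambda)$, or even just that the !-fiber at $t^\lambda$ of $(\bj_\lambda)^!(\bj_\lambda)_! \CF_2$ agrees with that of $\CF_2$ compatibly with the unit, since $\CHom$ out of $\CF_1 \in \SI(\Gr_G)_{=\lambda}$ is detected by the !-fiber at $t^\lambda$ (by the proof of \propref{p:on orbit}). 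The main obstacle — though a mild one — is confirming that $(\bj_\lambda)^!$ genuinely commutes with the filtered colimit in \eqref{e:colim descr} and that the resulting identification is compatible with the unit of the adjunction, i.e. that no correction term appears when passing between the ind-scheme and its finite-type approximations; this is handled by the compatibility of $!$-pullback with the presentation $\Shv(\ol{S}{}^\lambda) \simeq \underset{i}{\on{lim}}\, \Shv(Y_i)$ and the fact that $(\bj_\lambda)_!$ was constructed precisely as the colimit of the $(\bj_i)_!$.
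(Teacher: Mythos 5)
Your argument is correct, but note how much shorter the paper's is: one formal sentence. The paper invokes the standard fact that for an adjoint triple $L \dashv F \dashv R$, the left adjoint $L$ is fully faithful if and only if the right adjoint $R$ is --- the unit $\id \to FL$ and the counit $FR \to \id$ become isomorphisms simultaneously, as one checks by combining the triangle identity for $F \dashv R$ with naturality of the counit. Since $(\bj_\lambda)_*$ is fully faithful (being $*$-pushforward along an open embedding) and the triple $(\bj_\lambda)_! \dashv (\bj_\lambda)^! \dashv (\bj_\lambda)_*$ restricts to the semi-infinite subcategories as recorded just before \propref{p:on orbit}, the conclusion for $(\bj_\lambda)_!$ is immediate, with no computation at all. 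Your route instead evaluates the unit directly on the generator $\omega_{S^\lambda}$, unwinding the colimit presentation of $(\bj_\lambda)_!(\omega_{S^\lambda})$ from the proof of \propref{p:! exists} and reducing to the finite-type statement $(\bj_i)^!(\bj_i)_! \simeq \id$. This does work: $(\bj_\lambda)^!$ commutes with the filtered colimit because it is $*$-pullback along an open embedding and hence a left adjoint, base change against the closed embeddings $Y_i \hookrightarrow \ol{S}{}^\lambda$ supplies the term-by-term identifications, and the reduction to the single generator via \propref{p:on orbit} and \corref{c:omega} is legitimate. But the price is exactly the bookkeeping you flag as a ``mild obstacle'' at the end, namely tracking the unit of the adjunction through the ind-scheme presentation. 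The formal argument sidesteps all of that by reading off fully faithfulness from the adjunction data alone, without ever evaluating $(\bj_\lambda)_!$ on any object; it also makes transparent that nothing about $\omega_{S^\lambda}$ or the semi-infinite orbits in particular is being used --- only that the open-embedding adjoint triple restricts to the equivariant subcategories.
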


\begin{proof}

Since the right adjoint of $(\bj_\lambda)^!\simeq (\bj_\lambda)^*$, i.e., $(\bj_\lambda)_*$, is fully faithful, 
it formally follows that so is the left adjoint, i.e., $(\bj_\lambda)_!$. 

\end{proof} 

\begin{cor}
The functor $(\bi_\lambda)^!:\SI(\Gr_G)\to \SI(\Gr_G)_{=\lambda}$ admits a 
\emph{left} adjoint, to be denoted $(\bi_\lambda)_!$. The functor $(\bi_\lambda)_!$
is fully faithful. 
\end{cor}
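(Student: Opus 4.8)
The plan is to obtain the corollary by composing the adjunctions already established. Recall from the previous lemma and proposition that $(\bi_\lambda)_!$ is \emph{defined} by the formula $(\bi_\lambda)_!:=(\ol\bi_\lambda)_!\circ (\bj_\lambda)_!$, where $(\bj_\lambda)_!:\SI(\Gr_G)_{=\lambda}\to \SI(\Gr_G)_{\leq\lambda}$ is the left adjoint constructed in \propref{p:! exists} (defined on the whole subcategory $\SI(\Gr_G)_{=\lambda}$), and $(\ol\bi_\lambda)_!=(\ol\bi_\lambda)_*$ is the pushforward along the closed embedding $\ol\bi_\lambda:\ol S{}^\lambda\hookrightarrow \Gr_G$. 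First I would check that $(\ol\bi_\lambda)_!$, viewed as a functor $\SI(\Gr_G)_{\leq\lambda}\to \SI(\Gr_G)$, is left adjoint to $(\ol\bi_\lambda)^!:\SI(\Gr_G)\to \SI(\Gr_G)_{\leq\lambda}$. On the ambient categories of sheaves, $((\ol\bi_\lambda)_!,(\ol\bi_\lambda)^!)$ is an adjoint pair with $(\ol\bi_\lambda)_!$ fully faithful (this is the closed-embedding adjunction already recorded in the text), and one already knows that all these functors preserve the $\fL(N)$-equivariant subcategories and that the same adjunctions hold there; so this step is essentially a citation of the structure set up in \secref{ss:semiinf cat}.

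Next I would assemble the composite. Since $(\bj_\lambda)^!$ and $(\ol\bi_\lambda)^!$ compose to $(\bi_\lambda)^!$ (as $\bi_\lambda=\ol\bi_\lambda\circ\bj_\lambda$), and since left adjoints compose in the reverse order, the functor $(\ol\bi_\lambda)_!\circ(\bj_\lambda)_!$ is automatically left adjoint to $(\bi_\lambda)^!=(\bj_\lambda)^!\circ(\ol\bi_\lambda)^!$ on the equivariant categories. This is a purely formal two-out-of-three argument about composites of adjunctions, so there is nothing to prove beyond writing it down. The one genuine point is that $(\bj_\lambda)_!$ is defined on \emph{all} of $\SI(\Gr_G)_{=\lambda}$ — but that is exactly the content of \propref{p:! exists}, so the composite left adjoint is honestly defined on the whole of $\SI(\Gr_G)_{=\lambda}$, not just partially.

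It remains to prove full faithfulness of $(\bi_\lambda)_!$. Here I would use that a composite of fully faithful functors is fully faithful: $(\bj_\lambda)_!$ is fully faithful by the lemma immediately preceding this corollary, and $(\ol\bi_\lambda)_!=(\ol\bi_\lambda)_*$ is fully faithful because $\ol\bi_\lambda$ is a closed embedding (again recorded in the text, and compatible with the equivariant structures). Alternatively, and perhaps more cleanly, one argues directly: to show $(\bi_\lambda)_!$ fully faithful it suffices to show the counit $(\bi_\lambda)^!\circ(\bi_\lambda)_!\to \Id$ is an isomorphism on $\SI(\Gr_G)_{=\lambda}$; but $(\bi_\lambda)^!\circ(\bi_\lambda)_!\simeq (\bj_\lambda)^!\circ(\ol\bi_\lambda)^!\circ(\ol\bi_\lambda)_!\circ(\bj_\lambda)_!\simeq (\bj_\lambda)^!\circ(\bj_\lambda)_!$ using that $(\ol\bi_\lambda)^!\circ(\ol\bi_\lambda)_!\simeq\Id$ (full faithfulness of $(\ol\bi_\lambda)_!$), and $(\bj_\lambda)^!\circ(\bj_\lambda)_!\simeq\Id$ since $(\bj_\lambda)_!$ is fully faithful. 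I do not anticipate a serious obstacle; the only thing to be careful about is keeping track of which adjunctions were already asserted to survive passage to the $\fL(N)$-equivariant categories, and invoking them rather than re-deriving them. If one wanted to be maximally self-contained one could instead build $(\bi_\lambda)_!(\omega_{S^\lambda})$ by hand as $\on{Av}^{\fL(N)}_!(t^\lambda)$ via \eqref{e:Av Delta} and extend by colimits along $\SI(\Gr_G)_{=\lambda}\simeq\Vect$, but routing everything through the already-established adjunctions is shorter.
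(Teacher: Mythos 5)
Your proposal is correct and matches the paper's proof, which is exactly the one-line assertion $(\bi_\lambda)_!:=(\ol\bi_\lambda)_!\circ(\bj_\lambda)_!$ relying on the already-recorded adjunctions and full faithfulness for $\ol\bi_\lambda$ and $\bj_\lambda$ on the $\fL(N)$-equivariant categories. You have simply spelled out the formal steps (composition of adjoints, composition of fully faithful functors) that the paper leaves implicit.
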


\begin{proof}
The left adjoint in question is given by $(\bi_\lambda)_!:=(\ol\bi_\lambda)_!\circ (\bj_\lambda)_!$. 
\end{proof}

We also note:

\begin{lem} \label{l:invis}
The objects $(\bi_\lambda)_!(\omega_{S^\lambda})$ are infinitely connective in the t-structure on $\Shv(\Gr_G)$.
\end{lem}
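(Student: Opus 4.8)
The plan is to read off the connectivity directly from the averaging presentation \eqref{e:Av Delta} of the object. That formula identifies $(\bi_\lambda)_!(\omega_{S^\lambda})$ with $\on{Av}^{\fL(N)}_!(t^\lambda)$, where $t^\lambda$ now denotes the skyscraper sheaf $\sfe$ at the point $t^\lambda\in \Gr_G$; and by \secref{sss:!-av}, writing $\fL(N)=\underset{\alpha\in A}{\on{colim}}\, N_\alpha$ as in \eqref{e:loop N as colim}, this object equals $\underset{\alpha}{\on{colim}}\, \on{Av}^{N_\alpha}_!(t^\lambda)$. So it is enough to prove that $\on{Av}^{N_\alpha}_!(t^\lambda)$ becomes arbitrarily connective as $\alpha$ grows, and then to pass to the colimit.

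First I would compute a single term. Let $Y_\alpha:=N_\alpha\cdot t^\lambda$ be the $N_\alpha$-orbit of $t^\lambda$ --- a smooth scheme of finite type of dimension $e_\alpha:=\dim N_\alpha-\dim \on{Stab}_{N_\alpha}(t^\lambda)$ --- with its ($N_\alpha$-equivariant) locally closed embedding $\jmath_\alpha:Y_\alpha\hookrightarrow \Gr_G$, factored as an open embedding $Y_\alpha\hookrightarrow \ol Y_\alpha$ followed by a closed embedding $\ol Y_\alpha\hookrightarrow \Gr_G$. Since $t^\lambda$ lies in the open subscheme $Y_\alpha$, the skyscraper $t^\lambda$ equals $(\jmath_\alpha)_!(\delta_\alpha)$ with $\delta_\alpha\in \Shv(Y_\alpha)$ the skyscraper at $t^\lambda$; as $(\jmath_\alpha)_!$ commutes with the !-averaging functors, this gives $\on{Av}^{N_\alpha}_!(t^\lambda)\simeq (\jmath_\alpha)_!\bigl(\on{Av}^{N_\alpha}_!(\delta_\alpha)\bigr)$. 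Now $N_\alpha$ acts transitively on $Y_\alpha$ with unipotent stabilizer, so the argument of \propref{p:on orbit} shows that the !-fiber at $t^\lambda$ is an equivalence $\Shv(Y_\alpha)^{N_\alpha}\simeq \Vect$ whose inverse carries $\sfe$ to $\omega_{Y_\alpha}$ (cf. \corref{c:omega}); since the !-averaging of a skyscraper at a point corepresents the !-fiber at that point, we get $\on{Av}^{N_\alpha}_!(\delta_\alpha)\simeq \omega_{Y_\alpha}$, hence $\on{Av}^{N_\alpha}_!(t^\lambda)\simeq (\jmath_\alpha)_!(\omega_{Y_\alpha})$. As $Y_\alpha$ is smooth of dimension $e_\alpha$, the object $\omega_{Y_\alpha}$ sits in cohomological degree $-2e_\alpha$; and $(\jmath_\alpha)_!$ is t-exact for the t-structure of \secref{sss:t ind-sch}, being the composite of the (t-exact) !-extension along an open embedding with the (t-exact) pushforward along a closed embedding of finite type. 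Therefore $\on{Av}^{N_\alpha}_!(t^\lambda)\in \Shv(\Gr_G)^{\leq -2e_\alpha}$.

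To conclude, I would use that the $N_\alpha$ exhaust $\fL(N)$, so the $Y_\alpha$ form an increasing family of subschemes whose union is $S^\lambda$, which is infinite-dimensional; thus $e_\alpha$ is unbounded, and being monotone in $\alpha$, the subcategory $\{\alpha\in A : e_\alpha\geq m\}$ is cofinal in $A$ for every $m$. Fix $n\in \ZZ$ and choose $\alpha_0$ with $2e_{\alpha_0}\geq -n$. For every $\alpha\geq \alpha_0$ we have $\on{Av}^{N_\alpha}_!(t^\lambda)\in \Shv(\Gr_G)^{\leq -2e_\alpha}\subseteq \Shv(\Gr_G)^{\leq n}$, and since $\{\alpha\geq \alpha_0\}$ is cofinal, $(\bi_\lambda)_!(\omega_{S^\lambda})$ is a colimit of objects of $\Shv(\Gr_G)^{\leq n}$. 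As $\Shv(\Gr_G)^{\leq n}$ is the left orthogonal of $\Shv(\Gr_G)^{\geq n+1}$, it is closed under colimits, so $(\bi_\lambda)_!(\omega_{S^\lambda})\in \Shv(\Gr_G)^{\leq n}$; since $n$ was arbitrary, the object is infinitely connective.

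The point I expect to require the most care is the identification $\on{Av}^{N_\alpha}_!(t^\lambda)\simeq (\jmath_\alpha)_!(\omega_{Y_\alpha})$: one must check that $(\jmath_\alpha)_!$, for the possibly non-affine locally closed equivariant embedding $\jmath_\alpha$, commutes with the !-averaging functors; that the relevant !-averaging on the homogeneous space $Y_\alpha$ produces $\omega_{Y_\alpha}$ with no residual shift depending on $\dim N_\alpha$ or $\dim\on{Stab}_{N_\alpha}(t^\lambda)$; and that pushforward along the (possibly singular) closure $\ol Y_\alpha$ into $\Gr_G$ is t-exact. The remaining steps are bookkeeping. (The argument above is kept independent of \propref{p:invis}, since that proposition is plausibly proved using statements of this kind.)
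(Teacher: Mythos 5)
Your proof is correct and in substance the same as the paper's. Both arguments reduce to exhibiting $(\bi_\lambda)_!(\omega_{S^\lambda})$ as the filtered colimit over $\alpha$ of the $!$-extensions of $\omega_{N_\alpha\cdot t^\lambda}$ and then invoking the unboundedness of $\dim(N_\alpha\cdot t^\lambda)$; the paper gets the colimit presentation directly from the description of $(\bj_\lambda)_!(\omega_{S^\lambda})$ established in the proof of Proposition~\ref{p:! exists}, whereas you re-derive it by routing through the averaging isomorphism \eqref{e:Av Delta} and identifying $\on{Av}^{N_\alpha}_!(\delta_{t^\lambda})\simeq\omega_{Y_\alpha}$, which is a mild detour but sound. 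One small caution: the paper leaves the t-structure on the $\Shv(Y_i)$'s unspecified, and your intermediate claims that $\omega_{Y_\alpha}$ lies precisely in degree $-2e_\alpha$ and that $(\jmath_\alpha)_!$ is (two-sided) t-exact implicitly assume the standard t-structure; if the perverse t-structure is meant, $\omega_{Y_\alpha}$ only sits in degree $-e_\alpha$ and $(\jmath_\alpha)_!$ is merely right t-exact. Neither weakening affects the conclusion, since all you use is a connectivity bound growing without bound and right t-exactness, which is exactly the level at which the paper states its estimate.
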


\begin{proof}

Write $S^\lambda$ as the colimit of $N_{\alpha}\cdot t^\lambda$, as in the proof of \propref{p:on orbit}. Then 
$(\bi_\lambda)_!(\omega_{S^\lambda})$ is the colimit of !-extensions of the objects $\omega_{N_{\alpha}\cdot t^\lambda}$
under the locally closed embeddings $N_{\alpha}\cdot t^\lambda\hookrightarrow \Gr_G$. 

\medskip

To prove the lemma it suffices to show
that for every $n$, there exists an index $\alpha$ such that for all $\alpha\to \alpha'$, the object
$\omega_{N_{\alpha'}\cdot t^\lambda}\in \Shv(N_{\alpha'}\cdot t^\lambda)$ belongs to $(\Shv(N_{\alpha'}\cdot t^\lambda))^{\leq -n}$.

\medskip

However, this does indeed happen as soon as $\dim(N_{\alpha}\cdot t^\lambda)\geq n$.

\end{proof}

\sssec{}     \label{sss:proof of invis}

For $\lambda\in \Lambda$, let $\Delta^\lambda$ and $\nabla^\lambda$ be the objects of $\SI(\Gr_G)$
equal to
$$(\bi_\lambda)_!(\sfe)[-\langle \lambda,2\check\rho\rangle] \text{ and } (\bi_\lambda)_*(\sfe)[-\langle \lambda,2\check\rho\rangle],$$
respectively, where we think of $\sfe$ as an object of $\SI(\Gr_G)_{=\lambda}$ via the equivalence of \propref{p:on orbit}.
The cohomological shift by $[-\langle \lambda,2\check\rho\rangle]$ in our normalization will be explained later. 

\medskip

By construction, the objects $\Delta^\lambda$ are compact in $\SI(\Gr_G)$ (but of course not in $\Shv(\Gr_G)$). 

\begin{lem} \label{l:! gener}
The category $\SI(\Gr_G)$ is generated by the objects $\Delta^\lambda$. 
\end{lem}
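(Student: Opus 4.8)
The plan is to show that the objects $\Delta^\lambda$ generate $\SI(\Gr_G)$ under colimits by showing that an object $\CF\in \SI(\Gr_G)$ is zero as soon as $\CHom_{\SI(\Gr_G)}(\Delta^\lambda,\CF)=0$ for all $\lambda$; since the $\Delta^\lambda$ are compact, this suffices. By the adjunction $((\bi_\lambda)_!,(\bi_\lambda)^!)$ established in the corollary preceding \lemref{l:! gener}, together with the definition of $\Delta^\lambda$ as $(\bi_\lambda)_!(\sfe)[-\langle\lambda,2\check\rho\rangle]$ and the equivalence $\SI(\Gr_G)_{=\lambda}\simeq \Vect$ of \propref{p:on orbit}, we have
\begin{equation*}
\CHom_{\SI(\Gr_G)}(\Delta^\lambda,\CF)\simeq \CHom_{\SI(\Gr_G)_{=\lambda}}(\sfe,(\bi_\lambda)^!(\CF))[\langle\lambda,2\check\rho\rangle],
\end{equation*}
and the right-hand side, via \propref{p:on orbit} and \corref{c:omega}, is just (a shift of) the $!$-fiber of $\CF$ at $t^\lambda$. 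So the claim reduces to: an object $\CF\in \SI(\Gr_G)$ with vanishing $!$-fibers at all points $t^\lambda$, $\lambda\in\Lambda$, must be zero.

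First I would reduce to the case where $\CF$ is supported on a fixed $\ol S{}^\lambda$, i.e.\ lies in $\SI(\Gr_G)_{\leq\lambda}$: since $\Gr_G$ is the union of the closed sub-ind-schemes $\ol S{}^\lambda$ (every $\fL(N)$-orbit lies in some $\ol S{}^\lambda$ because the $\fL(N)$-orbits are the $S^\mu$ and these are ordered by $\Lambda^{\on{pos}}$, which is filtered) and $\Shv(\Gr_G)$ is the limit of the $\Shv(\ol S{}^\lambda)$ under $!$-restriction, it is enough to check that each $!$-restriction $(\ol\bi_\lambda)^!(\CF)$ vanishes. Thus fix $\lambda$ and work inside $\SI(\Gr_G)_{\leq\lambda}=\Shv(\ol S{}^\lambda)^{\fL(N)}$. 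Now I would argue by a second dévissage along the stratification of $\ol S{}^\lambda$ by the orbits $S^\mu$ with $\lambda-\mu\in\Lambda^{\on{pos}}$: using the recollement for the open--closed decomposition given by $\bj_\lambda$ and its complement, an object of $\Shv(\ol S{}^\lambda)$ is zero iff its $!$-restriction to the open stratum $S^\lambda$ vanishes and its $*$-restriction (equivalently, since we are in the $\fL(N)$-equivariant world where $!$- and $*$-restrictions to orbits are interchanged up to shift by the relative dimension) to the closed complement vanishes; iterating over the finitely-many-at-each-level strata $S^\mu$ reduces everything to the statement that an object of $\SI(\Gr_G)_{=\mu}\simeq\Vect$ whose $!$-fiber at $t^\mu$ vanishes is zero, which is exactly \propref{p:on orbit} combined with \corref{c:omega}.

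To make the induction precise I would filter $\ol S{}^\lambda$ by the closed sub-ind-schemes $Z_n = \bigcup_{\langle\lambda-\mu,\check\rho\rangle\le n} \ol S{}^\mu$ (a well-ordered exhaustion, since between $\mu$ and $\lambda$ there are only finitely many coweights in $\Lambda^{\on{pos}}$-intervals at each fixed pairing with $\check\rho$), and prove by induction on $n$ that if $\CF\in\SI(\Gr_G)_{\le\lambda}$ has vanishing $!$-fibers at all $t^\mu$ then $(\text{restriction of }\CF\text{ to }Z_n)=0$; the inductive step is the open--closed triangle for $Z_{n-1}\hookrightarrow Z_n \hookleftarrow (Z_n\setminus Z_{n-1})$, whose open part is a disjoint union of orbits $S^\mu$ and is handled by the previous paragraph. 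Passing to the colimit over $n$ gives $(\ol\bi_\lambda)^!(\CF)=0$, and then over $\lambda$ gives $\CF=0$.

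The main obstacle I anticipate is not the formal recollement bookkeeping but checking that the relevant partially-defined functors — in particular $(\bj_\lambda)_!$ and the costandard objects' $*$-restrictions — are genuinely defined on the equivariant subcategories and that the open--closed triangles are available \emph{within} $\SI(\Gr_G)_{\le\lambda}$ rather than merely in $\Shv(\ol S{}^\lambda)$; fortunately \propref{p:! exists} and the lemma/corollary immediately following it provide exactly these existence statements and the compatibility of the standard functors with the equivariance conditions, so the argument should go through cleanly. A secondary point requiring a little care is the exhaustion argument: one must confirm that the $Z_n$ really are ind-schemes of ind-finite type stable under $\fL(N)$ and that $\Shv(\ol S{}^\lambda)$ (hence its equivariant subcategory) is the limit of the $\Shv(Z_n)$, which follows from the general nonsense on ind-schemes recalled in \secref{sss:sheaves} and the $\fL(N)$-stability built into the construction of \secref{ss:semiinf cat}.
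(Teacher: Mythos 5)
Your overall strategy — reduce generation to the vanishing criterion for compact objects, translate $\CHom(\Delta^\lambda,\CF)=0$ via adjunction into $(\bi_\lambda)^!(\CF)=0$, and then argue by stratification — matches the paper's, and the first two steps are carried out identically. Where you diverge is the dévissage: you first pass to $\ol{S}{}^\lambda$ and then run a recollement induction over the filtration $Z_n$, whereas the paper's proof dispatches the third step in one line by observing that every closed \emph{finite-type} subscheme $Y\subset\Gr_G$ meets only finitely many of the strata $S^\mu$, so $Y\cap S^\mu$ is a finite locally closed decomposition of $Y$; since $\Shv(\Gr_G)=\lim_i\Shv(Y_i)$, vanishing of all $(\bi_\mu)^!(\CF)$ then immediately gives $\CF=0$. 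This is cleaner for two reasons your own worry points at. First, the question of whether $(\bj_\lambda)_!$ and the other recollement functors are defined on the \emph{equivariant} subcategories is a red herring: the implication $(\bi_\mu)^!(\CF)=0\ \forall\mu\Rightarrow\CF=0$ is a statement about $\Shv(\Gr_G)$ alone (vanishing in $\SI(\Gr_G)$ is detected after forgetting equivariance), so one may run the stratification argument entirely in the non-equivariant categories $\Shv(Y_i)$, where recollement is unproblematic and, moreover, each $Y_i$ has only finitely many strata — no transfinite exhaustion needed. Second, your preliminary reduction to $\ol{S}{}^\lambda$ via the claim that $\Gr_G$ is a filtered union of the $\ol{S}{}^\lambda$ is slightly off as stated: the $\ol{S}{}^\lambda$ are cofinal only within a fixed coset of the coroot lattice, so distinct connected components of $\Gr_G$ are never jointly dominated by a single $\ol{S}{}^\lambda$; this is easily repaired (work component by component) but is simply avoided by the paper's route, which never singles out $\ol{S}{}^\lambda$. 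In short: your argument is correct in substance and would go through after these adjustments, but the paper's version achieves the same end more economically by exploiting ind-finite-typeness of $\Gr_G$ rather than the orbit-closure filtration.
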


\begin{proof}
The is equivalent to the fact that for $\CF\in \SI(\Gr_G)$,
$$\CHom(\Delta^\lambda,\CF)=0,\,\forall \lambda\, \Rightarrow \CF=0.$$

However, 
$$\CHom(\Delta^\lambda,\CF)=0 \Leftrightarrow (\bi_\lambda)^!(\CF)=0.$$

Since for every closed subscheme $Y\subset \Gr_G$, the intersections $Y\cap S^\lambda$ define a decomposition
into locally closed subsets, we obtain that 
$$(\bi_\lambda)^!(\CF)=0,\,\forall \lambda\, \Rightarrow \CF=0.$$

\end{proof} 

By \lemref{l:invis}, the objects $\Delta^\lambda$ are infinitely connective in the t-structure on $\Shv(\Gr_G)$.
Combined with \lemref{l:! gener}, this implies \propref{p:invis}.

\sssec{}

We claim:

\begin{lem}
For every $\lambda\in \Lambda$, the partially defined left adjoint $(\bi_\lambda)^*$ of
$(\bi_\lambda)_*:\Shv(S^\lambda)\to \Shv(\Gr_G)$ is defined on the full subcategory $\SI(\Gr_G)\subset \Shv(\Gr_G)$,
and takes values in $\SI(\Gr_G)_{=\lambda}\subset \Shv(S^\lambda)$.
\end{lem}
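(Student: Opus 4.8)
The plan is to reduce the existence of $(\bi_\lambda)^*$ on $\SI(\Gr_G)$ to a statement that can be checked using the already-established adjoint $(\bj_\lambda)_!$ and the closed-embedding functor $(\ol\bi_\lambda)^!$. Recall that $\bi_\lambda = \ol\bi_\lambda\circ \bj_\lambda$, so $(\bi_\lambda)_* = (\ol\bi_\lambda)_*\circ (\bj_\lambda)_*$. Its partially defined left adjoint, where it exists, is therefore $(\bj_\lambda)^!\circ (\ol\bi_\lambda)^*$. Now $(\ol\bi_\lambda)^*$ is the partially defined left adjoint to the closed embedding $(\ol\bi_\lambda)_*$; the first step is to show it is defined on all of $\SI(\Gr_G)\subset \Shv(\Gr_G)$ and lands in $\SI(\Gr_G)_{\leq\lambda}$.

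\textbf{Step 1: the closed-embedding left adjoint.} For a closed embedding of ind-schemes $\ol{S}{}^\lambda\overset{\ol\bi_\lambda}\hookrightarrow \Gr_G$, let $U = \Gr_G\setminus \ol{S}{}^\lambda$ with open embedding $j_U$. The partially defined $(\ol\bi_\lambda)^*$ of $\CF$ is computed by the cofiber sequence $(j_U)_!(j_U)^!\CF \to \CF \to (\ol\bi_\lambda)_*(\ol\bi_\lambda)^*\CF$, so it exists precisely when $(j_U)_!(j_U)^!\CF$ exists. Here I would use that $U$ is the union of the translated open subsets $\Gr_G\setminus \ol S{}^\lambda$, written $U = \underset{i}{\on{colim}}\, U_i$ via the ind-scheme presentation of $\Gr_G$; since $\CF\in\SI(\Gr_G)$ is $\fL(N)$-equivariant, one argues as in the proof of \propref{p:! exists} that each $(j_{U_i})_!(j_{U_i})^!\CF$ exists (the complement being an $N_\alpha$-stable closed subscheme, so the finite-type open embedding $j_{U_i}$ has a well-defined $j_!$ on the equivariant category in all three sheaf theories — in the D-module case one uses that these restrictions remain holonomic, or bypasses this by the same colimit-of-$\omega$ trick as in \propref{p:! exists}). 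Taking the colimit over $i$ produces $(j_U)_!(j_U)^!\CF$, hence $(\ol\bi_\lambda)^*\CF$; and $\fL(N)$-equivariance of the output follows because every term in the colimit can be taken $N_\alpha$-equivariant.

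\textbf{Step 2: compose with $(\bj_\lambda)^!$.} Once $(\ol\bi_\lambda)^*$ is known on $\SI(\Gr_G)$ with values in $\SI(\Gr_G)_{\leq\lambda}$, we simply post-compose with $(\bj_\lambda)^!:\SI(\Gr_G)_{\leq\lambda}\to \SI(\Gr_G)_{=\lambda}$, which is everywhere defined. The composite $(\bj_\lambda)^!\circ(\ol\bi_\lambda)^*$ is then left adjoint to $(\ol\bi_\lambda)_*\circ(\bj_\lambda)_* = (\bi_\lambda)_*$ on $\SI(\Gr_G)$, which is exactly $(\bi_\lambda)^*$, and it takes values in $\SI(\Gr_G)_{=\lambda}$ as claimed. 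The adjunction is the formal composite of the two partial adjunctions, valid because the intermediate category $\SI(\Gr_G)_{\leq\lambda}$ is where the first leg lands.

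\textbf{Main obstacle.} The delicate point is Step 1 in the D-module setting, where $j_!$ for an open embedding is only partially defined. The cleanest route — and the one I expect the author takes — is to avoid proper-ness hypotheses entirely and instead observe that for objects supported on (translates of) semi-infinite orbits, the relevant $!$-extensions are built as colimits of $\omega$-type objects on finite-type strata exactly as in the proof of \propref{p:! exists}, where such $j_!$ provably exists. One should double-check that the presentation of $U$ as a filtered colimit of finite-type opens is compatible with the $N_\alpha$-actions and that the colimit defining $(j_U)_!(j_U)^!\CF$ is computed in the limit category $\Shv(\Gr_G)$ correctly; this is the same bookkeeping as in \secref{ss:semiinf cat} and carries no new difficulty.
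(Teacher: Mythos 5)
Your factorization $\bi_\lambda = \ol\bi_\lambda\circ\bj_\lambda$ is correct, and Step 2 is formal once Step 1 holds; but Step 1 has a genuine gap. To produce $(\ol\bi_\lambda)^*\CF$ from the cofiber sequence you need $j_!\, j^!\CF$ to exist for \emph{every} $\CF\in\SI(\Gr_G)$, where $j$ is the open complement of $\ol{S}{}^\lambda$. In the D-module sheaf theory (case (iii) of \secref{sss:sheaf theory}), $j_!$ for an open embedding is only partially defined, and for a general $\fL(N)$-equivariant $\CF$ there is no reason for the restrictions of $j^!\CF$ to the finite-type pieces $Y_i$ to be holonomic. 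You cannot repair this by invoking the argument from the proof of \propref{p:! exists}: that argument is specific to the dualizing object $\omega_{S^\lambda}$, whose restriction to each finite-dimensional stratum is again a dualizing complex (hence holonomic, hence admits a $j_!$-extension), and it does not transfer to an arbitrary object of $\SI(\Gr_G)$. Your ``main obstacle'' paragraph acknowledges the problem but the proposed workaround is exactly the step that fails.

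The paper sidesteps this entirely with a shorter argument. By \lemref{l:! gener}, $\SI(\Gr_G)$ is compactly generated by the objects $\Delta^{\lambda'}$, and the domain of definition of a partially defined left adjoint is automatically closed under colimits; so it suffices to show $(\bi_\lambda)^*$ is defined on each $\Delta^{\lambda'}$ with value in $\SI(\Gr_G)_{=\lambda}$. This is then a direct calculation from the explicit description of $\Delta^{\lambda'}$ in the proof of \propref{p:! exists} as a filtered colimit of $!$-extensions of dualizing objects from finite-dimensional orbits: one finds $(\bi_\lambda)^*(\Delta^{\lambda'})\simeq\omega_{S^\lambda}[-\langle\lambda,2\check\rho\rangle]$ for $\lambda'=\lambda$ and $0$ otherwise. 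The reduction to generators is not an optional simplification but the mechanism that dissolves the holonomicity issue: it is precisely on the $\Delta^{\lambda'}$ that the ``colimit-of-$\omega$'' argument you wanted to use actually applies. If you wish to retain your factorization, you must insert the generator reduction before Step 1; but at that point Step 1 collapses to the same explicit computation and the detour through $(\ol\bi_\lambda)^*$ is unnecessary.
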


\begin{proof}

By \lemref{l:! gener}, it is enough to show that $(\bi_\lambda)^*$ is defined on every $\Delta^{\lambda'}$, and the result
belongs to $\SI(\Gr_G)_{=\lambda}$. However, from the explicit description in the proof of \propref{p:! exists} it follows
that $(\bi_\lambda)^*(\Delta^{\lambda'})$ equals $\omega_{S^\lambda}[-\langle \lambda,2\check\rho\rangle]$
for $\lambda'=\lambda$, and $0$ otherwise. 

\end{proof}

By definition, we obtain:

\begin{lem} 
An object $\CF\in \SI(\Gr_G)$ is compact if and only if there exist at most finitely many elements
$\lambda\in \Lambda$ such that $(\bi_\lambda)^*(\CF)\neq 0$, and for every such $\lambda$, 
the corresponding object of $\SI(\Gr_G)_{=\lambda}$ is compact\footnote{I.e., under the equivalence
with $\Vect$ corresponds to a complex with finitely many cohomologies such that each of these 
cohomologies is finite-dimensional}.
\end{lem}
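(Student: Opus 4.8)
The plan is to characterize compactness in $\SI(\Gr_G)$ using the fact, established just above, that this category is compactly generated by the objects $\Delta^\lambda$, together with the description of $(\bi_\lambda)^*$ on the generators. Recall that in a compactly generated DG category $\CC$ with a set of compact generators $\{c_i\}$, an object is compact if and only if it can be built out of the $c_i$ by a \emph{finite} iterated process of shifts, cones, and retracts. So my first step is to note that $\CF\in\SI(\Gr_G)$ is compact iff it lies in the smallest stable subcategory of $\SI(\Gr_G)$ containing the $\Delta^\lambda$ and closed under retracts, i.e. iff it is a retract of a finite extension of shifts of the $\Delta^\lambda$'s.

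Next I would run the two implications. For the ``only if'' direction: suppose $\CF$ is compact. Since the $\Delta^\lambda$ generate, $\CF$ is a retract of an object $\CF'$ built by finitely many cones from finitely many shifts $\Delta^{\lambda}[n]$; only finitely many $\lambda$'s occur. The functors $(\bi_\mu)^*$ are exact (being defined as a partially defined left adjoint, but here defined on all of $\SI(\Gr_G)$ by the preceding lemma, hence colimit-preserving, in particular exact) and kill $\Delta^{\lambda'}$ unless $\mu=\lambda'$, in which case they yield $\omega_{S^\mu}[-\langle\mu,2\check\rho\rangle]$, a compact object of $\SI(\Gr_G)_{=\mu}\simeq\Vect$. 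Since $(\bi_\mu)^*$ preserves finite colimits and retracts, $(\bi_\mu)^*(\CF')$ is a finite extension of shifts of that object, hence compact, and $(\bi_\mu)^*(\CF)$ is a retract of it, hence compact; and it vanishes for all but the finitely many $\mu$ appearing among the $\lambda$'s. For the ``if'' direction: suppose $(\bi_\lambda)^*(\CF)\neq 0$ for only finitely many $\lambda$, say $\lambda_1,\dots,\lambda_k$, each with compact image. One builds $\CF$ by a finite filtration: order the finitely many relevant strata compatibly with the closure order on $\Lambda^{\on{pos}}$ restricted to $\{\lambda_1,\dots,\lambda_k\}$, and use the recollement triangles attached to the (locally closed) strata $S^{\lambda_j}\hookrightarrow\Gr_G$ together with the adjunctions $((\bi_\lambda)_!,(\bi_\lambda)^!)$ and $((\bi_\lambda)^*,(\bi_\lambda)_*)$ recorded earlier. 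Since $(\bi_{\lambda_j})^*(\CF)$ is compact in $\SI(\Gr_G)_{=\lambda_j}$ and $(\bi_{\lambda_j})_!$ sends compacts to compacts (being a left adjoint that preserves compactness, as $(\bi_{\lambda_j})^!$ preserves colimits), each graded piece of the filtration is compact, whence $\CF$, as a finite extension of compacts, is compact.

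The subtle point — and the main obstacle — is making the ``if'' direction precise: one needs an honest \emph{finite} stratification-type dévissage on $\Gr_G$ using only the finitely many strata where $\CF$ is nonzero. A priori $\CF$ is supported set-theoretically on $\bigcup_j \ol S{}^{\lambda_j}$, but I must argue that $\CF$ actually lies in the subcategory of objects supported on this finite union, so that the recollement with respect to the finitely many strata $S^{\lambda_j}$ genuinely reconstructs $\CF$ (no ``leftover'' at infinity). This follows because $\CHom(\Delta^\mu,\CF)\simeq (\bi_\mu)^!(\CF)$ and one can reduce computing $(\bi_\mu)^!$ on a sheaf supported on a closed subscheme; but to invoke this cleanly I would first show: if $(\bi_\mu)^*(\CF)=0$ for all $\mu$ outside a finite set $\Xi\subset\Lambda$ that is ``closed downward'' inside $\Lambda^{\on{pos}}$ (which one can always enlarge $\Xi$ to achieve, keeping it finite since $\Lambda^{\on{pos}}$ is finitely generated below each element — actually one must be slightly careful here, as $\{\mu : S^\mu\subset \ol S{}^{\lambda_j}\text{ some }j\}$ need \emph{not} be finite), then $\CF$ lies in the essential image of $(\ol\bi)_!$ for $\ol S = \bigcup_j \ol S{}^{\lambda_j}$. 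This last remark forces a correction to the naive approach: the vanishing of $(\bi_\mu)^*(\CF)$ on an infinite set of $\mu$'s is not automatic from compactness of the finitely many nonzero ones, so the statement to be proved is really that the nonzero set is finite \emph{and} the values compact — both conditions together pin down compactness, and the dévissage must be organized as a (possibly transfinite but eventually stabilizing) colimit over the poset of all strata, which collapses to a finite computation precisely because only finitely many contribute. I expect the cleanest route is: prove ``only if'' directly as above, then for ``if'' use that the functor $\CF\mapsto\big((\bi_\mu)^*\CF\big)_\mu$ is conservative (by \lemref{l:! gener}, since $(\bi_\mu)^*(\CF)=0\ \forall\mu\Leftrightarrow(\bi_\mu)^!(\CF)=0\ \forall\mu\Leftrightarrow\CF=0$), and that an object all of whose ``$*$-stalks'' are compact with finite support is a finite extension of the $(\bi_{\mu})_!(\bi_\mu)^*(\CF)$ — this is where I would spend the real work, invoking the t-structure on $\SI(\Gr_G)$ promised in \secref{ss:t} if needed, or else a direct induction on $|\Xi|$ using the recollement triangle for an open stratum of $\bigcup_j\ol S{}^{\lambda_j}$.
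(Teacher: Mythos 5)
Your treatment of the ``only if'' direction is correct and is precisely what the paper's laconic ``by definition'' has in mind: compact objects of $\SI(\Gr_G)$ are retracts of finite extensions of shifts of the compact generators $\Delta^\lambda$, the functors $(\bi_\mu)^*$ preserve colimits and send $\Delta^\lambda$ to zero unless $\mu=\lambda$ (and to a compact generator of $\Vect$ when $\mu=\lambda$), so the stated finiteness and compactness of the $*$-stalks drop out immediately. One small circularity to tighten: you argue that $(\bi_{\lambda_j})_!$ preserves compacts because $(\bi_{\lambda_j})^!$ preserves colimits; the clean logic runs the other way — $(\bi_{\lambda})_!\sfe[-\langle\lambda,2\check\rho\rangle]=\Delta^\lambda$ is compact by construction, so $(\bi_{\lambda})_!$ sends the compact generator of $\Vect$ to a compact object and hence preserves all compacts, and only \emph{then} does one deduce that its right adjoint $(\bi_\lambda)^!$ preserves colimits.

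The ``if'' direction is where your proposal stops short of a complete argument, and you are right to flag it as the substantive point. The obstruction you name is exactly correct: a finite set $\Xi=\{\lambda_1,\dots,\lambda_k\}$ on which the $*$-stalks are nonzero does \emph{not} bound the sheaf to a finite union of strata, since $\ol S{}^{\lambda_j}$ contains infinitely many strata; moreover, if one tries to peel off a maximal $\lambda_1\in\Xi$ via the cofiber sequence $\mathrm{fib}\to\CF\to (\bi_{\lambda_1})_*(\bi_{\lambda_1})^*\CF$, the $*$-stalks of $(\bi_{\lambda_1})_*(\bi_{\lambda_1})^*\CF$ are nonzero on infinitely many $\mu\prec\lambda_1$, so the fiber may acquire \emph{infinitely} many nonzero $*$-stalks and the naive induction on $|\Xi|$ breaks. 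You gesture at two possible repairs (joint conservativity of the $*$-stalk functors plus a collapsing dévissage, or the t-structure of \secref{ss:t}) without carrying either out; note also that the claimed equivalence ``$(\bi_\mu)^*(\CF)=0\ \forall\mu\Leftrightarrow(\bi_\mu)^!(\CF)=0\ \forall\mu$'' does not follow formally from \lemref{l:! gener} — one must argue it separately (it does hold, by the same reduction to a finite stratification of each closed finite-type subscheme $Y\subset\Gr_G$ used in the proof of \lemref{l:! gener}). To be fair to you, the paper offers no proof at all here — the result is asserted as following ``by definition'' — so there is nothing to match against; but as written your proposal establishes one implication cleanly and only sketches the other, so it is not yet a complete proof of the biconditional.
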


As we shall see in the sequel, the objects $\nabla^\lambda$ are \emph{not} compact. 

\ssec{The t-structure on the semi-infinite category}  \label{ss:t}

\sssec{} \label{sss:t}

We define the t-structure on $\SI(\Gr_G)$ in either of the following (tautologically equivalent) ways:

\medskip

\noindent(i) The subcategory $(\SI(\Gr_G))^{\leq 0}$ is generated under filtered colimits by the objects $\Delta^\lambda[n]$, $n\geq 0$.

\medskip

\noindent(ii) An object $\CF\in \SI(\Gr_G)$ belongs to $(\SI(\Gr_G))^{>0}$ if and only if $\Maps(\Delta^\lambda[n],\CF)=*$ for $n\geq 0$.

\sssec{}

Define a t-structure on 
$$\SI(\Gr_G)_{=\lambda}\simeq \Vect$$
to be the shift of the standard t-structure on $\Vect$ by $[-\langle \lambda,2\check\rho\rangle]$, i.e., the object
$\sfe[-\langle \lambda,2\check\rho\rangle]$ is in the heart.

\medskip

Then the above t-structure on $\SI(\Gr_G)$ can also be characterized by:

\medskip

\noindent(ii') An object $\CF\in \SI(\Gr_G)$ belongs to $(\SI(\Gr_G))^{\geq 0}$ if and only if $(\bi_\lambda)^!(\CF)\in \SI(\Gr_G)_{=\lambda}$
belongs to $(\SI(\Gr_G)_{=\lambda})^{\geq 0}$ for all $\lambda$. 

\medskip

\noindent(iii) An object $\CF\in \SI(\Gr_G)$, which is supported on $\ol{S}^\mu$ for some $\mu$, 
belongs to $(\SI(\Gr_G))^{\leq 0}$ if and only if $(\bi_\lambda)^*(\CF)\in \SI(\Gr_G)_{=\lambda}$
belongs to $(\SI(\Gr_G)_{=\lambda})^{\leq 0}$ for all $\lambda$.

\begin{rem}
The above t-structure on $\SI(\Gr_G)$ (and in particular, the corresponding truncations functors) exists for abstract reasons.
Ultimately, all the objects involved are colimits of compact objects of $\Shv(\Gr_G)$, i.e., compact objects in $\Shv(Y_i)$,
where $Y_i$ are closed subschemes of $\Gr_G$. 
However, one should not delude oneself by thinking that the above
t-structure can be modeled by a finite-dimensional situation, as is manifested by \thmref{t:j is perv} below. 
\end{rem} 

\sssec{}

In \secref{ss:!-fibers via global} we will prove:

\begin{thm}  \label{t:j is perv}  For $\lambda\neq 0$, the object 
$(\bi_{\lambda})^!(\Delta^0)$ belongs to $(\SI(\Gr_G)_{\lambda})^{>0}$.
\end{thm}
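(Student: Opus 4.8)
The claim is that for $\lambda \neq 0$ the object $(\bi_\lambda)^!(\Delta^0)$ is strictly positive, i.e., lies in $(\SI(\Gr_G)_\lambda)^{>0}$. Since $\SI(\Gr_G)_{=\lambda} \simeq \Vect$ with the heart shifted by $[-\langle \lambda, 2\check\rho\rangle]$, this amounts to computing the $!$-fiber of $\Delta^0$ at $t^\lambda$ and showing its cohomology is concentrated in degrees $> \langle \lambda, 2\check\rho\rangle$. The plan is to first reduce to a concrete computation on finite-dimensional schemes, and then extract the bound via an explicit description of the relevant orbit geometry.

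\textbf{Step 1: Reduce $\Delta^0$ to a colimit of finite-dimensional objects.} Recall $\Delta^0 = (\bi_0)_!(\sfe)$ (with no shift since $\langle 0, 2\check\rho\rangle = 0$), and by \propref{p:! exists} and \corref{c:omega}, $(\bi_0)_!(\omega_{S^0}) \simeq \on{Av}^{\fL(N)}_!(t^0)$. Writing $\fL(N) = \on{colim}_\alpha N_\alpha$ and $S^0 = \on{colim}_\alpha (N_\alpha \cdot t^0)$, the object $\Delta^0$ is the filtered colimit of the $!$-extensions to $\Gr_G$ of the dualizing sheaves $\omega_{N_\alpha \cdot t^0}$ (this is exactly the description used in the proof of \lemref{l:invis}, up to the cohomological normalization converting $\omega$ to $\sfe$). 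Taking $(\bi_\lambda)^!$ commutes with the colimit, so $(\bi_\lambda)^!(\Delta^0) = \on{colim}_\alpha (\bi_\lambda)^! (j_\alpha)_! (\omega_{N_\alpha\cdot t^0})$, where $j_\alpha : N_\alpha \cdot t^0 \hookrightarrow \Gr_G$. Since $t^\lambda \in S^\lambda$ lies in the closure of $S^0$ but not in $S^0$ itself (as $\lambda \neq 0$, so $-\lambda \notin \Lambda^{\on{pos}}$ unless... actually $\lambda \in \Lambda^{\on{pos}}$ is needed for $S^\lambda \subset \ol S{}^0$; for other $\lambda$ the fiber vanishes and there is nothing to prove), the key case is $\lambda \in \Lambda^{\on{pos}} \setminus \{0\}$.

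\textbf{Step 2: Identify the $!$-fiber with stalks of IC-type sheaves and apply the dimension bound.} By base change, the $!$-fiber of $(j_\alpha)_!(\omega_{N_\alpha \cdot t^0})$ at $t^\lambda$ is computed from the geometry of $N_\alpha \cdot t^0$ near $t^\lambda$. More productively, I would invoke the relation between $\Delta^0$ and the restriction of $\IC$-sheaves on $\ol\Gr^{\lambda'}$ that motivates the whole paper: $\ICs$ is built from $t^{-\lambda'} \cdot \IC_{\ol\Gr^{\lambda'}}[\langle \lambda', 2\check\rho\rangle]$, and $\Delta^0 \to \ICs$ is the canonical map whose cone measures exactly the strictly-positive parts. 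But a cleaner self-contained route: the $!$-stalk of $\Delta^0 = (\bi_0)_! \sfe$ at $t^\lambda$ computes, via the standard cofiber sequence $(\bi_0)_! \to (\ol\bi_0)_*(\bj_0)_* \sfe$ on $\ol S{}^0$, and parity/dimension estimates. The essential input is that the closure $\ol S{}^0 = \ol{N\ppart \cdot t^0}$ inside $\ol\Gr^{\lambda'}$ has the property that the $!$-costalk of the constant sheaf on the open orbit $S^0$ at a boundary point $t^\lambda$ sits in degrees $\geq 2\langle\lambda, \check\rho\rangle + 1$, strictly. Concretely, $(\bi_\lambda)^!(\Delta^0)[\langle\lambda,2\check\rho\rangle]$ should be $H^*_c$ of an intersection $S^0 \cap (\text{transversal slice to } S^\lambda)$ shifted appropriately, and the codimension estimate $\on{codim}_{\ol S{}^0}(S^\lambda) = \langle \lambda, 2\check\rho\rangle$ combined with the fact that this intersection is a \emph{positive-dimensional} affine variety (never a point, since $\lambda\neq 0$) forces the $H^0_c$ to vanish, giving strict positivity.

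\textbf{Main obstacle.} The hard part will be making Step 2 precise: showing that the relevant transversal intersection has no top compactly-supported cohomology, equivalently that the closure $\ol S{}^0$ is \emph{not} lisse along $S^\lambda$ and moreover fails to be so in the strongest way (the $!$-costalk vanishes in the "middle" degree $\langle\lambda,2\check\rho\rangle$). This is where the genuinely infinite-dimensional phenomenon enters — as the remark before \thmref{t:j is perv} warns, this is not modeled finite-dimensionally, because in a genuine finite-type situation $(\bi_\lambda)^!(\Delta^0)$ would generically have a nonzero bottom cohomology. I expect the proof to proceed by exhibiting $S^0 \cap \ol S{}^\lambda$-type strata explicitly (using the factorization structure over $\Ran(X)$, or the map $\pi : \ol S{}^0 \to \BunNb$ and the known strata of $\BunNb$), reducing to the statement that a certain open subvariety of an affine space — the "floating points colliding" locus — has vanishing $H^{\on{top}}_c$, which holds because it is a nonempty open subset of an affine space of the relevant dimension and hence has $H^{2d}_c = 0$ strictly unless it is all of the affine space, which it is not for $\lambda \neq 0$.
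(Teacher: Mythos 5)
Your Step 1 is fine (modulo a sign: the strata in $\ol S{}^0$ are indexed by $\mu\in-\Lambda^{\on{pos}}$, not $\Lambda^{\on{pos}}$), and your instinct to route the computation through $\pi:\ol S{}^0\to\BunNb$ is exactly what the paper does. But Step 2 is not a proof, as you yourself acknowledge, and the heuristic you offer to close it is false. You claim that a nonempty proper Zariski-open subset $U\subsetneq\BA^d$ has $H^{2d}_c(U)=0$. This is incorrect: for any dense open $U$ in an irreducible $d$-dimensional variety $X$, the long exact sequence $H^{2d-1}_c(X\setminus U)\to H^{2d}_c(U)\to H^{2d}_c(X)\to H^{2d}_c(X\setminus U)$ together with $\dim(X\setminus U)<d$ gives $H^{2d}_c(U)\simeq H^{2d}_c(X)\neq 0$. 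So the ``vanishing of top $H_c$'' mechanism you posit cannot be the reason the statement holds; finite-dimensional intuition of this type is exactly what the remark preceding the theorem is warning against.

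The paper's actual argument is: first prove \thmref{t:main j}, i.e., that $\Delta^0\simeq\pi^!(\Delta^0_{\on{glob}})[(g-1)\dim N]$ (this is where the genuine geometry enters, namely the contraction of the Zastava space $\CZ^\mu$ onto its canonical section lying outside $\oCZ{}^\mu$ for $\mu\neq 0$, combined with Braden's hyperbolic localization theorem — neither of which appears in your sketch). Then invoke \propref{p:! of j glob}, which is a direct import of [BG, Theorem 6.6] together with [BFGM, Theorem 1.12], giving a filtration of $(\imath_\mu)^!(\Delta^0_{\on{glob}})$ with explicit graded pieces $\Sym(\cn^-[-2])(\mu_1)\otimes\on{C}^\cdot(\cn)(\mu_2)$ over $\mu_1+\mu_2=\mu$. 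The theorem then reduces to the purely algebraic observation that for $\mu\neq 0$, at least one of $\mu_1,\mu_2$ is nonzero, hence each summand is concentrated in strictly positive cohomological degrees. So the missing content in your proposal is precisely the two nontrivial external inputs: the contraction/Braden argument needed to establish $\Delta^0\simeq\pi^!(\Delta^0_{\on{glob}})$, and the [BG]-type computation of $!$-restrictions of $\Delta^0_{\on{glob}}$ along the boundary strata of $\BunNb$.
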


The meaning of \thmref{t:j is perv} is that $\Delta^0$ belongs to $(\SI(\Gr_G))^\heartsuit$
and is \emph{the intermediate extension} of $\omega_{S^0}$, the latter being the unique irreducible object in $\SI(\Gr_G)_{=0}$. 
In particular, we obtain:

\begin{cor} \label{c:j is perv} 
The object $\Delta^0\in (\SI(\Gr_G))^\heartsuit$ is irreducible.
\end{cor}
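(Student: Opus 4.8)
The plan is to derive the corollary as a formal consequence of Theorem \ref{t:j is perv} together with the characterizations (iii) and (iv) of the t-structure on $\SI(\Gr_G)$ and the equivalence $\SI(\Gr_G)_{=\lambda}\simeq\Vect$ of \propref{p:on orbit}. First I would record that $\Delta^0$ lies in the heart $(\SI(\Gr_G))^\heartsuit$. For connectivity, $\Delta^0=(\bi_0)_!(\sfe)$ and $(\bi_\lambda)^*(\Delta^0)$ vanishes for $\lambda\neq 0$ and equals $\sfe[-\langle 0,2\check\rho\rangle]=\sfe$ for $\lambda=0$ (this is the computation of $(\bi_\lambda)^*$ on standards from the lemma proved via \propref{p:! exists}); hence $(\bi_\lambda)^*(\Delta^0)\in(\SI(\Gr_G)_{=\lambda})^{\leq 0}$ for all $\lambda$, so $\Delta^0\in(\SI(\Gr_G))^{\leq 0}$ by (iii). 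For coconnectivity, $(\bi_0)^!(\Delta^0)=\sfe\in(\SI(\Gr_G)_{=0})^{\geq 0}$ trivially, while Theorem \ref{t:j is perv} gives $(\bi_\lambda)^!(\Delta^0)\in(\SI(\Gr_G)_{=\lambda})^{>0}\subset(\SI(\Gr_G)_{=\lambda})^{\geq 0}$ for $\lambda\neq 0$; hence $\Delta^0\in(\SI(\Gr_G))^{\geq 0}$ by (iv). So $\Delta^0$ is in the heart.

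Next I would argue irreducibility. The point is that $\Delta^0$ is the intermediate extension $\bi_{0,!*}(\omega_{S^0})$ along the open embedding $S^0\hookrightarrow\ol{S}{}^0$ (with the appropriate cohomological shift), and $\omega_{S^0}$ — corresponding to $\sfe$ under $\SI(\Gr_G)_{=0}\simeq\Vect$, normalized to lie in the heart — is the unique simple object of the heart $(\SI(\Gr_G)_{=0})^\heartsuit\simeq\Vect^\heartsuit$. Concretely: suppose $0\neq \CG\hookrightarrow\Delta^0$ is a subobject in the heart. Applying the exact (for the chosen t-structures) functor $(\bj_0)^*=(\bi_0)^*$ restricted to the heart — exactness here is the statement that $(\bj_0)^*$ is t-exact for these t-structures, which follows since it is a restriction to an open — and using $(\bj_0)^*\Delta^0=\sfe$ (in the heart of $\SI(\Gr_G)_{=0}$), which is simple, we get that either $(\bj_0)^*\CG=0$ or $(\bj_0)^*\CG=\sfe$. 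In the first case $\CG$ is supported on the complement of $S^0$, so $(\bi_0)^!\CG$... — actually the cleanest route is: in the first case $\CG\hookrightarrow\Delta^0$ with $(\bj_0)^*\CG=0$ forces $\CHom(\CG,\Delta^0)$ to factor through $\Gamma$-sections supported on $\ol{S}{}^0\setminus S^0$; by adjunction $\Hom(\CG,\Delta^0)=\Hom(\CG,(\bi_0)_!\sfe)$ and since $\CG$ has no sections on $S^0$ this Hom sees only $(\bi_0)^!$-data, but $(\bi_0)^!(\bi_0)_!\sfe=\sfe$ has nothing in the relevant degrees that a heart object $\CG$ with $(\bj_0)^*\CG=0$ can map to nontrivially — so $\CG=0$, a contradiction. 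In the second case, $(\bj_0)^*(\Delta^0/\CG)=0$, so the quotient is supported off $S^0$; but $\Delta^0=(\bi_0)_!(\sfe)$ and a $!$-extension has no nonzero quotients supported on the boundary closed subset (dually to the fact that a $*$-extension has no nonzero sub supported on the boundary), forcing $\Delta^0/\CG=0$, i.e. $\CG=\Delta^0$. Hence $\Delta^0$ is simple.

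The main obstacle I anticipate is making the boundary-vanishing arguments rigorous in this genuinely infinite-dimensional, infinitely-connective setting: the usual recollement yoga $(\text{j}_!,\text{j}^*,\text{j}_*)$ and $(\text{i}^*,\text{i}_*,\text{i}^!)$ for the pair $(S^0\hookrightarrow\ol{S}{}^0\hookleftarrow\partial)$ has to be invoked within $\SI(\Gr_G)_{\leq 0}$ rather than within $\Shv$, where by \propref{p:invis} every object is infinitely connective and so the ambient t-structure gives no traction. The safeguard is that all the functors $(\bj_0)^*,(\bj_0)_*,(\bi_0)_!,(\bi_0)^!$ have been shown to exist at the level of the $\SI$-categories with the expected adjunctions (\propref{p:! exists} and the corollaries following it), and that $\SI(\Gr_G)_{=0}\simeq\Vect$ has a semisimple heart with a single simple object. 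So I would phrase the argument entirely inside the recollement $\SI(\Gr_G)_{=0}\rightleftarrows\SI(\Gr_G)_{\leq 0}\rightleftarrows\SI(\Gr_G)_{\partial}$ (where $\SI(\Gr_G)_{\partial}:=\Shv(\ol{S}{}^0\setminus S^0)^{\fL(N)}$), where the statement "$\Delta^0$ is the intermediate extension of the unique simple, hence simple" is the standard abstract lemma about intermediate extensions in a recollement, once one knows — as Theorem \ref{t:j is perv} provides — that $\Delta^0$ has the correct $!$-co-connectivity on the boundary to coincide with $\bi_{0,!*}$.
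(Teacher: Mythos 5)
Your proposal is essentially the paper's own reasoning: the corollary is stated as an immediate consequence ("in particular") of the remark that $\Delta^0$ lies in the heart and is the intermediate extension of $\omega_{S^0}$, the unique simple of $\SI(\Gr_G)_{=0}\simeq\Vect$, with heart membership and the intermediate-extension characterization both supplied by Theorem \ref{t:j is perv}. One caveat: in your "first case" you invoke $(\bi_0)^!(\bi_0)_!\sfe=\sfe$, which is the $!$-restriction to the \emph{open} stratum $S^0$; what is actually needed there is $(\bi_\lambda)^!\Delta^0\in(\SI(\Gr_G)_{=\lambda})^{>0}$ for $\lambda\neq 0$ (the boundary strata), i.e.\ precisely Theorem \ref{t:j is perv} — which is what kills $\CHom(\CG,\Delta^0)$ for a heart object $\CG$ supported on $\ol{S}{}^0\setminus S^0$ — and your closing recollement paragraph does state this correctly.
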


In \secref{ss:proof of ab} we will prove:

\begin{prop} \label{p:abelian categ}
The abelian category $(\SI(\Gr_G))^\heartsuit$ is equivalent to $\Rep(\cB^-)$,
where $\cB^-$ is the (negative) Borel in the Langlands dual $\cG$ of $G$. Under this equivalence,
the object $\Delta^\lambda$ correspond to the 1-dimensional representation of $\cB^-$ with
character $w_0(\lambda)$. 
\end{prop}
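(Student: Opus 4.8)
The plan is to identify the abelian category $(\SI(\Gr_G))^\heartsuit$ by producing enough structure on it to pin it down as $\Rep(\cB^-)$. First I would observe that by \thmref{t:j is perv} and \corref{c:j is perv}, together with \lemref{l:! gener}, the heart $(\SI(\Gr_G))^\heartsuit$ is an abelian category with simple objects $\Delta^\lambda$ (for all $\lambda\in\Lambda$), and that every object has a finite filtration with subquotients among the $\Delta^\lambda$ — indeed the $\Delta^\lambda$ are the images of the (irreducible) standard objects $(\bi_\lambda)_!$, and $\nabla^\lambda=(\bi_\lambda)_*$ are the costandard objects; the claim that $\Delta^0$ is \emph{both} standard and costandard (hence the unique irreducible supported on $\ol S{}^0$, by \thmref{t:j is perv}) is the key structural input, and by translating by $t^\lambda$ the same holds for every $\Delta^\lambda$. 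So the heart is a highest-weight-type category in which standards equal simples equal costandards; such a category, being locally finite with the weight lattice $\Lambda$ indexing the simples, is already very rigid.

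Next I would compute the $\Ext^1$'s between the simples in the heart, i.e. $\CHom_{\SI(\Gr_G)}(\Delta^\lambda,\Delta^\mu[1])$. Using adjunction $(\bi_\lambda)_!\dashv(\bi_\lambda)^!$ this reduces to computing $(\bi_\lambda)^!(\Delta^\mu)$, which is controlled by the (co)standard filtration and ultimately by the geometry of the closures $\ol S{}^\lambda$ near the orbit $S^\mu$. The expected answer is that $\Ext^1(\Delta^\lambda,\Delta^\mu)$ is one-dimensional exactly when $\mu-\lambda$ is a negative simple root (equivalently, after applying $w_0$, when the difference of $w_0$-twisted characters is a positive simple root of $\cg$), and zero otherwise; higher Ext's should follow from the same local computation. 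This is precisely the Ext-quiver of $\Rep(\cB^-)$: the category of finite-dimensional $\cB^-$-representations is equivalent to finite-dimensional comodules over $\O(\cB^-)$, its simples are the characters $\lambda\in\Lambda=X^*(\cT)$, and $\Ext^1$ between characters $\lambda,\mu$ is one-dimensional precisely when $\mu-\lambda$ is a simple root of $\cn^-$ (coming from $H^1(\cn^-)=(\cn^-)^*$), matching the shape above. I would then invoke a Tannakian/quiver-reconstruction argument: an abelian category which is locally finite, with simples a lattice $\Lambda$, with the $\Delta^\lambda$ simultaneously standard and costandard, and with the above Yoneda algebra, is equivalent to $\Rep(\cB^-)$ — most cleanly by exhibiting the monoidal structure on $\SI(\Gr_G)$ (coming from convolution, compatibly with the $\Lambda$-grading by orbits) and running Tannaka duality to recognize $\cB^-$ as the corresponding affine group scheme, whose reductive quotient is forced to be $\cT$ and whose unipotent radical has Lie algebra with the computed root spaces, i.e. $\cn^-$. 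The normalization $\Delta^\lambda\leftrightarrow w_0(\lambda)$ is fixed by keeping track of the cohomological shift $[-\langle\lambda,2\check\rho\rangle]$ built into the definition of $\Delta^\lambda$ together with the sign conventions relating $S^\lambda$ to the negative Borel.

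The main obstacle I anticipate is the $\Ext$-computation between standards, i.e. showing $\Ext^i(\Delta^\lambda,\Delta^\mu)$ has exactly the size dictated by $\Rep(\cB^-)$ — the vanishing for $\mu-\lambda\notin-\Lambda^{\pos}$ and the precise one-dimensionality along simple roots. One route is to reduce, via the isomorphism with the !-pullback of $\ICs_{\on{glob}}$ on $\BunNb$ and the stratification of $\BunNb$ (\secref{ss:BunNb}), to known facts about the intersection cohomology sheaf of Drinfeld's compactification from \cite{BFGM}, \cite{FFKM}, where the relevant local cohomology has been computed and is governed by $\Sym(\cn^-[-2])$; the simple-root part of $\cn^-$ then produces exactly the $\Ext^1$. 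An alternative, more self-contained route is to use the factorization structure on $\ICs_{\Ran}$ to reduce the multi-point Ext computation to the single-simple-root case, which is a rank-one $(\SL_2$ or $\PGL_2)$ computation. Either way, once the Yoneda algebra of the heart is identified with that of $\Rep(\cB^-)$ and the monoidal structure is in place, Tannakian reconstruction finishes the proof; matching $\Delta^\lambda$ with the character $w_0(\lambda)$ is then just bookkeeping of conventions.
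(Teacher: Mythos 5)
Your route is genuinely different from the paper's, and it has real gaps.

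The paper's actual proof (Sect.~\ref{ss:proof of ab}) does \emph{not} compute any $\Ext$ groups and does not run Tannakian reconstruction. It first establishes \propref{p:Iwahori N} — Raskin's equivalence $(\SI(\Gr_G))^{\fL^+(T)}\simeq\Shv(\Gr_G)^I$ — and then composes with the [ABG] equivalence $\Sat^I$ of \thmref{t:ABG} to obtain an equivalence of DG categories
$(\SI(\Gr_G))^{\fL^+(T)}\simeq\IndCoh(\cn^-\underset{\cg}\times\{0\}/\cB^-)$
which carries $\Delta^\lambda$ to $\sfe^{w_0(\lambda)}\otimes\CO_{\on{pt}/\cB^-}$. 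It then shows (\propref{p:abelian categ'}) that the heart on the geometric side matches the heart $(\IndCoh(\on{pt}/\cB^-))^\heartsuit\simeq\Rep(\cB^-)^\heartsuit$ by comparing generators, and finally (\propref{p:monodromic}) removes the $\fL^+(T)$-equivariance using the $\fL^+(T)$-monodromy and the Cousin filtration. The whole identification is deduced from an already-established equivalence of DG categories; no Yoneda algebra is ever calculated.

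Your approach — take the $\Delta^\lambda$ as the simples, compute $\Ext^\bullet(\Delta^\lambda,\Delta^\mu)$, and reconstruct — could in principle succeed, but as written it has the following problems. (1) The $\Ext$ computation is asserted, not performed; your hand-off to \cite{BFGM} via $\ICs_{\on{glob}}$ gives the stalks $\Sym(\cn^-[-2])(\mu)$ of $\Delta^0=\ICs$ ... but note that in this paper $\Delta^0\neq\ICs$, and in any case $\Ext(\Delta^\lambda,\Delta^\mu)$ in the \emph{equivariant} category $\SI(\Gr_G)$ is not the same as a stalk; one has to account for $\fL(N)$-equivariant cohomology. (2) Knowing only $\Ext^1$ between simples does not determine the abelian category; one needs the full graded Yoneda algebra together with its $A_\infty$-structure, since e.g.\ the Lie bracket on $\cn^-$ is visible only in Massey products, not in $\dim\Ext^1$. (3) The Tannakian version requires a symmetric monoidal structure on the heart (or on $\SI(\Gr_G)$ compatibly with the $\Lambda$-grading), which you claim ``comes from convolution'' but do not construct; $\Gr_G$ is not a group and a tensor structure on $\fL(N)$-equivariant sheaves is not available in any obvious way. (4) The assertion that every object of the heart has a \emph{finite} filtration with subquotients among the $\Delta^\lambda$ is not established and need not hold; the paper's Cousin filtration in the proof of \propref{p:monodromic} is indexed by the poset $\Lambda$ and is not asserted to be finite. (5) You also never address the $\fL^+(T)$-equivariance issue that \propref{p:monodromic} resolves — the paper's equivalence a priori lands only on the $\fL^+(T)$-equivariant heart, and a separate argument is needed to identify this with the full heart. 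In short, your plan is a plausible sketch of an alternative, more self-contained route, but as it stands it leaves the two central steps (Koszul-dual identification and Tannakian/reconstruction) unproven, whereas the paper sidesteps both by importing the [ABG] and Raskin equivalences.
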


\sssec{}

However, $\Delta^0$ is \emph{not} the object that we are after. In the next section we will construct 
another object 
$$\ICs\in (\SI(\Gr_G))^\heartsuit$$
that we will call the \emph{semi-infinite IC sheaf}, and study its properties.  We will relate $\ICs$ to two
other naturally appearing objects:

\medskip

\noindent(i) In \secref{s:Drinf} we will relate $\ICs$ to the IC sheaf of Drinfeld's compactification. 

\medskip

\noindent(ii) In \secref{s:baby Verma} we will express it as the (dual) baby Verma object.

\medskip

We will show:

\begin{prop} \label{p:semiinf as reg}
In terms of the equivalence of \propref{p:abelian categ}, the object $\ICs$ corresponds to $\CO(\cB^-/\cT)$.
\end{prop}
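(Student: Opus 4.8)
The plan is to identify $\ICs$ inside $(\SI(\Gr_G))^\heartsuit \simeq \Rep(\cB^-)$ by computing two things: its associated graded with respect to the stratification of $\ol{S}{}^0$, and the extension structure that glues the strata together. By \propref{p:abelian categ}, the object $\Delta^\mu$ corresponds to the one-dimensional $\cB^-$-representation of weight $w_0(\mu)$, and $\nabla^\mu$ to the corresponding costandard object. The first step is to recall that the $(!)$-restrictions of $\ICs$ to the strata $S^\mu$ are given by \eqref{e:Lus answer}, i.e. $(\bi_\mu)^!(\ICs) \simeq \omega_{S^\mu}\otimes \Sym(\cn^-[-2])(\mu)$. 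Since $\ICs$ lies in the heart (this is part of the package established for $\ICs$), and the $(\bi_\mu)^!$ are t-exact for the t-structure of \secref{ss:t}, the graded pieces of $\ICs$ under the cofiltration by the $(\ol\bi_{\le\mu})$ are controlled by the weight-$\mu$ graded pieces of $\Sym(\cn^-)$. But $\Sym(\cn^-) = \CO(\cn^{-,*}) = \CO(\check{\fu}^{-})$ as a $\cT$-module, and more to the point, the graded object $\bigoplus_\mu \Sym(\cn^-)(\mu)\otimes(\text{line of weight } w_0(\mu))$ is precisely the associated graded of $\CO(\cB^-/\cT)$ for the filtration by the $\cN^-$-orbit structure — equivalently, $\CO(\cB^-/\cT) \simeq \CO(\cN^-)$ as a $\cB^-$-module, and its weights are exactly $\Lambda^{\pos}$ (up to the $w_0$ twist) each with the multiplicity $\dim \Sym(\cn^-)(\mu)$.

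The second step is to pin down the actual $\cB^-$-module structure, not just its graded. The natural candidate map is built as follows: $\CO(\cB^-/\cT)$ is, in $\Rep(\cB^-)$, the "induced/regular" object characterized by a universal property — it corepresents the functor $V \mapsto V^{\cT}$ on $\Rep(\cB^-)$, or dually, $\Hom_{\cB^-}(\CO(\cB^-/\cT), V) \simeq V_0$ (the zero weight space) for $V$ suitable. I would match this against the corresponding property of $\ICs$ on the geometric side: $\CHom_{\SI(\Gr_G)}(\Delta^\mu, \ICs) \simeq (\bi_\mu)^!(\ICs)$ computed above, and these $\CHom$'s, as $\mu$ ranges over $\Lambda$, should assemble (using that the $\Delta^\mu$ generate, \lemref{l:! gener}) into exactly the data of $\CO(\cN^-)$ as a module over $\bigoplus_\mu \CHom(\Delta^0,\Delta^\mu)$-type structure. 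Concretely: under the equivalence of \propref{p:abelian categ}, $\Hom_{\Rep(\cB^-)}(\text{wt } w_0(\mu)\text{ line}, \CO(\cB^-/\cT))$ has dimension equal to $\dim \Sym(\cn^-)(\mu)$ — this is just the statement that $\CO(\cN^-)$ has weight multiplicities given by Kostant partitions — which is exactly \eqref{e:Lus answer}. So $\ICs$ and $\CO(\cB^-/\cT)$ have the same $\Delta^\mu$-graded dimensions in every degree.

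To upgrade "same graded dimensions" to "isomorphic", the cleanest route is to exhibit a nonzero map $\CO(\cB^-/\cT)\to \ICs$ (or the reverse) in $(\SI(\Gr_G))^\heartsuit$ and argue it is an isomorphism by the graded count. A canonical source of such a map: $\CO(\cB^-/\cT)$ receives a map from the trivial representation $\sfe$ (the constants, i.e. the unit $1 \in \CO$), which corresponds to $\Delta^0 = \ICs|_{S^0}$-style unit; equivalently, $\ICs$ comes with its defining map from $\Delta^0$ (or $j_{0,*}$-type object), since $\ICs$ is built as an extension of $\omega_{S^0}$. The map $\Delta^0 \to \ICs$ adjoint to $\id$ on $S^0$ corresponds under \propref{p:abelian categ} to the inclusion of the weight-$0$ line into $\CO(\cB^-/\cT)$, and one then checks that $\ICs$ is generated by this in the appropriate sense (it is the minimal object with the prescribed $(!)$-fibers extending $\omega_{S^0}$ — a maximality/minimality property that $\ICs$ is shown to have in the sections relating it to $\BunNb$). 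Then any map $\CO(\cB^-/\cT)\to\ICs$ extending the weight-$0$ inclusion is automatically injective (its kernel would be a $\cB^-$-submodule meeting the weight-$0$ line trivially, forcing it to be supported away from $\mu = 0$, contradicting generation), and surjective by the equality of graded dimensions established above.

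\textbf{Main obstacle.} The genuinely delicate point is the second step: matching the \emph{module structure}, i.e. ruling out that $\ICs$ is some other self-extension of the $\Delta^\mu$'s with the same graded pieces as $\CO(\cB^-/\cT)$ but a different (e.g. "more split", like $\bigoplus_\mu \Delta^\mu^{\oplus\dim\Sym(\cn^-)(\mu)}$) gluing. Equivalently: I must show the extensions between the standard objects $\Delta^\mu$ realized \emph{inside} $\ICs$ are exactly the ones present in $\CO(\cB^-/\cT)$, which ultimately comes down to a nonvanishing/rank statement for the relevant $\Ext^1$'s in $\Rep(\cB^-)$ and their geometric avatars as (co)boundary maps in the stratification spectral sequence for $\ol{S}{}^0$. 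I expect this to be handled by invoking the comparison with $\ICs_{\on{glob}}$ on $\BunNb$ (\secref{s:Drinf}), where the analogous statement is the known description, from \cite{FFKM}/\cite{BFGM}, of $\IC_{\BunNb}$ as "spread-out $\CO(\cN^-)$" — i.e. the factorization/Koszulity structure that identifies the nilpotent-cone geometry with $\Sym(\cn^-)$ — rather than re-proven from scratch here.
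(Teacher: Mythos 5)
Your proposal takes a genuinely different route from the paper and, as written, has concrete gaps. The paper derives \propref{p:semiinf as reg} directly from \thmref{t:other main} together with \propref{p:abelian categ'}: once one knows that the equivalence \eqref{e:functor for ab} sends $\ICs$ to the (dual) baby Verma object $\CF'_{\cG,\cT}[\dim(G/B)]$, and that this object on the coherent side is by construction the pushforward of $\CO_{\on{pt}/\cT}$ along $\on{pt}/\cT\to (\cG/\cN^-)/\on{Ad}_\cT$ whose underlying $\Rep(\cB^-)$-object is $\CO(\cB^-/\cT)$, the statement is immediate. Your approach tries to identify the $\cB^-$-module from its restriction to strata, which is a legitimate alternative strategy but is not carried out correctly here.

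The specific problems. First, the direction of the comparison map is backwards: $\CO(\cB^-/\cT)\simeq\on{coInd}^{\cB^-}_{\cT}(\sfe)$ has a \emph{corepresentation} universal property, $\Hom_{\cB^-}(W,\CO(\cB^-/\cT))\simeq W(0)^*$, so the canonical map is $\ICs\to\CO(\cB^-/\cT)$, not $\CO(\cB^-/\cT)\to\ICs$, and it is produced from \propref{p:properties IC}(c) (the weight-$0$ piece of $\ICs$ is $\sfe$). The phrase ``$\CO(\cB^-/\cT)$ receives a map from $\sfe$ … equivalently, $\ICs$ comes with its defining map from $\Delta^0$'' does not produce a map between $\CO(\cB^-/\cT)$ and $\ICs$. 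Second, the injectivity argument as stated appeals to ``generation'' by the weight-$0$ line, but neither $\CO(\cB^-/\cT)$ nor $\ICs$ is generated by its weight-$0$ piece --- the weight-$0$ line is $\cB^-$-\emph{fixed}, so it generates only a one-dimensional submodule. What is true (and what one would need to say) is that $\ICs$ has simple socle $\Delta^0$; this is precisely the content of \thmref{t:j is perv} applied to $\Hom(\Delta^\mu,\ICs)$, and a map out of an object with simple socle $\Delta^0$ that is nonzero on the socle is injective. Third, comparing $!$-fibers to $\Sym(\cn^-)$ does not directly give Jordan--H\"older multiplicities of the $\Delta^\mu$'s in $\ICs$; for that one should use the $*$-fibers (\propref{p:* fibers of IC}), which give $\CO(\cN)(\mu)$ in a single degree and hence show $\ICs$ is $\Delta$-filtered with the Kostant multiplicities. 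Finally, your ``main obstacle'' --- that one must rule out alternative gluings with the same graded pieces --- is in fact dissolved once one has an injective $\cT$-equivariant map between objects with the same finite weight multiplicities; the proposed detour via $\ICs_{\on{glob}}$ is not needed and is not the paper's route.
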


\begin{rem}
One may wonder whether the object $\ICs$ that we will construct can be expressed as the intermediate
expression in some t-structure.  This is indeed possible via the following procedure:

\medskip

Instead of the usual affine Grassmannian we consider its version over the Ran space $\Ran(X)$ of
a curve $X$; denote it $\Gr_{G,\Ran}$.  It is equipped with an action of the Ran version of the
loop group $\fL(G)_{\Ran}$, and in particular $\fL(N)_{\Ran}$. We define the category
$$\SI(\Gr_{G,\Ran}):=(\Shv(\Gr_{G,\Ran}))^{\fL(N)_{\Ran}},$$
and it can be equipped with a t-structure.  Let $S^0_{\Ran}\subset \Gr_{G,\Ran}$ be the locally closed subfunctor
equal to the orbit of unit section with respect to $\fL(N)_{\Ran}$. Let $\ICs_{\Ran}$ be the intermediate 
extension of the dualizing sheaf on $S^0_{\Ran}$. One can show (and this will be done in \cite{Ga1})
that our $\ICs$ identifies with the !-restriction of $\ICs_{\Ran}$ to
$$\Gr_G=\{x\}\underset{\Ran(X)}\times \Gr_{G,\Ran}.$$

\end{rem}

\section{Construction of the semi-infinite IC sheaf as a colimit}  \label{s:colimit}

\ssec{The Langlands dual group}

In this subsection we will fix some conventions pertaining to the Lannglands dual of $G$,
to be used in this section and later in the paper. 

\sssec{}

Let $\cG$ be the Langlands dual group of $G$.  We regard as equipped with chosen Borel and Cartan subgroups:
$$
\cT\subset \cB.$$

In particular, we have a well-defined negative Borel $\cB^-$, so that $\cT=\cB\cap \cB^-$. 

\medskip

We regard $\cT$ as a common torus quotient of $\cB$ and $\cB^-$ and use characters of $\cT$ to produce characters
of each. Note, however, that a character that is dominant from the point of view of $\cB$ is anti-dominant from
the point of view of $\cB^-$. 


\medskip

We regard $\Rep(\cG)$ the \emph{abelian} symmetric monoidal category of $\cG$-representations. 
For $V\in \Rep(\cG)$ and $\nu\in \Lambda$ (note that $\Lambda$ is the weight lattice of $\cT$)
we let $V(\nu)$ denote the corresponding weight space. 

\sssec{}

We fix representatives of irreducible highest weight representations $V^\lambda$, $\lambda\in \Lambda^+$ equipped
with \emph{trivializations} of highest weight lines $V^\lambda(\lambda)=(V^\lambda)^{\cN}$, i.e., with
chosen highest weight vectors $$v^\lambda\in V^\lambda(\lambda).$$ Let
$(v^\lambda)^*$ denote the functional $V^\lambda\to \sfe$ such that $\langle (v^\lambda)^*,v^\lambda\rangle=1$
and $(v^\lambda)^*|_{V(\mu)}=0$ for $\mu\neq \lambda$. We can regard $(v^\lambda)^*$ as a highest weight vector
with respect to $\cB^-$ in $(V^\lambda)^*$.  

\sssec{}

In what follows let $\sfe^\lambda$ denote the 1-dimensional representation of $\cT$ on $\sfe$, given by the character $\lambda$.

\medskip

When we think of $V^\lambda$ as endowed with the distinguished highest weight vector $v^\lambda$, we identify it with
$$\on{Ind}^\cG_\cB(\be^\lambda),$$ where $\be^\lambda$ is regarded as a representation of $\cB$ via $\cB\to \cT$, and
$$\on{Ind}^\cG_\cB:\Rep(\cB)\to \Rep(\cG)$$
is the functor \emph{left adjoint} to the restriction functor $\Res^\cG_\cB$. 

\medskip

When we think of $V^\lambda$ as endowed with the distinguished covector $(v^\lambda)^*$, we identify it with
$$\coInd^\cG_{\cB^-}(\be^\lambda),$$ where $\be^\lambda$ is regarded as a representation of $\cB^-$ via $\cB^-\to \cT$, and
$$\coInd^\cG_{\cB^-}:\Rep(\cB^-)\to \Rep(\cG)$$
is the functor \emph{right adjoint} to the restriction functor $\Res^\cG_{\cB^-}$. 

\sssec{}

For each pair $\lambda_1,\lambda_2\in \Lambda$, we have a \emph{canonical} map
\begin{equation} \label{e:Plucker map}
V^{\lambda_1}\otimes V^{\lambda_2}\to V^{\lambda_1+\lambda_2},
\end{equation} 
determined by the condition that the diagram
$$
\CD
V^{\lambda_1}\otimes V^{\lambda_2}  @>>> V^{\lambda_1+\lambda_2} \\
@V{(v^{\lambda_1)^*}\otimes (v^{\lambda_2})^*}VV   @VV{(v^{\lambda_1+\lambda_2})^*}V   \\
\sfe\otimes \sfe  @>{=}>>  \sfe
\endCD
$$
commutes. 

\medskip

We also have a canonical map 
\begin{equation} \label{e:Plucker map dual}
V^{\lambda_1+\lambda_2}\to V^{\lambda_1}\otimes V^{\lambda_2},
\end{equation} 
determined by the condition that the diagram
$$
\CD
V^{\lambda_1+\lambda_2} @>>> V^{\lambda_1}\otimes V^{\lambda_2}  \\
@A{v^{\lambda_1+\lambda_2}}AA  @AA{v^{\lambda_1}\otimes v^{\lambda_2}}A     \\
\sfe @>{=}>>   \sfe\otimes \sfe 
\endCD
$$
commutes.

\sssec{}

For future reference we note that for a fixed finite-dimensional $V\in \Rep(\cG)$ and $\lambda$ deep enough
in the dominant cone (i.e., $\lambda\in \lambda_0+\Lambda^+$ for some $\lambda_0$) there exists a \emph{canonical}
isomorphism in $\Rep(\cG)$
\begin{equation} \label{e:razval initial}
V^\lambda\otimes V\simeq \underset{\mu\in \Lambda}\oplus\, V^{\lambda+\mu} \otimes V(\mu).
\end{equation}

It is uniquely characterized by the property that for a given $v\in V(\mu)$, the image of
$$v^\lambda\otimes v\in V^\lambda(\lambda)\otimes V(\mu) \subset V^\lambda\otimes V$$
lies in
$$\underset{\mu'\geq \mu}\oplus\, V^{\lambda+\mu'} \otimes V(\mu')\subset \underset{\mu\in \Lambda}\oplus\, V^{\lambda+\mu} \otimes V(\mu),$$
and its projection onto the $V^{\lambda+\mu} \otimes V(\mu)$ factor equals $v^{\lambda+\mu}\otimes v$. 

\ssec{Recollections on geometric Satake}

\sssec{} 

We consider $\Sph(G):=\Shv(\Gr_G)^{\fL^+(G)}$ as a monoidal category with respect to convolution. As such it acts
on $\Shv(\Gr_G)$ by right convolutions; denote this action by
$$\CF\in \Shv(\Gr_G),\,\, \CF'\in \Sph(G)\mapsto \CF\star \CF'.$$

\sssec{} \label{sss:Satake}

We will regard Geometric Satake as a monoidal functor
$$\Sat:\Rep(\cG)\to \Sph(G).$$

We will use the following of its properties:

\medskip

\noindent(i) For $\lambda\in \Lambda^+$, we have $\Sat(V^\lambda)=\IC_{\ol\Gr^\lambda_G}$,
where $\Gr^\lambda_G=\fL^+(G)\cdot t^\lambda$, and $\ol\Gr^\lambda_G$ is its closure.

\medskip

\noindent(ii) For $V\in \Rep(\cG)$ and $\mu\in \Lambda$, the weight space $V(\mu)$ identifies canonically
with
$$H_c(S^\mu,(\bi_\mu)^*(\Sat(V)))[\langle \mu,2\check\rho\rangle];$$
in particular, the above cohomology sits in a single degree equal to $\langle \mu,2\check\rho\rangle$. 

\medskip

\noindent(iii) For $\lambda\in \Lambda^+$ the trivialization of the line $V^\lambda(\lambda)$ corresponds
to the identification
$$H_c(S^\lambda,(\bi_\lambda)^*(\Sat(V^\lambda)))[\langle \lambda,2\check\rho\rangle]
\simeq H_c(\fL^+(N)\cdot t^\lambda,\sfe)[2\langle \lambda,2\check\rho\rangle]\simeq \sfe,$$
where we are using the fact that
$$S^\lambda\cap \ol\Gr^\lambda_G=\fL^+(N)\cdot t^\lambda\simeq \fL^+(N)/\on{Ad}_{t^\lambda}(\fL^+(G)),$$
and $\dim(\fL^+(N)/\on{Ad}_{t^\lambda}(\fL^+(G)))=\langle\lambda,2\check\rho\rangle$. 

\sssec{}

Denote $$S^{-,\mu}:=\fL(N^-)\cdot t^\mu\overset{\bi_{-,\mu}}\hookrightarrow \Gr_G.$$
Let $\bk_\mu$ (resp., $\bk_{-,\mu}$) denote the embeddings of the point
$$S^\mu \overset{\bk_\mu}\hookleftarrow \on{pt} \overset{\bk_{-,\mu}}\hookrightarrow S^{-,\mu}.$$

\medskip

We have the following assertion, which is a hallmark application on Braden's theorem from \cite{Bra}:

\begin{lem}  \label{l:Braden}  \hfill

\medskip

\noindent{\em(a)} For $\CF\in \Shv(S^\mu)$ equivariant with respect to the action of $T$ we have canonical isomorphisms
$$H_c(S^\mu,\CF)\simeq (\bk_\mu)^!(\CF),\quad H(S^\mu,\CF)\simeq (\bk_\mu)^*(\CF),$$
and similarly for $(S^{-,\mu},\bk_{-,\mu})$.  

\medskip

\noindent{\em(b)} For $\CF\in \Shv(\Gr_G)$ equivariant with respect to the action of $T$, we have a canonical isomorphism 
$$(\bk_\mu)^!\circ (\bi_\mu)^*(\CF)\simeq  (\bk_{-,\mu})^*\circ (\bi_{-,\mu})^!(\CF)$$
\end{lem}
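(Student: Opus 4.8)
The plan is to deduce both parts from Braden's hyperbolic localization theorem \cite{Bra}, applied to the $\mathbb{G}_m$-action on $\Gr_G$ coming from a generic cocharacter. More precisely, pick a dominant regular coweight $\check\gamma$ of $\cG$, viewed as a cocharacter $\mathbb{G}_m\to T$; then $\mathbb{G}_m$ acts on $\Gr_G$ through its action on $T$ and the loop rotation is not needed here since we only move within a single $\fL^+(G)$-orbit closure. The fixed points of this action are exactly the $T$-fixed points $t^\mu$, the attracting set of $t^\mu$ is $S^\mu = \fL(N)\cdot t^\mu$, and the repelling set is $S^{-,\mu} = \fL(N^-)\cdot t^\mu$. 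For part (a): since $S^\mu$ is an attracting locus for a single fixed point $t^\mu$, with the contraction map $S^\mu\to \on{pt}$ being (an ind-version of) an affine fibration contracting everything to $t^\mu$, the standard contraction principle (valid for $T$-equivariant, hence in particular $\mathbb{G}_m$-equivariant, sheaves) gives $H_c(S^\mu,\CF)\simeq (\bk_\mu)^!(\CF)$ and dually $H(S^\mu,\CF)\simeq (\bk_\mu)^*(\CF)$; the argument for $(S^{-,\mu},\bk_{-,\mu})$ is identical with $N^-$ in place of $N$. One subtlety to address: these are ind-schemes, not schemes of finite type, so the contraction principle must be applied stratum-by-stratum (i.e.\ on each $N_\alpha\cdot t^\mu$ as in \secref{sss:N alpha}) and then passed to the colimit; $T$-equivariance ensures the relevant sheaves descend through the approximating system.

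For part (b): apply Braden's theorem to the pair (attracting, repelling) at the fixed point $t^\mu$. Braden's theorem asserts a canonical isomorphism between the "attracting" hyperbolic localization functor $(p_+)_!(i_+)^*$ and the "repelling" one $(p_-)_*(i_-)^!$, where $i_\pm$ are the inclusions of the attracting/repelling sets and $p_\pm$ their contraction maps to the fixed locus. In our notation $i_+ = \bi_\mu$, $p_+\circ(\text{restriction to }S^\mu) = \bk_\mu$ composed appropriately, and $i_- = \bi_{-,\mu}$, $p_- = \bk_{-,\mu}$. Unwinding, $(p_+)_!(i_+)^*(\CF)$ computes $H_c(S^\mu,(\bi_\mu)^*\CF)$, which by part (a) equals $(\bk_\mu)^!\circ(\bi_\mu)^*(\CF)$, while $(p_-)_*(i_-)^!(\CF)$ computes $H(S^{-,\mu},(\bi_{-,\mu})^!\CF) = (\bk_{-,\mu})^*\circ(\bi_{-,\mu})^!(\CF)$ again by part (a). Braden's isomorphism then yields exactly the claimed identification.

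The main obstacle is the infinite-dimensionality: Braden's theorem as stated in \cite{Bra} is for finite-type schemes (or at worst finite-type stacks), whereas $\Gr_G$ is an ind-scheme and the orbits $S^\mu$, $S^{-,\mu}$ are infinite-dimensional ind-schemes. The way I would handle this is to note that any $T$-equivariant $\CF\in\Shv(\Gr_G)$ is supported on some finite-type closed subscheme $Y\subset\Gr_G$ which can be chosen $\mathbb{G}_m$-stable (since the $\mathbb{G}_m$-action factors through $\fL^+(G)$ or at least normalizes the defining filtration), and then apply the finite-dimensional Braden theorem to $Y$ together with its induced attracting/repelling decomposition $Y\cap S^\mu$, $Y\cap S^{-,\mu}$. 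One must check that the functors $(\bi_\mu)^*$, $(\bi_{-,\mu})^!$ and the cohomology functors are compatible with enlarging $Y$, so that the isomorphism is canonical and independent of the choice; this is where the $T$-equivariance hypothesis is essential, as it guarantees the relevant restriction maps in the ind-scheme colimit are isomorphisms on the nose. Everything else is bookkeeping around the geometric Satake normalizations in \secref{sss:Satake}.
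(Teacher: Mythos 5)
Your proposal is correct and follows the same approach the paper itself indicates: the lemma is stated as a direct application of Braden's hyperbolic-localization theorem from \cite{Bra}, with part (a) being the contraction lemma for the $\BG_m$-action through a regular cocharacter of $T$ and part (b) being Braden's comparison of the two hyperbolic-localization functors, and the paper gives no further proof. Your handling of the ind-finiteness (reducing to a $\BG_m$-stable finite-type closed subscheme $\ol\Gr^\lambda_G$ containing the support, so that $Y\cap S^\mu$ and $Y\cap S^{-,\mu}$ are the finite-type attractor and repeller at the isolated fixed point $t^\mu$) is the right way to make the citation rigorous.
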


We will apply the isomorphism of \lemref{l:Braden} to objects of the form $\Sat(V)$ and thus
obtain several alternative expressions for $V(\mu)$. 

\ssec{Definition of the semi-infinite IC sheaf}

From now on we will assume that $G$ is semi-simple and simply 
connected\footnote{This is not a substantial restriction: for a general reductive $G$, the semi-infinite IC
sheaf we are about to define is supported on the neutral connected component of $\Gr_G$, while the latter 
is the same for $G$ and the simply-connected cover of its derived group.}.

\sssec{}  \label{sss:bizarre order}

Consider the set $\Lambda^+$, endowed the above set with the following (non-standard!) order relation
$$\lambda_1\leq \lambda_2\, \Leftrightarrow\, \lambda_2-\lambda_1\in \Lambda^+.$$

%

Note that this poset if filtered (due to the assumption that $G$ is semi-simple and simply connected). 
We will view $(\Lambda^+,\leq)$ as a category. 

\sssec{}

We are going to construct a functor
$$
(\Lambda^+,\leq) \to \Shv(\Gr_G)$$
that on the level of objects sends
$$\lambda\in \Lambda^+ \mapsto  t^{-\lambda}\cdot \Sat(V^\lambda)[\langle \lambda,2\check\rho\rangle].$$

\sssec{}

For an individual pair $\lambda_1\leq \Lambda_2$, the corresponding map
\begin{equation} \label{e:transition map}
t^{-\lambda_1}\cdot \Sat(V^{\lambda_1})[\langle \lambda_1,2\check\rho\rangle] \to 
t^{-\lambda_2}\cdot \Sat(V^{\lambda_2})[\langle \lambda_2,2\check\rho\rangle] 
\end{equation}
is defined as follows.

\medskip

Write $\lambda_2=\lambda_1+\lambda$ with $\lambda\in \Lambda^+$. We claim that we have a canonically defined map
\begin{equation} \label{e:transition map initial}
\delta_{1,\Gr_G}\to t^{-\lambda}\cdot \Sat(V^\lambda)[\langle \lambda,2\check\rho\rangle].
\end{equation}

Indeed, the datum of a map \eqref{e:transition map initial} is equivalent to that of a map
\begin{equation} \label{e:transition map initial 1}
\delta_{t^\lambda,\Gr_G}\to  \Sat(V^\lambda)[\langle \lambda,2\check\rho\rangle],
\end{equation}
and the latter amounts to a vector in the !-fiber of $\Sat(V^\lambda)[\langle \lambda,2\check\rho\rangle]$ at $t^\lambda$.

\medskip

However, this !-fiber of $\Sat(V^\lambda)$ identifies canonically with $\sfe$, as $t^\lambda$ belongs to
the smooth locus $\Gr^\lambda_G\subset \ol\Gr^\lambda_G$ and $\langle \lambda,2\check\rho\rangle=\dim(\Gr^\lambda_G)$. 

\medskip

We now let \eqref{e:transition map} be the map
\begin{multline*} 
t^{-\lambda_1}\cdot \Sat(V^{\lambda_1})[\langle \lambda_1,2\check\rho\rangle] \simeq 
t^{-\lambda_1}\cdot \delta_{1,\Gr_G} \star \Sat(V^{\lambda_1})[\langle \lambda_1,2\check\rho\rangle]  
\overset{\text{\eqref{e:transition map initial 1}}}\longrightarrow \\
\to t^{-\lambda_1}\cdot t^{-\lambda}\cdot \Sat(V^{\lambda})[\langle \lambda,2\check\rho\rangle]  
\star \Sat(V^{\lambda_1})[\langle \lambda_1,2\check\rho\rangle] \simeq 
t^{-\lambda_2}\cdot \Sat(V^{\lambda}\otimes V^{\lambda_1})[\langle \lambda_2,2\check\rho\rangle] \overset{\text{\eqref{e:Plucker map}}}\longrightarrow \\
\to t^{-\lambda_2}\cdot \Sat(V^{\lambda_2})[\langle \lambda_2,2\check\rho\rangle]
\end{multline*} 

\sssec{}

It is easy to check that the above assignment defines a functor from $(\Lambda^+,\leq)$ to the \emph{homotopy category} of $\Shv(\Gr_G)$,
i.e., that for $\lambda_1\leq \lambda_2\leq \lambda_3$, the corresponding two maps
$$t^{-\lambda_1}\cdot \Sat(V^{\lambda_1})[\langle \lambda_1,2\check\rho\rangle] \rightrightarrows
t^{-\lambda_3}\cdot \Sat(V^{\lambda_3})[\langle \lambda_3,2\check\rho\rangle]$$
are equal in the homotopy category. 

\medskip

In \secref{ss:infty} below we will show that the above assignment uniquely extends to a functor of $\infty$-categories
\begin{equation} \label{e:construction}
(\Lambda^+,\leq) \to \Shv(\Gr_G), \quad \lambda\mapsto t^{-\lambda}\cdot \Sat(V^\lambda)[\langle \lambda,2\check\rho\rangle].
\end{equation}

\medskip

Assuming the existence of the functor \eqref{e:construction}, we define an object $\ICs\in \Shv(\Gr_G)$ by 

\begin{equation} \label{e:IC as colimit}
\ICs:=\underset{\lambda\in (\Lambda^+,\leq)}{\on{colim}}\, t^{-\lambda}\cdot \Sat(V^\lambda)[\langle \lambda,2\check\rho\rangle]
\end{equation} 

\begin{rem}
A conceptual framework for the formation of the colimit \eqref{e:IC as colimit}, suggested to us by S.~Raskin, will
be explained in \secref{ss:Drinfeld-Plucker}.
\end{rem}

\sssec{}

Here are the first properties of $\ICs$:

\begin{prop}  \label{p:properties IC} \hfill

\smallskip

\noindent{\em(a)} $\ICs$ belongs to $\SI(\Gr_G):=\Shv(\Gr_G)^{\fL(N)}$;

\smallskip

\noindent{\em(b)} $\ICs$ is supported on $\ol{S}{}^0$;

\smallskip

\noindent{\em(c)} $(\bi_0)^!(\ICs)\simeq \omega_{S^0}\simeq (\bi_0)^*(\ICs)$;

\smallskip

\noindent{\em(d)} $\ICs$ belongs to $\SI(\Gr_G)^\heartsuit$.

\end{prop}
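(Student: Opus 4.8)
### Proof proposal

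The plan is to prove the four parts in order, extracting each from the colimit presentation \eqref{e:IC as colimit} together with the geometric Satake facts recalled in \secref{sss:Satake}. For part (a), I would note that each term $t^{-\lambda}\cdot\Sat(V^\lambda)[\langle\lambda,2\check\rho\rangle]$ is $\fL(N)$-equivariant: indeed $\Sat(V^\lambda)=\IC_{\ol\Gr^\lambda_G}$ is $\fL^+(G)$-equivariant, hence a fortiori $\fL^+(N)$-equivariant, and translating by $t^{-\lambda}$ conjugates $\fL^+(N)$-equivariance into $\on{Ad}_{t^{-\lambda}}(\fL^+(N))$-equivariance; since $\fL(N)=\bigcup_\lambda \on{Ad}_{t^{-\lambda}}(\fL^+(N))$ (the cofinality underlying the order of \secref{sss:bizarre order}), the colimit over $(\Lambda^+,\leq)$ is $\fL(N)$-equivariant. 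More precisely, I would run the argument inside each fixed $\Shv(\Gr_G)^{N_\alpha}$, which is closed under colimits in $\Shv(\Gr_G)$, and observe that for each $\alpha$ all terms with $\lambda$ sufficiently large are $N_\alpha$-equivariant, so the colimit lies in $\Shv(\Gr_G)^{N_\alpha}$ for every $\alpha$, i.e.\ in $\SI(\Gr_G)$.

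For part (b), the support of $t^{-\lambda}\cdot\Sat(V^\lambda)$ is $t^{-\lambda}\cdot\ol\Gr^\lambda_G$. The weights of $V^\lambda$ lie in $\lambda-\Lambda^{\on{pos}}$, so $\ol\Gr^\lambda_G\subset\bigcup_{\mu\leq\lambda}\ol S{}^\mu$ meets $S^\nu$ only for $\nu\in\lambda-\Lambda^{\on{pos}}$; translating by $t^{-\lambda}$, the support meets $S^\nu$ only for $\nu\in -\Lambda^{\on{pos}}$, i.e.\ is contained in $\ol S{}^0$. Since $\ol S{}^0$ is closed and each term of the colimit is supported there, so is $\ICs$. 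For part (c), I would compute $(\bi_0)^!(\ICs)$ by commuting the !-restriction past the filtered colimit: $(\bi_0)^!\bigl(t^{-\lambda}\cdot\Sat(V^\lambda)[\langle\lambda,2\check\rho\rangle]\bigr)$ is the !-restriction of $\IC_{\ol\Gr^\lambda_G}$ to $t^\lambda\cdot S^0=\on{Ad}_{t^\lambda}(\fL(N))\cdot t^\lambda$, suitably shifted. Using \secref{sss:Satake}(iii) — the statement that $S^\lambda\cap\ol\Gr^\lambda_G=\fL^+(N)\cdot t^\lambda$ with the canonical generator of the top cohomology — together with Braden/equivariance (\lemref{l:Braden}), this !-restriction, after the shift $[\langle\lambda,2\check\rho\rangle]$, is identified with $\omega$ of the orbit. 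The transition maps \eqref{e:transition map} are built precisely from the Plücker maps and the canonical generator of the !-fiber at $t^\lambda$, so they induce the identity on these !-restrictions; hence the colimit is $\omega_{S^0}$. The statement $(\bi_0)^*(\ICs)\simeq\omega_{S^0}$ follows the same way using $H_c$ in place of $(\bk_\mu)^!$ via \lemref{l:Braden}(a), or alternatively by noting $S^0$ is open in $\ol S{}^0$ so $(\bi_0)^*=(\bi_0)^!$ on $\ol S{}^0$; either way $\bi_0^*(\ICs)\simeq\omega_{S^0}$.

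For part (d), I must show $\ICs\in(\SI(\Gr_G))^{\leq 0}\cap(\SI(\Gr_G))^{\geq 0}$ for the t-structure of \secref{ss:t}. Connectivity, $\ICs\in(\SI(\Gr_G))^{\leq 0}$, I would get from characterization (iii): I need $(\bi_\mu)^*(\ICs)\in(\SI(\Gr_G)_{=\mu})^{\leq 0}$, i.e.\ that the $*$-fiber at $t^\mu$, shifted by $[-\langle\mu,2\check\rho\rangle]$, is cohomologically $\leq 0$. Commuting $(\bi_\mu)^*$ past the colimit and using \secref{sss:Satake}(ii) (which gives $H_c(S^\mu,(\bi_\mu)^*\Sat(V))\cong V(\mu)[-\langle\mu,2\check\rho\rangle]$, concentrated in a single degree) together with \lemref{l:Braden}(a) to pass between $H_c(S^\mu,-)$ and $(\bk_\mu)^*\circ(\bi_\mu)^*$: after translating $V=V^\lambda$ by $t^{-\lambda}$ and applying all shifts, each term is concentrated in degree $0$, hence so is the colimit — in particular it is $\leq 0$. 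Co-connectivity, $\ICs\in(\SI(\Gr_G))^{\geq 0}$, comes from characterization (iv): I need $(\bi_\mu)^!(\ICs)\in(\SI(\Gr_G)_{=\mu})^{\geq 0}$. Here I use \lemref{l:Braden}(b): $(\bk_\mu)^!\circ(\bi_\mu)^*(\Sat V)\simeq(\bk_{-,\mu})^*\circ(\bi_{-,\mu})^!(\Sat V)$, which expresses $V(\mu)$ also as a $!$-type fiber; combined with the standard fact that $(\bi_\mu)^!\Sat(V)$ has cohomology in degrees $\geq\langle\mu,2\check\rho\rangle$ (the stalk–costalk parity in geometric Satake), the shifted $!$-fiber lands in degrees $\geq 0$, and this is preserved under the colimit. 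The main obstacle I anticipate is the last point: controlling $(\bi_\mu)^!$ of the \emph{colimit} rather than of individual terms, since $(\bi_\mu)^!$ commutes with filtered colimits of sheaves but one must check the transition maps \eqref{e:transition map} are compatible with the $V(\mu)$-identifications of \secref{sss:Satake}(ii), i.e.\ that the Plücker maps \eqref{e:Plucker map} induce, on $\mu$-weight spaces, exactly the maps appearing in the colimit system; this is a bookkeeping verification using the normalization of highest-weight vectors fixed in the Langlands-dual conventions, and the isomorphism \eqref{e:razval initial} is the tool that makes it transparent. Once this compatibility is in hand, parts (a)–(c) are immediate and (d) follows from the single-degree concentration in \secref{sss:Satake}(ii) plus \lemref{l:Braden}.
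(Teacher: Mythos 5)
Your proof mirrors the paper's: all four parts are obtained by commuting the relevant operation past the filtered colimit \eqref{e:IC as colimit} and applying the geometric Satake facts of \secref{sss:Satake} and Braden's theorem (\lemref{l:Braden}) term by term, exactly as the paper does. The only differences are minor slips that do not affect validity: via \lemref{l:Braden}(a) it is $(\bk_\mu)^!$ (not $(\bk_\mu)^*$) that corresponds to $H_c$; the appeal to \lemref{l:Braden}(b) for coconnectivity is a detour (the paper argues directly that the $!$-restriction of $\IC_{\ol\Gr^\lambda_G}$ to the strictly smaller stratum $\Gr^{\lambda+\mu}_G$, of dimension $\langle\lambda+\mu,2\check\rho\rangle$, sits in strictly positive perverse degrees --- this is precisely the ``standard fact'' you invoke, so you should state it that way rather than routing through Braden); and the transition-map bookkeeping via \eqref{e:razval initial} that you flag as the main obstacle is in fact only needed for the finer fiber computations of \propref{p:! fibers of IC} and \propref{p:* fibers of IC}, not for the present degree estimates, which hold term by term and pass through the filtered colimit.
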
 

\ssec{Proof of \propref{p:properties IC}}

\sssec{}

For point (a), fix a subgroup $N_\alpha$ as in \secref{sss:N alpha}. We need to show that
$\ICs$ is $N_\alpha$-equivariant.

\medskip

Consider the subset of $\Lambda^+$ consisting of those
elements $\lambda$ such that $\on{Ad}_{t^{\lambda}}(N_\alpha)\subset \fL^+(N)$. Clearly, this subset is cofinal.

\medskip

It is clear that for every such $\lambda$, the object
$$t^{-\lambda}\cdot \CF$$
is $N_\alpha$-equivariant for any $\CF$ that is $\fL^+(N)$-equivariant.

\medskip

This implies that all the terms in the colimit \eqref{e:IC as colimit} corresponding to $\lambda$'s from this subset are
$N_\alpha$-equivariant. 

\sssec{}

For point (b) we need to show that if $S^\mu$ is in the support of $\ICs$, then $\mu\in -\Lambda^{\on{pos}}$.
However, this follows from the fact that
$$S^{\mu}\cap \ol\Gr^\lambda_G\neq \emptyset\, \Rightarrow\, \lambda-\mu\in \Lambda^{\on{pos}}.$$

\sssec{}

In view of (b), we have $(\bi_0)^!(\ICs)\simeq (\bi_0)^*(\ICs)$.  Therefore, using \propref{p:on orbit} and point (a), 
in order to prove (c), we need to show that
$$(\bk_0)^!\circ (\bi_0)^!(\ICs)\simeq \sfe.$$

\medskip

The left-hand side is a colimit over $\lambda \in \Lambda^+$ of the !-fibers of 
$$t^{-\lambda}\cdot \Sat(V^\lambda)[\langle \lambda,2\check\rho\rangle]$$
at the point $1\in \Gr_G$.

\medskip

Each term in this colimit is canonically isomorphic to $\sfe$. And it is easy to see from the definition
of the maps \eqref{e:transition map}, that the maps in this system are the identity maps.

\sssec{}

For point (d), let us first prove that $\ICs\in \SI(\Gr_G)^{\geq 0}$. We will show that for $0\neq \mu\in -\Lambda^{\on{pos}}$,
we have
$$(\bi_\mu)^!(\ICs)\in (\SI_{=\mu})^{>0}.$$

\medskip

By \propref{p:on orbit} and point (a), we need to show that the colimit over $\lambda\in \Lambda^+$ of the !-fibers of 
$$t^{-\lambda}\cdot \Sat(V^\lambda)[\langle \lambda,2\check\rho\rangle]$$
at $t^\mu$ lives in degrees $>\langle \mu,2\check\rho \rangle$.  

\medskip

For every fixed $\mu$, this is the same as the !-fiber of $\Sat(V^\lambda)[\langle \lambda,2\check\rho\rangle]$
at $t^{\lambda+\mu}$. Now the desired estimate follows from the fact that the !-restriction of $\Sat(V^\lambda)$ to
$\Gr^{\lambda+\mu}_G$ is concentrated in strictly positive perverse degrees, while
$$\dim(\Gr^{\lambda+\mu}_G)=\langle \lambda+\mu,2\check\rho \rangle.$$

\sssec{}  \label{sss:* estim}

Let us now show that $\ICs\in \SI(\Gr_G)^{\leq 0}$. By \secref{sss:t}(iii), 
we need to show that for $\mu\in -\Lambda^{\on{pos}}$,
we have
$$(\bi_\mu)^*(\ICs)\in (\SI_{=\mu})^{\leq 0}.$$

In fact, we will see that $(\bi_\mu)^*(\ICs)$ belongs to $(\SI_{=\mu})^{\heartsuit}$ for all $\mu\in -\Lambda^{\on{pos}}$.
(We will also see in \propref{p:* fibers of IC} that these objects are non-zero, which implies that $\ICs$ is \emph{not} the minimal 
extension from $S^0$.)

\medskip

By \propref{p:on orbit} and point (a), we need to show that the colimit over $\lambda\in \Lambda^+$ of 
$$(\bk_\mu)^!\circ (\bi_\mu)^*\left(t^{-\lambda}\cdot \Sat(V^\lambda)[\langle \lambda,2\check\rho\rangle]\right)$$
lives degree $\langle \mu,2\check\rho \rangle$.  

\medskip

We claim this happens for each term in the colimit. Indeed, using \lemref{l:Braden}(a), we rewrite
$$(\bk_\mu)^!\circ (\bi_\mu)^*(t^{-\lambda}\cdot \Sat(V^\lambda))\simeq
H_c(S^{\mu+\lambda},(\bi_{\mu+\lambda})^*(\Sat(V^\lambda))).$$

\medskip

Now, by \secref{sss:Satake}(ii), the latter cohomology indeed lives in single degree equal to 
$\langle\mu+\lambda,2\check\rho\rangle$. 

\qed[\propref{p:properties IC}]

\ssec{Fibers of the semi-infinite IC sheaf}  \label{ss:fibers of IC}

\sssec{}

We will now derive some information on the shape
of $(\bi_\mu)^!(\ICs)$ and $(\bi_\mu)^*(\ICs)$.

\medskip

We claim: 

\begin{prop}  \label{p:! fibers of IC}
For $\mu\in -\Lambda^{\on{pos}}$, we have
$$(\bi_\mu)^!(\ICs) \simeq \omega_{S^\mu}[-\langle \mu,2\check\rho \rangle]\otimes \Sym(\cg/\cb^-[-2])(-\mu).$$
\end{prop}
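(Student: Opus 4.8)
The plan is to compute the $!$-fiber term by term in the colimit \eqref{e:IC as colimit} and identify both the answer and the transition maps. By \propref{p:on orbit} and \propref{p:properties IC}(a) it suffices to compute the $!$-fiber of $\ICs$ at the point $t^\mu$, i.e.
$$(\bk_\mu)^!\circ (\bi_\mu)^!(\ICs)\simeq \underset{\lambda\in(\Lambda^+,\leq)}{\on{colim}}\,(\bk_\mu)^!\circ(\bi_\mu)^!\bigl(t^{-\lambda}\cdot\Sat(V^\lambda)[\langle\lambda,2\check\rho\rangle]\bigr).$$
Translating by $t^\lambda$, the $\lambda$-term equals the $\ast$-restriction to $S^{-,\lambda+\mu}$ composed with the $!$-fiber at the point, applied to $\Sat(V^\lambda)$ — more precisely, using \lemref{l:Braden}(b) we rewrite the $\lambda$-term of the colimit as $(\bk_{-,\lambda+\mu})^*\circ(\bi_{-,\lambda+\mu})^!(\Sat(V^\lambda))[\langle\lambda,2\check\rho\rangle]$, up to the shift $[-\langle\mu,2\check\rho\rangle]$ coming from the normalization of $\omega_{S^\mu}$. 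Thus the computation reduces to understanding the $N^-$-orbit costalks of spherical perverse sheaves.

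Next I would invoke the standard fact (a companion to \secref{sss:Satake}(ii), proved by the same Braden-theorem argument, see \cite{BFGM}) that for $V\in\Rep(\cG)$ the cohomology $H(S^{-,\mu},(\bi_{-,\mu})^!(\Sat(V)))$, suitably shifted, computes the weight space of the \emph{costalk} — and more relevantly, that over the whole dominant cone these assemble, via the isomorphisms \eqref{e:razval initial}, into the desired symmetric-algebra answer. Concretely: for $\lambda$ deep in the dominant cone one has $V^\lambda\otimes V\simeq\bigoplus_{\nu}V^{\lambda+\nu}\otimes V(\nu)$ canonically, and the transition map \eqref{e:transition map} of the colimit is built precisely from the Plücker map \eqref{e:Plucker map}; so the colimit over $\lambda$ of the $!$-fibers at $t^\mu$ is a stabilizing/increasing system whose colimit is computed by letting $\lambda\to\infty$. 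The point is that $(\bi_{-,\lambda+\mu})^!(\Sat(V^\lambda))$ at the point, as $\lambda$ grows with $\mu$ fixed, has $!$-costalk cohomology that in the stable range is $\Sym(\cn^-[-2])$ in an appropriate weight — but here, because we are taking $!$ (costalk) rather than $\ast$ (stalk), and because $S^{-,\lambda+\mu}$ rather than $S^{\lambda+\mu}$ is relevant, one picks up the quotient $\cg/\cb^-$ in place of $\cn^-=\cn$. I would make this precise by relating the costalk of $\IC_{\ol\Gr^\lambda_G}$ along $S^{-,\mu}$ to $H^\bullet_{\cN}$ of the corresponding weight space (Kostant / Ginzburg), giving $\Sym(\cg/\cb^-[-2])(-\mu)$ in the limit.

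The main obstacle is not the identification of the answer — that is forced by the combinatorics of \cite{Lus} and \cite{BFGM} — but rather making the colimit of costalks rigorous: one must check that the maps induced by \eqref{e:transition map} on $!$-fibers at $t^\mu$ are the structure maps of the inductive system $\Sym^{\leq n}(\cg/\cb^-[-2])(-\mu)\hookrightarrow\Sym^{\leq n+1}(\cg/\cb^-[-2])(-\mu)$ (or that the system stabilizes in each cohomological degree), and that no correction terms appear. I expect this to follow by tracing through the construction in \secref{sss:razval initial}: the canonical isomorphism \eqref{e:razval initial} tells us exactly how $\Sat(V^\lambda)$ sits inside $\Sat(V^{\lambda_2})$ after the twist, and the claim is that on the relevant costalk this inclusion realizes the multiplication by the ``lowest weight vector direction'' in $\Sym(\cg/\cb^-)$. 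A clean way to organize this is to first prove the $\ast$-version (which was already essentially done in \secref{sss:* estim}, giving $\Sym(\cn^-[-2])$ via \secref{sss:Satake}(ii)) and then pass to $!$ either by a direct costalk computation or by Verdier duality combined with the known self-duality properties of $\ICs$; I would present the direct costalk route, citing \lemref{l:Braden} to convert $(\bi_\mu)^!\circ(\bi_\mu)^\ast$-type expressions into honest cohomology of $\fL(N^-)$-orbits where the Kostant-type computation of $\Sym(\cg/\cb^-)$ is available.
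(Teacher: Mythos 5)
Your approach diverges from the paper's: the paper proves this proposition (with the actual computation deferred to \secref{sss:fibers on Gr}) by first converting the $!$-fiber of $\Sat(V^\lambda)[\langle\lambda+\mu,2\check\rho\rangle]$ at $t^{\lambda+\mu}$ into a $\CHom$-space in $\Shv(\Gr_G)^I$ via Verdier duality, then transporting to $\IndCoh(\cn^-\underset{\cg}\times\{0\}/\cB^-)$ by the [ABG] equivalence of \thmref{t:ABG}, and finally performing an explicit colimit calculation there in terms of $\Res^\cG_{\cB^-}$, $\coInd^\cG_{\cB^-}$ and the identification \eqref{e:razval initial}. You instead attempt a direct geometric route through Braden's theorem, and this is where the argument breaks. \lemref{l:Braden}(b) identifies the hyperbolic localization $(\bk_\mu)^!\circ(\bi_\mu)^*$ with $(\bk_{-,\mu})^*\circ(\bi_{-,\mu})^!$; neither of these is the pure $!$-fiber $(\bk_\mu)^!\circ(\bi_\mu)^!\simeq(\bk_{-,\mu})^!\circ(\bi_{-,\mu})^!$ that this proposition requires. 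Taken literally, your rewriting of the $\lambda$-term computes the hyperbolic localization of $\Sat(V^\lambda)$ at $t^{\lambda+\mu}$, which by \secref{sss:Satake}(ii) is the weight space $V^\lambda(\lambda+\mu)$ sitting in a single cohomological degree; the colimit of those is $\CO(\cN)(\mu)$, i.e.\ the answer of \propref{p:* fibers of IC}, not the $\Sym(\cg/\cb^-[-2])(-\mu)$ you are after. Braden offers no shortcut here precisely because the $!$-costalk of $\IC_{\ol\Gr^\lambda_G}$ is spread over several cohomological degrees (it carries the stable Kazhdan--Lusztig information), and no $*$/$!$ mix collapses it.

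Beyond the Braden misstep, the heart of the proposition is left unproved in your write-up: a \emph{canonical} isomorphism of the colimit with $\Sym(\cg/\cb^-[-2])(-\mu)$, compatible with the transition maps \eqref{e:transition map}. You defer this to ``a Kostant-type computation'' and ``the combinatorics of \cite{Lus} and \cite{BFGM}'', but those sources pin down graded dimensions only; they give neither the canonical identification (as a $\cT$-representation, functorial in $\lambda$) nor the verification that the transition maps realize the structure maps of a symmetric algebra. The paper obtains both automatically once on the coherent side of \thmref{t:ABG}, where the transition maps become explicit maps of $\cG$-representations and \eqref{e:razval initial} makes the termwise stabilization manifest. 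Absent that mechanism (or a genuine Kostant-functor substitute with the transition maps fully traced through), what you have is a plan for the computation rather than a proof.
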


\begin{proof}

We need to show that the colimit over $\lambda\in \Lambda^+$ of !-fibers at $t^{\lambda+\mu}$ of 
$\Sat(V^\lambda)[\langle \lambda+\mu,2\check\rho\rangle]$, under the maps \eqref{e:transition map}
identifies with $\Sym((\cg/\cb^-)[-2])(-\mu)$. This calculation will be performed in \secref{sss:fibers on Gr}.

\end{proof} 

\sssec{}

We now claim:

\begin{prop}  \label{p:* fibers of IC}
For $\mu\in -\Lambda^{\on{pos}}$ there is a canonical isomorphism
$$(\bi_\mu)^*(\ICs)\simeq \omega_{S^\mu}[-\langle \mu,2\check\rho \rangle]\otimes \CO(\cN)(\mu),$$
where $\CO_\cN$ denotes the algebra of regular functions on $\cN$, viewed as a $T$-representation
via the adjoint action.
\end{prop}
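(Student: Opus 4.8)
The plan is to run the same kind of argument as for \propref{p:! fibers of IC}, but with compactly supported cohomology along $S^\mu$ in place of the $!$-fiber, and to read the answer off from a colimit of weight spaces of $\cG$.

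First I would reduce to a colimit computation. By \secref{sss:* estim} the object $(\bi_\mu)^*(\ICs)$ already lies in $(\SI(\Gr_G)_{=\mu})^\heartsuit$, so by \propref{p:on orbit} and \corref{c:omega} it is of the form $\omega_{S^\mu}\otimes W_\mu$ with $W_\mu:=(\bk_\mu)^!\circ(\bi_\mu)^*(\ICs)\in\Vect$; it therefore suffices to identify $W_\mu$ with $\CO(\cN)(\mu)$ placed in cohomological degree $\langle\mu,2\check\rho\rangle$. As in \secref{sss:* estim}, $(\bi_\mu)^*$ and $(\bk_\mu)^!\simeq H_c(S^\mu,-)$ (the latter by \lemref{l:Braden}(a) on $T$-equivariant objects) commute with the colimit \eqref{e:IC as colimit}, and the $\lambda$-th term computes, via \lemref{l:Braden}(a) and geometric Satake \secref{sss:Satake}(ii), to $V^\lambda(\mu+\lambda)$ concentrated in degree $\langle\mu,2\check\rho\rangle$. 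Hence $W_\mu\simeq\underset{\lambda\in(\Lambda^+,\leq)}{\on{colim}}\,V^\lambda(\mu+\lambda)$ in that single degree.

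Next I would pin down the transition maps in this colimit. Unwinding \eqref{e:transition map}, using monoidality of geometric Satake, the identification of the distinguished map $\delta_{t^\lambda,\Gr_G}\to\Sat(V^\lambda)[\langle\lambda,2\check\rho\rangle]$ of \eqref{e:transition map initial 1} with the highest weight vector $v^\lambda$ (which is \secref{sss:Satake}(iii)), and the compatibility of the convolution isomorphism $\Sat(V^{\lambda})\star\Sat(V^{\lambda_1})\simeq\Sat(V^{\lambda}\otimes V^{\lambda_1})$ with the functor $H_c(S^{\bullet},(\bi_{\bullet})^{*}(-))$ built into \secref{sss:Satake}(ii)--(iii), the transition map $V^{\lambda_1}(\mu+\lambda_1)\to V^{\lambda_2}(\mu+\lambda_2)$ (for $\lambda_2=\lambda_1+\lambda$, $\lambda\in\Lambda^+$) is the composite
$$V^{\lambda_1}(\mu+\lambda_1)\;\xrightarrow{\;v^\lambda\otimes(-)\;}\;(V^{\lambda}\otimes V^{\lambda_1})(\mu+\lambda_2)\;\longrightarrow\;V^{\lambda_2}(\mu+\lambda_2),$$
the second arrow being induced by the Plücker map \eqref{e:Plucker map}. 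Then I would compute the colimit: since \eqref{e:Plucker map} sends $v^{\lambda_1}\otimes v^{\lambda_2}$ to $v^{\lambda_1+\lambda_2}$, the system $\{V^\lambda\}_{\lambda\in\Lambda^+}$ assembles into a commutative graded ring $A=\bigoplus_{\lambda\in\Lambda^+}V^\lambda$ (the homogeneous coordinate ring of the basic affine space $\overline{\cG/\cN}$), the $v^\lambda$ form a sub-monoid, and the transition maps above are multiplication by $v^\lambda$; so the colimit is the subspace of the localization $A[\{v^\lambda\}^{-1}]$ on which the internal $\cT$-weight is $\mu$ and the $\Lambda^+$-degree is $0$. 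This localization is the ring of functions on the big cell of $\overline{\cG/\cN}$, a product of $\cN$ (with its adjoint $\cT$-action) with a torus, and extracting that bi-weight component identifies the colimit with $\CO(\cN)(\mu)$. (The structured way to organize the last two steps is to invoke the Drinfeld--Plücker formalism of \secref{ss:Drinfeld-Plucker}, which packages \eqref{e:IC as colimit} and computes its $*$-fibers directly in dual-group terms; it also recovers \propref{p:semiinf as reg}, and explains why, under \propref{p:abelian categ}, the class $\CO(\cB^-/\cT)\in\Rep(\cB^-)$ has $t^\mu$-fiber $\CO(\cN)(\mu)$, the $w_0$-twist there being exactly what turns $\cN^-$ into $\cN$.)

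The hard part is this last step, and it is essentially bookkeeping of the same flavour as in \propref{p:! fibers of IC}: one must (i) decide whether \eqref{e:Plucker map} or \eqref{e:Plucker map dual} really governs the transitions, hence whether the big cell produces $\cN$ or $\cN^-$; (ii) keep separate the two torus actions in play — the ``internal'' $\cT\subset\cG$ giving the weight decomposition of each $V^\lambda$ versus the grading $\cT$ underlying $A=\bigoplus_\lambda V^\lambda$ — and track where $w_0$ enters; and (iii) upgrade the termwise isomorphisms of the first step to an isomorphism of the whole diagram. For (iii) it is convenient to note first, via \eqref{e:razval initial}, that each transition $V^{\lambda_1}(\mu+\lambda_1)\to V^{\lambda_2}(\mu+\lambda_2)$ is an isomorphism once $\lambda_1$ is deep enough in the dominant cone, so the colimit stabilizes; its dimension is then the Kostant partition function of $-\mu$, which equals $\dim\CO(\cN)(\mu)$ and provides a consistency check against \propref{p:! fibers of IC}.
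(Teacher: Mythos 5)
Your proposal is correct and follows the paper's proof in its essential shape: reduce to the colimit $\on{colim}_\lambda V^\lambda(\lambda+\mu)$ via Braden's theorem and \secref{sss:Satake}(ii), identify the transition maps as ``tensor with $v^\lambda$ followed by the Pl\"ucker map \eqref{e:Plucker map}'', then recognize the colimit as $\CO(\cN)(\mu)$. Two remarks on the differences.

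First, the paper's closing step is more concrete than yours: it writes down, for each $\lambda$, the matrix coefficient map $V^\lambda(\lambda+\mu)\to\CO(\cN)(\mu)$, $v\mapsto\bigl(n\mapsto\langle(v^\lambda)^*,n\cdot v\rangle\bigr)$, and observes these are compatible with the transitions. Your version packages this as a localization of the Pl\"ucker ring $A=\bigoplus_\lambda V^\lambda$ along the monoid $\{v^\lambda\}$ and extracts the $(\deg=0,\ \text{weight}=\mu)$ bi-isotypic piece. The two are really the same identification (the matrix coefficient against $(v^\lambda)^*$ is exactly division by $v^\lambda$ on the big cell), but yours is closer in spirit to \secref{ss:Drinfeld-Plucker}. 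One small slip: the relevant projective coordinate ring is that of $\ol{\cG/\cN^-}$ (not $\ol{\cG/\cN}$), as in the paper's formula $\CO(\ol{\cG/\cN^-})\simeq\bigoplus_\lambda V^\lambda\otimes\sfe^{-\lambda}$; the appearance of $\cN$ in the answer is because the big cell of $\cG/\cN^-$ is $\cN\cdot\cT$. You flag precisely this $\cN$-versus-$\cN^-$ ambiguity as your point (i), so this is a matter of stating rather than of substance.

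Second, your identification of the transition maps (step two) is stated as following from ``compatibility'' of Satake with $H_c(S^\bullet,(\bi_\bullet)^*(-))$; the paper regards this as nonobvious enough to devote the whole of \secref{ss:mult hw} to it, via the convolution diagram and the slice $S^{-,\mu}\subset\on{act}^{-1}(S^{-,\mu+\lambda})\cap(\ol\Gr^\lambda_G\wt\times\Gr_G)$. Your proposal would be complete once that step is spelled out, and the stabilization argument you give under (iii) (via \eqref{e:razval initial}, with the Kostant dimension count as a consistency check against \propref{p:! fibers of IC}) is a legitimate substitute for the paper's implicit verification that the induced map on colimits is an isomorphism.
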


\begin{proof}

We need to show that the colimit over $\lambda\in \Lambda^+$ of 
$$(\bk_\mu)^!\circ (\bi_\mu)^*\left(t^{-\lambda}\cdot \Sat(V^\lambda)[\langle \lambda+\mu,2\check\rho\rangle]\right)$$
identifies canonically with $\CO(\cN)(\mu)$.

\medskip

By \secref{sss:Satake}(ii), each term in the colimit identifies canonically with $V^\lambda(\lambda+\mu)$. Furthermore, by 
\secref{ss:mult hw} below, the transition maps are given as follows: 

\medskip

For $\lambda_2=\lambda_1+\lambda$, the corresponding map $V^{\lambda_1}(\lambda_1+\mu)\to V^{\lambda_2}(\lambda_2+\mu)$ is
given by
$$
V^{\lambda_1}(\lambda_1+\mu) \simeq V^\lambda(\lambda)\otimes V^{\lambda_1}(\lambda_1+\mu)\to 
(V^\lambda\otimes V^{\lambda_1})(\lambda_2+\mu)
\overset{\text{\eqref{e:Plucker map}}}\longrightarrow V^{\lambda_2}(\lambda_2+\mu).$$

\medskip

However, we claim that this colimit indeed identifies with $\CO(\cN)(\mu)$. Namely, we map each $V^\lambda(\lambda+\mu)$ to 
$\CO(\cN)(\mu)$ by the matrix coefficient map, which sends $v\in V^\lambda(\lambda+\mu)$ to the function
$$n\mapsto \langle (v^\lambda)^*,n\cdot v\rangle.$$

\end{proof}

\ssec{Tensor product maps in terms of geometric Satake}  \label{ss:mult hw} 

In this subsection we will explain the geometry that stands behind the claim made in the course
of the proof of \propref{p:* fibers of IC} above. 

\sssec{}

Let $V$ be a representation of $\cG$, and $\lambda\in \Lambda^+$ and $\mu\in \Lambda$. Consider the map
\begin{equation} \label{e:mult by hw}
V(\mu) \simeq 
V^\lambda(\lambda)\otimes V(\mu)\to  (V^\lambda\otimes V)(\lambda+\mu).
\end{equation}

In this subsection we will describe it in terms of geometric Satake.

\sssec{}

Let $\Conv_G$ denote the convolution diagram, i.e.,
$$\Conv_G=\fL(G)\overset{\fL^+(G)}\times \Gr_G,$$
where $\overset{\fL^+G}\times$ means taking the quotient with respect to the diagonal action of $\fL^+(G)$.
We will think of $\Conv_G$ as fibered over $\Gr_G$ via 
$$\on{pr}:\fL(G)\overset{\fL^+(G)}\times \Gr_G\to \fL(G)/\fL^+(G)=\Gr_G$$
with typical fiber $\Gr_G$. We will write $\Conv_G$ as  a ``twisted product" 
$$\Gr_G\wt\times \Gr_G,$$
where the first factor is the ``base copy" $\Gr_G$ and the second factor is the ``fiber copy" of $\Gr_G$. 

\medskip

For any $\CF\in \Shv(\Gr_G)$ and $\CF'\in \Shv(\Gr_G)^{\fL^+(G)}=\Sph(G)$, we can form their twisted external product
$$\CF\wt\boxtimes \CF'\in \Shv(\Conv_G).$$

\medskip

The action of $\fL(G)$ on $\Gr_G$ defines yet another map
$$\on{act}:\Conv_G\to \Gr_G.$$
This map is (ind)-proper. For $\CF,\CF'$ as above, their convolution product $\CF\star \CF'$ is by definition
$$\on{act}_!(\CF\wt\boxtimes \CF')\simeq \on{act}_*(\CF\wt\boxtimes \CF').$$

\sssec{}

Using \secref{sss:Satake} and \lemref{l:Braden}(b), for $V$, $\lambda$ and $\mu$ as above, we need to describe the map
\begin{multline}  \label{e:mult by hw geom}
H(S^{-,\mu},(\bi_{-,\mu})^!(\Sat(V)))[\langle \mu,2\check\rho \rangle]
\to H(S^{-,\mu+\lambda},(\bi_{-,\mu+\lambda})^!(\IC_{\ol\Gr_G^\lambda}\star \Sat(V)))[\langle \mu+\lambda,2\check\rho \rangle]
\simeq \\
\simeq H(\on{act}^{-1}(S^{-,\mu+\lambda}),(\wt\bi_{-,\mu+\lambda})^!(\IC_{\ol\Gr_G^\lambda}\wt\boxtimes \Sat(V)))
[\langle \mu+\lambda,2\check\rho \rangle],
\end{multline} 
where $\wt\bi_{-,\mu+\lambda}$ denotes the locally closed embedding
$$\on{act}^{-1}(S^{-,\mu+\lambda})\hookrightarrow \Gr_G\wt\times \Gr_G=\Conv_G.$$

\medskip

We now notice that 
$$\on{act}^{-1}(S^{-,\mu+\lambda})\cap (\ol\Gr^\lambda_G\wt\times \Gr_G)$$
contains as a closed subscheme the preimage with respect to $\on{pr}$ of
$$S^{-,\lambda}\cap \ol\Gr^\lambda_G=\{t^\lambda\}.$$

This subscheme is isomorphic to a copy of $S^{-,\mu}$, and the !-pullback of 
$$(\wt\bi_{-,\mu+\lambda})^!(\IC_{\ol\Gr_G^\lambda}\wt\boxtimes \Sat(V))[\langle \mu+\lambda,2\check\rho \rangle]$$ to it identifies 
with $(\bi_{-,\mu})^!(\Sat(V))[\langle \mu,2\check\rho \rangle]$. 

\medskip

We claim that the resulting map
$$H(S^{-,\mu},(\bi_{-,\mu})^!(\Sat(V)))[\langle \mu,2\check\rho \rangle]\to 
H(\on{act}^{-1}(S^{-,\mu+\lambda}),(\wt\bi_{-,\mu+\lambda})^!(\IC_{\ol\Gr_G^\lambda}\wt\boxtimes \Sat(V)))
[\langle \mu+\lambda,2\check\rho \rangle]$$
is the desired map \eqref{e:mult by hw geom}.

\medskip

Indeed, this follows from the construction of the monoidal structure on the functor $\Sat$.

\ssec{Construction at the level of $\infty$-categories}  \label{ss:infty}

In this subsection we carry out the construction of $\ICs$ at the level of $\infty$-categories. 

\sssec{}  \label{sss:central}

Consider the following paradigm. Let $\CA$ be a monoidal $\infty$-category, and let $\CC$ be a right-lax bi-module
 $\infty$-category, which means that we have the maps
$$a_1\star (a_2\star c)\to (a_1\star a_2)\star c \text{ and }
(c\star a_2)\star a_1 \to c\star (a_2\star a_1),\quad a_1,a_2\in \CA,\,\,c\in \CC$$
(satisfying a coherent system of higher compatibilities) but these maps are not necessarily isomorphisms. 

\medskip

Let $c\in \CC$  be a lax central object. By this we mean that we are given a family of maps
$$a\star c\overset{\phi(a,c)}\longrightarrow c\star a, \quad a\in A$$
that make the diagrams
\begin{equation} \label{e:weak central}
\CD
a_1\star (a_2\star c)   @>{\phi(a_2,c)}>>   a_1\star (c\star a_2)  \\
@VVV   @VV{\sim}V   \\
(a_1\star a_2)\star c  & & (a_1\star c)\star a_2 \\
@V{\phi(a_1\star a_2,c)}VV   @VV{\phi(a_1,c)}V   \\
c\star (a_1\star a_2)  @<<<  (c\star a_1) \star a_2
\endCD
\end{equation}
commute, along with a coherent system of higher compatibilities. 

\sssec{}  \label{sss:ordinary}

Assume for a moment that the monoidal category $\CA$ is ordinary (i.e., the mapping spaces are discrete). 
Assume also that for any
$a_1,...,a_n\in \CA$, the space
$$\Maps\left(a_1\star (a_2\star...(a_n\star c)...)),c\star (a_1\star....a_n)\right)$$
is discrete. In this case, the datum of the
maps $\phi(a,c)$ and the commutativity of the diagrams \eqref{e:weak central} at the level of 
homotopy categories uniquely extends to the $\infty$-level. 

\sssec{}

Let $\CA^\sim$ be the $\infty$-category that has the same objects as $\CA$, but where we set
$\Maps_{\CA^\sim}(a_1,a_2)$ to be the $\infty$-groupoid underlying the category
$$(a\in \CA, a_2\simeq a\star a_1),$$
and where the composition is given by
$$(a''\in \CA, a_3\simeq a''\star a_2)\circ (a'\in \CA, a_2\simeq a'\star a_1)=(a''\star a',a_3\simeq (a''\star a')\star a_1).$$

\medskip

Suppose now that the left action of $\CA$ is strict (i.e., non-lax), and that each of the functors
$$c\mapsto a\star c$$
admits a right adjoint. We will denote it symbolically by 
$$c\mapsto a^\vee \star c.$$

\medskip

In this case, we can consider the functor 
$$\CA^\sim\to \CC$$
that at the level of objects sends $a\in \CA$ to 
$$a^\vee\star (c\star a)$$
and at the level of morphisms sends $(a\in \CA, a_2\simeq a\star a_1)$ to
\begin{multline*} 
a_1^\vee\star (c\star a_1) \to 
a_1^\vee\star (a^\vee \star (a \star (c\star a_1))) \simeq 
a_1^\vee\star (a^\vee \star ((a \star c)\star a_1))) \overset{\phi(a,c)}\longrightarrow \\
\to a_1^\vee\star (a^\vee \star ((c \star a)\star a_1)))  \to 
 a_1^\vee\star (a^\vee \star (c \star (a\star a_1))) \overset{\text{left action is strict}}\simeq \\
 \simeq (a\star a_1)^\vee \star (c \star (a\star a_1))) \simeq a_2^\vee \star (c\star a_2).
\end{multline*}

\sssec{}

We apply the above paradigm to $\CA=\Lambda^+$, viewed as a monoidal structure given by the structure of
semi-group, and $\CC=\Shv(\Gr_G)$.  

\medskip

The left action of $\Lambda$ on $\Shv(\Gr_G)$ is given by 
$$\lambda\star \CF=t^\lambda\cdot \CF.$$

The right action is obtained by composing the right-lax functor
$$\Lambda^+\to \Rep(\cG), \quad \lambda\to V^\lambda[\langle \lambda,2\check\rho\rangle]$$
(with the right-lax monoidal structure given by the maps \eqref{e:Plucker map}) with
the right convolution action of $\Rep(\cG)$ on $\Shv(\Gr_G)$ obtained from
Geometric Satake. 

\medskip

We take $c=\delta_{1,\Gr_G}$. The map
\begin{equation} \label{e:central on delta}
t^\lambda\cdot \delta_{1,\Gr_G}=\delta_{t^\lambda,\Gr_G}\to \IC_{\ol\Gr^\lambda_G}[\langle \lambda,2\check\rho\rangle]=
\delta_{1,\Gr_G}\star \Sat(V^\lambda)[\langle \lambda,2\check\rho\rangle]
\end{equation} 
is given by the identification
$$H_{\{t^\lambda\}}(\Gr_G,\IC_{\ol\Gr^\lambda_G})[\langle \lambda,2\check\rho\rangle]\simeq 
H_{\{t^\lambda\}}(\Gr^\lambda_G,\IC_{\Gr^\lambda_G})[\langle \lambda,2\check\rho\rangle]
\simeq H_{\{t^\lambda\}}(\Gr^\lambda_G,\omega_{\Gr^\lambda_G})\simeq \sfe.$$

\medskip

We claim that $c$ indeed has a unique weak central structure which at the level of objects is \eqref{e:central on delta}.
To prove this, we will show that we are in the situation of \secref{sss:ordinary}.

\medskip 

Indeed, we need to check that
$$\Maps(\delta_{t^\lambda,\Gr_G},\IC_{\ol\Gr^\lambda_G}[\langle \lambda,2\check\rho\rangle])$$
is discrete. However, we have just seen the above mapping space identifies with $\sfe$. 

\ssec{Another way to view $\ICs$}

In this subsection we will write $\ICs$ as a colimit in the abelian category $(\SI(\Gr_G))^\heartsuit$.

\sssec{}

First, we make the following observation: 
 
\begin{prop}  \label{p:conv exact}
The action of $\Sph(G)^\heartsuit$ on $\SI(\Gr_G)$ is t-exact.
\end{prop}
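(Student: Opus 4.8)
The plan is to reduce the statement to a local assertion about how convolution with a fixed object of $\Sph(G)^\heartsuit$ interacts with the $!$-restriction and $*$-restriction functors $(\bi_\mu)^!$ and $(\bi_\mu)^*$ on the semi-infinite orbits. By the characterization of the t-structure on $\SI(\Gr_G)$ in terms of conditions (iii) and (iv) of \secref{ss:t}, an object $\CF$ lies in $(\SI(\Gr_G))^{\leq 0}$ (resp., $(\SI(\Gr_G))^{\geq 0}$) if and only if $(\bi_\mu)^*(\CF)$ (resp., $(\bi_\mu)^!(\CF)$) lies in the appropriate cohomological amplitude of $\SI(\Gr_G)_{=\mu}\simeq \Vect$ for all $\mu$. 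So it suffices to show: for $\CS\in \Sph(G)^\heartsuit$ and $\CF\in \SI(\Gr_G)$, the object $(\bi_\mu)^*(\CF\star \CS)$ is computed from the $(\bi_{\mu'})^*(\CF)$ by functors that preserve connectivity, and dually for $(\bi_\mu)^!$.

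First I would set up the relevant geometry. Using the convolution diagram $\Conv_G = \fL(G)\overset{\fL^+(G)}\times \Gr_G$ with its maps $\on{pr}$ and $\on{act}$ as in \secref{ss:mult hw}, I would analyze $\on{act}^{-1}(S^\mu)$ and stratify it by the pairs $(\mu',\text{position in fiber})$; since $\CS$ is supported on finitely many $\ol\Gr^\lambda_G$, only finitely many $\mu'$ with $\mu-\mu'\in\Lambda^{\on{pos}}$ contribute. The key point is Braden-style hyperbolic localization (\lemref{l:Braden}): for $T$-equivariant sheaves, $*$-restriction to $S^\mu$ followed by $H_c$ agrees with $!$-restriction to $S^{-,\mu}$ followed by $H$, i.e. with the $\bk_\mu^!\circ(\bi_\mu)^*$ or $\bk_{-,\mu}^*\circ(\bi_{-,\mu})^!$ calculation. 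This lets me replace the awkward $*$-restriction by a proper pushforward along the $\on{act}$-map over the contracting locus, where $\on{act}$ is ind-proper, so that $H_c$ of the twisted product $\CF\wt\boxtimes\CS$ on $\on{act}^{-1}(S^\mu)$ splits (by the decomposition of the twisted product over the $\Gr^\lambda$-strata) into a finite iterated extension of terms of the form $\left((\bi_{\mu'})^*(\CF)\right)\otimes \left(\text{stalk-type cohomology of }\CS\right)$.

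The crucial input is that $\CS\in\Sph(G)^\heartsuit$ is, by geometric Satake, $\Sat(V)$ for $V\in\Rep(\cG)$ an honest representation, and its $*$-stalk / $H_c$ data along $S^\nu$ sits in a \emph{single} cohomological degree $\langle\nu,2\check\rho\rangle$ — this is \secref{sss:Satake}(ii). That degree is precisely the shift built into the normalization of the t-structure on $\SI(\Gr_G)_{=\nu}$. Tracking the cohomological shifts: the contribution of the $\nu$-part of $\CS$ shifts $(\bi_{\mu-\nu})^*(\CF)$ by exactly the amount that re-indexes the heart of $\SI_{=\mu-\nu}$ to the heart of $\SI_{=\mu}$. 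Hence each graded piece of the finite filtration lies in the heart (when $\CF$ does), and connectivity/coconnectivity is preserved in general. For the $\geq 0$ half one runs the dual argument with $(\bi_\mu)^!$, using $\on{act}_*$ and the $!$-costalk data of $\Sat(V)$, again concentrated in the single degree $\langle\nu,2\check\rho\rangle$ by the same Satake fact together with Verdier self-duality of $\Sat(V)$ up to the appropriate twist.

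The main obstacle I anticipate is bookkeeping the twisted-product decomposition of $(\wt\bi_\mu)^*(\CF\wt\boxtimes\Sat(V))$ over the strata of $\on{act}^{-1}(S^\mu)$ \emph{compatibly with the $\fL(N)$-equivariance and $T$-equivariance} needed to invoke \lemref{l:Braden}, and checking that the finite filtration this produces is genuinely a filtration in $\SI(\Gr_G)_{=\mu}$ with the claimed graded pieces rather than just something with the right cohomology groups. Concretely, one must verify that the spectral sequence / filtration associated to the stratification of $\on{act}^{-1}(S^\mu)$ degenerates at the level of the relevant cohomological amplitude — which follows from the single-degree concentration in \secref{sss:Satake}(ii) but needs to be stated carefully. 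A clean way to organize this is to first prove t-exactness of $-\star\Sat(V^\lambda)$ for dominant $\lambda$ (where $\Sat(V^\lambda)=\IC_{\ol\Gr^\lambda_G}$ and one can argue directly on strata), and then deduce the general case since every $\CS\in\Sph(G)^\heartsuit$ is a subquotient of sums of such $\IC_{\ol\Gr^\lambda_G}$ and t-exact functors are closed under the operations producing subquotients of their values; but one should be slightly careful, as t-exactness is not automatically inherited by subobjects of a functor, so the stratawise argument is the safer route.
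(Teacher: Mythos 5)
Your proposal is in the right spirit — the two real inputs, namely the single-degree concentration of the $H_c$ data of $\Sat(V)$ along $S^\nu$ (\secref{sss:Satake}(ii)) together with Braden's theorem, and base change along the ind-proper convolution map — are exactly the ones the paper uses, and with care your stratification/filtration argument would go through. But it is doing considerably more work than necessary, and the paper's proof differs in one key organizational step that collapses all of it.

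The point you are missing is that $(\SI(\Gr_G))^{\leq 0}$ is generated under filtered colimits by the compact objects $\Delta^\lambda[n]$, $n\geq 0$, and $-\star\Sat(V)$ is colimit-preserving. So to prove right t-exactness it suffices to show that each $\Delta^\lambda\star\Sat(V)$ is connective. Now $\Delta^\lambda=(\bi_\lambda)_!(\omega_{S^\lambda})[\,\cdot\,]$ is a $!$-extension supported on a \emph{single} $\fL(N)$-orbit, so $(\bi_\mu)^*(\Delta^\lambda\star\Sat(V))$ is computed by the $!$-pushforward along $\on{act}:S^\lambda\wt\times S^{\mu-\lambda}\to S^\mu$ of $\omega_{S^\lambda}\wt\boxtimes(\bi_{\mu-\lambda})^*(\Sat(V))$. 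There is no stratification of $\on{act}^{-1}(S^\mu)$, no finite filtration, and no spectral sequence to worry about degenerating: one gets directly $\omega_{S^\mu}\otimes H_c(S^{\mu-\lambda},(\bi_{\mu-\lambda})^*(\Sat(V)))$, and Satake (ii) finishes it. This is the real content of the reduction; your instinct to ``prove it for $\Sat(V^\lambda)$ and then handle subquotients'' is (as you yourself note) not valid for t-exactness, and the generator reduction is what replaces it. You should also be aware that your dual argument for the $\geq 0$ half is subtler than it looks: $(\SI(\Gr_G))^{\geq 0}$ is \emph{not} generated under colimits by the $\nabla^\lambda$, so you cannot simply mirror the reduction to generators. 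The paper avoids this entirely with an adjunction: the left adjoint of $-\star\Sat(V)$ is $-\star\BD(\Sat(V)^\tau)\simeq -\star\Sat(V^*)$, and $V^*\in\Rep(\cG)^\heartsuit$, so the left adjoint is right t-exact by the same argument, hence $-\star\Sat(V)$ is left t-exact. Either that adjunction trick, or (as the paper also remarks) literally swapping $!$ and $*$ in the $\Delta^\lambda$ computation, is much cleaner than rebuilding the stratification argument on the $!$-side.
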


\begin{proof}

Let $\CF$ be an object of $(\SI(\Gr_G))^\heartsuit$. With no restriction of generality, we can assume that 
$\CF$ is compact. To prove that the functor $-\star \CF$ is right t-exact on $\SI(\Gr_G)$, by \secref{sss:t}(iii), 
it is enough to show that the objects $\Delta^\lambda\star \CF$ are connective.  I.e.,
we need to check that $(\bi_\mu)^*(\Delta^\lambda\star \CF)$ is a connective object of $(\SI(\Gr_G))_{=\mu}$
for every $\mu$.

\medskip

In fact, we claim that 
$$(\bi_\mu)^*(\Delta^\lambda\star \CF)\simeq 
H_c(S^{\mu-\lambda},(\bi_{\mu-\lambda})^*(\CF))[\langle \mu-\lambda,2\check\rho\rangle]
\otimes \omega_{S^\mu}[\langle -\mu,2\check\rho\rangle],$$
or equivalently
$$(\bi_\mu)^*((\bi_\lambda)_!(\omega_{S^\lambda})\star \CF)\simeq 
H_c(S^{\mu-\lambda},(\bi_{\mu-\lambda})^*(\CF))
\otimes \omega_{S^\mu}.$$

Indeed, $(\bi_\mu)^*((\bi_\lambda)_!(\omega_{S^\lambda})\star \CF)$ ia given as a !-direct image under the map
$$\on{act}:S^\lambda\wt\boxtimes S^{\mu-\lambda}\to S^\mu$$
of $\omega_{S^\lambda}\wt\boxtimes (\bi_{\mu-\lambda})^*(\CF)$
and is isomorphic to $\omega_{S^\mu}\otimes V$ for some $V\in \Vect$. In order to calculate $V$ we compute
\begin{multline*}
V\simeq H_c(S^\mu,\omega_{S^\mu}\otimes V)\simeq\
H_c\left(S^\mu,\on{act}_!(\omega_{S^\lambda}\wt\boxtimes (\bi_{\mu-\lambda})^*(\CF))\right)\simeq \\
\simeq 
H_c(S^\lambda\wt\boxtimes S^{\mu-\lambda},\omega_{S^\lambda}\wt\boxtimes (\bi_{\mu-\lambda})^*(\CF))\simeq
H_c(S^\lambda,\omega_{S^\lambda})\otimes H_c(S^{\mu-\lambda},(\bi_{\mu-\lambda})^*(\CF))
\simeq H_c(S^{\mu-\lambda},(\bi_{\mu-\lambda})^*(\CF)),
\end{multline*}
as required.

\medskip

In order to show that $-\star \CF$ is left t-exact we either swap ! and * in the above argument, or argue as follows:

\medskip

We note that for $\CF\in \Sph(G)$, both the left (and also right) adjoint of $-\star \CF$ is given by $-\star \BD(\CF^\tau)$,
where $\BD$ denotes Verdier duality, and $\tau$ is the anti-involution on $\Sph(G)$, induced by the inversion on $\fL(G)$.
Now, since the left adjoint is right t-exact (by the above), the right adjoint is left t-exact.

\end{proof}

\sssec{}

Consider the objects
$$\Delta^{-\lambda}\star \Sat(V^\lambda)\in \SI(\Gr_G).$$
By \propref{p:conv exact} and assuming \thmref{t:j is perv}, we obtain that they all lie in $(\SI(\Gr_G))^\heartsuit$. 

\medskip

Note that by \eqref{e:Av Delta}, we have
$$\on{Av}_!^{\fL(N)}(t^{-\lambda}\star \Sat(V^\lambda))[-\langle \lambda,2\check\rho\rangle]
\simeq \on{Av}_!^{\fL(N)}(\delta_{t^{-\lambda}})[-\langle \lambda,2\check\rho\rangle]\star \Sat(V^\lambda)
\simeq \Delta^{-\lambda}\star \Sat(V^\lambda)\in \SI(\Gr_G),$$
where the first isomorphism is due to the fact that the action of $\Sph(G)$ by right
convolutions is given by proper maps.

\medskip

In particular, applying $\on{Av}_!^{\fL(N)}$ to the maps \eqref{e:transition map}, we obtain a functor 
$$(\Lambda^+,\leq)\to (\SI(\Gr_G))^\heartsuit, \quad \lambda\mapsto \Delta^{-\lambda}\star \Sat(V^\lambda).$$

\medskip

We claim:

\begin{prop}
There exists a canonical isomorphism
$$\ICs\simeq \underset{\lambda\in (\Lambda^+,\leq)}{\on{colim}}\, \Delta^{-\lambda}\star \Sat(V^\lambda)\in \SI(\Gr_G).$$
\end{prop}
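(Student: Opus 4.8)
The claim is that the colimit defining $\ICs$ in $\Shv(\Gr_G)$ (formula \eqref{e:IC as colimit}) agrees with the colimit of the objects $\Delta^{-\lambda}\star \Sat(V^\lambda)$ formed in $(\SI(\Gr_G))^\heartsuit$. The strategy is to produce the isomorphism by comparing the diagram $\lambda\mapsto t^{-\lambda}\cdot \Sat(V^\lambda)[\langle\lambda,2\check\rho\rangle]$ in $\Shv(\Gr_G)$ with the diagram $\lambda\mapsto \Delta^{-\lambda}\star \Sat(V^\lambda)$ in $\SI(\Gr_G)$ by means of the functor $\on{Av}^{\fL(N)}_!$, and then argue that this functor commutes with the relevant colimit. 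Concretely: first I would observe, exactly as in the displayed computation just above the statement, that
\[
\on{Av}_!^{\fL(N)}\bigl(t^{-\lambda}\cdot \Sat(V^\lambda)[\langle\lambda,2\check\rho\rangle]\bigr)\simeq \Delta^{-\lambda}\star \Sat(V^\lambda),
\]
using that right convolution by $\Sat(V^\lambda)\in \Sph(G)$ is given by ind-proper maps, hence commutes with $\on{Av}^{\fL(N)}_!$, together with \eqref{e:Av Delta}. Moreover the transition maps \eqref{e:transition map} go over, under $\on{Av}^{\fL(N)}_!$, to the transition maps in the second diagram; this is how the functor $(\Lambda^+,\leq)\to (\SI(\Gr_G))^\heartsuit$ was defined in the paragraph preceding the statement. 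So at the level of $\Lambda^+$-indexed diagrams, the second diagram is $\on{Av}^{\fL(N)}_!$ applied to the first.

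Next I would apply $\on{Av}^{\fL(N)}_!$ to the colimit \eqref{e:IC as colimit}. Since $\on{Av}^{\fL(N)}_!$, where defined, is a left adjoint (to the forgetful functor $\SI(\Gr_G)\to \Shv(\Gr_G)$), it preserves colimits: one has to check it is genuinely defined on each term $t^{-\lambda}\cdot \Sat(V^\lambda)[\langle\lambda,2\check\rho\rangle]$ and on the colimit $\ICs$. The former is the content of the isomorphism above; for the latter, recall from \propref{p:properties IC}(a) that $\ICs$ already lies in $\SI(\Gr_G)$, so $\on{Av}^{\fL(N)}_!(\ICs)$ makes sense. In fact, since $\ICs\in \SI(\Gr_G)$, the adjunction shows $\on{Av}^{\fL(N)}_!$ restricted to objects already in $\SI(\Gr_G)$ is the identity; but more to the point, for a diagram all of whose terms lie in the domain of the partial left adjoint and whose colimit also does, $\on{Av}^{\fL(N)}_!$ commutes with that colimit. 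Hence
\[
\on{Av}_!^{\fL(N)}(\ICs)\simeq \underset{\lambda\in(\Lambda^+,\leq)}{\on{colim}}\,\Delta^{-\lambda}\star\Sat(V^\lambda),
\]
the colimit on the right being taken in $\SI(\Gr_G)$. It remains to identify the left-hand side with $\ICs$ itself: this is immediate from \propref{p:properties IC}(a), since $\ICs\in\SI(\Gr_G)$ implies $\on{Av}_!^{\fL(N)}(\ICs)\simeq \ICs$ (the unit of the adjunction is an isomorphism on objects of the subcategory). Finally, to conclude that the colimit on the right, a priori formed in $\SI(\Gr_G)$, may equally be computed in the abelian heart $(\SI(\Gr_G))^\heartsuit$, I would invoke \propref{p:conv exact} together with \thmref{t:j is perv}: these guarantee each $\Delta^{-\lambda}\star\Sat(V^\lambda)$ lies in $(\SI(\Gr_G))^\heartsuit$, and since the $\Lambda^+$-colimit of the $\Delta^{-\lambda}\star\Sat(V^\lambda)$ in $\SI(\Gr_G)$ is $\ICs\in(\SI(\Gr_G))^\heartsuit$ by \propref{p:properties IC}(d), the colimit is already in the heart, hence agrees with the colimit computed there (filtered colimits in the heart agree with colimits in $\SI(\Gr_G)$ when the latter happens to land in the heart).

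**Main obstacle.** The delicate point is the interchange of $\on{Av}^{\fL(N)}_!$ with the colimit, because $\on{Av}^{\fL(N)}_!$ is only a \emph{partially} defined left adjoint (defined via the colimit $\on{colim}_\alpha \on{Av}^{N_\alpha}_!$ over the groups $N_\alpha$ exhausting $\fL(N)$, cf. \secref{sss:!-av}), so "left adjoints preserve colimits" is not automatic. I would handle this by working with a cofinal subdiagram: for fixed $\alpha$, the subset of $\lambda\in\Lambda^+$ with $\on{Ad}_{t^{-\lambda}}(N_\alpha)\subset\fL^+(N)$ is cofinal (this is exactly the cofinality argument used in the proof of \propref{p:properties IC}(a)), and on such terms the averaging $\on{Av}^{N_\alpha}_!$ is computed by a proper pushforward and is unproblematic; passing to the colimit over $\alpha$ and using that everything in sight is already eventually $\fL(N)$-equivariant then legitimizes the interchange. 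The rest of the argument is formal manipulation of the two colimit presentations and the adjunction, so the cofinality bookkeeping around $\on{Av}^{\fL(N)}_!$ is the only real content.
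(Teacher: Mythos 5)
Your proposal is essentially the paper's argument: apply $\on{Av}_!^{\fL(N)}$ to the colimit \eqref{e:IC as colimit}, use that it preserves colimits as a (partial) left adjoint, and use that $\on{Av}_!^{\fL(N)}(\ICs)\simeq\ICs$ since $\ICs$ already lies in $\SI(\Gr_G)$. Your extra care about partial-definedness (the cofinality bookkeeping) and about the colimit landing in the heart is harmless elaboration on steps the paper states tersely.
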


\begin{proof}
Since $\ICs$ is already $\fL(N)$-equivariant, we have
$$\on{Av}_!^{\fL(N)}(\ICs)\simeq \ICs.$$

Now, the functor $\on{Av}_!^{\fL(N)}$, being a left adjoint, commutes with colimits, and hence
$$\on{Av}_!^{\fL(N)}\left(\underset{\lambda\in (\Lambda^+,\leq)}{\on{colim}}\, t^{-\lambda}\cdot \Sat(V^\lambda)\right)
\simeq \underset{\lambda\in (\Lambda^+,\leq)}{\on{colim}}\, \Delta^{-\lambda}\star \Sat(V^\lambda)\in \SI(\Gr_G).$$

\end{proof}

\section{Relation to the IC sheaf of Drinfeld's compactification} \label{s:Drinf}

In this section we let $X$ be a smooth and complete curve with a marked point $x\in X$.

\ssec{Recollections on Drinfeld's compactification}  \label{ss:BunNb}

\sssec{}

Let $\BunNb$ denote Drinfeld's relative compactification of the stack $\Bun_N$ along the fibers of the map
$\Bun_N\to \Bun_G\times \on{pt}/T$.

\medskip

Let us recall the definition of $\BunNb$. By definition, this is the algebraic stack 
that classifies triples $(\CP_G,\kappa)$, where:

\medskip

\noindent(i) $\CP_G$ is a $G$-bundle on $X$;

\medskip

\noindent(ii) $\kappa$ is a \emph{Pl\"ucker} data, i.e., a system of non-zero maps 
$$\kappa^{\check\lambda}:\CO_X\to \CV^{\check\lambda}_{\CP_G},$$
(here $\CV^{\check\lambda}$ denotes the Weyl module with highest weight $\check\lambda\in \check\Lambda^+$)
that satisfy Pl\"ucker relations, i.e., for $\check\lambda_1$ and $\check\lambda_2$ the diagram
$$
\CD
\CO_X\otimes \CO_X  @>{\kappa^{\check\lambda_1}\otimes \kappa^{\check\lambda_2}}>> 
\CV^{\check\lambda_1}_{\CP_G}\otimes \CV^{\check\lambda_2}_{\CP_G}  \\
@A{\sim}AA   @AAA  \\
\CO_X @>{\kappa^{\check\lambda_1+\check\lambda_2}}>> \CV^{\check\lambda_1\check\lambda_2}_{\CP_G} 
\endCD
$$
must commute. 

%

\sssec{}

We let $(\BunNb)_{\infty \cdot x}$ an ind-algebraic stack, which is a version of $\BunNb$, where we allow the maps 
$\kappa^{\check\lambda}$ to have poles at $x$.



\medskip

For each $\lambda\in \Lambda$, we let
$$(\BunNb)_{\leq \lambda\cdot x}\overset{\ol\imath_\lambda}\hookrightarrow (\BunNb)_{\infty \cdot x}$$
be the closed substack, where we bound the order of pole of $\kappa^{\check\lambda}$ by the integer $\langle \lambda,\check\lambda\rangle$. 
In particular,
$$\BunNb=(\BunNb)_{\leq 0\cdot x}.$$



\sssec{}

We let 
$$(\BunNb)_{=\lambda\cdot x} \overset{\jmath_\lambda}\hookrightarrow (\BunNb)_{\leq \lambda\cdot x}$$
be the open substack, where we require that $\kappa^{\check\lambda}$ have a pole of order exactly 
$\langle \lambda,\check\lambda\rangle$ and where we forbid it to have zeroes on $X-x$.
For $\lambda=0$ we have
$$(\BunNb)_{=0\cdot x} =\Bun_N;$$
denote the corresponding map
$\Bun_N\hookrightarrow \BunNb$ simply by $\jmath$. 

\medskip

The stack $(\BunNb)_{=\lambda\cdot x}$ is smooth for any $\lambda$; in fact
$$(\BunNb)_{=\lambda\cdot x}\simeq \Bun_B\underset{\Bun_T}\times \{\CP^0_T(-\lambda\cdot x)\},$$
where $\CP^0_T$ denotes the trivial $T$-bundle on $X$. 

\medskip

Denote
$$\imath_\lambda=\ol\imath_\lambda\circ \jmath_\lambda.$$



\sssec{}

Set
$$\Delta^\lambda_{\on{glob}}:=(\imath_\lambda)_!(\omega_{(\BunNb)_{=\lambda\cdot x}})[-\dim((\BunNb)_{=\lambda\cdot x})]$$
and 
$$\nabla^\lambda_{\on{glob}}:=(\imath_\lambda)_*(\omega_{(\BunNb)_{=\lambda\cdot x}})[-\dim((\BunNb)_{=\lambda\cdot x})].$$

In the above formula,
$$\dim((\BunNb)_{=\lambda\cdot x})=(g-1)\cdot \dim(N)+\langle \lambda,2\check\rho\rangle.$$

Thus, $\omega_{(\BunNb)_{=\lambda\cdot x}}[-\dim((\BunNb)_{=\lambda\cdot x})]$
is the IC sheaf on $(\BunNb)_{=\lambda\cdot x}$. 

\medskip

We let $\IC_{\on{glob}}$ denote the intersection cohomology sheaf on $\BunNb$, viewed as an object of
$\Shv((\BunNb)_{\infty \cdot x})$. 

\sssec{}

Hecke modifications of the underlying $G$-bundle define a (right) action of the monoidal category
$\Sph(G)$ on $\Shv((\BunNb)_{\infty \cdot x})$.
We denote this action by
$$\CF\in \Shv((\BunNb)_{\infty \cdot x}),\,\, \CF'\in \Sph(G)\,\,\mapsto \,\, \CF\star \CF'.$$

\sssec{}

We have a tautological map
$$\pi:\Gr_G\to (\BunNb)_{\infty \cdot x}.$$

Indeed, if $\CP_G$ is a $G$-bundle on $X$, equipped with a trivialization on $X-x$, the tautological
reduction to $N$ of the trivial bundle defines a Pl\"ucker data on $\CP_G$. 

\medskip

Note that the preimage of $(\BunNb)_{\leq \lambda\cdot x}$ (resp., $(\BunNb)_{=\lambda\cdot x}$) under $\pi$ equals 
$\ol{S}{}^\lambda$ (resp., $S^\lambda$). 

\sssec{}

Thus, we have the pullback functor 
$$\pi^!:\Shv((\BunNb)_{\infty \cdot x})\to \Shv(\Gr_G)$$
and its partially defined left adjoint $\pi_!$.

\medskip

Both these functors intertwine the actions of $\Sph_G$ on $\Shv(\Gr_G)$ and $\Shv((\BunNb)_{\infty \cdot x})$.
This is because the convolution action is given by proper maps. 

\ssec{Statement of the result}

\sssec{}

The main result of this paper is the following:

\begin{thm} \label{t:main}
There exists a canonical isomorphism 
$$\ICs\simeq \pi^!(\IC_{\on{glob}})[(g-1)\cdot \dim(N)].$$
\end{thm}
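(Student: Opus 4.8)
The plan is to realize $\IC_{\on{glob}}$ by the very colimit procedure that produced $\ICs$ in Section~\ref{s:colimit}, but performed on $(\BunNb)_{\infty\cdot x}$, and then to transport one presentation to the other along $\pi^!$. The key preliminary observation is Step~1: $\pi^!$ intertwines the right convolution action of $\Sph(G)$ on $\Shv(\Gr_G)$ with the one by Hecke modifications at $x$ on $\Shv((\BunNb)_{\infty\cdot x})$, and — applying base change along the Cartesian squares relating the embeddings $\imath_\lambda$ and $\bi_\lambda$ together with the (pro-)smoothness of the maps $S^\lambda\to(\BunNb)_{=\lambda\cdot x}$ — one gets $\pi^!(\Delta^\lambda_{\on{glob}})[(g-1)\cdot\dim(N)]\simeq\Delta^\lambda$, the shift absorbing the discrepancy $\dim((\BunNb)_{=\lambda\cdot x})=(g-1)\dim(N)+\langle\lambda,2\check\rho\rangle$ between the two IC-normalizations. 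Hence also $\pi^!(\Delta^{-\lambda}_{\on{glob}}\star\Sat(V^\lambda))[(g-1)\cdot\dim(N)]\simeq\Delta^{-\lambda}\star\Sat(V^\lambda)$, compatibly in $\lambda$.

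Next I would run the construction of \secref{ss:infty} on $(\BunNb)_{\infty\cdot x}$, taking $c=\Delta^0_{\on{glob}}=(\jmath)_!(\IC_{\Bun_N})$ in place of $\delta_{1,\Gr_G}$ (with the translation action replaced by the shift-by-$\Lambda$ automorphisms of $(\BunNb)_{\infty\cdot x}$; the requisite weak central structure is verified exactly as there, the relevant mapping spaces being discrete, and may also be deduced from the one on $\delta_{1,\Gr_G}$ via $\pi$). This produces a functor $(\Lambda^+,\leq)\to\Shv((\BunNb)_{\infty\cdot x})$, $\lambda\mapsto\Delta^{-\lambda}_{\on{glob}}\star\Sat(V^\lambda)$, with transition maps the images of \eqref{e:transition map}; call its colimit $\ICs'_{\on{glob}}$. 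Each term is supported on $\BunNb=(\BunNb)_{\leq 0\cdot x}$, since Hecke modification by $V^\lambda$ raises the pole of $\kappa^{\check\lambda}$ by at most $\langle\lambda,\check\lambda\rangle$, and each restricts to $\IC_{\Bun_N}$ over the open stratum $\Bun_N$ via the highest-weight line $V^\lambda(\lambda)\subset V^\lambda$. As $\Delta^{-\lambda}_{\on{glob}}\star\Sat(V^\lambda)$ lies in perverse degrees $\le 0$ with zeroth perverse cohomology restricting to $\IC_{\Bun_N}$ over $\Bun_N$, and $\IC_{\on{glob}}$ is the Goresky--MacPherson extension of $\IC_{\Bun_N}$, the identity over $\Bun_N$ extends canonically to a compatible family of maps $\Delta^{-\lambda}_{\on{glob}}\star\Sat(V^\lambda)\to\IC_{\on{glob}}$, hence to a map $\alpha\colon\ICs'_{\on{glob}}\to\IC_{\on{glob}}$.

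The main step is that $\alpha$ is an isomorphism, which I would check after $!$-restriction to each stratum $(\BunNb)_{=\mu\cdot x}$, $\mu\in-\Lambda^{\on{pos}}$ (jointly conservative, cf.\ \lemref{l:! gener}). By \secref{sss:Satake}(ii), refined as in \secref{ss:mult hw}, the $!$-restriction of $\Delta^{-\lambda}_{\on{glob}}\star\Sat(V^\lambda)$ to $(\BunNb)_{=\mu\cdot x}$ is governed by the weight space $V^\lambda(\lambda+\mu)$, with transition maps given by multiplication by the highest-weight vector followed by the Plücker map \eqref{e:Plucker map} — exactly the diagram analyzed in the proofs of \propref{p:! fibers of IC} and \propref{p:* fibers of IC} — so passing to the colimit over $\lambda$ yields the $\Sym((\cg/\cb^-)[-2])$-valued (resp.\ $\CO(\cN)$-valued) answers recorded there, which coincide with the $!$-restriction of $\IC_{\on{glob}}$ to $(\BunNb)_{=\mu\cdot x}$ computed in \cite{FFKM} (equivalently \cite{BFGM}); alternatively this stratumwise comparison may be made through the Zastava models of the singularities of $\BunNb$, where the IC sheaf is computed directly by geometric Satake on $\Gr_G$. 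Granting $\ICs'_{\on{glob}}\simeq\IC_{\on{glob}}$, one applies $\pi^!(-)[(g-1)\cdot\dim(N)]$: by Step~1 the transition maps of $\ICs'_{\on{glob}}$ pull back to those in the presentation $\ICs\simeq\underset{\lambda\in(\Lambda^+,\leq)}{\on{colim}}\,\Delta^{-\lambda}\star\Sat(V^\lambda)$ from Section~\ref{s:colimit}, and since the comparison map $\underset{\lambda}{\on{colim}}\,\pi^!(-)\to\pi^!(\underset{\lambda}{\on{colim}}\,(-))$ is again checked stratumwise on the $S^\mu$ (where $\pi$ is (pro-)smooth), it is an isomorphism; this gives $\ICs\simeq\pi^!(\IC_{\on{glob}})[(g-1)\cdot\dim(N)]$.

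\textbf{The main obstacle} is the globalization step: producing the maps $\Delta^{-\lambda}_{\on{glob}}\star\Sat(V^\lambda)\to\IC_{\on{glob}}$ requires controlling the perverse amplitude of $\Delta^{-\lambda}_{\on{glob}}\star\Sat(V^\lambda)$, and the stratumwise identification of their $!$-restrictions with those of $\IC_{\on{glob}}$ is precisely where the relation between the IC sheaf of Drinfeld's compactification and geometric Satake — the input of \cite{BFGM}/\cite{FFKM}, or a Zastava-space computation — genuinely enters. A secondary but persistent difficulty is the bookkeeping of the two IC-normalizations, whose $\mu$-strata on $\Gr_G$ and on $(\BunNb)_{\infty\cdot x}$ have codimensions differing by a factor of two, so that one must verify all shifts assemble into the single shift $[(g-1)\cdot\dim(N)]$.
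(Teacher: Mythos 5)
Your route is genuinely different from the paper's: you propose to realize $\IC_{\on{glob}}$ as the colimit $\ICs'_{\on{glob}}=\underset{\lambda}{\on{colim}}\,\Delta^{-\lambda}_{\on{glob}}\star\Sat(V^\lambda)$ on $(\BunNb)_{\infty\cdot x}$, parallel to the construction of $\ICs$ on $\Gr_G$, and then transport the identification along $\pi^!$. The paper instead constructs the map $\ICs\to\pi^!(\IC_{\on{glob}})[(g-1)\dim(N)]$ directly by adjunction (reducing to a single $!$-fiber computation for the convolution $\IC_{\on{glob}}\star\IC_{\ol\Gr^{-\lambda}_G}$), proves $\fL(N)$-equivariance of the target by the second-point device (\propref{p:equiv}), and tests the map via hyperbolic localization onto $S^{-,\mu}$ (\lemref{l:Braden}) combined with the Zastava-space geometry (\propref{p:Zastava and Bun}, \propref{p:calc on Zast}). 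Your proposal contains two serious gaps.

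\textbf{Gap 1.} Your ``Step 1'' --- that $\pi^!(\Delta^\lambda_{\on{glob}})[(g-1)\dim(N)]\simeq\Delta^\lambda$ --- is exactly \thmref{t:main j}, which the paper proves \emph{together with} \thmref{t:main} via \propref{p:comp fibers}(b) and the Zastava-space input \propref{p:Zastava and Bun}(b) (itself requiring the factorization trick to reduce to the locus where $\fq\colon\CZ^\mu\to\BunNb$ is smooth). It does not follow from formal base change: $\pi$ is not a smooth morphism of finite-type objects, $\Delta^\lambda_{\on{glob}}$ is a $!$-extension formed as a colimit over finite-dimensional approximations, and commuting $\pi^!$ past such $!$-extensions is precisely the content one must prove. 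Presenting it as a preliminary observation inverts the logical order and hides the hard part.

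\textbf{Gap 2.} Your stratumwise verification that $\alpha\colon\ICs'_{\on{glob}}\to\IC_{\on{glob}}$ is an isomorphism presupposes an identification that \thmref{t:main} is designed to supply. You assert that the colimit of $!$-restrictions along $\imath_\mu$, which you say is ``governed by the weight space $V^\lambda(\lambda+\mu)$,'' coincides with $(\imath_\mu)^!(\IC_{\on{glob}})\simeq\omega\otimes\Sym(\cn^-[-2])(\mu)$ from \cite{BFGM}. But this conflates two distinct computations: the weight space $V^\lambda(\lambda+\mu)$ controls the \emph{$*$-restriction} (\propref{p:* fibers of IC}, via \secref{sss:Satake}(ii)), whereas the \emph{$!$-fiber} at $t^{\lambda+\mu}$ is a stable affine Kazhdan--Lusztig datum, computed through the ABG equivalence (\propref{p:! fibers of IC}, \secref{sss:fibers on Gr}), with limit $\Sym((\cg/\cb^-)[-2])(-\mu)$. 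As the Remark following \propref{p:global IC} explicitly warns, the author cannot exhibit a direct identification $\Sym(\cn^-)\simeq\Sym(\cg/\cb^-)$ compatible with the structural maps: producing one geometrically is a \emph{consequence} of \thmref{t:main}, not an available input. The paper escapes this circularity by, after hyperbolic localization, comparing only a single lowest cohomology computed directly on the Zastava space (\propref{p:calc on Zast}(b)).

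A lesser point: constructing $\alpha$ by passing to the intermediate extension needs perverse amplitude bounds on $\Delta^{-\lambda}_{\on{glob}}\star\Sat(V^\lambda)$ (exactness of the $\Sph(G)$-action on $\Shv((\BunNb)_{\infty\cdot x})$); this is standard but should be cited, not asserted.
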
 

\sssec{}

It is clear from the definitions that for $\lambda\in \Lambda$, we have a canonical isomorphism
$$\nabla^\lambda\simeq \pi^!(\nabla^\lambda_{\on{glob}})[(g-1)\cdot \dim(N)].$$

\medskip

In addition, by adjunction, we obtain a map 
\begin{equation} \label{e:j map}
\Delta^\lambda\to  \pi^!(\Delta^\lambda_{\on{glob}})[(g-1)\cdot \dim(N)].
\end{equation}

\medskip

Along with \thmref{t:main}, we will prove:
\begin{thm} \label{t:main j}
The map \eqref{e:j map} is an isomorphism.
\end{thm}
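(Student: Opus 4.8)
The plan is to compute the target of \eqref{e:j map} by base change, recognise it as $\Delta^\lambda$, and then conclude from the fact that $\Delta^\lambda$ is a simple object of the abelian category $(\SI(\Gr_G))^\heartsuit$.

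\medskip

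First I would use base change. Since the preimage of $(\BunNb)_{=\lambda\cdot x}$ under $\pi$ is $S^\lambda$, the square
$$
\CD
S^\lambda  @>{\bi_\lambda}>>  \Gr_G  \\
@V{\pi_\lambda}VV  @VV{\pi}V  \\
(\BunNb)_{=\lambda\cdot x}  @>{\imath_\lambda}>>  (\BunNb)_{\infty\cdot x}
\endCD
$$
is Cartesian, where $\pi_\lambda:=\pi|_{S^\lambda}$. Factoring $\imath_\lambda=\ol\imath_\lambda\circ\jmath_\lambda$ as a closed embedding after an open one and combining base change for $!$-pullback against $*$-pushforward (along the closed $\ol\imath_\lambda$) with base change for $!$-pullback against extension by zero (along the open $\jmath_\lambda$), one obtains a canonical isomorphism $\pi^!\circ(\imath_\lambda)_!\simeq(\bi_\lambda)_!\circ(\pi_\lambda)^!$. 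Because the $!$-pullback of a dualizing complex is the dualizing complex, $(\pi_\lambda)^!\omega_{(\BunNb)_{=\lambda\cdot x}}\simeq\omega_{S^\lambda}$; feeding in $\dim\bigl((\BunNb)_{=\lambda\cdot x}\bigr)=(g-1)\dim(N)+\langle\lambda,2\check\rho\rangle$ and using \corref{c:omega} and \propref{p:! exists}, this produces a canonical isomorphism in $\SI(\Gr_G)$
$$\pi^!(\Delta^\lambda_{\on{glob}})\,[(g-1)\dim(N)]\;\simeq\;(\bi_\lambda)_!(\omega_{S^\lambda})[-\langle\lambda,2\check\rho\rangle]\;=\;\Delta^\lambda .$$

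\medskip

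Under this isomorphism the map \eqref{e:j map} becomes a morphism $\Delta^\lambda\to\Delta^\lambda$ in $(\SI(\Gr_G))^\heartsuit$ (a morphism in $\SI(\Gr_G)$ between objects of the heart is a morphism of the heart). By \corref{c:j is perv} and \propref{p:abelian categ} the object $\Delta^\lambda$ is simple, so it remains only to check that \eqref{e:j map} is nonzero. For this I would use that, by construction, \eqref{e:j map} is compatible with the tautological maps to the costandard objects: the composite
$$\Delta^\lambda\longrightarrow \pi^!(\Delta^\lambda_{\on{glob}})[(g-1)\dim(N)]\longrightarrow \pi^!(\nabla^\lambda_{\on{glob}})[(g-1)\dim(N)]\simeq \nabla^\lambda,$$
whose first arrow is \eqref{e:j map}, is the canonical map $\Delta^\lambda\to\nabla^\lambda$, and the latter is nonzero because it restricts to the identity on the orbit $S^\lambda\subset\ol{S}{}^\lambda$. (Alternatively, one checks that $(\bi_\mu)^!$ of \eqref{e:j map} is an isomorphism for every $\mu$: both sides vanish for $\mu\neq\lambda$ since $(\imath_\mu)^!\circ(\imath_\lambda)_!=0$, the stratum $(\BunNb)_{=\mu\cdot x}$ lying in the closed complement of the open substack $(\BunNb)_{=\lambda\cdot x}\subset(\BunNb)_{\leq\lambda\cdot x}$ or being disjoint from $(\BunNb)_{\leq\lambda\cdot x}$, while for $\mu=\lambda$ it is the canonical identification of $\omega_{S^\lambda}[-\langle\lambda,2\check\rho\rangle]$; then one invokes \lemref{l:! gener}.)

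\medskip

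I expect the main obstacle to be a matter of care rather than of depth: one must run the base-change step honestly in the setting where $\pi$ has infinite-dimensional fibres, so that $\pi_!$ is not available on $\Gr_G$ in the usual six-functor sense. The way around this is that every $!$-pushforward occurring above takes place either on the (ind-)finite-type stack $(\BunNb)_{\infty\cdot x}$, where base change is standard, or is one of the extensions $(\ol\bi_\lambda)_*$, $(\bj_\lambda)_!$ on objects of $\SI(\Gr_G)$ already shown to exist in \propref{p:! exists}; on $\Gr_G$ only $!$-pullbacks, which are unconditionally defined, are otherwise applied. The genuinely load-bearing external inputs are thus \propref{p:abelian categ} (simplicity of $\Delta^\lambda$) and the already-established isomorphism $\nabla^\lambda\simeq\pi^!(\nabla^\lambda_{\on{glob}})[(g-1)\dim(N)]$, and the step that requires the most attention is to pin down the construction of \eqref{e:j map} precisely enough to read off its compatibility with the latter.
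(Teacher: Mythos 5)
The central base-change claim is false, and it is precisely the content of the theorem. You assert that $\pi^!\circ(\jmath_\lambda)_!\simeq(\bj_\lambda)_!\circ(\pi_\lambda)^!$ for the open embedding $\jmath_\lambda$ by ``base change for $!$-pullback against extension by zero.'' No such base-change isomorphism exists in general. The standard base changes in a Cartesian square are $\pi^*\circ j_!\simeq j'_!\circ(\pi')^*$ (upper-$*$ against lower-$!$) and $\pi^!\circ j_*\simeq j'_*\circ(\pi')^!$ (upper-$!$ against lower-$*$); since for an open embedding $j_!\neq j_*$ as functors to the ambient space, the combination $\pi^!\circ j_!$ is not covered. (Concretely: with $X=\BA^1$, $U=\BG_m$, $\pi$ the inclusion of the origin, one finds $\pi^!(j_!\underline{\sfe}_U)\neq 0$ while $j'_!(\pi')^!$ gives $0$.) This is exactly why the paper notes, just before the statement, that the isomorphism $\nabla^\lambda\simeq\pi^!(\nabla^\lambda_{\on{glob}})[(g-1)\dim N]$ \emph{is} automatic (it is base change for a $*$-pushforward), whereas the analogous statement for $\Delta^\lambda$ is a theorem. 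If your base-change step were valid, \thmref{t:main j} would be a triviality and the paper's Zastava-space argument would be superfluous; and one would also be forced into the false conclusion that $\pi^!$ preserves intermediate extensions, contradicting the paper's explicit warning that $\ICs$ is not the intermediate extension while $\IC_{\on{glob}}$ is.

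Your parenthetical ``alternative'' argument contains a second confusion of $!$ and $*$. For $\mu<\lambda$ (i.e.\ $\lambda-\mu\in\Lambda^{\pos}$) the stratum $(\BunNb)_{=\mu\cdot x}$ lies in the \emph{closure} of $(\BunNb)_{=\lambda\cdot x}$, so it is $(\imath_\mu)^*\circ(\imath_\lambda)_!$ that vanishes, not $(\imath_\mu)^!\circ(\imath_\lambda)_!$. Similarly $(\bi_\mu)^!(\Delta^\lambda)$ does \emph{not} vanish for $\mu<\lambda$ (this is the whole point of \thmref{t:j is perv} and \propref{p:! of j glob}), so the claim ``both sides vanish'' and the subsequent invocation of \lemref{l:! gener} do not go through. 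The paper's actual strategy is dual to what you attempt: one first proves that $\pi^!(\Delta^0_{\on{glob}})$ is $\fL(N)$-equivariant (\propref{p:equiv}), then shows $(\bi_\mu)^*\circ\pi^!(\Delta^0_{\on{glob}})=0$ for $\mu\neq 0$ — and this is the part that requires real input: Braden's hyperbolic localization to pass to $H(S^{-,\mu},(\bi_{-,\mu})^!\pi^!(\Delta^0_{\on{glob}}))$, identification with a cohomology on the Zastava space $\CZ^\mu$, the pro-smoothness result \propref{p:Zastava and Bun}(b), and the $\BG_m$-contraction argument.
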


\ssec{Description of fibers}  \label{ss:!-fibers via global} 

\sssec{}

First, let us note that \thmref{t:main} gives an answer to the long-standing question of how to see geometrically an isomorphism
between 
$$\underset{\lambda\in \Lambda^+}{\on{colim}}\, 
H_{\{t^{\lambda+\mu}\}}(\Gr_G,\IC_{\ol\Gr^\lambda_G}[\langle \lambda,2\check\rho\rangle])$$
and !-fibers of $(\imath_\mu)^!(\IC_{\on{glob}})$.

\medskip

Indeed, the two sides identify with the !-fiber at $t^\mu$ of the two sides of the isomorphism of 
\thmref{t:main}.

\sssec{}

From Theorems \ref{t:main} and \ref{t:main j} one can obtain explicit descriptions of the objects
$$(\bi_\mu)^!(\ICs) \text{ and } (\bi_\mu)^!(\Delta^0).$$

\sssec{}  

The following was established in \cite[Theorem 1.12]{BFGM}:

\begin{prop}  \label{p:global IC}
For $\mu\in -\Lambda^{\on{pos}}$, there exists a canonical isomorphism
$$(\imath_\mu)^!(\IC_{\on{glob}})\simeq \omega_{(\BunNb)_{=\mu\cdot x}}[-\dim((\BunNb)_{=\mu\cdot x})]\otimes \Sym(\cn^-[-2])(\mu).$$
\end{prop}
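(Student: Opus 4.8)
The plan is to treat the statement as a description of $\IC_{\on{glob}}$ transverse to the smooth stratum $(\BunNb)_{=\mu\cdot x}$. Since this stratum is smooth of dimension $(g-1)\dim(N)+\langle\mu,2\check\rho\rangle$, the object $(\imath_\mu)^!(\IC_{\on{glob}})$ has locally constant cohomology sheaves along it, so it suffices to (a) compute its $!$-costalk at one geometric point, and (b) check that there is no monodromy around loops in the stratum. Point (b) is part of the package of \cite{BFGM}: it follows from the factorization structure of $\BunNb$ at $x$, which (over the locus of multiplicity-free divisors) exhibits the transverse singularity as a product of smooth simple-coroot models, and I would import it from there. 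The content is thus concentrated in (a).

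For (a) I would pass to the Zastava local models (\cite{FFKM}, \cite{BFGM}). Locally near a point of $(\BunNb)_{=\mu\cdot x}$ the ind-stack $(\BunNb)_{\infty\cdot x}$ is the product of the smooth stratum with a neighbourhood in the compactified Zastava space $\ol{Z}{}^{-\mu}$ --- a scheme of finite type fibered over the configuration space $X^{-\mu}$ of $\Lambda^{\on{pos}}$-valued divisors of degree $-\mu$ --- under which $\IC_{\on{glob}}$ becomes the exterior product of the constant sheaf on the smooth factor with $\IC_{\ol{Z}{}^{-\mu}}$, and $(\BunNb)_{=\mu\cdot x}$ becomes the point $(-\mu)\cdot x$ of the zero section $X^{-\mu}\subset\ol{Z}{}^{-\mu}$. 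Hence (a) reduces to computing the $!$-costalk of $\IC_{\ol{Z}{}^{-\mu}}$ at that point. Using the factorization of $\ol{Z}{}^{-\mu}$ over $X$ together with the identification of its degenerate fibers with the transverse slices $\ol\Gr^\lambda_G\cap S^{-,\mu}$ in the affine Grassmannian, geometric Satake reduces this costalk to the colimit over $\lambda\in(\Lambda^+,\leq)$ of $H_{\{t^{\lambda+\mu}\}}(\Gr_G,\IC_{\ol\Gr^\lambda_G})[\langle\lambda+\mu,2\check\rho\rangle]$ --- precisely the object evaluated in \propref{p:! fibers of IC}. That colimit equals $\Sym(\cg/\cb^-[-2])(-\mu)$, which agrees with $\Sym(\cn^-[-2])(\mu)$ as a cohomologically graded vector space (via the duality $\cg/\cb^-\simeq(\cn^-)^\vee$ of $\cT$-modules: both sit in even degrees, the degree-$2k$ part having dimension the number of ways of writing $\mu$ as a sum of $k$ negative roots of $\cg$). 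Propagating the shift $-\dim((\BunNb)_{=\mu\cdot x})$ back through the local model then yields the asserted isomorphism.

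There is also an essentially formal route that bypasses the Zastava analysis once \thmref{t:main} is granted: applying $(\bi_\mu)^!$ to the isomorphism $\ICs\simeq\pi^!(\IC_{\on{glob}})[(g-1)\dim(N)]$ and using $\pi\circ\bi_\mu=\imath_\mu\circ\pi_\mu$ (with $\pi_\mu\colon S^\mu\to(\BunNb)_{=\mu\cdot x}$ the surjection induced by $\pi$) gives $\pi_\mu^!\bigl((\imath_\mu)^!(\IC_{\on{glob}})\bigr)\simeq(\bi_\mu)^!(\ICs)[-(g-1)\dim(N)]$, which by \propref{p:! fibers of IC} equals $\omega_{S^\mu}[-\langle\mu,2\check\rho\rangle-(g-1)\dim(N)]\otimes\Sym(\cg/\cb^-[-2])(-\mu)$. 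Since $\pi_\mu^!\omega_{(\BunNb)_{=\mu\cdot x}}\simeq\omega_{S^\mu}$, the stratum is connected and smooth, $\pi_\mu$ is surjective on geometric points, and --- by (b) --- $(\imath_\mu)^!(\IC_{\on{glob}})$ is monodromy-free along it, this equality of $\pi_\mu^!$-pullbacks descends to the claimed description of $(\imath_\mu)^!(\IC_{\on{glob}})$ itself (and the shifts match on the nose because $\dim((\BunNb)_{=\mu\cdot x})=(g-1)\dim(N)+\langle\mu,2\check\rho\rangle$).

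The main obstacle, common to both routes, is the local-model / transversality input: showing that the singularity of $\BunNb$ normal to $(\BunNb)_{=\mu\cdot x}$ is governed by the Zastava space $\ol{Z}{}^{-\mu}$ --- equivalently, that the stabilized Satake costalks computed in \propref{p:! fibers of IC} genuinely are this normal costalk --- together with the monodromy-freeness of $(\imath_\mu)^!(\IC_{\on{glob}})$ along the stratum. This is the technical heart of \cite{BFGM}; in a self-contained treatment I would reprove it using the explicit local charts on $\BunNb$ from {\it loc. cit.}
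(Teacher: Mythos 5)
This proposition is not proven in the paper: it is cited from \cite[Theorem 1.12]{BFGM} and used as a black-box input. Both of your routes end by importing the answer $\Sym(\cg/\cb^-[-2])(-\mu)$ from \propref{p:! fibers of IC} and then asserting that this ``agrees with'' $\Sym(\cn^-[-2])(\mu)$. The Hilbert series do indeed match (both compute Kostant's partition function at $-\mu$), but the proposition asserts a \emph{canonical} isomorphism in the stated form, and the paper explicitly records --- in the remark immediately following the corollary that combines the present proposition with \thmref{t:main} --- that the author was \emph{not} able to canonically relate the $\Sym(\cn^-)$ and $\Sym(\cg/\cb^-)$ descriptions; it is only conjectured that the Cartan involution intertwines them. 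So at best you have established the variant of the proposition with $\Sym(\cg/\cb^-[-2])(-\mu)$ on the right-hand side, which is exactly what the paper itself obtains as a corollary of \thmref{t:main} together with \propref{p:! fibers of IC}, and which the paper is careful to keep distinct from BFGM's formula.

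Your Route 1 also contains a circularity. The claim that ``geometric Satake reduces'' the Zastava costalk to the stable colimit $\underset{\lambda}{\on{colim}}\, H_{\{t^{\lambda+\mu}\}}(\Gr_G,\IC_{\ol\Gr^\lambda_G})$ is not a reduction that comes for free: as the paper stresses at the start of \secref{ss:!-fibers via global}, identifying this colimit with the costalk of $(\imath_\mu)^!(\IC_{\on{glob}})$ is precisely what \thmref{t:main} accomplishes, and was ``something that the authors of \cite{BFGM} \ldots did not see how to do previously.'' So Route 1 silently invokes \thmref{t:main} and collapses into Route 2. Route 2 is logically sound once \thmref{t:main} is granted, but it reverses the paper's architecture (the paper treats the BFGM costalk computation as an input and deduces the description of $(\bi_\mu)^!(\ICs)$ from it, not the other way round), still imports the monodromy-freeness along the stratum from BFGM to make the descent work, and lands on the $\Sym(\cg/\cb^-)$ form rather than the stated $\Sym(\cn^-)$ form. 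Neither route is therefore a self-contained proof of the proposition as written.
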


Combining this with \thmref{t:main}, we obtain:

\begin{cor}
There exists a canonical isomorphism 
$$(\bi_\mu)^!(\ICs) \simeq \omega_{S^\mu}[-\langle \mu,2\check\rho \rangle]\otimes \Sym(\cn^-[-2])(\mu).$$
\end{cor}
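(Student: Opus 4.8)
The plan is to obtain the corollary as a formal consequence of \thmref{t:main} together with \propref{p:global IC}; the only genuine points to verify are the compatibility of the pullback functor $\pi^!$ with the two stratifications and the bookkeeping of cohomological shifts. First I would record that, by the construction of $\pi:\Gr_G\to (\BunNb)_{\infty\cdot x}$ and the fact recalled in \secref{ss:BunNb} that $\pi^{-1}\bigl((\BunNb)_{=\mu\cdot x}\bigr)=S^\mu$, the map $\pi$ restricts to a map $\pi_\mu:S^\mu\to (\BunNb)_{=\mu\cdot x}$ fitting into a commutative (in fact Cartesian) square
$$
\CD
S^\mu  @>{\bi_\mu}>>  \Gr_G \\
@V{\pi_\mu}VV  @VV{\pi}V  \\
(\BunNb)_{=\mu\cdot x}  @>{\imath_\mu}>>  (\BunNb)_{\infty\cdot x}.
\endCD
$$
Since $\pi\circ \bi_\mu=\imath_\mu\circ \pi_\mu$, functoriality of $!$-pullback alone (no base-change theorem is needed) gives $(\bi_\mu)^!\circ \pi^!\simeq \pi_\mu^!\circ (\imath_\mu)^!$.

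Next I would apply $(\bi_\mu)^!$ to the isomorphism $\ICs\simeq \pi^!(\IC_{\on{glob}})[(g-1)\cdot\dim(N)]$ of \thmref{t:main} and insert \propref{p:global IC}, obtaining
$$
(\bi_\mu)^!(\ICs)\;\simeq\; \pi_\mu^!\Bigl(\omega_{(\BunNb)_{=\mu\cdot x}}[-\dim((\BunNb)_{=\mu\cdot x})]\otimes \Sym(\cn^-[-2])(\mu)\Bigr)[(g-1)\cdot\dim(N)].
$$
Here $\Sym(\cn^-[-2])(\mu)$ is an object of $\Vect$, so it passes through $\pi_\mu^!$ and through the shift; and the $!$-pullback of a dualizing complex is a dualizing complex (this is immediate from the definition of $\omega_{(-)}$ as the pullback of $\sfe$ along the structure map), so $\pi_\mu^!\bigl(\omega_{(\BunNb)_{=\mu\cdot x}}\bigr)\simeq \omega_{S^\mu}$. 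Finally, using the dimension formula $\dim((\BunNb)_{=\mu\cdot x})=(g-1)\cdot\dim(N)+\langle\mu,2\check\rho\rangle$ recalled in \secref{ss:BunNb}, the two occurrences of $(g-1)\cdot\dim(N)$ cancel, leaving
$$
(\bi_\mu)^!(\ICs)\;\simeq\; \omega_{S^\mu}[-\langle\mu,2\check\rho\rangle]\otimes \Sym(\cn^-[-2])(\mu),
$$
which is the assertion. (One could note in passing that this recovers \propref{p:! fibers of IC}, the two forms of the answer matching via $\cg/\cb^-\simeq\cn$ and the weight identification $\Sym(\cn[-2])(-\mu)\simeq \Sym(\cn^-[-2])(\mu)$.)

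There is no serious obstacle in this last step itself: essentially all the weight of the argument sits in \thmref{t:main}, the local-to-global comparison $\ICs\simeq \pi^!(\IC_{\on{glob}})$, and in the already-established computation \propref{p:global IC} of the $!$-restrictions of $\IC_{\on{glob}}$ to the strata. The only items needing the slightest care here are that the displayed square is genuinely commutative with $S^\mu$ the full preimage of the stratum—so that $!$-pullback may legitimately be transported to $S^\mu$—and the arithmetic of the cohomological shifts; both are immediate from the conventions fixed in \secref{ss:BunNb}.
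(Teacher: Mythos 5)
Your argument is correct and is exactly what the paper intends when it writes ``Combining this with \thmref{t:main}, we obtain'' just before the corollary: apply $(\bi_\mu)^!$ to the isomorphism of \thmref{t:main}, transport along $\pi$ using $\pi^{-1}((\BunNb)_{=\mu\cdot x})=S^\mu$, plug in \propref{p:global IC}, and cancel shifts via the dimension formula. You have merely spelled out the bookkeeping the paper leaves implicit, so there is nothing to add.
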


\begin{rem}
Note that \propref{p:! fibers of IC} gives another description of $(\bi_\mu)^!(\ICs)$, namely as
$$\omega_{S^\mu}[-\langle \mu,2\check\rho \rangle]\otimes \Sym(\cg/\cb^-[-2])(-\mu).$$

Presumably, the two identifications are related by isomorphism
$$\cn^-\simeq \cn\simeq \cg/\cb^-,$$
where the first arrow is the Cartan involution. However, the author was not able to prove it.
\end{rem}

\sssec{}

The next assertion follows as a combination of \propref{p:global IC} and \cite[Theorem 6.6]{BG}:

\begin{prop} \label{p:! of j glob}
For $\mu\in -\Lambda^{\on{pos}}$, the object
$$(\imath_\mu)^!(\Delta^0_{\on{glob}})\in \Shv((\BunNb)_{=\mu\cdot x})$$
has a canonical filtration with subquotients of the form
$$\omega_{(\BunNb)_{=\mu\cdot x}}[-\dim((\BunNb)_{=\mu\cdot x})]\otimes \Sym(\cn^-[-2])(\mu_1)\otimes \on{C}^\cdot(\cn)(\mu_2), \quad
\mu_1+\mu_2=\mu.$$
\end{prop}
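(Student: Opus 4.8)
The plan is to combine two inputs about Drinfeld's compactification. The first is \propref{p:global IC}, which computes $(\imath_\mu)^!(\IC_{\on{glob}})$ on the stratum $(\BunNb)_{=\mu\cdot x}$. The second is \cite[Theorem 6.6]{BG}, which describes the $!$-restriction to the strata of $\Bun_B$ of the standard object $j_!(\IC_{\Bun_N})$ living on $\ol\Bun_B$ — equivalently, in our notation here, the behavior of $\Delta^0_{\on{glob}}$ relative to the closed strata. The key conceptual point is that $\Delta^0_{\on{glob}}=(\imath_0)_!(\IC_{\Bun_N})$ differs from $\IC_{\on{glob}}$ precisely by the ``defect'' measured by the other standard-sheaf input, so the filtration on $(\imath_\mu)^!(\Delta^0_{\on{glob}})$ should be obtained by convolving the two answers.

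First I would make precise the relationship between $\Delta^0_{\on{glob}}$ and $\IC_{\on{glob}}$. Both are objects of $\Shv((\BunNb)_{\infty\cdot x})$ supported on $\ol\Bun_B=(\BunNb)_{\leq 0\cdot x}$; the inclusion of the open stratum $\jmath:\Bun_N\hookrightarrow \BunNb$ gives a canonical map $\Delta^0_{\on{glob}}\to \IC_{\on{glob}}$ (adjunction, since both restrict to the IC sheaf on $\Bun_N$). The structure of $\IC_{\on{glob}}$ on $\ol\Bun_B$ is governed by the factorization of the strata $(\BunNb)_{=\nu\cdot x}$, which are each isomorphic to $\Bun_B\times X^{\nu}$ (or, here, $\Bun_B\underset{\Bun_T}\times\{\CP^0_T(-\nu\cdot x)\}$ — the difference being whether divisors are allowed to float off $x$). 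The content of \cite{BG} is that $(\imath_\nu)^!(\Delta^0_{\on{glob}})$ carries a filtration whose subquotients are IC sheaves of the strata tensored with $\on{C}^\cdot(\cn)(\nu)$ — the Lie algebra cohomology of $\cn$ in weight $\nu$ — reflecting the Eisenstein/cohomological-integral structure of the $!$-extension.

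Next I would run the standard ``two-step stratification'' computation: restrict to $(\BunNb)_{=\mu\cdot x}$ in two stages, using that $\ICs_{\on{glob}}$ and $\Delta^0_{\on{glob}}$ both relate through the filtration-by-strata. Concretely, one writes $(\imath_\mu)^!(\Delta^0_{\on{glob}})$ as built out of contributions indexed by $\mu_1+\mu_2=\mu$ with $\mu_1,\mu_2\in-\Lambda^{\on{pos}}$, where the $\mu_1$-piece comes from the IC-sheaf fiber $\Sym(\cn^-[-2])(\mu_1)$ of \propref{p:global IC} and the $\mu_2$-piece from the $\on{C}^\cdot(\cn)(\mu_2)$ factor of \cite{BG}. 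The Künneth/factorization structure of the strata in $\BunNb$ — that the closure relations and normal-cone geometry are ``the same'' for $\ICs_{\on{glob}}$ and for $\Delta^0_{\on{glob}}$ up to this cohomological-integral twist — is exactly what produces the tensor product $\Sym(\cn^-[-2])(\mu_1)\otimes \on{C}^\cdot(\cn)(\mu_2)$ as claimed, with $\omega_{(\BunNb)_{=\mu\cdot x}}[-\dim((\BunNb)_{=\mu\cdot x})]$ the IC sheaf of the ambient $\mu$-stratum factored out.

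I expect the main obstacle to be bookkeeping the compatibility of the two cited results: \propref{p:global IC}/\cite{BFGM} is phrased in terms of the $!$-restriction of $\IC_{\on{glob}}$, while \cite[Theorem 6.6]{BG} is phrased in terms of $\Bun_B$-strata of $j_!(\IC_{\Bun_N})$ on $\ol\Bun_B$ (without the floating-points complication, or with a different version of it). One must check that passing between the ``$(\BunNb)_{\infty\cdot x}$ with poles at $x$'' picture and the ``$\ol\Bun_B$ with the full curve of strata'' picture does not alter the subquotients — essentially that the $x$-localized version of the Eisenstein filtration has the stated form — and that the indexing set is exactly $\{(\mu_1,\mu_2):\mu_1+\mu_2=\mu,\ \mu_i\in-\Lambda^{\on{pos}}\}$. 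Once that dictionary is in place, the filtration is obtained by composing the filtration coming from \cite{BG} with the fiberwise identification from \propref{p:global IC}, and splicing the associated graded pieces.
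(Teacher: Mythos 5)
Your proposal matches the paper's proof, which consists of nothing more than the one-line remark that the statement ``follows as a combination of \propref{p:global IC} and \cite[Theorem 6.6]{BG}.'' Your elaboration of that combination---using \cite{BG} to supply the filtration of $\Delta^0_{\on{glob}}$ with $\on{C}^\cdot(\cn)(\mu_2)$-weighted pieces and then \propref{p:global IC} to compute the $!$-restriction of each associated-graded term (producing the $\Sym(\cn^-[-2])(\mu_1)$ factor, with $\mu_1+\mu_2=\mu$)---is the intended reading, though note that the intermediate sentence describing \cite{BG} as computing $(\imath_\nu)^!(\Delta^0_{\on{glob}})$ directly is slightly miscast (if it did, \propref{p:global IC} would be superfluous); the correct picture, which your final paragraph recovers, is that \cite{BG} filters $\Delta^0_{\on{glob}}$ itself and \propref{p:global IC} is then applied to the subquotients.
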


In the above proposition, $\on{C}^\cdot(\cn)$ denotes the cohomological Chevalley complex of the Lie algebra $\cn$.

\sssec{}

Note that \propref{p:! of j glob} combined with \thmref{t:main j}, immediately imply \thmref{t:j is perv}. Indeed, for $\mu\neq 0$, the complexes
$$\Sym(\cn^-[-2])(\mu_1)\otimes \on{C}^\cdot(\cn)(\mu_2), \quad \mu_1+\mu_2=\mu$$
are concentrated is strictly positive degrees. 

\ssec{Construction of the map}

In this subsection we begin the proof of \thmref{t:main} by constructing the map $\ICs\to \pi^!(\IC_{\on{glob}})[(g-1)\cdot \dim(N)]$.

\sssec{}

By adjunction, the datum of a map
$$\ICs\to \pi^!(\IC_{\on{glob}})[(g-1)\cdot \dim(N)]$$
is equivalent to that of a map
\begin{equation} \label{e:map one direction}
\pi_!(\ICs)\to \IC_{\on{glob}}[(g-1)\cdot \dim(N)].
\end{equation}

\medskip

We will first construct a map 
\begin{equation} \label{e:map one direction lambda}
\pi_!(t^{-\lambda}\cdot \Sat(V^\lambda))[\langle \lambda,2\check\rho \rangle]\to \IC_{\on{glob}}[(g-1)\cdot \dim(N)]
\end{equation}
for an individual $\lambda\in \Lambda^+$.  

\sssec{}

Note that for $\CF'\in \Sph(G)$, the functor adjoint to $-\star \CF'$ is given by 
$$-\star \BD((\CF')^\tau),$$
where $\BD$ denotes Verdier duality on $\Sph(G)=\Shv(\Gr_G)^{\fL^+(G)}$ and $\tau$ is the anti-involution on $\Sph(G)$
induced by the inversion on $\fL(G)$. 

\medskip

Thus, the functor adjoint to convolution with $\Sat(V^\lambda)=\IC_{\ol\Gr^\lambda_G}$ is given by convolution with
$\IC_{\ol\Gr^{-\lambda}_G}$. Therefore, by adjunction, the datum of a map \eqref{e:map one direction lambda} is
equivalent to that of a map
$$\delta_{\pi(t^{-\lambda})}[\langle \lambda,2\check\rho \rangle]\to \IC_{\on{glob}}\star \IC_{\ol\Gr^{-\lambda}_G}[(g-1)\cdot \dim(N)],$$
or which is the same, of a vector in the !-fiber of
$$\IC_{\on{glob}}\star \IC_{\ol\Gr^{-\lambda}_G}[(g-1)\cdot \dim(N)-\langle \lambda,2\check\rho \rangle]$$
at the point $\pi(t^{-\lambda})\in (\BunNb)_{\infty\cdot x}$. 

\medskip

We claim that the fiber in question identifies canonically with $\sfe$. 

\sssec{}

Indeed, consider the convolution morphism
$$
\CD
(\BunNb)_{\infty\cdot x}\wt\times  \Gr_G @>{\on{pr}}>> (\BunNb)_{\infty\cdot x} \\
@V{\on{act}}VV  \\
(\BunNb)_{\infty\cdot x}.
\endCD
$$

It is easy to see that the intersection 
$$\on{act}^{-1}(\pi(t^{-\lambda}))\cap (\BunNb\wt\times  \ol\Gr^{-\lambda}_G)$$
coincides with its open subset 
\begin{equation} \label{e:simple fiber of conv}
\on{act}^{-1}(\pi(t^{-\lambda}))\cap (\Bun_N\wt\times  \Gr^{-\lambda}_G)
\end{equation}
and identifies canonically with $S^\lambda\cap \Gr^\lambda_G$.

\medskip

The restriction of 
$$\IC_{\on{glob}}[(g-1)\cdot \dim(N)]\wt\boxtimes \IC_{\ol\Gr^{-\lambda}_G}[\langle \lambda,2\check\rho \rangle]$$
to $\Bun_N\wt\times  \Gr^{-\lambda}_G$ identifies with $\omega_{\Bun_N\wt\times  \Gr^{-\lambda}_G}$, and hence
its further !-restriction to \eqref{e:simple fiber of conv} identifies with
$$\omega_{S^\lambda\cap \Gr^\lambda_G}.$$

\medskip

Thus, the !-fiber of $$\IC_{\on{glob}}\star \IC_{\ol\Gr^{-\lambda}_G}[(g-1)\cdot \dim(N)-\langle \lambda,2\check\rho \rangle]$$
at $\pi(t^{-\lambda})\in (\BunNb)_{\infty\cdot x}$ identifies with
$$H(S^\lambda\cap \Gr^\lambda_G,\omega_{S^\lambda\cap \Gr^\lambda_G})[-2\langle \lambda,2\check\rho \rangle],$$
and the latter is indeed canonically $\sfe$, since 
$$S^\lambda\cap \Gr^\lambda_G=\fL^+(N)/\on{Ad}_{t^\lambda}(\fL^+(N)),$$
and is isomorphic to an affine space of dimension $\langle \lambda,2\check\rho \rangle$. 

\sssec{}

It is easy to see from the construction that for $\lambda_2=\lambda_1+\lambda$, the corresponding maps
$$\pi_!(t^{-\lambda_1}\cdot \Sat(V^{\lambda_1}))[\langle \lambda_1,2\check\rho \rangle]\to \IC_{\on{glob}}[(g-1)\cdot \dim(N)]$$
and
$$\pi_!(t^{-\lambda_2}\cdot \Sat(V^{\lambda_2}))[\langle \lambda_2,2\check\rho \rangle]\to \IC_{\on{glob}}[(g-1)\cdot \dim(N)]$$
are compatible with the morphism \eqref{e:transition map}.

\medskip

We claim that these maps combine to give rise to a uniquely defined map
$$\pi_!(\ICs)=\underset{\lambda\in \Lambda^+}{\on{colim}}\, 
\pi_!(t^{-\lambda}\cdot \Sat(V^\lambda))[\langle \lambda,2\check\rho \rangle]\to \IC_{\on{glob}}[(g-1)\cdot \dim(N)].$$

This follows from the next general observation.

\sssec{}

Let $\CC$ be an $\infty$-category, and
$$I\to \CC, \quad i\mapsto c_i$$
a diagram of objects, where $I$ is some index $\infty$-category. 

\medskip

Let $c'$ be another object of $\CC$. Let us be given a system of maps $\phi_i:c_i\to c$, so that for every arrow $i_1\to i_2$ in $I$,
the two maps
$$c_{i_1}\to c_{i_2}\overset{\phi_{i_2}}\to c' \text{ and } c_{i_1}\overset{\phi_{i_1}}\to c'$$
agree on the homotopy category of $\CC$. 

\medskip

We have:

\begin{lem}
Suppose that for every $i$, the space $\Maps(c_i,c')$ is discrete. Then the maps $\phi_i$ uniquely combine to a map
$$\underset{i\in I}{\on{colim}}\, c_i\to c'.$$
\end{lem}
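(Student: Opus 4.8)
The plan is to reduce everything to the universal property of the colimit and then to use the discreteness hypothesis to kill all the higher coherences that could, a priori, obstruct assembling the $\phi_i$ into a single map. First I would invoke the adjunction characterizing colimits: a map $\underset{i\in I}{\on{colim}}\, c_i\to c'$ is the same datum as a point of the limit $\underset{i\in I}{\on{lim}}\,\Maps(c_i,c')$, formed over $I^{\on{op}}$ in $\Spc$ with transition maps induced by the arrows of $I$; equivalently, a natural transformation from the constant functor at $\ast$ to the functor $F\colon I^{\on{op}}\to\Spc$, $i\mapsto\Maps(c_i,c')$. Uniqueness of the map out of the colimit will then be the statement that there is at most one such natural transformation restricting to the given $\phi_i$ on objects.

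Next I would bring in the hypothesis that each $\Maps(c_i,c')$ is discrete: this says $F$ lands in $\on{Sets}=\tau_{\leq 0}\Spc\subset\Spc$. Since the inclusion $\on{Sets}\hookrightarrow\Spc$ is fully faithful and preserves limits (being right adjoint to $\pi_0$), and since $\on{Sets}$ is a $1$-category so that $F$ factors through the homotopy category $\on{Ho}(I^{\on{op}})$, the limit $\underset{i}{\on{lim}}\,F$ is again discrete, with underlying set the ordinary limit, over $\on{Ho}(I)$, of $i\mapsto\Maps(c_i,c')$. A point of that ordinary limit is exactly a family of elements $\phi_i\in\Maps(c_i,c')$ such that for every arrow $i_1\to i_2$ of $I$ the induced restriction map $\Maps(c_{i_2},c')\to\Maps(c_{i_1},c')$ sends $\phi_{i_2}$ to $\phi_{i_1}$; as these mapping spaces are discrete, this is verbatim the hypothesis that $c_{i_1}\to c_{i_2}\overset{\phi_{i_2}}{\to}c'$ and $c_{i_1}\overset{\phi_{i_1}}{\to}c'$ agree on the homotopy category of $\CC$. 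Hence the given family is precisely a point of this discrete space, and so determines the desired map $\underset{i}{\on{colim}}\, c_i\to c'$, uniquely.

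I expect the one genuine subtlety to be this middle step — checking that a family compatible \emph{only} at the level of homotopy categories already assembles into an honest natural transformation of $\Spc$-valued functors, with no higher data left to choose. This is exactly where discreteness enters: it is what makes the mapping space $\Maps_{\on{Fun}(I^{\on{op}},\Spc)}(\ast,F)$ discrete and computable over $\on{Ho}(I)$. The facts I would lean on for this are that $n$-truncated objects of $\Spc$ are stable under limits, that $\on{Sets}\hookrightarrow\Spc$ preserves limits, and that functors into a $1$-category factor through the homotopy category; granting these, the rest is purely formal manipulation of the colimit adjunction.
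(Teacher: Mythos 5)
Your argument is correct, and in fact the paper offers no proof at all here: the lemma is stated as a ``general observation'' immediately before it is applied, so you are filling a gap the author left implicit. Your route is the expected one: rewrite $\Maps(\on{colim}_i c_i,c')$ as $\on{lim}_{I^{\on{op}}}\Maps(c_i,c')$ by the universal property, note that the functor $i\mapsto\Maps(c_i,c')$ lands in the full subcategory of discrete objects of $\Spc$ (which is closed under all limits, being the image of the right adjoint to $\pi_0$), observe that a $\Sets$-valued functor on $I^{\on{op}}$ factors through $\on{Ho}(I^{\on{op}})$ because $\Sets$ is a $1$-category, and conclude that the limit is the ordinary $1$-categorical limit whose points are exactly families $(\phi_i)$ compatible at the level of the homotopy category. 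You also correctly identify the crux: discreteness of each $\Maps(c_i,c')$ is precisely what collapses the potential higher coherences, so that homotopy-category compatibility is all the data needed, and the resulting point of the limit is unique. The only cosmetic slip is writing the index set as $i\in I$ in the first display before correcting to $I^{\on{op}}$; the mathematics is fine.
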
 

\sssec{}

The conditions of the above lemma are applicable in our situation since we have identified 
$$\CMaps_{\Shv((\BunNb)_{\infty\cdot x})}(\pi_!(t^{-\lambda}\cdot \Sat(V^\lambda))[\langle \lambda,2\check\rho \rangle],
\IC_{\on{glob}}[(g-1)\cdot \dim(N)])$$
with $\sfe$. 

\sssec{}  \label{sss:map to nabla}

For future use, we will need to following description of the composite map
$$t^{-\lambda}\cdot \Sat(V^\lambda)[\langle \lambda,2\check\rho \rangle]\to \pi^!(\IC_{\on{glob}})[(g-1)\cdot \dim(N)]\to
\pi^!(\nabla^0_{\on{glob}})[(g-1)\cdot \dim(N)]\simeq (\bj_0)_*(\omega_{S^0}).$$

Namely, unwinding the definitions, we obtain that this map equals
$$t^{-\lambda}\cdot \Sat(V^\lambda)[\langle \lambda,2\check\rho \rangle] \to 
(\bj_0)_*\circ (\bj_0)^*(t^{-\lambda}\cdot \Sat(V^\lambda)[\langle \lambda,2\check\rho \rangle]) \to
(\bj_0)_*(\omega_{S^0}),$$
where the first arrow is the unit of the $((\bj_0)^*,(\bj_0)_*)$-adjunction, and the second comes from the identification
$$(\bj_0)^*(t^{-\lambda}\cdot \Sat(V^\lambda)[\langle \lambda,2\check\rho \rangle])\simeq  
(\bj_0)^!(t^{-\lambda}\cdot \Sat(V^\lambda)[\langle \lambda,2\check\rho \rangle])\simeq \omega_{S^0\cap t^{-\lambda}\cdot \Gr^\lambda_G},$$
where $S^0\cap t^{-\lambda}\cdot \Gr^\lambda_G$ is a closed subscheme of $S^0$ and we regard 
$\omega_{S^0\cap t^{-\lambda}\cdot \Gr^\lambda_G}$ as an object of $\Shv(S^0)$.

\ssec{Strategy of proofs of Theorems \ref{t:main} and \ref{t:main j}}

\sssec{}

In \secref{ss:proof of equiv} we will prove:

\begin{prop} \label{p:equiv}
The objects $\pi^!(\IC_{\on{glob}})$ and $\pi^!(\Delta^0_{\on{glob}})$ belong to $\Shv(\Gr_G)^{\fL(N)}=:\SI(\Gr_G)$.
\end{prop}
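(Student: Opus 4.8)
\emph{Overview.} The plan is to reduce \propref{p:equiv} to the following general principle, and then to verify its hypotheses: if $\CF\in\Shv(\Gr_G)$ is extended by zero from the closed ind-subscheme $\ol{S}{}^0$, i.e.\ $\CF\simeq(\ol\bi_0)_*(\ol\bi_0)^!(\CF)$, and if $(\bi_\mu)^!(\CF)\in\SI(\Gr_G)_{=\mu}$ for every $\mu\in-\Lambda^{\on{pos}}$, then $\CF\in\SI(\Gr_G)$. Both $\pi^!(\IC_{\on{glob}})$ and $\pi^!(\Delta^0_{\on{glob}})$ satisfy the first condition, since $\IC_{\on{glob}}$ and $\Delta^0_{\on{glob}}$ are $*$-extensions from the closed substack $\BunNb=(\BunNb)_{\leq 0\cdot x}$, whose $\pi$-preimage is $\ol{S}{}^0$, while $\ol\imath_0$ (hence $\ol\bi_0$) is proper, so proper base change applies. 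Recall from \secref{sss:N alpha} that belonging to $\SI(\Gr_G)$ is the \emph{property} of being $N_\alpha$-equivariant for all $\alpha$, and that, the groups in play being pro-unipotent, the subcategories $\Shv(-)^{N_\alpha}$ are full and closed under colimits, limits, extensions, and $*$-pushforward along $N_\alpha$-equivariant open or closed embeddings.

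\emph{Step 1: dévissage along the orbit stratification.} Fix $\alpha$; the goal is that $\CF_0:=(\ol\bi_0)^!(\CF)$ be $N_\alpha$-equivariant. Since $\Shv(\ol{S}{}^0)$ is the limit of the categories $\Shv(Y_j)$ over an exhaustion of $\ol{S}{}^0$ by $N_\alpha$-stable finite-type closed subschemes $Y_j$ (as in \secref{sss:N alpha}), it suffices that the $!$-restriction $\CG_j$ of $\CF$ to each $Y_j$ be $N_\alpha$-equivariant. Now $Y_j$ is stratified by the finitely many nonempty intersections $Y_j\cap S^\mu$, and I would induct on their number. Let $U=Y_j\cap S^{\mu_0}$ be a stratum open in $Y_j$, with $j\colon U\hookrightarrow Y_j$ and closed complement $i\colon Z\hookrightarrow Y_j$; the recollement triangle $i_*i^!(\CG_j)\to\CG_j\to j_*j^*(\CG_j)$ exhibits $\CG_j$ as an extension of $j_*j^*(\CG_j)$ by $i_*i^!(\CG_j)$. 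Here $j^*(\CG_j)$ is the restriction of $\CF$ to $U$; as $U$ is open in $Y_j$, this equals the $!$-restriction of $(\bi_{\mu_0})^!(\CF)$ along the closed $N_\alpha$-stable embedding $U\hookrightarrow S^{\mu_0}$, which is $N_\alpha$-equivariant by the hypothesis, so $j_*j^*(\CG_j)$ is too. And $i^!(\CG_j)$ is the $!$-restriction of $\CF$ to $Z$, which is $N_\alpha$-equivariant by the inductive hypothesis, so $i_*i^!(\CG_j)$ is too. By the closure properties above, $\CG_j$ is $N_\alpha$-equivariant; letting $j$, then $\alpha$, vary gives $\CF\in\SI(\Gr_G)$. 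What makes this run is that the $S^\mu$ are \emph{exactly} the $\fL(N)$-orbits, so the base case of a single stratum is settled directly by the hypothesis.

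\emph{Step 2: checking the stratum-wise hypothesis.} From $\pi\circ\bi_\mu=\imath_\mu\circ\pi_\mu$, where $\pi_\mu\colon S^\mu\to(\BunNb)_{=\mu\cdot x}$ is the restriction of $\pi$, we get $(\bi_\mu)^!\circ\pi^!\simeq\pi_\mu^!\circ(\imath_\mu)^!$. By \propref{p:global IC}, $(\imath_\mu)^!(\IC_{\on{glob}})$ is, up to shift, $\omega_{(\BunNb)_{=\mu\cdot x}}$ tensored with the constant graded vector space $\Sym(\cn^-[-2])(\mu)$; applying $\pi_\mu^!$ and using $\pi_\mu^!(\omega_{(\BunNb)_{=\mu\cdot x}})\simeq\omega_{S^\mu}$, we obtain $\omega_{S^\mu}$ tensored with a constant vector space, which lies in $\SI(\Gr_G)_{=\mu}$ since $\omega_{S^\mu}$ is canonically $\fL(N)$-equivariant (\corref{c:omega}). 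Likewise, by \propref{p:! of j glob}, $(\imath_\mu)^!(\Delta^0_{\on{glob}})$ is an iterated extension of complexes of the same shape, so $\pi_\mu^!$ of it is an iterated extension of objects of $\SI(\Gr_G)_{=\mu}$, hence also lies in $\SI(\Gr_G)_{=\mu}$. (For $\Delta^0_{\on{glob}}$ one may instead argue directly: base change along the Cartesian square formed by $\pi$, $\imath_0$, $\pi_0$ and $\bi_0$ identifies $\pi^!(\Delta^0_{\on{glob}})[(g-1)\cdot\dim(N)]$ with $\Delta^0$, which lies in $\SI(\Gr_G)$ by construction.)

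\emph{Expected main difficulty.} The substantive local inputs — that the $!$-restrictions of $\IC_{\on{glob}}$ and of $\Delta^0_{\on{glob}}$ to the strata $(\BunNb)_{=\mu\cdot x}$ are (iterated extensions of) constant complexes — are precisely \propref{p:global IC} and \propref{p:! of j glob}, so that is not where the effort goes. The delicate part is Step 1: one must produce $N_\alpha$-stable finite-type exhaustions of $\ol{S}{}^0$ compatible enough to carry the inductions, make sure the objects in question are genuinely extended by zero from $\ol{S}{}^0$ (not merely that their $*$-restrictions vanish off it), and cope with the fact that the stratification of $\ol{S}{}^0$ by the $S^\mu$ has infinite descending chains — which is exactly why one passes to finite-type truncations before the stratum-induction can be started.
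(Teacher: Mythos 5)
Your proof is correct but takes a genuinely different route from the paper's. The paper's argument is purely geometric and ``soft'': it picks a second point $y\in X$, $y\ne x$, shows that for $\CF\in\{\IC_{\on{glob}},\Delta^0_{\on{glob}}\}$ the $!$-pullback of $\CF$ along $\pi_{x,y}:\ol{S}{}^0_x\times S^0_y\to (\BunNb)^{\on{good}_y}$ is $\fL(N)_y$-equivariant via the second factor (using the $\fL^+(N)_y$-torsor of level structures at $y$ and the regluing $\fL(N)_y$-action), and then plays the diagonal $N_{X-(x,y)}$-invariance of $\pi_{x,y}$ against the transitivity of $N_{X-(x,y)}$ on each orbit $S^\lambda_x$ to obtain $\fL(N)_x$-equivariance of $\pi_x^!(\CF)$. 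That argument never looks at the $!$-fibers of $\CF$ and applies to any object of $\Shv((\BunNb)_{\infty\cdot x})$ supported on $\BunNb$. You instead run a recollement d\'evissage along the $S^\mu$-stratification of finite-type $N_\alpha$-stable truncations of $\ol{S}{}^0$ and verify the resulting stratum-wise condition directly from \propref{p:global IC} and \propref{p:! of j glob}. The d\'evissage is sound (the full stable subcategory $\Shv(Y)^{N_\alpha}\subset\Shv(Y)$, $N_\alpha$ pro-unipotent, is closed under extensions and under $!$-pullback and $*$-pushforward along $N_\alpha$-equivariant locally closed embeddings), and in fact makes explicit a reduction that the paper treats as evident; the trade-off is that your stratum-wise check imports the substantial external computations behind \propref{p:global IC} and \propref{p:! of j glob}, while the strong-approximation argument costs neither. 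One small correction: the worry at the end about distinguishing ``genuinely extended by zero'' from ``vanishing $*$-restriction off $\ol{S}{}^0$'' is moot --- for a closed embedding $i$ with open complement $j$, the conditions $\CF\simeq i_*i^!\CF$, $\CF\simeq i_*i^*\CF$ and $j^*\CF=0$ are all equivalent, by the two recollement triangles and conservativity of $j_*$.
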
 

Let us assume this proposition and proceed with the proofs of Theorems \ref{t:main} and \ref{t:main j}. 

\sssec{}

In order to prove \thmref{t:main} it is sufficient to show that for any $\mu\in -\Lambda^{\on{pos}}$,
the map
$$(\bi_\mu)^*(\ICs)\to (\bi_\mu)^*\circ \pi^!(\IC_{\on{glob}}[(g-1)\cdot \dim(N)]),$$
induced by \eqref{e:map one direction} is an isomorphism.

\medskip

In order to prove \thmref{t:main j} (for $\lambda=0$), it suffices to show that 
$$(\bi_\mu)^*\circ \pi^!(\Delta^0_{\on{glob}})=0$$
for $\mu\neq 0$. The proof for a general $\lambda$ will be completely analogous. 

\sssec{}

Taking into account \propref{p:on orbit} and using \lemref{l:Braden}, we obtain that Theorems \ref{t:main} and \ref{t:main j}
would follow once we establish the next assertion:

\begin{prop}  \label{p:comp fibers}  For any $\mu\in -\Lambda^{\on{pos}}$ we have:

\smallskip

\noindent{\em(a)}
The map
$$H(S^{-,\mu},(\bi_{-,\mu})^!(\ICs))\to H(S^{-,\mu},(\bi_{-,\mu})^!\circ \pi^!(\IC_{\on{glob}}[(g-1)\cdot \dim(N)])),$$
induced by \eqref{e:map one direction}, is an isomorphism.

\smallskip

\noindent{\em(b)} $H(S^{-,\mu},(\bi_{-,\mu})^!\circ \pi^!(\Delta^0_{\on{glob}}))=0$ for $\mu\neq 0$. 

\end{prop}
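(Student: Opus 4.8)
The plan is to reduce both parts, via \lemref{l:Braden} and \propref{p:on orbit}, to statements about the $*$-restrictions to the orbits $S^\mu$, and then to compute those using geometric Satake for the local objects and \propref{p:global IC} for the global one. All sheaves in sight are $T$-equivariant, so \lemref{l:Braden} provides, functorially in $\CF\in\Shv(\Gr_G)$,
$$H(S^{-,\mu},(\bi_{-,\mu})^!(\CF))\simeq (\bk_{-,\mu})^*\circ(\bi_{-,\mu})^!(\CF)\simeq (\bk_\mu)^!\circ(\bi_\mu)^*(\CF)\simeq H_c(S^\mu,(\bi_\mu)^*(\CF)).$$
Hence (a) amounts to the assertion that $(\bi_\mu)^*(\ICs)\to(\bi_\mu)^*\circ\pi^!(\IC_{\on{glob}}[(g-1)\cdot\dim(N)])$ induces an isomorphism on $H_c(S^\mu,-)$, and (b) amounts to $(\bi_\mu)^*\circ\pi^!(\Delta^0_{\on{glob}})=0$ for $\mu\neq 0$. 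For (b) I would first identify $\pi^!(\Delta^0_{\on{glob}})$ by base change: the square formed by $\pi$, the locally closed embeddings $S^0\hookrightarrow\ol{S}{}^0$ and $\Bun_N=(\BunNb)_{=0\cdot x}\hookrightarrow(\BunNb)_{\infty\cdot x}$, and the induced map $\pi_0\colon S^0\to\Bun_N$, is Cartesian because $\pi^{-1}(\Bun_N)=S^0$; writing $\Delta^0_{\on{glob}}=(\ol\imath_0)_*\circ\jmath_!(\omega_{\Bun_N}[-(g-1)\cdot\dim(N)])$ and applying base change for $!$-pushforward along the open embedding $\jmath$ and the closed embedding $\ol\imath_0$ gives $\pi^!(\Delta^0_{\on{glob}})[(g-1)\cdot\dim(N)]\simeq(\bi_0)_!(\omega_{S^0})=\Delta^0$ (which re-proves the $\Delta^0_{\on{glob}}$-half of \propref{p:equiv}). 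Since $\Delta^0=(\bi_0)_!(\omega_{S^0})$ is the extension by zero of $\omega_{S^0}$ from the open orbit $S^0\subset\ol{S}{}^0$, and every $S^\mu$ with $0\neq\mu\in-\Lambda^{\on{pos}}$ lies in the complementary closed subset, we get $(\bi_\mu)^*(\Delta^0)=0$, which is (b).

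For the left-hand side of (a) I would write $\ICs=\underset{\lambda\in(\Lambda^+,\leq)}{\on{colim}}\, t^{-\lambda}\cdot\Sat(V^\lambda)[\langle\lambda,2\check\rho\rangle]$ and use that $H_c(S^\mu,(\bi_\mu)^*(-))$ commutes with colimits. After translating by $t^\lambda$ (which carries $S^\mu$ to $S^{\lambda+\mu}$) and applying \secref{sss:Satake}(ii), the $\lambda$-term becomes $V^\lambda(\lambda+\mu)[-\langle\mu,2\check\rho\rangle]$, the transition maps being the highest-weight multiplications $V^{\lambda_1}(\lambda_1+\mu)\to V^{\lambda_2}(\lambda_2+\mu)$ described in \secref{ss:mult hw}. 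Exactly as in the proof of \propref{p:* fibers of IC}, the matrix-coefficient maps identify this colimit with $\CO(\cN)(\mu)[-\langle\mu,2\check\rho\rangle]$.

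For the right-hand side of (a) the global geometry of $\BunNb$ is needed. The stack $(\BunNb)_{\infty\cdot x}$ carries a $\GG_m$-action through a regular dominant coweight of $T$ for which $\IC_{\on{glob}}$ is monodromic, and $\pi\circ\bi_{-,\mu}$ lands $S^{-,\mu}$ in a repelling-type locus; I would invoke Braden's theorem on $(\BunNb)_{\infty\cdot x}$, together with \propref{p:global IC} $=$ \cite[Theorem~1.12]{BFGM} governing the (dual, i.e.\ $*$-) restrictions along the attracting strata, to identify $H(S^{-,\mu},(\bi_{-,\mu})^!\circ\pi^!(\IC_{\on{glob}})[(g-1)\cdot\dim(N)])$ with $\CO(\cN)(\mu)[-\langle\mu,2\check\rho\rangle]$ as well. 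It then remains to check that the map of \propref{p:comp fibers}(a) realizes the matrix-coefficient isomorphism and not merely some abstract isomorphism of the two computations; this is forced term by term, since the construction of \eqref{e:map one direction lambda} identifies, for each $\lambda$, the $!$-fiber of $\IC_{\on{glob}}\star\IC_{\ol\Gr^{-\lambda}_G}$ at $\pi(t^{-\lambda})$ canonically with $\sfe$, and unwinding this against the Satake normalizations of \secref{sss:Satake} matches the induced map with the matrix-coefficient map, hence in the colimit.

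I expect the global half of (a) to be the main obstacle: expressing the $\fL(N^-)$-direction $!$-restriction of $\pi^!(\IC_{\on{glob}})$ in dual-group terms, and verifying that the tautologically constructed comparison map of \eqref{e:map one direction} is precisely the matrix-coefficient isomorphism. This is the geometric content the paper is after — the bridge, absent from \cite{BFGM}, between the ``stable Satake limit'' and the IC sheaf of Drinfeld's compactification — and it rests on the hyperbolic-localization (Zastava-space) analysis behind \propref{p:global IC}, adapted to $(\BunNb)_{\infty\cdot x}$, with careful tracking of cohomological shifts. A subsidiary but genuinely necessary point, in view of the essential infinite-dimensionality of the orbits, is the conservativity of $(\bi_\mu)^*$ on $\SI(\Gr_G)$ and its compatibility (together with that of $(\bi_{-,\mu})^!$ and $\pi^!$) with the base changes and colimits used above.
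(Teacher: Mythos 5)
Your initial Braden reduction is exactly the paper's implicit one, and your Satake computation of the left-hand side of (a) reproduces the argument of \propref{p:* fibers of IC}. But both halves of the proposal have genuine gaps at precisely the points where the paper has to work.

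For (b), the claim that the Cartesian square gives $\pi^!\circ\jmath_!\simeq(\bj_0)_!\circ\pi_0^!$ is the crux, and it is not a formal base-change fact: for an open embedding $\jmath$ and a general map $\pi$, $\pi^!\circ\jmath_!$ need not be the $!$-extension along the pulled-back open embedding (the $*$-restriction of $\pi^!\jmath_!(\CF)$ to the closed complement typically does not vanish — already a closed embedding $\pi$ into the complement gives a counterexample). In fact your asserted isomorphism $\pi^!(\Delta^0_{\on{glob}})[(g-1)\dim N]\simeq\Delta^0$ is exactly \thmref{t:main j}, and the paper deduces that theorem \emph{from} \propref{p:comp fibers}(b), not the other way around; so taken literally your argument is circular. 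The paper establishes (b) by passing to the Zastava space: it uses \propref{p:Zastava and Bun}(b) — where the needed base change for the map $\fq:\CZ^\mu\to\BunNb$ is justified via factorization and a reduction to the smooth case of sufficiently dominant $-\mu$ — together with the identification of the central fiber $\fF^\mu\simeq\ol{S}{}^0\cap S^{-,\mu}$ and the contracting $\BG_m$-action of \cite[Sect.~5.1]{BFGM}.

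For (a), the proposal to invoke Braden/hyperbolic localization directly on $(\BunNb)_{\infty\cdot x}$ and to identify the right-hand side with $\CO(\cN)(\mu)$ is exactly the kind of infinite-dimensional argument the paper is at pains to avoid; the Zastava space is introduced precisely to have a finite-type model on which the hyperbolic localization (in the guise of \propref{p:calc on Zast}, i.e.\ \cite[Sect.~5]{BFGM}) is available, and there is no direct appeal to Braden on $\BunNb$ anywhere in the paper. Moreover the paper does not compute both sides of (a) and then compare: it shows both sides live in a single cohomological degree $\langle\mu,2\check\rho\rangle$ (LHS via \secref{sss:Satake}(ii) and \lemref{l:Braden}, RHS via \propref{p:Zastava and Bun}(a) and \propref{p:calc on Zast}(a)), replaces $\ICs$ by the individual term $t^{-\lambda}\cdot\Sat(V^\lambda)$ for $\lambda$ large, and reduces the comparison of the two $0$-th cohomologies to a concrete geometric statement about the open dense piece $S^0\cap S^{-,\mu}\cap(t^{-\lambda}\cdot\Gr^\lambda_G)$, closing the loop via the explicit description of the composite to $\nabla^0_{\on{glob}}$ in \secref{sss:map to nabla} and \propref{p:calc on Zast}(b). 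You correctly flag that the global half is the hard part, but you offer no mechanism to carry it out — the Zastava input (\propref{p:Zastava and Bun}, \propref{p:calc on Zast}) and the central-fiber identification of \secref{sss:central fiber} are the missing ingredients, and without them neither (a) nor (b) goes through.
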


\ssec{Recollections about the Zastava space}

\sssec{}

Consider the fiber product
\begin{equation} \label{e:pre-Zastava}
\BunNb\underset{\Bun_G}\times \Bun^{-\mu}_{B^-}.
\end{equation} 

Here $\Bun^{-\mu}_{B^-}$ denotes the connected component of $\Bun_{B^-}$ equal to
$$\Bun_{B^-}\underset{\Bun_T}\times \Bun_T^{-\mu},$$
where $\Bun_T^{-\mu}$ corresponds to $T$-bundles of degree $-\mu$. 

\medskip

The Zastava space $\CZ^\mu$ is by definition the open subset in \eqref{e:pre-Zastava}, corresponding to the
condition that (generic) reduction of $\CP_G$ to $N$ and the (genuine) reduction of $\CP_G$ to $B^-$ are
transversal at the generic point of $X$. By construction, $\CZ^\mu$ is an algebraic stack, but it is in fact
a quasi-projective scheme. 

\medskip

Let $\fq$ denote the forgetful map
$$\CZ^\mu\to \BunNb.$$

\medskip

We denote by $\oCZ{}^\mu\subset \CZ^\mu$ the open subset equal to the preimage 
of $\Bun_N\subset \BunNb$ under $\fq$. By a slight abuse of notation we denote by
$\jmath$ the corresponding open embedding. The scheme $\oCZ{}^\mu$ is smooth. 

\sssec{}

Let us think of a point of $\Bun_{B^-}$ as a triple $(\CP_G,\CP_T,\kappa^-)$, where $\CP_G$ is a $G$-bundle on $X$, 
$\CP_T$ is a $T$-bundle on $X$ of degree $\mu$, and 
$\kappa^-$ is a non-degenerate Pl\"ucker data, i.e., surjective maps
$$\kappa^{-,\check\lambda}:\wt\CV_{\CP_G}^{\check\lambda}\to \check\lambda(\CP_T),$$
satisfying Pl\"ucker relations. In the above formula $\wt\CV^{\check\lambda}$ is the dual Weyl module with highest weight 
$\check\lambda$.

\medskip

A point of \eqref{e:pre-Zastava} belongs to $\CZ^\mu$ if and only if for every $\check\lambda$, the composite map
$$\CO_X \overset{\kappa^{\check\lambda}}\to 
\CV_{\CP_G}^{\check\lambda}\to \wt\CV_{\CP_G}^{\check\lambda}\overset{\kappa^{-,\check\lambda}}\longrightarrow 
\check\lambda(\CP_T^f)$$
is non-zero.

\medskip

In this case, the datum of zeroes of the above composite maps is encoded by a point of $X^\mu$. Here for
an element $\mu\in -\Lambda^{\on{pos}}$ equal to $\underset{i}\Sigma\, -n_i\cdot \alpha_i$ we let
$$X^\mu=\underset{i}\prod\, X^{(n_i)}.$$

\medskip

We denote by $\fs$ the resulting map
$$\CZ^\mu\to X^\mu.$$ 

\sssec{}  \label{sss:central fiber}

Denote by $\fF^\mu$ the ``central fiber" of $\CZ^\mu$ over $X^\mu$, i.e., the preimage of the point $\mu\cdot x\in X^\mu$.
Set 
$$\oF{}^\mu:=\fF^\mu\cap \oCZ{}^\mu.$$

\medskip

According to \cite[Proposition 2.6]{BFGM}, we have a canonical isomorphism
$$\fF^\mu\simeq \ol{S}{}^0\cap S^{-,\mu}$$ so that
$\oF{}^\mu$ corresponds to $S^0\cap S^{-,\mu}$. 

\medskip

It follows from the construction that under this identification the composite map
$$\fF^\mu\hookrightarrow \CZ^\mu \overset{\fq}\longrightarrow \BunNb$$
equals the map
$$\ol{S}{}^0\cap S^{-,\mu}\hookrightarrow \ol{S}{}^0 \overset{\pi}\longrightarrow \BunNb.$$

\sssec{}

The following assertion was implicit in \cite[Sect. 3.4]{BFGM}. For completeness, we will supply a proof in 
\secref{ss:proof Zastava and Bun}: 

\begin{prop} \label{p:Zastava and Bun}  \hfill

\smallskip

\noindent{\em(a)}
We have a (canonical) isomorphism
$$\fq^!(\IC_{\on{glob}})[(g-1)\cdot \dim(N)]\simeq \IC_{\CZ^\mu}[-\langle \mu,2\check\rho\rangle],$$
extending the tautological isomorphism over $\oCZ{}^\mu$. 

\smallskip

\noindent{\em(b)} The map
$\jmath_!(\omega_{\oCZ{}^\mu})\to \fq^!\circ \jmath_!(\omega_{\Bun_N})$
is an isomorphism.

\end{prop}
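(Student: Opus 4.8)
The plan is to deduce both parts from the description of the boundary singularities of $\BunNb$ in terms of Zastava spaces, following \cite[Sect.~3.4]{BFGM} (where this input is used to compute $\IC_{\on{glob}}$); the two assertions are the ``IC'' and the ``$!$-extension'' incarnations of one and the same compatibility. I would begin with the bookkeeping that reduces everything to a boundary computation. Over the dense open $\oCZ{}^\mu=\fq^{-1}(\Bun_N)$ there is nothing to prove: $\Bun_N$ and $\oCZ{}^\mu$ are smooth, $\dim\Bun_N=(g-1)\dim N$ and $\dim\CZ^\mu=-\langle\mu,2\check\rho\rangle$, so both $\IC_{\on{glob}}[(g-1)\dim N]$ and $\IC_{\CZ^\mu}[-\langle\mu,2\check\rho\rangle]$ restrict to the respective dualizing complexes, and $\fq^!$ always carries the dualizing complex to the dualizing complex; the same remark identifies $\jmath^*$ of both sides of (b) with $\omega_{\oCZ{}^\mu}$. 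This fixes the cohomological shifts appearing in the statement and gives the tautological identifications over $\oCZ{}^\mu$. By the recollement triangle for the decomposition of $\CZ^\mu$ into the open $\oCZ{}^\mu$ and its closed complement, it then suffices, for (a), to verify that $\fq^!(\IC_{\on{glob}}[(g-1)\dim N])$ obeys the support and cosupport estimates characterizing $\IC_{\CZ^\mu}$ upon $*$- and $!$-restriction to the boundary, and, for (b) --- recalling that $\jmath_!(\omega_{\Bun_N})=\Delta^0_{\on{glob}}[(g-1)\dim N]$, since $\imath_0=\jmath$ --- that $\fq^!\jmath_!(\omega_{\Bun_N})$ has vanishing $!$-restriction to the boundary.

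These boundary estimates are where the geometry of $\CZ^\mu$ --- equivalently, of \cite{BFGM} --- is needed. The map $\fq$ is \emph{not} smooth, but $\CZ^\mu$ serves as a local model: \'etale-locally near a point of the boundary stratum of $\BunNb$ indexed by a colored divisor $D=\sum_i\mu_i\cdot x_i$ on $X$, the pair $(\BunNb,\Bun_N)$, with its forgetful and Hecke structure, is isomorphic to a product of the Zastava pairs $(\CZ^{\mu_i},\oCZ{}^{\mu_i})$ with a smooth factor; one invokes here the factorization of Zastava spaces over $X^{\nu}$, with $\nu=\sum_i\mu_i$, whose fiber over $\nu\cdot x$ is the central fiber $\fF^{\nu}\simeq\ol{S}{}^{0}\cap S^{-,\nu}$ of \secref{sss:central fiber}, to reduce to the rank-one, one-point case. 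Transporting the stalks of $\IC_{\on{glob}}$ and of $\Delta^0_{\on{glob}}$, as computed in \cite{BFGM} and \cite{BG} (cf.\ \propref{p:global IC} and \propref{p:! of j glob}), through these local isomorphisms, one checks that the $*$- and $!$-restrictions of $\fq^!(\IC_{\on{glob}})$ to the boundary strata of $\CZ^\mu$ have precisely the amplitude that forces it to coincide with $\IC_{\CZ^\mu}$, and that the $!$-restriction of $\fq^!\jmath_!(\omega_{\Bun_N})$ to the boundary vanishes; uniqueness of the IC sheaf, resp.\ of the $!$-extension, then promotes these into the asserted canonical isomorphisms extending the one over $\oCZ{}^\mu$.

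The main obstacle is the local-structure input itself: producing the \'etale-local identification of a neighborhood of a boundary point of $\BunNb$ with a product of Zastava spaces, compatibly with $\fq$, which is a genuine piece of the \cite{BFGM} geometry rather than a formal manipulation. Everything downstream of it --- matching the shifts over $\oCZ{}^\mu$, the recollement reduction, and deducing the $!$-version in tandem with the IC-version --- is routine.
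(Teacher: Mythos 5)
Your reduction to the open set is correct and matches the paper: over $\oCZ{}^\mu$ the statement is tautological, the cohomological shifts are fixed by the dimensions, and for (b) one indeed has $\Delta^0_{\on{glob}} = \jmath_!(\omega_{\Bun_N})[-(g-1)\dim N]$.

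Where you diverge is at the boundary, and this is a genuinely different route from the paper's. You propose to verify the support/cosupport (resp.\ $!$-vanishing) characterization of $\IC_{\CZ^\mu}$ (resp.\ of $\jmath_!(\omega_{\oCZ{}^\mu})$) by transporting the known boundary stalks of $\IC_{\on{glob}}$ and $\Delta^0_{\on{glob}}$ (\propref{p:global IC}, \propref{p:! of j glob}) across an \'etale-local identification of $(\BunNb,\Bun_N)$ near a boundary point with a product of Zastava pairs compatibly with $\fq$. The paper instead never touches stalks of $\IC_{\on{glob}}$ at all: it observes that $\fq\colon\CZ^\mu\to\BunNb$ is itself \emph{smooth} when $-\mu$ is sufficiently dominant (because then $\Bun_{B^-}^{-\mu}\to\Bun_G$ is smooth), in which case the statement is immediate; and it reduces an arbitrary $\mu$ to this case by choosing $\lambda$ with $-(\mu+\lambda)$ sufficiently dominant and chasing the commutative diagram \eqref{e:Zast com diag} built from the factorization isomorphism \eqref{e:factorization}, using that all the auxiliary arrows there are smooth. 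This is cleaner and more self-contained: it needs only \cite[Prop.~2.4]{BFGM} (factorization) as input, not the stalk computations of \cite{BFGM,BG}.

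There is also a real gap in your plan as written. The \'etale-local product decomposition you invoke is neither stated in the paper nor quite correct as phrased: what is true is that $\CZ^\mu$ is \emph{smooth-locally equivalent} to $\BunNb$ (via suitable Zastava correspondences), not that a neighborhood of a boundary point of $\BunNb$ is \'etale-isomorphic to a product of Zastava spaces compatibly with $\fq$ and $\jmath$. Making the local-model statement precise, together with its compatibility with the forgetful map, is essentially the entire content of the proposition (the paper remarks that the result was only ``implicit'' in \cite[Sect.~3.4]{BFGM}), so deferring to it does not constitute a proof. Moreover, the boundary stalks of $\IC_{\on{glob}}$ you want to transport are themselves established in \cite{BFGM} by means of precisely this sort of local-model comparison, so the dependency you set up would require re-proving that comparison anyway. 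The paper's diagram-chase-to-the-smooth-case circumvents both issues.
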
 

In addition, the following was established in \cite[Sect. 5]{BFGM}: 

\begin{prop} \label{p:calc on Zast} \hfill

\smallskip

\noindent{\em(a)}
The cohomology $H_{\{\fF^\mu\}}(\CZ^\mu,\IC_{\CZ^\mu})$
is concentrated in cohomological degree zero. 

\smallskip

\noindent{\em(b)}
The map
\begin{multline} \label{e:map on Zast}
H_{\{\fF^\mu\}}(\CZ^\mu,\IC_{\CZ^\mu})\to H_{\{\oF{}^\mu\}}(\oCZ{}^\mu,\IC_{\oCZ{}^\mu}))\simeq 
H_{\{\oF{}^\mu\}}(\oCZ{}^\mu,\omega_{\oCZ{}^\mu}))[\langle \mu,2\check\rho\rangle]\simeq \\
\simeq H(\ol{S}{}^0\cap S^{-,\mu},\omega_{\ol{S}{}^0\cap S^{-,\mu}})[\langle \mu,2\check\rho\rangle]
\end{multline}
induces an isomorphism in (the lowest) cohomological degree $0$.

\end{prop}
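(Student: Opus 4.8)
The plan is to reproduce the analysis of the Zastava space from \cite[\S 5]{BFGM}. Both assertions describe the behaviour of $\IC_{\CZ^\mu}$ near the central fiber $\fF^\mu$, and the argument rests on three structures on $\CZ^\mu$: its factorization over the configuration space $X^\mu$, the standard contracting $\BG_m$-action, and a resolution of singularities.

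First I would recall that $\CZ^\mu$ carries a canonical contracting $\BG_m$-action (built, as in \cite{BFGM}, from a cocharacter of $T$ acting on the $B^-$-structure) which commutes with $\fs$, fixes the image of the canonical ``maximal defect'' section $\bs:X^\mu\hookrightarrow\CZ^\mu$, and for which $\IC_{\CZ^\mu}$ is equivariant. The central fiber $\fF^\mu=\fs^{-1}(\mu\cdot x)$ is $\BG_m$-stable, and the induced action contracts it onto the single point $\bs(\mu\cdot x)$; under the identification $\fF^\mu\simeq\ol{S}{}^0\cap S^{-,\mu}$ of \secref{sss:central fiber} this is the usual contraction of the opposite semi-infinite orbit intersection towards $t^\mu$. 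Applying the contraction principle of \cite{Bra} to the ($\BG_m$-monodromic) sheaf $i_\fF^!\IC_{\CZ^\mu}$ on $\fF^\mu$ then identifies $H_{\{\fF^\mu\}}(\CZ^\mu,\IC_{\CZ^\mu})$ with a single local invariant of $\IC_{\CZ^\mu}$ attached to the point $t^\mu$ (a suitably shifted stalk or costalk, the shift depending on the codimension $\langle-\mu,\check\rho\rangle$ of $\fF^\mu$ in $\CZ^\mu$).

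For part (a) it then remains to show this invariant is concentrated in cohomological degree $0$. Here I would combine purity with factorization. Fixing a resolution $\wt\CZ^\mu\to\CZ^\mu$ compatible with the factorization of $\CZ^\mu$ over $X^\mu$, the decomposition theorem exhibits $\IC_{\CZ^\mu}$ as a direct summand of the pushforward of a shifted constant sheaf from a smooth scheme, so its local invariants are pure. One then inducts on $\mu$ via the factorization isomorphism $\CZ^{\mu_1+\mu_2}\simeq\CZ^{\mu_1}\times\CZ^{\mu_2}$ over the locus of disjoint divisors: the base case $\mu=-\alpha_i$ is the smooth surface $\CZ^{-\alpha_i}$, whose $\IC$ sheaf is a shift of the constant sheaf, and the inductive step uses purity together with the explicit description of the fiber of $\wt\CZ^\mu$ over $\bs(\mu\cdot x)$ to force the invariant into degree $0$. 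I expect this last, concrete step --- the analysis of that central fiber of the resolution, which is the technical heart of \cite[\S 5]{BFGM} --- to be the main obstacle: the contraction principle only reduces the question to a local invariant of an intersection cohomology sheaf, and such invariants are generically spread over many cohomological degrees, so their collapse to a single degree is genuinely geometric and not formal.

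For part (b), the displayed map is the restriction along the open embedding $\oCZ{}^\mu\hookrightarrow\CZ^\mu$, which carries $\fF^\mu$ onto $\oF{}^\mu$. Since $\oCZ{}^\mu$ is smooth, the target equals $H(\ol{S}{}^0\cap S^{-,\mu},\omega)[\langle\mu,2\check\rho\rangle]$ (via \lemref{l:Braden} and \secref{sss:central fiber}), whose degree-$0$ part is the free $\sfe$-module on the set of top-dimensional irreducible components of $\ol{S}{}^0\cap S^{-,\mu}$. By the computation in part (a), the degree-$0$ part of the source $H_{\{\fF^\mu\}}(\CZ^\mu,\IC_{\CZ^\mu})$ is the free $\sfe$-module on the top-dimensional irreducible components of the fiber of $\wt\CZ^\mu$ over $\bs(\mu\cdot x)$, and --- since $\oF{}^\mu=S^0\cap S^{-,\mu}$ is open and dense in $\fF^\mu$ --- these are in canonical bijection with the former components; tracing the identifications through the contraction and the resolution shows that the map of part (b) realizes exactly this bijection in degree $0$.
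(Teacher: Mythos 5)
The paper itself does not prove this statement: it is cited directly from \cite[Sect.~5]{BFGM}, so there is no in-paper argument to compare yours against. Judged on its own terms, your outline does land on several of the correct ingredients — the contracting $\BG_m$-action on $\CZ^\mu$ over $X^\mu$, the factorization of $\CZ^\mu$ over $X^\mu$, and the identification $\fF^\mu\simeq\ol{S}{}^0\cap S^{-,\mu}$ with its dimension theory. But the argument is not a proof, and you say so yourself in the middle of part (a) (``I expect this last, concrete step $\ldots$ to be the main obstacle''). Two concrete issues. First, the purity step does not by itself produce concentration in a single cohomological degree. Exhibiting $\IC_{\CZ^\mu}$ as a summand of the pushforward of a constant sheaf along a resolution gives pointwise purity of weights, but weights live in every cohomological degree; to force the Braden-reduced local invariant into degree zero one needs an actual dimension/parity mechanism. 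The input that does this is the Mirkovi\'c--Vilonen dimension estimate, namely that every irreducible component of $S^\nu\cap S^{-,\mu}$ has dimension exactly $\langle\nu-\mu,\check\rho\rangle$, so that $\fF^\mu$ has pure dimension $-\langle\mu,\check\rho\rangle$, i.e.\ exactly half of $\dim\CZ^\mu$; this is what makes the degree count close up and is precisely the datum you gesture at but never invoke. (It also matters that one works with a sheaf theory without weights — D-modules, Betti — for which the weight-theoretic purity argument is not even available; the \cite{BFGM} argument is arranged to avoid it.)

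Second, part (b) as written relies on ``the computation in part (a)'' having identified the degree-$0$ piece of $H_{\{\fF^\mu\}}(\CZ^\mu,\IC_{\CZ^\mu})$ as the free module on top-dimensional components of the fiber of a resolution. Part (a) as you set it up asserts only concentration in degree zero; no such identification was produced, so the bijection claimed in (b) has no source. What you actually need for (b) is more elementary and does not pass through a resolution at all: once (a) is known, the degree-$0$ piece of $H_{\{\fF^\mu\}}(\CZ^\mu,\IC_{\CZ^\mu})$ is the free module on the top-dimensional irreducible components of $\fF^\mu$ (by the standard description of the lowest cohomology of $i^!\IC$ in middle codimension), and the restriction map to $H_{\{\oF{}^\mu\}}(\oCZ{}^\mu,\omega_{\oCZ{}^\mu})[\langle\mu,2\check\rho\rangle]$ is an isomorphism in degree $0$ because $\oF{}^\mu=S^0\cap S^{-,\mu}$ is open and \emph{dense} in $\fF^\mu$ — density being, again, exactly the MV dimension estimate applied to the boundary strata $S^\nu\cap S^{-,\mu}$, $\nu<0$. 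So the dimension estimate is doing double duty, and making it explicit would both fill the gap in (a) and replace the unsupported appeal in (b). As it stands, the proposal names the right geometric actors but does not deliver a proof of either part.
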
 

\sssec{Proof of \propref{p:comp fibers}(b)}

We will now deduce \propref{p:comp fibers}(b) from 
\propref{p:Zastava and Bun}(b). 

\medskip

Indeed, from \secref{sss:central fiber},
we obtain that the expression 
$$H(S^{-,\mu},(\bi_{-,\mu})^!\circ \pi^!(\Delta^0_{\on{glob}}))$$
identifies with the !-fiber at $\mu\cdot x\in X^\mu$ of $\fs_*\circ \fq^!(\Delta^0_{\on{glob}})$. Now, 
using \propref{p:Zastava and Bun} (b) and base change along $\{\mu\cdot x\}\to X^\mu$, we obtain
that it suffices to show that
$$\fs_*\circ \jmath_!(\omega_{\oCZ{}^\mu})=0.$$

\medskip

However, this follows from the fact that there a $\BG_m$-action on $\CZ^\mu$ that preserves the projection
$\fs$ that contracts $\CZ^\mu$ to the canonical section
$$X^\mu \to \CZ^\mu$$
(see \cite[Sect. 5.1]{BFGM})
whose image lies outside of $\oCZ{}^\mu$, unless $\mu=0$. 

\ssec{Proof of \propref{p:comp fibers}(a)}

\sssec{}

For $\lambda\in \Lambda$ consider the map
$$t^{-\lambda}\cdot \IC_{\ol\Gr^\lambda_G}[\langle \lambda,2\check\rho\rangle]
\to \ICs\to \pi^!(\IC_{\on{glob}})[(g-1)\cdot \dim(N)],$$
and the corresponding map
\begin{equation} \label{e:map to check}
H(S^{-,\mu},(\bi_{-,\mu})^!(t^{-\lambda}\cdot \IC_{\ol\Gr^\lambda_G})[\langle \lambda,2\check\rho\rangle])\to 
H(S^{-,\mu},(\bi_{-,\mu})^!\circ \pi^!(\IC_{\on{glob}})[(g-1)\cdot \dim(N)]).
\end{equation} 

\medskip

By \secref{sss:Satake}(ii) and \lemref{l:Braden}, the left-hand side is concentrated in 
single cohomology degree $\langle \mu,2\check\rho\rangle$.

\medskip

The right-hand side is also is concentrated in single cohomology degree $\langle \mu,2\check\rho\rangle$, 
by Propositions \ref{p:Zastava and Bun}(a) and \ref{p:calc on Zast}(a). 

\medskip

Thus, it suffices to show that the map \eqref{e:map to check} induces
an isomorphism on the $\langle \mu,2\check\rho\rangle$ cohomology, after the taking the colimit of the left-hand side
over $\clambda$. 

\sssec{}

First, we note that the intersection
$$S^{-,\mu} \cap (t^{-\lambda}\cdot \ol\Gr^\lambda_G)$$ contains as an open subset
$$S^0\cap S^{-,\mu} \cap (t^{-\lambda}\cdot \Gr^\lambda_G),$$
which is dense in every irreducible component of the (top) dimension $-\langle \mu,2\check\rho\rangle$.  Moreover, for a fixed
$\mu$ and $\lambda$ large, the inclusion
$$S^0\cap S^{-,\mu} \cap (t^{-\lambda}\cdot \Gr^\lambda_G) \subset S^0\cap S^{-,\mu}$$
is an equality. 

\medskip

Hence, we obtain a map
$$H(S^{-,\mu},(\bi_{-,\mu})^!(t^{-\lambda}\cdot \IC_{\ol\Gr^\lambda_G})[\langle \lambda,2\check\rho\rangle])\to
H(S^{-,\mu}\cap S^0\cap (t^{-\lambda}\cdot \Gr^\lambda_G),\omega_{S^{-,\mu}\cap S^0\cap (t^{-\lambda}\cdot \Gr^\lambda_G)}),$$
which is injective at the level of the $\langle \mu,2\check\rho\rangle$ cohomology, and this injection is an isomorphism for
$\lambda$ large.

\sssec{}

Taking into account \propref{p:calc on Zast}(b), it suffices to show that the following diagram commutes
$$
\CD
H(S^{-,\mu},(\bi_{-,\mu})^!(t^{-\lambda}\cdot \IC_{\ol\Gr^\lambda_G})[\langle \lambda,2\check\rho\rangle])
@>>> H(S^0\cap S^{-,\mu}\cap (t^{-\lambda}\cdot \Gr^\lambda_G),\omega_{S^{-,\mu}\cap S^0\cap (t^{-\lambda}\cdot \Gr^\lambda_G)})  \\
@V{\text{\eqref{e:map to check}}}VV  @VVV  \\
H(S^{-,\mu},(\bi_{-,\mu})^!\circ \pi^!(\IC_{\on{glob}})[(g-1)\cdot \dim(N)]) @>>{\text{\eqref{e:map on Zast}}}>   
H(\ol{S}{}^0\cap S^{-,\mu},\omega_{\ol{S}{}^0\cap S^{-,\mu}}). 
\endCD
$$

\medskip

However, this follows from the description of the map
$$t^{-\lambda}\cdot \IC_{\ol\Gr^\lambda_G}[\langle \lambda,2\check\rho\rangle]
\to  \pi^!(\IC_{\on{glob}})[(g-1)\cdot \dim(N)]\to \pi^!(\nabla^0_{\on{glob}})\simeq (\bj_0)_*(\omega_{S^0})$$
in \secref{sss:map to nabla}. 

\ssec{Proof of equivariance}  \label{ss:proof of equiv}

In this subsection we will prove \propref{p:equiv}.

\sssec{}

Let $y$ be a point on $X$ different from $x$. In what follows we will use the subscript $x$ in $\Gr_{G,x}$, $S^\lambda_x$, $\pi_x$,
$\fL(N)_x$, $\fL^+(N)_x$ to emphasize the dependence on $x$. We will also consider the corresponding objects at $y$. 

\sssec{}

Let $(\BunNb)^{\on{good}_y}\subset \BunNb$ be the open substack,
where we forbid the maps $\kappa^{\check\lambda}$ to have a zero at $y$. Clearly, the map
$$\pi_x:\ol{S}{}^0_x\to \BunBb$$
has its image in $(\BunNb)^{\on{good}_y}$. 

\medskip

In addition, we can consider the map 
$$\pi_{x,y}:\ol{S}{}^0_x\times S^0_y\to (\BunNb)^{\on{good}_y}.$$
Its restriction to $\ol{S}{}^0_x\times 1\subset \ol{S}{}^0_x\times S^0_y$ is the original map $\pi_x$.

\medskip

We will prove:

\begin{prop}  \label{p:strong approx}
For $\CF=\IC_{\on{glob}}$ or $\CF=\Delta^0_{\on{glob}}$, the object $\pi_{x,y}^!(\CF)$ is equivariant with respect to
$\fL(N)_y$ acting on the second factor in $\ol{S}{}^0_x\times S^0_y$.
\end{prop}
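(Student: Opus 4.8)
The plan is to deduce the $\fL(N)_y$-equivariance of $\pi_{x,y}^!(\CF)$ from an honest $\fL(N)_y$-action that lives one level up, on a torsor. First I would reduce, exactly as in \secref{sss:N alpha}, to checking $N_{\alpha,y}$-equivariance for the pro-unipotent subgroups $N_{\alpha,y}\subset\fL(N)_y$, and then, restricting to finite-type pieces, to equivariance under a finite-dimensional unipotent quotient $\bar N$ of $N_{\alpha,y}$. So the whole question is local and of finite type, and it suffices to produce, for such $\bar N$, an isomorphism $\on{act}^!\pi_{x,y}^!(\CF)\simeq\on{pr}^!\pi_{x,y}^!(\CF)$ on $\bar N\times(\ol{S}{}^0_x\times S^0_y)$ together with the usual compatibilities.

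The key geometric observation is that over $(\BunNb)^{\on{good}_y}$ the Pl\"ucker data $\{\kappa^{\check\lambda}\}$ restricts, on the formal disc $D_y$, to a \emph{genuine} reduction of $\CP_G$ to $N$ (no zeroes at $y$). Let $P\to(\BunNb)^{\on{good}_y}$ be the $\fL^+(N)_y$-torsor of trivializations of this $N$-bundle on $D_y$. Then $\fL(N)_y$ acts on $P$ by \emph{modifications at $y$} of the $N$-reduction (well defined once a trivialization is chosen). The point of the "good at $y$" condition is that this operation introduces neither poles nor zeroes of the $\kappa^{\check\lambda}$ at $y$ --- because the relevant vectors are highest weight vectors, hence $N$-fixed --- so it preserves the preimage of $(\BunNb)^{\on{good}_y}$; moreover it preserves the whole stratification of $\BunNb$, since modifying at $y$ by $N$ leaves the degeneracy divisor (which misses $y$) untouched. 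I would then check that $\pi_{x,y}$ pulls the torsor $P$ back to the evident $\fL^+(N)_y$-torsor $\ol{S}{}^0_x\times\fL(N)_y$ over $\ol{S}{}^0_x\times S^0_y$, and that the induced map $\ol{S}{}^0_x\times\fL(N)_y\to P$ covering $\pi_{x,y}$ is $\fL(N)_y$-equivariant for left translations on the second factor. This is a concrete functor-of-points verification: translating the gluing parameter at $y$ by $h\in\fL(N)_y$ is precisely modifying by $h$ the $N$-reduction of the image bundle.

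Granting this, $\fL(N)_y$-equivariance of $\pi_{x,y}^!(\CF)$ follows once one knows that $\CF_P:=p^!\bigl(\CF|_{(\BunNb)^{\on{good}_y}}\bigr)$ is $\fL(N)_y$-equivariant on $P$, i.e. $\on{act}^!\CF_P\simeq\on{pr}^!\CF_P$ on $\fL(N)_y\times P$ compatibly (pull everything further back along the equivariant lift above). For $\CF=\IC_{\on{glob}}$: its restriction to $(\BunNb)^{\on{good}_y}$ is the intermediate extension of $\IC_{\Bun_N}$ along the open embedding $\Bun_N\hookrightarrow(\BunNb)^{\on{good}_y}$; pulling back to $P$ and working at finite type with $\bar N$, the $\bar N$-action preserves $\Bun_N$ (a genuine $N$-reduction stays genuine), preserves all strata, and fixes $\IC_{\Bun_N}$ (which is $\omega$ up to shift, as $\Bun_N$ is smooth), so its intermediate extension inherits a $\bar N$-equivariant structure. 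For $\CF=\Delta^0_{\on{glob}}=(\imath_0)_!(\IC_{\Bun_N})$ the same action preserves $\Bun_N$ and the locally closed embedding $\imath_0$, and $!$-pushforward transports the equivariant structure, so $\Delta^0_{\on{glob}}$ is equivariant as well.

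The step I expect to be the main obstacle is the bookkeeping around the torsor $P$ and the pro-unipotent reductions: checking carefully, at finite type, that the $\bar N$-action on the relevant pieces of $P$ exists and is compatible with the one on $(\BunNb)^{\on{good}_y}$, that $\pi_{x,y}$ lifts $\bar N$-equivariantly through $\pi_{x,y}^{*}(P)\simeq\ol{S}{}^0_x\times\fL(N)_y$, and --- most delicately --- that everything must be phrased via $\on{act}^!\simeq\on{pr}^!$ on $\fL(N)_y\times P$ rather than as descent of an action on $(\BunNb)^{\on{good}_y}$ itself, since $\fL^+(N)_y$ is not normal in $\fL(N)_y$ and there is no honest downstairs $\fL(N)_y$-action.
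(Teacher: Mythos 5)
Your proof follows essentially the same route as the paper's: you construct the torsor of trivializations of the $N$-bundle on the formal disc at $y$ (what the paper calls $(\BunNb)^{\on{level}_y}$), observe that $\fL(N)_y$ acts on it by regluing and that this action preserves the stratification, deduce equivariance of $p^!(\CF)$ upstairs, lift $\pi_{x,y}$ equivariantly through the torsor, and descend. The paper compresses the equivariance-of-$p^!(\CF)$ step into the words ``by functoriality'' and relegates the finite-type bookkeeping to a footnote; your elaboration of both points (strata preserved because highest-weight vectors are $N$-fixed, reduction to finite-type quotients $\bar N$) is a correct unpacking of that.
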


Let us deduce \propref{p:equiv} from \propref{p:strong approx}:

\begin{proof}

Let $\CF\in \Shv((\BunNb)^{\on{good}_y})$ be such that $\pi_{x,y}^!(\CF)$ is $\fL(N)_y$-equivariant. We claim that
in this case $\pi_x^!(\CF)$ is $\fL(N)_x$-equivariant. 

\medskip

Let $N_{X-(x,y)}$ be the group ind-scheme of maps $(X-\{x,y\})\to N$. Laurent expansion defines
a closed embedding
$$N_{X-(x,y)}\to \fL(N)_x\times \fL(N)_y.$$

\medskip

We have three actions of $N_{X-(x,y)}$ on $\ol{S}{}^0_x\times S^0_y$. One is via the map $N_{X-(x,y)}\to \fL(N)_x$
and the first factor; another is via the map $N_{X-(x,y)}\to \fL(N)_y$ and the second factor; and the third is diagonal. 

\medskip

It is clear that the map $\pi_{x,y}$ is $N_{X-(x,y)}$-invariant with respect to the \emph{diagonal} action. 
In particular, for any $\CF\in \Shv((\BunNb)^{\on{good}_y})$, we have
$$\pi_{x,y}^!(\CF)\in \Shv(\ol{S}{}^0_x\times S^0_y)^{N_{X-(x,y)},\on{diag}}.$$

\medskip

Note, however, that the condition on $\CF$ implies that $\pi_{x,y}^!(\CF)$ is equivariant with respect to
the $N_{X-(x,y)}$-action via the second factor. Combined with diagonal equivariance, we obtain that $\pi_{x,y}^!(\CF)$
is $N_{X-(x,y)}$-equivariant with respect to the action on the first factor.  

\medskip

In particular, we obtain that $\pi_x^!(\CF)$ is $N_{X-(x,y)}$-equivariant with respect to the action on the first factor.  

\medskip

We claim that any object $\CF'\in \Shv(\ol{S}{}^0_x)$ with this property is $\fL(N)_x$-equivariant. Indeed, it is sufficient
to show that the !-restriction of $\CF'$ to any $S^\lambda_x$ is $\fL(N)_x$-equivariant. However, this follows from 
the fact that the action of $N_{X-(x,y)}$ on $S^\lambda_x$ is transitive. 

\end{proof} 

\sssec{}   \label{sss:jets}

We now prove \propref{p:strong approx}\footnote{In the proof below we will use sheaves on stacks and schemes of infinite-type.
As this may cause a feeling of discomfort, we note that everything can be rephrased by choosing finite level structures and thus
dealing only with stacks locally of finite type.}:

\begin{proof} 

The data of $\{\kappa^{\check\lambda}\}$ that does not have zeroes at $y$
defines a reduction of the $G$-bundle $\CP_G$ to $N$ on the formal neighborhood of $y$.
Thus, we can consider the $\fL^+(N)_y$-torsor over $(\BunNb)^{\on{good}_y}$, denoted 
$$(\BunNb)^{\on{level}_y}$$
that classifies the data $(\CP_G,\kappa,\epsilon)$, where $\epsilon$ is the datum of trivialization of the resulting $N$
bundle on the formal neighborhood of $y$. 

\medskip

The usual regluing procedure defines an action on $(\BunNb)^{\on{level}_y}$ of the group ind-scheme $\fL(N)_y$.
By functoriality, for $\CF=\IC_{\on{glob}}$ or $\CF=\Delta^0_{\on{glob}}$, the pullback of $\CF$ to $(\BunNb)^{\on{level}_y}$
is $\fL(N)_y$-equivariant.

\medskip

We have a commutative diagram
$$
\CD
\ol{S}{}^0_x\times \fL(N)_y  @>{\wt\pi_{x,y}}>>  (\BunNb)^{\on{level}_y} \\ 
@VVV   @VVV   \\
\ol{S}{}^0_x\times S^0_y  @>{\pi_{x,y}}>> (\BunNb)^{\on{good}_y},
\endCD
$$
where the map $\wt\pi_{x,y}$ is $\fL(N)_y$-equivariant. Hence, the pullback of $\pi_{x,y}^!(\CF)$ along
$$\ol{S}{}^0_x\times \fL(N)_y\to \ol{S}{}^0_x\times S^0_y$$
is $\fL(N)_y$-equivariant. This implies that $\pi_{x,y}^!(\CF)$ itself was $\fL(N)_y$-equivariant.

\end{proof}

\ssec{Proof of \propref{p:Zastava and Bun}}  \label{ss:proof Zastava and Bun}

\sssec{}  \label{sss:suff dom}

Let us first assume that $-\mu$ is \emph{sufficiently dominant}, by which we mean that
$\check\alpha(-\mu)>2(g-1)$ for every root $\check\alpha$ of $G$. In this case, the projection
$$\Bun_{B^-}^{-\mu}\to \Bun_G$$
is smooth. 

\medskip

Hence, in this case, the map 
$$\fq:\CZ^\mu\to \BunNb$$
is also smooth, and the assertion is evident.

\medskip

We will reduce the general case to the one above, using the factorization property of the Zastava
spaces over the configuration spaces. 

\sssec{}

For a pair of elements $\mu,\lambda\in -\Lambda^{\on{pos}}$, let
$$(X^\mu\times X^\lambda)_{\on{disj}}\subset X^\mu\times X^\lambda$$
be the open subset corresponding to the locus when the two $-\Lambda^{\on{pos}}$-valued divisors have
disjoint support.

\medskip

The \emph{factorization property} (see \cite[Proposition 2.4]{BFGM}) says that there is a canonical isomorphism
\begin{equation} \label{e:factorization}
\CZ^{\mu+\lambda}\underset{X^{\mu+\lambda}}\times (X^\mu\times X^\lambda)_{\on{disj}}\simeq
(\CZ^\mu\times \CZ^\lambda)\underset{X^\mu\times X^\lambda}\times (X^\mu\times X^\lambda)_{\on{disj}}.
\end{equation}

\sssec{}

In what follows we will need a particular property of the isomorphism \eqref{e:factorization} that follows from 
its definition. 

\medskip

Let $(\BunNb\times X^\lambda)^{\on{good}}$ be the open subset of the product
$$\BunNb\times X^\lambda,$$
where we forbid the maps $\kappa^{\check\lambda}$ to have a zero at the support of the point of $X^\lambda$. 

\medskip

As in \secref{sss:jets}, we consider the group-scheme $\fL^+(N)_{X^\lambda}$ (over $X^\lambda$), the group ind-scheme $\fL(N)_{X^\lambda}$, and 
the $\fL^+(N)_{X^\lambda}$-torsor 
$$(\BunNb\times X^\lambda)^{\on{level}}\to (\BunNb\times X^\lambda)^{\on{good}}.$$

The action of $\fL^+(N)_{X^\lambda}$ on $(\BunNb\times X^\lambda)^{\on{level}}$ extends to that of
$\fL(N)_{X^\lambda}$. We fix a group subscheme $N'_{X^\lambda}$
$$\fL^+(N)_{X^\lambda} \subset N'_{X^\lambda}\subset \fL(N)_{X^\lambda},$$
pro-smooth over $X^\lambda$, 
and consider the quotient stack $(\BunNb\times X^\lambda)^{\on{level}}/N'_{X^\lambda}$. It comes equipped with a smooth projection
$$(\BunNb\times X^\lambda)^{\on{good}}\to 
(\BunNb\times X^\lambda)^{\on{level}}/N'_{X^\lambda}.$$

\medskip

Then for $N'_{X^\lambda}$ large enough the following diagram is commutative:
\begin{equation} \label{e:Zast com diag}
\CD
(\CZ^\mu\times \oCZ{}^\lambda)\underset{X^\mu\times X^\lambda}\times (X^\mu\times X^\lambda)_{\on{disj}}   
@>{\text{\eqref{e:factorization}}}>> \CZ^{\mu+\lambda}\underset{X^{\mu+\lambda}}\times (X^\mu\times X^\lambda)_{\on{disj}}  \\
@V{\on{id}\times \fs}VV   @VV{\fq}V   \\
(\CZ^\mu\times X^\lambda)\underset{X^\mu\times X^\lambda}\times (X^\mu\times X^\lambda)_{\on{disj}} 
& &    (\BunNb\times X^\lambda)^{\on{good}}   \\
@V{\fq}VV   @VVV     \\
(\BunNb\times X^\lambda)^{\on{good}}   @>>>  (\BunNb\times X^\lambda)^{\on{level}}/N'_{X^\lambda}.
\endCD
\end{equation}

The idea is that a point $z\in \oCZ{}^\lambda$ modifies the (generalized) $N$-bundle at the points of the support
of the divisor $\fs(z)$. 

\sssec{}

Now, given any $\mu\in -\Lambda^{\on{pos}}$, we can find $\lambda\in -\Lambda^{\on{pos}}$ so that
$-(\mu+\lambda)$ is sufficiently dominant (as in \secref{sss:suff dom}), so that
$$\CZ^{\mu+\lambda}\to \BunNb$$
is smooth. 

\medskip

The assertion of \propref{p:Zastava and Bun} follows by chasing over the diagram \eqref{e:Zast com diag}. 
For example, point (a) is obtained as follows:

\medskip

It suffices to show that the pullback of the IC sheaf along the composite left vertical map is isomorphic to the IC sheaf
(up to a cohomological shift). 

\medskip

Since the bottom horizontal arrow in \eqref{e:Zast com diag} is smooth, it suffices to show that the pullback
of $\IC_{(\BunNb\times X^\lambda)^{\on{level}}/N'_{X^\lambda}}$ 
along the counter-clockwise circuit in \eqref{e:Zast com diag} is isomorphic to the IC sheaf
(up to a cohomological shift). 

\medskip

Since the diagram \eqref{e:Zast com diag} is commutative, this is equivalent to showing that the pullback
of $\IC_{(\BunNb\times X^\lambda)^{\on{level}}/N'_{X^\lambda}}$ 
along the clockwise circuit in \eqref{e:Zast com diag} is isomorphic to the IC sheaf
(up to a cohomological shift). 

\medskip

Since the top horizontal arrow and the lower right vertical arrows in \eqref{e:Zast com diag} are smooth, it suffices 
to show that the pullback of the IC sheaf along
$$\CZ^{\mu+\lambda}\underset{X^{\mu+\lambda}}\times (X^\mu\times X^\lambda)_{\on{disj}}\to 
(\BunNb\times X^\lambda)^{\on{good}}$$
is isomorphic to the IC sheaf
(up to a cohomological shift). 

\medskip

Since $X^\lambda$ is smooth, it suffices to show that the pullback of the IC sheaf along the composite map
$$\CZ^{\mu+\lambda}\underset{X^{\mu+\lambda}}\times (X^\mu\times X^\lambda)_{\on{disj}}\to 
(\BunNb\times X^\lambda)^{\on{good}}\to \BunNb$$
is isomorphic to the IC sheaf (up to a cohomological shift). But this follows from the fact that the map
$$\CZ^{\mu+\lambda} \overset{\fq}\longrightarrow \BunNb$$
is smooth.

\section{Digression: (dual) baby Verma objects}  

In this section we summarize the construction of (dual) baby Verma objects, following \cite{ABBGM} and \cite{FG}. 
These are objects of the category $\Shv(\Gr_G)^I$ that have a particular property (the Hecke property) with respect
to convolutions with objects of the form $\Sat(V)$, $V\in \Rep(\cG)$. 

\ssec{The Iwahori category on the affine Grassmannian}

\sssec{}

Consider the category
$$\Shv(\Gr_G)^I,$$
where $I\subset \fL^+(G)$ is the Iwahori 
subgroup\footnote{We will use a slightly renormalized version of $\Shv(\Gr_G)^I$, where we declare compact objects 
to be the ones that map to compact objects under the forgetful functor 
$\Shv(\Gr_G)^I\to \Shv(\Gr_G)$. The same applies to $\Shv(\Fl_G)^I$.
This is done in order to avoid the singular support condition on coherent sheaves on the spectral side.
We are grateful to J.~Campbell for catching this imprecision.}.

\medskip

This category carries an action by right convolutions by $\Sph(G)=\Shv(\Gr_G)^{\fL^+(G)}$ and
a commuting action of
$$\CH(G):=\Shv(\Fl_G)^I$$
by left convolutions. 

\sssec{}

For an element $\wt{w}$ of the extended affine Weyl group $W^{\on{aff}}$, we let
$$j_{\wt{w},!} \text{ and } j_{\wt{w},*}$$
denote the corresponding standard and costandard objects in $\CH(G)^\heartsuit$.

\medskip

The key fact is that there are canonical isomorphisms
\begin{equation} \label{e:j !*}
j_{\wt{w},!} \star j_{\wt{w}^{-1},*} \simeq \delta_{1,\Fl_G}\simeq  j_{\wt{w}^{-1},*}\star j_{\wt{w},!}.
\end{equation}

\sssec{}

Another crucial observation that there are symmetric monoidal functors
$$\Lambda\rightrightarrows \CH(G)^\heartsuit,$$
uniquely characterized by the property that they send 
$$(\lambda\in \Lambda^+)\mapsto j_{\lambda,*} \text{ and } (\lambda\in \Lambda^+)\mapsto j_{\lambda,!},$$
respectively.

\medskip

Using \eqref{e:j !*}, we obtain that the first functor sends $\lambda\in -\Lambda^+$ to $j_{\lambda,!}$ and
the second functor sends $\lambda\in -\Lambda^+$ to $j_{\lambda,*}$.

\medskip

The above two symmetric monoidal functors are intertwined by the automorphism induced by the action of $w_0\in W$
and the automorphism of $\CH(G)$ given by
$$\CF\mapsto j_{w_0,!}\star \CF\star j_{w_0,*}.$$

Indeed, for $\lambda\in \Lambda^+$ we have:
$$j_{w_0,!}\star j_{\lambda,* }\star j_{w_0,*}\simeq j_{w_0(\lambda),*}.$$

\medskip

In particular, the auto-equivalence of $\Shv(\Gr_G)^I$, given by
$$\CF\mapsto j_{w_0,!}\star \CF$$
intertwines the $\Rep(\cT)$-action on $\Shv(\Gr_G)^I$ given 
\begin{equation} \label{e:good action}
\sfe^\lambda\star \CF:=j_{\lambda,*}\star \CF, \quad \lambda\in \Lambda^+
\end{equation}
and the action given by
\begin{equation} \label{e:bad action}
\sfe^\lambda\star \CF:=j_{w_0(\lambda),*}\star \CF, \quad \lambda\in \Lambda^+.
\end{equation} 

\ssec{Recollections on the [ABG] theory}

For the remainder of this paper, we will change our conventions, and for a group $H$, we let $\Rep(H)$ denote
the symmetric monoidal DG category of its representation (rather than the corresponding abelian category). 

\sssec{}

Consider the derived stack
$$\cn^-\underset{\cg}\times \{0\}/\cB^-.$$

Note that its underlying classical stack is $\on{pt}/\cB^-$.

\sssec{}

The following theorem is established in \cite{ABG}:

\begin{thm} \label{t:ABG}
There exists a canonically defined equivalence of categories
$$\Sat^I:\IndCoh(\cn^-\underset{\cg}\times \{0\}/\cB^-)\simeq \Shv(\Gr_G)^I$$
with the following properties:

\smallskip

\noindent{\em(i)} The action of $\Rep(\cG)$ on $\IndCoh(\cn^-\underset{\cg}\times \{0\}/\cB^-)$
arising from the projection 
$$\cn^-\underset{\cg}\times \{0\}/\cB^-\to \on{pt}/\cB^-\to \on{pt}/\cG$$
corresponds to the action of $\Rep(\cG)$ on $\Shv(\Gr_G)^I$ via $\Sat:\Rep(\cG)\to \Sph(G)$
and right convolutions. 

\smallskip

\noindent{\em(ii)} The action of $\Rep(\cT)$ on $\IndCoh(\cn^-\underset{\cg}\times \{0\}/\cB^-)$
arising from the projection 
$$\cn^-\underset{\cg}\times \{0\}/\cB^-\to \on{pt}/\cB^-\to \on{pt}/\cT$$
corresponds to the action on $\Rep(\cG)$ on $\Shv(\Gr_G)^I$ given by \eqref{e:good action}.

\smallskip

\noindent{\em(iii)} The object
$$\CO_{\on{pt}/\cB^-}\in \IndCoh(\cn^-\underset{\cg}\times \{0\}/\cB^-)$$
corresponds under $\Sat^I$ to $\delta_{1,\Gr_G}\in \Shv(\Gr_G)^I$.

\smallskip

\noindent{\em(iv)}
For $\lambda\in \Lambda^+$ the morphism 
$$(v^\lambda)^*:V^\lambda\to \sfe^\lambda$$
in $\Rep(\cB^-)$ corresponds under $\Sat^I$ to the natural map of perverse sheaves
$$\IC_{\ol\Gr^\lambda_G}\to j_{\lambda,*}\star \delta_{1,\Gr_G}.$$

\end{thm}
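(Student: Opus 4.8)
The plan is to deduce the whole statement from the main equivalence of \cite{ABG}, the only real work being to reconcile the conventions of \emph{loc.~cit.} with ours and then to read off the four listed properties. First I would fix the form in which \cite{ABG} is invoked: there one obtains an equivalence between a derived version of the $I$-equivariant category on $\Gr_G$ and $\IndCoh$ of a stack built from the Springer resolution $\wt\cCN$; using the $\cG$-equivariant identification $\wt\cCN/\cG\simeq\cn^-/\cB^-$ and then passing to the derived fiber over $0\in\cg$ produces precisely the stack $\cn^-\underset{\cg}\times\{0\}/\cB^-$, whose classical truncation is $\on{pt}/\cB^-$ as asserted. Two features of this translation need care: first, the passage from the triangulated (and mixed) formulation of \emph{loc.~cit.} to presentable DG categories — here one must use exactly the renormalized version of $\Shv(\Gr_G)^I$ singled out in the footnote, since it is that renormalization, rather than a subcategory cut out by a singular-support condition, that matches all of $\IndCoh$ of the derived stack; and second, the fact that \cite{ABG} is phrased in terms of $\cB$ and the ``naive'' $\Rep(\cT)$-action \eqref{e:bad action} rather than $\cB^-$ and the action \eqref{e:good action} that we want. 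For the latter the conversion is furnished by the material recalled in the preceding subsection: the two symmetric monoidal functors $\Lambda\rightrightarrows\CH(G)^\heartsuit$ are interchanged by the automorphism $\CF\mapsto j_{w_0,!}\star\CF$ together with the action of $w_0\in W$, and composing the equivalence of \cite{ABG} with this automorphism yields a functor $\Sat^I$ for which (i) and (ii) hold as stated.

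Next I would dispose of (iii), which is built into the normalization of the equivalence in \cite{ABG} and is in any case forced once (i) and (ii) are in place, since $\delta_{1,\Gr_G}$ is the unit for left convolution by $\CH(G)$ and $\CO_{\on{pt}/\cB^-}$ is correspondingly distinguished on the coherent side, generating it under the $\Rep(\cG)$- and $\Rep(\cT)$-actions. For (iv): using the presentation $V^\lambda=\coInd^\cG_{\cB^-}(\sfe^\lambda)$ together with (i) and (iii), the functor $\Sat^I$ sends $V^\lambda$ (viewed as $V^\lambda\otimes\CO_{\on{pt}/\cB^-}$ via the $\Rep(\cG)$-action) to $\Sat(V^\lambda)\star\delta_{1,\Gr_G}=\IC_{\ol\Gr^\lambda_G}$, and sends $\sfe^\lambda$ to $j_{\lambda,*}\star\delta_{1,\Gr_G}$ by (ii) and \eqref{e:good action}. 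The map $(v^\lambda)^*\colon V^\lambda\to\sfe^\lambda$ in $\Rep(\cB^-)$ is the projection onto the $\cB^-$-highest weight line; all four objects occurring lie in the relevant hearts, so it suffices to show that $\Sat^I((v^\lambda)^*)$ is a \emph{nonzero} map $\IC_{\ol\Gr^\lambda_G}\to j_{\lambda,*}\star\delta_{1,\Gr_G}$, such a map being unique up to a scalar. I would verify nonvanishing by restricting to the point $t^\lambda\in\Gr^\lambda_G$: there both sides have one-dimensional (suitably shifted) stalk and the natural map of perverse sheaves is an isomorphism, and the identification of this stalk with $V^\lambda(\lambda)$ furnished by the normalization of geometric Satake in \secref{sss:Satake}(iii) carries $(v^\lambda)^*$ to the nonzero functional $v^\lambda\mapsto 1$.

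The step I expect to be the genuine obstacle is not any isolated computation but the bookkeeping in the translation above: checking that \emph{our} renormalized presentable category $\Shv(\Gr_G)^I$ is exactly the target of $\IndCoh(\cn^-\underset{\cg}\times\{0\}/\cB^-)$ — that no singular-support hypothesis is silently imposed or dropped in passing from the mixed triangulated setting of \cite{ABG} — and that the $w_0$-twist really does convert the $\Rep(\cT)$-action of \emph{loc.~cit.} into \eqref{e:good action} rather than into \eqref{e:bad action}, with the trivialization in (iii) and the highest-weight normalization in (iv) compatible throughout. Everything else is either explicit in \cite{ABG} or a formal consequence of (i)--(iii) together with the Satake normalization of \secref{sss:Satake}.
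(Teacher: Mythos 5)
The paper gives no proof of this statement: it simply writes ``The following theorem is established in \cite{ABG}'' and moves on. So there is no argument in the paper to compare your write-up against; what you have produced is a (plausible) reconstruction of the translation exercise that the author implicitly delegates to the reader. With that caveat, a few remarks on the substance of your proposal.

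Your overall structure — identify $\wt\cCN/\cG\simeq\cn^-/\cB^-$, pass to the derived fiber over $0$, reconcile the triangulated/mixed formulation of \cite{ABG} with the renormalized presentable $\Shv(\Gr_G)^I$, and use the $w_0$-twist $\CF\mapsto j_{w_0,!}\star\CF$ together with the Weyl-group action to pass between the two $\Rep(\cT)$-actions — is the right shape of argument, and the $w_0$-twist step is precisely the mechanism the paper sets up in the preceding subsection. Your nonvanishing check for (iv) by restriction to the stalk at $t^\lambda$, matched against the Satake normalization in \secref{sss:Satake}(iii), is also the natural way to pin down the scalar.

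One claim I would not let stand as written: that (iii) ``is in any case forced once (i) and (ii) are in place.'' The object $\CO_{\on{pt}/\cB^-}$ is not characterized by the $\Rep(\cG)\otimes\Rep(\cT)$-module structure alone; an autoequivalence of $\Shv(\Gr_G)^I$ commuting with both actions need not fix $\delta_{1,\Gr_G}$. Property (iii) is genuinely a normalization of $\Sat^I$ — part of what makes the equivalence ``canonically defined'' rather than defined up to a tensor-twist — and should be treated as such, as your primary justification (built into the normalization in \cite{ABG}) already does; the secondary ``forced'' remark should be dropped. You also correctly flag the renormalization/singular-support bookkeeping as the place where a careless reading of \cite{ABG} could go wrong: that is exactly why the author inserted the footnote about the renormalized $\Shv(\Gr_G)^I$, and it deserves the scrutiny you give it. Since the paper offers no proof, it is impossible to say whether your route matches the author's intended one, but it is consistent with the scaffolding the paper erects around the citation.
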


\sssec{}  \label{sss:fibers on Gr}

Let us use \thmref{t:ABG} to supply the calculation for the proof of \propref{p:! fibers of IC} that
$$\underset{\lambda\in \Lambda^+}{\on{colim}}\, H_{\{t^{\lambda+\mu}\}}(\Gr_G,\IC_{\ol\Gr^\lambda_G})[\langle \lambda+\mu,2\check\rho\rangle]$$
identifies canonically with $\Sym^n(\cg/\cb^-[-2])(-\mu)$.

\begin{proof}

By Verdier duality, the !-fiber of $\Sat(V^\lambda)[\langle \lambda+\mu,2\check\rho\rangle]$ at $t^{\lambda+\mu}$ identifies with 
\begin{equation} \label{e:stable !-fibers equiv}
\CHom_{\Shv(\Gr_G)^I}(\IC_{\ol\Gr^\lambda_G},j_{\lambda+\mu,*}\star \delta_{1,\Gr_G}),
\end{equation}
tensored with $\sfe$ over $\Sym(\ct[-2])$, the latter being 
the $I$-equivariant cohomology of the point.

\medskip

Using \thmref{t:ABG}, we rewrite the expression in \eqref{e:stable !-fibers equiv} as 
\begin{equation} \label{e:stable !-fibers coh}
\CHom_{\IndCoh(\cn^-\underset{\cg}\times \{0\}/\cB^-)}(\Res^{\cG}_{\cB^-}(V^\lambda)\otimes \CO_{\on{pt}/\cB^-},
\Res^{\cT}_{\cB^-}(\sfe^{\lambda+\mu})\otimes \CO_{\on{pt}/\cB^-}).
\end{equation}

\medskip

Moreover, by unwinding the definitions, we obtain that the transition map for $\lambda_2=\lambda_1+\lambda$ 
$$H_{\{t^{\lambda_1+\mu}\}}(\Gr_G,\Sat(V^{\lambda_1})[\langle \lambda_1+\mu,2\check\rho\rangle]\to
H_{\{t^{\lambda_2+\mu}\}}(\Gr_G,\Sat(V^{\lambda_2})[\langle \lambda_2+\mu,2\check\rho\rangle]$$
is induced by the map
\begin{multline*}
\CHom_{\IndCoh(\cn^-\underset{\cg}\times \{0\}/\cB^-)}(\Res^{\cG}_{\cB^-}(V^{\lambda_1})\otimes \CO_{\on{pt}/\cB^-},
\Res^{\cT}_{\cB^-}(\sfe^{\lambda_1+\mu})\otimes \CO_{\on{pt}/\cB^-})\to \\
\to \CHom_{\IndCoh(\cn^-\underset{\cg}\times \{0\}/\cB^-)}(\Res^{\cG}_{\cB^-}(V^{\lambda_1}\otimes V^\lambda)
\otimes \CO_{\on{pt}/\cB^-}, \Res^{\cT}_{\cB^-}(\sfe^{\lambda_1+\mu})\otimes \Res^{\cG}_{\cB^-}(V^\lambda) \otimes \CO_{\on{pt}/\cB^-}) \to \\
\to
\CHom_{\IndCoh(\cn^-\underset{\cg}\times \{0\}/\cB^-)}(\Res^{\cG}_{\cB^-}(V^{\lambda_2})\otimes \CO_{\on{pt}/\cB^-},
\Res^{\cT}_{\cB^-}(\sfe^{\lambda_2})\otimes \CO_{\on{pt}/\cB^-}),
\end{multline*}
where second arrow is induced by the maps
$$V^{\lambda_2}\to V^{\lambda_1}\otimes V^\lambda$$
and
$$\Res^{\cG}_{\cB^-}(V^\lambda)\to \Res^{\cT}_{\cB^-}(\sfe^{\lambda}).$$

\medskip

We rewrite \eqref{e:stable !-fibers coh} as
\begin{equation} \label{e:stable !-fibers ind}
\CHom_{\Rep(\cG)}(V^\lambda,\on{coInd}_{\cB^-}^\cG(\sfe^{\lambda+\mu}\otimes \Sym(\cg/\cn^-[-2]))),
\end{equation}
and the transition maps are given by
\begin{multline*}
\CHom_{\Rep(\cG)}(V^{\lambda_1},\on{coInd}_{\cB^-}^\cG(\sfe^{\lambda_1+\mu}\otimes \Sym(\cg/\cn^-[-2]))) \to \\
\to \CHom_{\Rep(\cG)}(V^{\lambda_1}\otimes V^\lambda,\on{coInd}_{\cB^-}^\cG(\sfe^{\lambda_1+\mu}\otimes \Sym(\cg/\cn^-[-2]))\otimes V^\lambda)\simeq \\
\simeq 
\CHom_{\Rep(\cG)}(V^{\lambda_1}\otimes V^\lambda,\on{coInd}_{\cB^-}^\cG(\sfe^{\lambda_1+\mu}\otimes \Sym(\cg/\cn^-[-2])\otimes 
\Res^\cG_{\cB^-}(V)^\lambda)) \to \\
\to \CHom_{\Rep(\cG)}(V^{\lambda_2},\on{coInd}_{\cB^-}^\cG(\sfe^{\lambda_2+\mu}\otimes \Sym(\cg/\cn^-[-2]))).
\end{multline*}

Finally, we claim that the colimit of the expressions \eqref{e:stable !-fibers ind} over $\lambda\in \Lambda^+$ identifies with
$$\Sym(\cg/\cn^-[-2])(-\mu).$$

Indeed, for a given integer $m\geq 0$, let $\lambda_0$ be such that for every weight $\nu$ that appears in 
$\Sym^n(\cg/\cn^-)$, we have $\lambda_0+\nu\in \Lambda^+$. Then for all $\lambda\in \lambda_0+\Lambda^+$, we have
$$\CHom_{\Rep(\cG)}(V^\lambda,\on{coInd}_{\cB^-}^\cG(\sfe^{\lambda+\mu}\otimes \Sym^n(\cg/\cn^-[-2])))\simeq
\Sym^n(\cg/\cn^-[-2])(-\mu).$$

\end{proof}

\ssec{Hecke patterns}

\sssec{}

Let $\CC$ be a DG category, acted on by $\Rep(M)$, where $M$ as an algebraic group. We denote by
$\Hecke_M(\CC)$ the corresponding Hecke category
$$\Hecke_M(\CC):=\CC\underset{\Rep(M)}\otimes \Vect.$$

\medskip

We have a tautological functor
$$\ind_{\Hecke_M}:\CC\to \Hecke_M(\CC),$$
which admits a continuous right adjoint, denoted $\oblv_{\Hecke_M}$. The comonad
$\oblv_{\Hecke_M}\circ \ind_{\Hecke_M}$ on $\CC$ is given by the action of the left regular representation object $\CO(M)\in \Rep(M)$.

\sssec{}

Since $\Rep(M)$ is \emph{rigid} (see \cite[Chapter 1]{GR}), we an canonically identify
$$\Hecke_M(\CC):=\CC\underset{\Rep(M)}\otimes \Vect\simeq  \on{Funct}_{\Rep(M)}(\Vect,\CC),$$
see Proposition 9.4.8 in {\it loc.cit.}

\medskip

Under this identification, the functor $\oblv_{\Hecke_M}$ corresponds to the forgetful functor
$$\on{Funct}_{\Rep(M)}(\Vect,\CC)\to \on{Funct}_{\Rep(M)}(\Rep(M),\CC)\simeq \CC.$$

\medskip

This point of view allows to think of objects of $\Hecke_M(\CC)$ as ``Hecke eigen-objects'': these are objects $c\in \CC$ equipped
with a system of isomorphisms
$$c\star V\simeq \ul{V}\otimes c, \quad V\in \Rep(M)$$
(here $\ul{V}$ denotes the vector space underlying $V$) that are associative in the same sense as in \secref{sss:central}.

\medskip

The functor $\ind_{\Hecke_M}$ sends $c\in \CC$ to $c\star \CO(M)$, with the Hecke structure induced by that on $\CO(M)$:
$$\CO(M)\otimes V\simeq \ul{V}\otimes \CO(M).$$

\sssec{}

We apply this for $M$ being $\cG$ or $\cT$. In addition, we will use the following two variants.

\medskip

For $\CC$ equipped with an action of $\Rep(\cG)$ we will denote by $\bHecke_{\cG}(\CC)$ the category
$$\CC\underset{\Rep(\cG)}\otimes \Rep(\cT).$$

\medskip

The category $\bHecke_{\cG}(\CC)$
is acted on by $\Rep(\cT)$ and is related to $\Hecke_{\cG}(\CC)$ by the formula
$$\Hecke_{\cG}(\CC)=\Hecke_{\cT}(\CC)(\bHecke_{\cG}(\CC)).$$

We have the corresponding pair of adjoint functors 
$$\ind_{\Hecke_\cT}:\bHecke_{\cG}(\CC)\rightleftarrows \Hecke_{\cG}(\CC):\oblv_{\Hecke_\cT}.$$

\sssec{}

We can think of $\bHecke_{\cG}(\CC)$ as the category of $\Lambda$-graded Hecke eigen-objects. I.e., an object of 
$\bHecke_{\cG}(\CC)$ is a collection of objects $\{c_\lambda\in \CC, \lambda\in \Lambda\}$, equipped with a system 
of isomorphisms
\begin{equation} \label{e:graded Hecke data}
c_\lambda\star V\simeq \underset{\mu}\bigoplus\, V(\mu)\otimes c_{\lambda+\mu}, \quad V\in \Rep(\cG)
\end{equation}
that are associative in the same sense as in \secref{sss:central}.

\medskip

In terms of this description, the functor $\ind_{\Hecke_T}:\bHecke_{\cG}(\CC)\to\Hecke_{\cG}(\CC)$ sends
$$\{c_\lambda\}\mapsto \underset{\lambda}\oplus\, c_\lambda,$$
and the functor 
$$\oblv_{\Hecke_T}:\Hecke_{\cG}(\CC)\to \bHecke_{\cG}(\CC)$$ 
sends $c$ to
$$\{c_\lambda\},\,\, c_\lambda=c \text{ for all }\lambda.$$

\sssec{}  \label{sss:twisted Hecke}

Suppose now that $\CC$ carries an action of $\Rep(\cT)\otimes \Rep(\cG)$. We will consider the category
$$\Hecke_{\cG,\cT}(\CC):=\CC\underset{\Rep(\cT)\otimes \Rep(\cG)}\otimes \Rep(\cT).$$

\medskip

We can think of its objects as $c\in \CC$, equipped with a collection of isomorphisms 
\begin{equation} \label{e:twisted Hecke}
c\star V\simeq \Res^\cG_\cT(V)\star c, \quad V\in \Rep(\cG)
\end{equation}
(here convolution on the right denotes the action of $\Rep(\cG)$ and convolution on the left denotes the action of $\Rep(\cT)$),
that are associative in the same sense as in \secref{sss:central}.

\sssec{}

We have the tautological functor
$$\Phi:\bHecke_{\cG}(\CC)=\CC\underset{\Rep(\cG)}\otimes \Rep(\cT)\to 
\CC\underset{\Rep(\cT)\otimes \Rep(\cG)}\otimes \Rep(\cT)=\Hecke_{\cG,\cT}(\CC),$$
which admits a continuous right adjoint
\begin{equation} \label{e:right adj to Hecke}
\Psi:\Hecke_{\cG,\cT}(\CC)\to \bHecke_{\cG}(\CC).
\end{equation}

Explicitly, for an object $c$ of $\Hecke_{\cG,\cT}(\CC)$ as in \secref{sss:twisted Hecke}, the object 
$\Psi(c)\in \bHecke_{\cG}(\CC)$ consists of
$$c_\lambda=\sfe^\lambda\star c,$$
with the Hecke structure supplied by \eqref{e:twisted Hecke}.

\sssec{}

In addition, the functor
$$\ind_{\Hecke_{\cT}}:\CC\to \Hecke_{\cT}(\CC)$$
induces a functor
$$\Hecke_{\cG,\cT}(\ind_{\Hecke_{\cT}}):\Hecke_{\cG,\cT}(\CC) \to \Hecke_{\cG,\cT}(\Hecke_{\cT}(\CC))\simeq
\Hecke_\cG(\Hecke_{\cT}(\CC)).$$

Explicitly, for an object $c$ of $\Hecke_{\cG,\cT}(\CC)$, the corresponding underlying object of $\Hecke_{\cT}(\CC)$
is 
$$\ind_{\Hecke_{\cT}}(c):=\underset{\lambda\in \Lambda}\oplus\, \sfe^\lambda\star c,$$
with the Hecke structure with respect to $\cG$ given by \eqref{e:twisted Hecke}.

\sssec{}

We have a commutative diagram:
\begin{equation} \label{e:Hecke diagram}
\CD
\Hecke_{\cG,\cT}(\CC)     @>{\Psi}>> \bHecke_{\cG}(\CC)   \\
@V{\Hecke_{\cG,\cT}(\ind_{\Hecke_{\cT}})}VV    @VV{\ind_{\Hecke_{\cT}}}V   \\
\Hecke_\cG(\Hecke_{\cT}(\CC))   @>{\Hecke_\cG(\oblv_{\Hecke_{\cT}})}>>  \Hecke_{\cG}(\CC). 
\endCD
\end{equation} 

\ssec{The (dual) baby Verma object: coherent side}

\sssec{}

Consider $\IndCoh(\cn^-\underset{\cg}\times \{0\}/\cB^-)$ as a category endowed with a pair of commuting actions
of $\Rep(\cG)$ and $\Rep(\cT)$, obtained from the maps
$$\cn^-\underset{\cg}\times \{0\}/\cB^-\to \on{pt}/\cB^-$$
and 
$$\on{pt}/\cB^-\to \on{pt}/\cG \text{ and } \on{pt}/\cB^-\to \on{pt}/\cT,$$
respectively. 

\medskip

Consider the corresponding categories
$$\Hecke_\cG(\IndCoh(\cn^-\underset{\cg}\times \{0\}/\cB^-)), \,\,
\bHecke_\cG(\IndCoh(\cn^-\underset{\cg}\times \{0\}/\cB^-)),$$
$$\Hecke_\cT(\IndCoh(\cn^-\underset{\cg}\times \{0\}/\cB^-))
\text{ and }
\Hecke_{\cG,\cT}(\IndCoh(\cn^-\underset{\cg}\times \{0\}/\cB^-)).$$

Direct image along the closed embedding $\on{pt}/\cB^-\to \cn^-\underset{\cg}\times \{0\}/\cB^-$ defines functors
\begin{equation} \label{e:flags into usual}
\QCoh(\cG/\cB^-)\simeq \Hecke_\cG(\QCoh(\on{pt}/\cB^-))\to \Hecke_\cG(\IndCoh(\cn^-\underset{\cg}\times \{0\}/\cB^-));
\end{equation}
\begin{equation} \label{e:flags into graded}
\QCoh(\cT\backslash \cG/\cB^-)\simeq 
\bHecke_\cG(\QCoh(\on{pt}/\cB^-))\to \bHecke_\cG(\IndCoh(\cn^-\underset{\cg}\times \{0\}/\cB^-));
\end{equation}
\begin{equation} \label{e:flags into G/N}
\QCoh(\on{pt}/\cN^-)\simeq 
\Hecke_\cT(\QCoh(\on{pt}/\cB^-))\to \Hecke_\cT(\IndCoh(\cn^-\underset{\cg}\times \{0\}/\cB^-))
\end{equation}
and
\begin{equation} \label{e:flags into interesting}
\QCoh((\cG/\cN^-)/\on{Ad}_{\cT})\simeq \Hecke_{\cG,\cT}(\QCoh(\on{pt}/\cB^-))\to \Hecke_{\cG,\cT}(\IndCoh(\cn^-\underset{\cg}\times \{0\}/\cB^-)).
\end{equation}

\medskip

The diagram \eqref{e:Hecke diagram} is induced by the commutative diagram
$$
\CD
\QCoh((\cG/\cN^-)/\on{Ad}_{\cT}) @>>>  \QCoh(\cT\backslash \cG/\cB^-)  \\
@VVV  @VVV  \\
\QCoh(\cG/\cN^-) @>>> \QCoh(\cG/\cB^-),
\endCD
$$
where we take inverse images along the vertical arrows and direct images along the horizontal arrows.

\sssec{}  \label{sss:coherent Verma}

The inclusion $\cN^-\hookrightarrow \cG$ induces a map
\begin{equation} \label{e:key geom map}
\on{pt}/\cT\to (\cG/\cN^-)/\on{Ad}_{\cT}.
\end{equation} 

Taking the direct image of $\CO_{\on{pt}/\cT}$ along this map and applying the functor \eqref{e:flags into interesting} we obtain
an object that we denote
$$\CM_{\cG,\cT}\in \Hecke_{\cG,\cT}(\IndCoh(\cn^-\underset{\cg}\times \{0\}/\cB^-)).$$

\medskip

Note that its image under the functor $\Psi$ \eqref{e:right adj to Hecke} is the object of $\bHecke_\cG(\IndCoh(\cn^-\underset{\cg}\times \{0\}/\cB^-))$,
to be denoted $\bCM_\cG$, obtained by means of \eqref{e:flags into graded} from the direct image of $\CO_{\on{pt}/\cT}$ under the map
$$\on{pt}/\cT\to \cT\backslash \cG/\cB^-.$$

\medskip

Finally, the object
$$\CM_\cG:=\ind_{\Hecke_T}(\bCM)\in \Hecke_\cG(\IndCoh(\cn^-\underset{\cg}\times \{0\}/\cB^-))$$
is obtained by means of \eqref{e:flags into usual} from the sky-scraper
$$\sfe_{1,\cG/\cB^-}\in \QCoh(\cG/\cB^-).$$



\medskip

The objects $\CM_{\cG,\cT}$, $\CM_\cG$ and $\bCM_\cG$ are the various incarnations of what we call the ``(dual) baby Verma object'';
the origin of the name will be explained shortly.

\sssec{}

For future use, let us describe explicitly the object of $\Hecke_{\cG,\cT}(\QCoh(\on{pt}/\cB^-))$
from which we obtained $\CM_{\cG,\cT}$ as a direct image in terms of \secref{sss:twisted Hecke}. Unwinding the 
definition, we obtain that as an object of $\Rep(\cB^-)$ it identifies with $\CO(\cB^-/\cT)$. The isomorphisms \eqref{e:twisted Hecke}
are given as follows:

\medskip

For any $W\in \Rep(\cB^-)$, we have a canonical isomorphism
\begin{align*} 
&W\otimes \CO(\cB^-/\cT)=\\
&=(W\otimes \CO(\cB^-))^{\cT_{\on{right}}}\simeq \\
&\simeq (\CO(\cB^-) \otimes W)^{\cT_{\on{right-diag}}}\simeq \\
&\simeq \underset{\mu}\oplus\, \CO(\cB^-)(-\mu) \otimes W(\mu) \simeq \\
&\simeq \underset{\mu}\oplus\,  (\CO(\cB^-/\cT)\otimes \sfe^\mu) \otimes W(\mu)=\\
&=\CO(\cB^-/\cT)\otimes  \Res^{\cB^-}_\cT(W),
\end{align*}
where:

\smallskip

\noindent--In the second line we view $W\otimes \CO(\cB^-)$ as an object of $\Rep(\cB^-)$ 
diagonally via the given action on $W$ and an action on $\CO(\cB^-)$ by left translations, and as such 
acted on by $\cT$ via right translations on the $\CO(\cB^-)$-factor;

\smallskip

\noindent--In the third line we view $\CO(\cB^-) \otimes W$ as an object of $\Rep(\cB^-)$ 
via the action of $\cB^-$ by left translations on the $\CO(\cB^-)$-factor, and as such acted on 
diagonally by $\cT$ via $\cT\to \cB^-$ and right translations on the $\CO(\cB^-)$-factor and the
given action on $W$;

\smallskip

\noindent--In the fourth line $\CO(\cB^-)(-\mu)$ means the $(-\mu)$ weight space with respect to
the action of $\cT$ via $\cT\to \cB^-$ and right translations;
\smallskip

\noindent--The isomorphism between the fourth and the fifth lines is induced by the identification
$$\CO(\cB^-)(-\mu)\simeq \CO(\cB^-/\cT)\otimes \sfe^\mu,$$
given by multiplication by the character $\cB^-\to \cT \overset{\mu}\to \BG_m$.

\sssec{}  \label{sss:coherent b Verma expl}

We note that the above object of $\Hecke_{\cG,\cT}(\QCoh(\on{pt}/\cB^-))$ can be written
as
$$\underset{\lambda\in \Lambda^+}{\on{colim}}\, \sfe^{\lambda}\otimes \Res^{\cG}_{\cB^-}((V^\lambda)^*),$$
where the transition maps for $\lambda_2=\lambda_1+\lambda$ are given as follows
\begin{multline*} 
\sfe^{\lambda_1}\otimes \Res^{\cG}_{\cB^-}((V^{\lambda_1})^*)\simeq 
\sfe^{\lambda_1}\otimes \sfe^{\lambda}\otimes \sfe^{-\lambda}\otimes \Res^{\cG}_{\cB^-}((V^{\lambda_1})^*) \to \\
\to \sfe^{\lambda_2}\otimes \Res^{\cG}_{\cB^-}((V^\lambda)^*) \otimes \Res^{\cG}_{\cB^-}((V^{\lambda_1})^*) \to
\sfe^{\lambda_2}\otimes \Res^{\cG}_{\cB^-}((V^{\lambda_2})^*), 
\end{multline*}
where the second arrow is given by the identification $\sfe^{-\lambda}\simeq ((V^\lambda)^*)^{\cN^-}$, and the last 
arrow by \eqref{e:Plucker map dual}. 

\medskip

The Hecke structure is given as follows. For a given finite-dimensional $V\in \Rep(\cG)^\heartsuit$ and $\lambda\gg 0$,
we start with the isomorphism \eqref{e:razval initial}, from which we produce 
a canonical isomorphism
\begin{equation} \label{e:razval}
(V^\lambda)^*\otimes V\simeq \underset{\mu}\oplus\, (V^{\lambda+\mu})^*\otimes V(-\mu).
\end{equation} 

\medskip

Hence, for a fixed $V$ as above and $\lambda\gg 0$, we have
$$\sfe^{\lambda}\otimes \Res^{\cG}_{\cB^-}((V^\lambda)^*)\otimes \Res^{\cG}_{\cB^-}(V)\simeq
\underset{\mu}\oplus\, \left(\sfe^{-\mu}\otimes V(-\mu)\right)\otimes \left(\sfe^{\lambda+\mu}\otimes \Res^{\cG}_{\cB^-}((V^{\lambda+\mu})^*)\right),$$
as required.

\begin{rem}
In \secref{ss:Drinfeld-Plucker} we will give a more conceptual explanation of the above presentation the above object of 
$\Hecke_{\cG,\cT}(\QCoh(\on{pt}/\cB^-))$.
\end{rem} 

\ssec{The geometric (dual) baby Verma object}  \label{ss:geom Verma}

\sssec{}

We now consider the category $\Shv(\Gr_G)^I$, equipped with the action of $\Rep(\cG)$ via $\Sat$ and a commuting
action of $\Rep(\cT)$ given by \eqref{e:good action}. 

\medskip

Applying the equivalence $\Sat^I$ of \thmref{t:ABG}, from the objects $\CM_{\cG,\cT}$, $\CM_\cG$ and $\bCM_\cG$
we obtain the objects
$$\CF_{\cG,\cT}\in \Hecke_{\cG,\cT}(\Shv(\Gr_G)^I);$$
$$\bCF_\cG\in \bHecke_\cG(\Shv(\Gr_G)^I);$$
$$\CF_\cG\in \Hecke_\cG(\Shv(\Gr_G)^I).$$

\begin{rem}
The object $\bCF_\cG$ was introduced in \cite{ABBGM} so that under the equivalence between $\bHecke_\cG(\Shv(\Gr_G)^{I^0})$
and the (regular block of) category of modules over the small quantum group, it corresponds to the \emph{dual baby Verma module}.

\medskip

The object $\CF_\cG$ was introduced in \cite{FG}, so that the functor of global sections of critically twisted D-modules sends
it to the Verma module over affine algebra at the critical level (reduced modulo the ideal of the center that corresponds to
regular opers). 

\end{rem}

\sssec{}  \label{sss:geom Verma colim}

The object $\CF_{\cG,\cT}$ and its derivatives (i.e., $\bCF_\cG$, $\CF_\cG$) can be described explicitly
using \secref{sss:coherent b Verma expl}. Namely, the underlying object of $\Shv(\Gr_G)^I$ is 
$$\underset{\lambda\in \Lambda^+}{\on{colim}}\, j_{\lambda,*}\star \Sat((V^\lambda)^*),$$
where the transition maps for $\lambda_2=\lambda_1+\lambda$ are given by
\begin{multline*}
 j_{\lambda_1,*}\star \Sat((V^{\lambda_1})^*)\to  j_{\lambda_1,*}\star  j_{\lambda,*}\star  j_{-\lambda,!}\star 
\Sat((V^{\lambda_1})^*)\to \\
\to j_{\lambda_2,*}\star \Sat((V^\lambda)^*)\star \Sat((V^{\lambda_1})^*)\to
j_{\lambda_2,*}\star  \Sat((V^{\lambda_2})^*),
\end{multline*}
where the second arrow comes from the (monoidal) dual of the canonical map $\IC_{\ol\Gr^\lambda_G}\to j_{\lambda,*}\star \delta_{1,\Gr_G}$,
and the last arrow is given by \eqref{e:Plucker map dual}. 

\medskip

The Hecke structure is given by identifying for a fixed finite-dimensional $V\in \Rep(G)^\heartsuit$ and $\lambda\gg 0$ 
\begin{multline*} 
j_{\lambda,*}\star \Sat((V^\lambda)^*)\star \Sat(V)\simeq j_{\lambda,*}\star \Sat((V^\lambda)^*\otimes V)\overset{\text{\eqref{e:razval}}}\simeq \\
\simeq 
\underset{\mu}\oplus\, V(-\mu)\otimes j_{\lambda,*}\star \Sat((V^{\lambda+\mu})^*)
\simeq \underset{\mu}\oplus\, \left(J_{-\mu,!}\otimes V(-\mu)\right)\star \left(j_{\lambda+\mu,*}\star \Sat((V^{\lambda+\mu})^*)\right),
\end{multline*} 
where $J_{\nu}$ is the BMW object: for $\nu=\nu_1-\nu_2$ with $\nu_1,\nu_2\in \Lambda^+$,
$$J_\nu:=j_{\nu_1,*}\star j_{-\nu_2,!},$$
and this is canonically independent of the choice of $\nu_1,\nu_2$. 

\begin{rem}
In \secref{ss:Drinfeld-Plucker} we will present a point of view from which the presentation of 
(the object of $\Shv(\Gr_G)^I$ underlying) $\CF_{\cG,\cT}$ as a colimit is automatic, along with its Hecke structure. 
\end{rem}

\ssec{The twist by $w_0$}

\sssec{}

Note that the Weyl group $W$ acts on $\Rep(T)$, preserving the forgetful functor $\Rep(\cG)\to \Rep(\cT)$. 
Hence, its also acts on the category $\bHecke_{\cG}(\CC)$.

\medskip

Explicitly, given an object $\{c_\lambda\}\in \bHecke_{\cG}(\CC)$ and choosing a representative $g_w\in W$ of a given
element $w\in W$, define a new object $\{c^w_\lambda\} \in \bHecke_{\cG}(\CC)$ as follows:

\medskip

We set $c^w_\lambda:=c_{w(\lambda)}$. The Hecke data for $\{c^w_\lambda\}$ is given by the composing the
Hecke data \eqref{e:graded Hecke data} for $\{c_\lambda\}$ and the maps 
$$g_w:V(\mu)\to V(\w(\mu)), \quad V\in \Rep(\cG).$$

\medskip

If we modify the choice of $g_w$ by an element $t\in \cT$, the corresponding two objects $\{c^w_\lambda\}$ 
are isomorphed by means of acting by $\lambda(t)$ on the $\lambda$-component. 

\sssec{}  \label{sss:w conj}

Given an element $w\in W$ we can consider the category $\Hecke_{\cG,\cT,w}(\CC)$, which is defined in the same way
as $\Hecke_{\cG,\cT}(\CC)$, but where we modify the $\Rep(\cT)$-action by applying the automorphism $w$ 
(the restriction functor $\Rep(\cG)\to \Rep(\cT)$ stays intact).

\medskip

Choosing a representative $g_w\in G$ of $w$, there exists a canonical equivalence
$$\Hecke_{\cG,\cT}(\CC)\to \Hecke_{\cG,\cT,w}(\CC).$$

\medskip

Explicitly, given an object of $\Hecke_{\cG,\cT}(\CC)$, the corresponding object of $\Hecke_{\cG,\cT,w}(\CC)$ has the
same underlying object $c\in \CC$, and the new data of \eqref{e:twisted Hecke} is obtained by acting on the old
data of \eqref{e:twisted Hecke} by using $g_w$ to identify 
$$\Res^\cG_\cT \text{ and } \Res^\cG_{\cT,w},$$
where $\Res^\cG_{\cT,w}$ is the composite
$$\Rep(\cG)\overset{\Res^\cG_\cT}\longrightarrow \Rep(\cT) \overset{w}\longrightarrow \Rep(\cT).$$

\sssec{}

Note that the geometric Satake isomorphism provides a canonical representative $g_{w_0}$
of the element $w_0\in W$. Indeed, finding a representative for $w_0$ is equivalent to trivializing the
lower weight line in every $V^\lambda$ in a way compatible with the maps \eqref{e:Plucker map} and \eqref{e:Plucker map dual}.

\medskip

However, by \secref{sss:Satake}, we have
$$V^\lambda(w_0(\lambda))\simeq H_c(S^{w_0(\lambda)},\bi^*_{w_0(\lambda)}(\IC_{\ol\Gr^\lambda_G}))[\langle w_0(\lambda),2\rho\rangle].$$

Now, the intersection $S^{w_0(\lambda)}\cap \ol\Gr^\lambda_G$ consists of a single point $t^{w_0(\lambda)}$, hence
the above cohomology identifies canonically with $\sfe$.

\sssec{}

Note also that a choice of a representative of $w_0$ defines a system of identifications 
\begin{equation} \label{e:identify dual}
V^{-w_0(\lambda)}\simeq (V^\lambda)^*,
\end{equation} 
compatible with the maps \eqref{e:Plucker map} and \eqref{e:Plucker map dual}. The isomorphism is uniquely fixed by the condition that the trivialization
of the highest weight line in $V^{-w_0(\lambda)}$ corresponds to the trivialization of the lowest weight line in $V^\lambda$.

\medskip

The following is easy to verify: 

\begin{lem} \label{l:w_0} 
For any choice of a representative $g_{w_0}$ of $w_0$, a fixed finite-dimensional $V\in \Rep(\cG)$ and $\lambda\gg 0$, the
following diagram commutes:
$$
\CD
(V^\lambda)^*\otimes V @>{\text{\eqref{e:razval}}}>>  \underset{\mu}\oplus\, (V^{\lambda+\mu})^*\otimes V(-\mu) \\
@VVV   @VVV   \\ 
V^{-w_0(\lambda)} \otimes V @>{\sim}>> \underset{\mu}\oplus\, V^{-w_0(\lambda+\mu)}\otimes V(-w_0(\mu)),
\endCD
$$
where the right vertical arrow is given by the maps
$$V(-\mu) \overset{g_{w_0}}\longrightarrow V(-w_0(\mu)).$$
\end{lem}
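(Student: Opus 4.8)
The plan is to deduce the commutativity from the uniqueness clauses pinning down the four arrows of the square. Recall that \eqref{e:razval initial} is the \emph{unique} $\cG$-equivariant isomorphism with the leading-term normalization recalled right after its statement (the image of $v^\lambda\otimes v$, $v\in V(\mu)$, has $V^{\lambda+\mu}\otimes V(\mu)$-component $v^{\lambda+\mu}\otimes v$, the remaining components being supported in strictly larger summands); that \eqref{e:razval} is obtained from it in \secref{sss:coherent b Verma expl} and is characterized by the analogous normalization for the distinguished covectors $(v^\lambda)^*$ via the Plücker maps \eqref{e:Plucker map}, \eqref{e:Plucker map dual}; that the bottom arrow is nothing but \eqref{e:razval initial} for the module $V^{-w_0(\lambda)}$, written after the reindexing $\mu\mapsto-w_0(\mu)$; that the left vertical arrow is (the inverse of) the identification \eqref{e:identify dual}, uniquely fixed by the chosen representative $g_{w_0}$; and that the right vertical arrow is, on the $\mu$-summand, the $(\lambda+\mu)$-instance of \eqref{e:identify dual} tensored with $g_{w_0}\colon V(-\mu)\to V(-w_0(\mu))$. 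The representative $g_{w_0}$ is itself canonical, being supplied by the geometric-Satake normalization recalled just above.

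Granting these identifications, the two composites around the square are both $\cG$-equivariant isomorphisms with the same source and target — for the right vertical arrow this uses that, once the multiplicity spaces $V(-\mu)$ carry the trivial $\cG$-action, both of its constituents lie in $\Rep(\cG)$. Hence it suffices to compare them on the vectors $(v^\lambda)^*\otimes v$, $v\in V(-\mu)$ a weight vector, as these span a $\cG$-submodule equal to the whole source. I would carry out this comparison by propagating such a vector around both ways and reading off the resulting image in each summand, using the characterization of \eqref{e:razval} along the top edge and, along the left–bottom edge, the normalization of \eqref{e:identify dual} together with the $\cG$-equivariance of \eqref{e:razval initial} for $V^{-w_0(\lambda)}$ to transport the relevant normalization from the highest-weight vector $v^{-w_0(\lambda)}$ to the lowest-weight vector that the inverse of \eqref{e:identify dual} assigns to $(v^\lambda)^*$. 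What makes the two outcomes coincide is exactly that $g_{w_0}$ carries the highest-weight normalization of each $V^{\nu}$ to its lowest-weight normalization and is compatible with the Plücker maps \eqref{e:Plucker map}, \eqref{e:Plucker map dual} and with the weight-space maps $V(-\mu)\to V(-w_0(\mu))$; by the uniqueness in the characterization of \eqref{e:razval initial} the two composites are then equal.

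The step I expect to be the main obstacle is the one just described: checking that the single representative $g_{w_0}$ threads coherently through all three pieces of data it touches — the duality \eqref{e:identify dual}, the highest-to-lowest-weight transport inside each $V^\nu$, and the weight-space isomorphisms $V(-\mu)\to V(-w_0(\mu))$ — without introducing any spurious scalar. It is ``easy'' only in that the geometric-Satake normalization of $g_{w_0}$ and the chosen normalizations of the $v^\lambda$, the $(v^\lambda)^*$ and the Plücker maps have all been arranged so that the relevant structure constants equal $1$; the argument is then a short, if attentive, diagram chase.
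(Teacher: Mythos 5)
The paper offers no proof of this lemma — it is introduced with ``The following is easy to verify'' — so there is no paper argument to compare against, and what matters is whether your plan actually works. Your identification of the four arrows is right, modulo what appears to be a typo in the paper's lower-right corner: $(V^{-w_0(\lambda+\mu)})^*$ should presumably read $V^{-w_0(\lambda+\mu)}$ (without the star), so that the bottom row really is \eqref{e:razval initial} for $V^{-w_0(\lambda)}$ after the reindexing $\nu=-w_0(\mu)$, consistent with the lemma's use in \secref{sss:baby w0}. Granting that, reducing to $\cG$-equivariance plus the uniqueness characterization of \eqref{e:razval initial} is the natural and correct move.

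Two points to tighten. First, the vectors $(v^\lambda)^*\otimes v$ do not \emph{span} the source; they span a $\dim V$-dimensional subspace whose $\cG$-submodule \emph{generated} is the whole source — that is the statement you need, and it holds because under \eqref{e:razval} the line $(v^\lambda)^*\otimes V(-\mu)$ meets the lowest-weight space of every summand $(V^{\lambda+\mu})^*$. Second, and more substantively, the uniqueness argument needs a supporting observation you only implicitly use: for $\lambda\gg0$ the summands $V^{-w_0(\lambda+\mu)}$ in the target are pairwise non-isomorphic irreducibles, so any $\cG$-equivariant automorphism of the target is block-diagonal, of the form $\bigoplus_\mu \id\otimes\psi_\mu$ with $\psi_\mu\in\End(V(-w_0(\mu)))$. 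This is what lets the comparison of leading terms at $(v^\lambda)^*\otimes v$, $v\in V(-\mu)$, pin down $\psi_\mu=\id$ and hence equality of the composites; without it the reduction to generators is not tight, since the characterizations of \eqref{e:razval initial} and \eqref{e:razval} only control the leading term of an image, not the whole vector.

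The remaining content — the chase tracking $(v^\lambda)^*\otimes v$ around both circuits, using $\cG$-equivariance to transport the highest-weight normalization of \eqref{e:razval initial} for $V^{-w_0(\lambda)}$ through $g_{w_0}$ to the lowest-weight vector that \eqref{e:identify dual} assigns to $(v^\lambda)^*$ — is a finite computation you flag but do not carry out. It does go through: both routes send $(v^\lambda)^*\otimes v$ to $g_{w_0}^{-1}(v^{-w_0(\lambda+\mu)})\otimes g_{w_0}(v)$ plus higher-order terms in the $\mu$-summand, so your expectation that the structure constants all equal $1$ is correct. But as written this is a plan with an honest acknowledgment of the open step, not a completed proof; the verification you defer is precisely the content of the lemma.
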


\sssec{}

Applying the procedure of \secref{sss:w conj} to the object $\CF_{\cG,\cT}$ for the above choice of representative of $w_0$,
we obtain an object, denoted
$$\CF_{\cG,\cT,w_0}\in \Hecke_{\cG,\cT,w_0}(\Shv(\Gr_G)^I).$$

\sssec{}  \label{sss:baby w0}

From 
\lemref{l:w_0}, we obtain that $\CF_{\cG,\cT,w_0}$ can be explicitly described as follows. The underlying object of
$\Shv(\Gr_G)^I$ is given by

\begin{equation} \label{e:describe other b v}
\underset{\lambda\in \Lambda^+}{\on{colim}}\, j_{-w_0(\lambda),*}\star \Sat(V^{\lambda}).
\end{equation}

\medskip

The transition maps for $\lambda_2=\lambda_1+\lambda$ are given by
\begin{multline}  \label{e:Hecke other b v}
j_{-w_0(\lambda_1),*}\star \Sat(V^{\lambda_1})\simeq j_{-w_0(\lambda_1),*}\star j_{-w_0(\lambda),*}\star j_{w_0(\lambda),!}\star
\Sat(V^{\lambda_1}) \to \\
j_{-w_0(\lambda_2),*}\star \Sat(V^{\lambda})\star \Sat(V^{\lambda_1}) \to 
j_{-w_0(\lambda_2),*}\star\Sat(V^{\lambda_w}),
\end{multline} 
where the second arrow comes from the monoidal dual of the map $\IC_{\ol\Gr^\lambda_{-w_0(\lambda)}}\to j_{-w_0(\lambda),*}$
via the identification 
$$\Sat((V^{\lambda})^*)\simeq \Sat(V^{-w_0(\lambda)})\simeq \IC_{\ol\Gr^\lambda_{-w_0(\lambda)}}.$$

\medskip

The Hecke structure is given by the system isomorphisms
\begin{multline*} 
j_{-w_0(\lambda),*}\star \Sat(V^\lambda)\star \Sat(V) \simeq j_{-w_0(\lambda),*}\star \Sat(V^\lambda\otimes V) \simeq \\
\simeq \underset{\mu}\oplus\,  V(\mu)\otimes 
j_{-w_0(\lambda),*}\star \Sat(V^{\lambda+\mu}) \simeq 
\underset{\mu}\oplus\,  \left(j_{w_0(\mu),!}\times V(\mu)\right)\star 
\left(j_{-w_0(\lambda+\mu),*}\star \Sat(V^{\lambda+\mu})\right).
\end{multline*} 

\sssec{}

Finally, we perform one more modification. We replace the action of $\Rep(\cT)$ given by \eqref{e:good action}
by that given by \eqref{e:bad action}. Denote the resulting variant of $\Hecke_{\cG,\cT}(\Shv(\Gr_G)^I)$ by
$\Hecke'_{\cG,\cT}(\Shv(\Gr_G)^I)$. 

\medskip

We note that the functor
$$\CF\mapsto j_{w_0,!}\star \CF$$
defines an equivalence
$$\Hecke_{\cG,\cT,w_0}(\Shv(\Gr_G)^I)\to \Hecke'_{\cG,\cT}(\Shv(\Gr_G)^I).$$

\medskip

We denote by
$$\CF'_{\cG,\cT}:=j_{w_0,!}\star \CF_{\cG,\cT,w_0}$$
the resulting object of $\Hecke'_{\cG,\cT}(\Shv(\Gr_G)^I)$.

\section{The semi-infinite IC sheaf as a (dual) baby Verma object}  \label{s:baby Verma}

In this section we will relate our semi-infinite cohomology sheaf $\ICs$ with the (dual) baby Verma object
from the previous section (in its incarnation as $\CF'_{\cG,\cT}$).

\ssec{Hecke structure on the semi-infinite IC sheaf}  \label{ss:Hecke on IC}

\sssec{}

Let us return to our main object of study, namely, the object
$$\ICs\in \Shv(\Gr_G)^{\fL(N)}.$$

Note that it naturally belongs to the category $\Shv(\Gr_G)^{\fL(N)\cdot \fL^+(T)}$, and we will consider 
it as such\footnote{When forming $\Shv(\Gr_G)^{\fL(N)\cdot \fL^+(T)}$ starting from $\Shv(\Gr_G)^{\fL(N)}$, 
we apply the same renormalization procedure as we did for $\Shv(\Gr_G)^I$.}.  

\sssec{}

We note that the category $\Shv(\Gr_G)^{\fL(N)\cdot \fL^+(T)}$ is acted on by $\Rep(\cT)$ by translations
\begin{equation} \label{e:action by translations}
\sfe^\lambda\star \CF:= t^\lambda\cdot \CF[-\langle \lambda, 2\rho \rangle].
\end{equation} 

In addition, it is acted on by $\Rep(\cG)$ via Geometric Satake and convolutions on the right. So we are in the situation
of \secref{sss:twisted Hecke}. 

\sssec{}

We claim that our object $\ICs\in \Shv(\Gr_G)^{\fL(N)\cdot \fL^+(T)}$ naturally upgrades to an object of
$$\Hecke_{\cG,\cT}(\Shv(\Gr_G)^{\fL(N)\cdot \fL^+(T)}).$$

\sssec{}

In order to construct this structure, we recall that $\ICs$ lives in the heart of the t-structure, and so do the convolutions
$$t^\lambda\cdot \ICs[-\langle \lambda,2\check\rho\rangle] \text{ and } \ICs\star \Sat(V),\,\, V\in \Rep(\cG)^\heartsuit.$$

Hence, when constructing the Hecke structure, we are staying within the abelian category, and it is enough to make
the construction at the level of the homotopy categories.

\medskip

Now, the required Hecke structure on
$$\ICs:=\underset{\lambda\in \Lambda^+}{\on{colim}}\, t^{-\lambda}\cdot \Sat(V^\lambda)[\langle \lambda,2\check\rho\rangle]$$
is given by
\begin{multline}  \label{e:Hecke semiinf}
\left(\underset{\lambda\in \Lambda^+}{\on{colim}}\, t^{-\lambda}\cdot \Sat(V^\lambda)[\langle \lambda,2\check\rho\rangle]\right)\star \Sat(V)\simeq
\left(\underset{\lambda\in \Lambda^+}{\on{colim}}\, t^{-\lambda}\cdot  \Sat(V^\lambda\otimes V)[\langle \lambda,2\check\rho\rangle]\right) 
\overset{\text{\eqref{e:razval initial}}}\simeq \\
\simeq \underset{\mu}\bigoplus\, V(\mu)\otimes 
\left(\underset{\lambda\in \Lambda^+}{\on{colim}}\, t^{-\lambda}\cdot  \Sat(V^{\lambda+\mu})[\langle \lambda,2\check\rho\rangle]\right)
\simeq  \\
\simeq \underset{\mu}\bigoplus\, V(\mu)\otimes 
t^\mu \cdot 
\left(\underset{\lambda\in \Lambda^+}{\on{colim}}\, t^{-\lambda-\mu}\cdot  \Sat(V^{\lambda+\mu})[\langle \lambda+\mu,2\check\rho\rangle]\right)
[-\langle \mu,2\check\rho\rangle].
\end{multline}

The compatibility of these isomorphisms with tensor products in $\Rep(\cG)$ at the level of the homotopy categories is straightforward. 

\ssec{The semi-infinite vs Iwahori equivalence}

The results of this subsection are borrowed from \cite[Theorem 6.2.1]{Ras}. They are applicable for 
$\Shv(\Gr_G)$ replaced by any category acted on by $\fL(G)$. 

\sssec{}

Consider the forgetful functor
\begin{equation} \label{e:forget I equiv}
\Shv(\Gr_G)^I\to \Shv(\Gr_G)^{\fL^+(T)}.
\end{equation} 
 
It admits a right adjoint, given by *-averaging with respect to $I$ ``modulo" $\fL^+(T)$; we denote it by $\on{Av}^{I/\fL^+(T)}_*$. 

\begin{prop}  \label{p:Iwahori N}
The functor $\on{Av}^{I/\fL^+(T)}_*(\CF)$ restricted to
$$(\SI(\Gr_G))^{\fL^+(T)}:=
\Shv(\Gr_G)^{\fL(N)\cdot \fL^+(T)}\subset \Shv(\Gr_G)^{\fL^+(T)}$$
defines an equivalence 
$$(\SI(\Gr_G))^{\fL^+(T)}\to \Shv(\Gr_G)^I.$$
This equivalence intertwines the $\Rep(\cT)$-action on $(\SI(\Gr_G))^{\fL^+(T)}$, given by \eqref{e:action by translations}
and the $\Rep(\cT)$-action on $\Shv(\Gr_G)^I$, given by \eqref{e:bad action}.
\end{prop}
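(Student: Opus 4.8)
The result is \cite[Theorem 6.2.1]{Ras}, applied to the $\fL(G)$-category $\CC=\Shv(\Gr_G)$; so the plan is to indicate how that argument specializes here. Write the Iwahori subgroup via its Iwahori factorization $I=\fL^+(N)\cdot\fL^+(T)\cdot\fL^{+,0}(N^-)$, where $\fL^{+,0}(-)$ denotes the first congruence subgroup, and recall that its pro-unipotent radical is $I^0=\fL^+(N)\cdot\fL^{+,0}(T)\cdot\fL^{+,0}(N^-)$, so that $I=I^0\cdot\fL^+(T)$ and, since $\fL^{+,0}(T)\subset I^0$, the residual $\fL^+(T)$-action on $\Shv(\Gr_G)^{I^0}$ factors through $T$. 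On the other side $\fL(N)\cdot\fL^+(T)=\fL(N)\rtimes\fL^+(T)$, and the claim is a comparison of $\bigl(\Shv(\Gr_G)^{\fL(N)}\bigr)^{\fL^+(T)}$ with $\bigl(\Shv(\Gr_G)^{I^0}\bigr)^{\fL^+(T)}=\Shv(\Gr_G)^I$. Since $\on{Av}^{I/\fL^+(T)}_*$ is tautologically valued in $\Shv(\Gr_G)^I$, the whole content is that its restriction to $(\SI(\Gr_G))^{\fL^+(T)}=\Shv(\Gr_G)^{\fL(N)\cdot\fL^+(T)}$ is an equivalence and is $\Rep(\cT)$-linear for the two asserted actions.

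First I would build a candidate inverse. Fix a regular dominant coweight $\gamma$, so that $\fL(N)=\bigcup_{n\ge 0}\on{Ad}_{t^{-n\gamma}}(\fL^+(N))$ is an increasing union of group subschemes while the congruence subgroups $\on{Ad}_{t^{-n\gamma}}(\fL^{+,0}(N^-))$ form a decreasing chain with trivial intersection; the conjugation $\on{Ad}_{t^{-n\gamma}}$ fixes $\fL^+(T)$. Conjugating $I$ by $t^{-n\gamma}$ thus produces a family of Moy--Prasad-type subgroups $\on{Ad}_{t^{-n\gamma}}(I)$ whose $N$-parts grow to fill $\fL(N)$ and whose $N^-$-parts shrink to nothing. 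Interpolating between consecutive members through the (parahoric-type) group they generate, one connects $\Shv(\Gr_G)^I=\Shv(\Gr_G)^{\on{Ad}_{t^{0}}(I)}$ to $\Shv(\Gr_G)^{\on{Ad}_{t^{-(n+1)\gamma}}(I)}$ by composing a $*$-averaging functor over the finite-dimensional unipotent slice by which the $N$-part grows with a forgetful functor for the finite-dimensional congruence slice by which the $N^-$-part shrinks, and then conjugating back. The colimit of the resulting tower is a functor $\Shv(\Gr_G)^I\to\Shv(\Gr_G)^{\fL(N)\cdot\fL^+(T)}$: growth of the $N$-parts is what forces the output to be $\fL(N)$-equivariant, and decay of the $N^-$-parts is what prevents the colimit from being obstructed.

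Then I would check that this functor and $\on{Av}^{I/\fL^+(T)}_*$ are mutually inverse by testing unit and counit on the standard generators: the objects $\Delta^\lambda$ on the semi-infinite side (which generate $\SI(\Gr_G)$ by \lemref{l:! gener}, hence generate $(\SI(\Gr_G))^{\fL^+(T)}$ as they are $\fL^+(T)$-equivariant), and the corresponding standard objects $j_{\lambda,!}\star\delta_{1,\Gr_G}$, $\lambda\in\Lambda$, on the Iwahori side. On such generators every averaging occurs over an orbit of a pro-unipotent group scheme, i.e. over an (ind-)affine space, so it contributes only a cohomological shift and a (co)homology-of-affine-space factor; the verification therefore reduces to bookkeeping of those shifts, which is exactly what the normalization $[-\langle\lambda,2\check\rho\rangle]$ in the definition of $\Delta^\lambda$ and the shift in \eqref{e:action by translations} are set up to absorb. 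For $\Rep(\cT)$-linearity, translation by $t^\lambda$ on $\Shv(\Gr_G)^{\fL(N)\cdot\fL^+(T)}$ is carried by the tower above into right-convolution with a Wakimoto object on $\Shv(\Gr_G)^I$; that it is $j_{w_0(\lambda),*}$ — i.e. the appearance of $w_0$ and of the ``\eqref{e:bad action}'' normalization rather than ``\eqref{e:good action}'' — has the same origin as the $w_0$ in \secref{sss:Satake}(iii) and the discussion around \eqref{e:good action}--\eqref{e:bad action}: the orbit $S^\lambda$ is cut out by the \emph{positive} loop unipotent radical, so the point $t^\lambda$ sits in the $w_0$-twisted position relative to the Iwahori conventions. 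I expect the main obstacle to be precisely the first step — making the ``promotion'' functor well defined, proving its output genuinely lies in $\SI(\Gr_G)$, and showing the colimit over $n$ stabilizes. The $\fL(N)$-direction is infinite-dimensional, which is exactly why objects of $\SI(\Gr_G)$ are infinitely connective (\propref{p:invis}) and why one cannot $*$-average over $\fL(N)$ all at once; controlling the interpolation is the technical heart of \cite{Ras}, and rests on a Braden-type analysis of the $\BG_a$-slices one averages over.
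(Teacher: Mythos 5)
Your overall strategy is essentially the paper's: the inverse is the partially defined left adjoint $\on{Av}^{\fL(N)}_!$, realized as a colimit of averagings over the conjugates $\on{Ad}_{t^{-\lambda}}(\fL^+(N))$, and the $\Rep(\cT)$-intertwining is read off from the identity $t^\lambda\cdot\on{Av}^{\fL(N)}_!(\CF)[-\langle\lambda,2\check\rho\rangle]\simeq\on{Av}^{\fL(N)}_!(j_{\lambda,!}\star\CF)$. (Minor slip: the $j$-action in \eqref{e:good action}--\eqref{e:bad action} is \emph{left} convolution by $\CH(G)$, not right convolution.) Your observation that, for $\CF\in\Shv(\Gr_G)^I$, the averaging $\on{Av}^{\on{Ad}_{t^{-\lambda}}(\fL^+(N))}_!(\CF)\simeq t^{-\lambda}\cdot j_{\lambda,!}\star\CF[\langle\lambda,2\check\rho\rangle]$ exists, lands in $\Shv(\Gr_G)^{\fL^+(T)}$, and in the colimit produces an $\fL(N)$-equivariant object is exactly what the paper records, and the identity $j_{-\lambda,*}\star j_{\lambda,!}\simeq\mathrm{id}$ gives the unit isomorphism.

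Where you leave a genuine gap is precisely the direction you flag as ``the main obstacle'': you propose to close the argument by checking the counit on the generators $\Delta^\lambda$, but you do not actually show that every object of $(\SI(\Gr_G))^{\fL^+(T)}$ is hit — equivalently, that $\on{Av}^{I/\fL^+(T)}_*$ is conservative on $\Shv(\Gr_G)^{\fL(N)\cdot\fL^+(T)}$. This is where the real content is, and your sketch replaces it with an appeal to an unspecified ``Braden-type analysis.'' The paper's argument is more elementary and concrete: given $0\neq\CF\in\Shv(\Gr_G)^{\fL(N)\cdot\fL^+(T)}$, nonvanishing in $\Shv(\Gr_G)$ produces a congruence-equivariant test object mapping nontrivially to $\CF$; this test object is $\on{Ad}_{t^{-\lambda}}(\fL^+_1(N^-))$-equivariant for $\lambda\gg0$, so $\on{Av}_*^{\on{Ad}_{t^{-\lambda}}(\fL^+_1(N^-))}(\CF)\neq 0$; then the Iwahori factorization $I=\fL^+_1(N^-)\cdot\fL^+(T)\cdot\fL^+(N)$ identifies this averaging, after translating by $t^\lambda$, with $j_{-\lambda,*}\star(t^\lambda\cdot-)$ applied to something nonzero, and $j_{-\lambda,*}\star-$ is invertible on $\Shv(\Gr_G)^I$. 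That chain is what upgrades the unit isomorphism to an equivalence. Without a substitute for this conservativity step (or a proof that the counit is an isomorphism on $\Delta^\lambda$ together with compact generation of $(\SI(\Gr_G))^{\fL^+(T)}$ by those objects and continuity of $\on{Av}^{I/\fL^+(T)}_*$ after renormalization — none of which you verify), the proposal does not close.
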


The rest of this subsection is devoted to the proof of this proposition. The argument essentially
mimics the proof of the corresponding statement in the theory of $\fp$-adic groups, due to 
J.~Bernstein. 

\sssec{}

We claim that the functor 
$$\on{Av}^{I/\fL^+(T)}_*(\CF):\Shv(\Gr_G)^{\fL(N)\cdot \fL^+(T)}\to \Shv(\Gr_G)^I$$
admits a left adjoint.

\medskip

More precisely, we claim that the partially defined functor 
$$\on{Av}^{\fL(N)}_!:\Shv(\Gr_G)^{\fL^+(T)}\to \Shv(\Gr_G)^{\fL(N)\cdot  \fL^+(T)}$$
(see \secref{sss:!-av}) is defined on objects lying in the essential image of the forgetful functor \eqref{e:forget I equiv}.
Then it automatically provides the required left adjoint. 

\medskip

To show this, we pick a particular sequence of subgroups $N_\alpha\subset \fL(N)$. Namely, we take the indexing
set to be $\Lambda^+$ (with the order relation from \secref{sss:bizarre order}) and we set
$$N_\lambda:=\on{Ad}_{t^{-\lambda}}(\fL^+(N)).$$

\medskip

Then it is easy to see that for $\CF\in \Shv(\Gr_G)^I$ we have a canonical identification 
$$\on{Av}^{\on{Ad}_{t^{-\lambda}}(\fL^+(N))}_!(\CF)\simeq t^{-\lambda}\cdot j_{\lambda,!}\star \CF[\langle \lambda,2\check\rho\rangle],$$
where the right-hand side is viewed as an object of $\Shv(\Gr_G)^{\fL^+(T)}$.

\sssec{}

Note that by construction, for $\lambda\in \Lambda^+$ and $\CF$ as above, we have a canonical identification 
$$t^{\lambda}\cdot \on{Av}^{\fL(N)}_!(\CF)[-\langle \lambda,2\check\rho\rangle]
\simeq \on{Av}^{\fL(N)}_!(j_{\lambda,!}\star \CF).$$

\medskip

This shows that the functor $\on{Av}^{\fL(N)}_!$ intertwines the $\Rep(\cT)$-action on $(\SI(\Gr_G))^{\fL^+(T)}$, given by
\eqref{e:action by translations} and the $\Rep(\cT)$-action on $\Shv(\Gr_G)^I$, given by \eqref{e:bad action}.

\sssec{}

We now check that the unit of the adjunction, namely, the natural transformation
\begin{equation} \label{e:unit for adj}
\CF\to \on{Av}^{I/\fL^+(T)}_*\circ \on{Av}^{\fL(N)}_!(\CF), \quad \CF\in \Shv(\Gr_G)^I
\end{equation}
is an isomorphism. 

\medskip

Indeed, for every $\lambda\in \Lambda^+$ and $\CF'\in \Shv(\Gr_G)^I$ we have a canonical identification 
$$\on{Av}^{I/\fL^+(T)}_*(t^{-\lambda}\cdot \CF')\simeq j_{-\lambda,*}\star \CF'[-\langle \lambda,2\check\rho\rangle].$$

So, 
$$\on{Av}^{I/\fL^+(T)}_*\circ \on{Av}^{\on{Ad}_{t^{-\lambda}}(\fL^+(N))}_!(\CF)\simeq  j_{-\lambda,*}\star j_{\lambda,!}\star \CF \simeq \CF,$$
and hence
$$\on{Av}^{I/\fL^+(T)}_*\circ \on{Av}^{\fL(N)}_!(\CF)\simeq 
\underset{\lambda\in \Lambda^+}{\on{colim}}\,  \on{Av}^{I/\fL^+(T)}_*\circ \on{Av}^{\on{Ad}_{t^{-\lambda}}(\fL^+(N))}_!(\CF)\simeq
\underset{\lambda\in \Lambda^+}{\on{colim}}\, \CF\simeq \CF.$$

\medskip

Furthermore, by unwinding the definitions it is easy to see that with respect to the above
identification $\on{Av}^{I/\fL^+(T)}_*\circ \on{Av}^{\fL(N)}_!(\CF)\simeq \CF$, 
the map \eqref{e:unit for adj} is the identity map $\CF\to \CF$. 

\sssec{}

Finally, let us show that the functor $\on{Av}^{I/\fL^+(T)}_*$ is conservative on 
$\Shv(\Gr_G)^{\fL(N)\cdot  \fL^+(T)}$. 

\medskip

Let $\CF$ be a non-zero object of $\Shv(\Gr_G)^{\fL(N)\cdot  \fL^+(T)}$. Since it is non-zero as an object of
$\Shv(\Gr_G)$, there exists $\CF'\in \Shv(\Gr_G)$ equivariant with respect to some congruence subgroup
of $\fL^+(G)$ such that
$$\CMaps_{\Shv(\Gr_G)}(\CF',\CF)\neq 0.$$

By assumption $\CF'$ is equivariant with respect to $\on{Ad}_{t^{-\lambda}}(\fL^+_1(N^-))$ for $\lambda\gg 0$
(here $\fL^+_1(N^-)$ denotes the first congruence subgroup of $\fL^+(N)$). Hence,
$$\on{Av}_*^{\on{Ad}_{t^{-\lambda}}(\fL^+(N^-))}(\CF)\neq 0.$$

Note, however that since $\CF$ belongs to $\Shv(\Gr_G)^{\fL(N)\cdot  \fL^+(T)}$ and
$$I=\fL^+_1(N^-)\cdot \fL^+(T)\cdot \fL^+(N),$$ 
we have
$$\on{Av}_*^{\on{Ad}_{t^{-\lambda}}(\fL^+_1(N^-))}(\CF)\in \Shv(\Gr_G)^{\on{Ad}_{t^{-\lambda}}(I)}.$$

\medskip

Now, for any $\CF''\in \Shv(\Gr_G)^{\on{Ad}_{t^{-\lambda}}(I)}$, we have
$$t^{\lambda}\cdot \CF''\in  \Shv(\Gr_G)^I$$
and 
$$\on{Av}^{\fL^+_1(N^-)}_*(\CF'') \simeq j_{-\lambda,*}\star (t^\lambda\cdot \CF'')[-\langle \lambda,2\check\rho\rangle].$$ 

Hence, for $\CF$ as above, 
\begin{multline*}
\on{Av}^{I/\fL^+(T)}_*(\CF)\simeq \on{Av}_*^{\fL^+_1(N^-)}(\CF)\simeq 
\on{Av}_*^{\fL^+_1(N^-)}\circ \on{Av}_*^{\on{Ad}_{t^{-\lambda}}(\fL^+_1(N^-))}(\CF)\simeq  \\ 
\simeq j_{-\lambda,*}\star \left(t^\lambda\cdot \on{Av}_*^{\on{Ad}_{t^{-\lambda}}(\fL^+_1(N^-))}(\CF)\right)[-\langle \lambda,2\check\rho\rangle],
\end{multline*}
which is non-zero, because the functor $j_{-\lambda,*}\star -$ is conservative 
(it is in fact a self-equivalence of $\Shv(\Gr_G)^I$).

\ssec{The semi-infinite IC sheaf as the baby Verma object}

We are now ready to state the second main result of this paper: 

\begin{thm} \label{t:other main}
Under the equivalence of \propref{p:Iwahori N}, the object 
$$\ICs\in \Hecke_{\cG,\cT}(\Shv(\Gr_G)^{\fL(N)\cdot \fL^+(T)})$$
goes over to $\CF'_{\cG,\cT}[\dim(G/B)]$.
\end{thm}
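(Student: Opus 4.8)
The plan is to compare the two objects through their explicit presentations as colimits, matching term-by-term the transition maps and the Hecke structures.

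First I would unwind what the equivalence of \propref{p:Iwahori N} does to $\ICs$. By \propref{p:Iwahori N}, the equivalence $(\SI(\Gr_G))^{\fL^+(T)}\to \Shv(\Gr_G)^I$ is $\on{Av}^{I/\fL^+(T)}_*$, whose left adjoint $\on{Av}^{\fL(N)}_!$ satisfies $\on{Av}^{\on{Ad}_{t^{-\lambda}}(\fL^+(N))}_!(\CF)\simeq t^{-\lambda}\cdot j_{\lambda,!}\star \CF[\langle \lambda,2\check\rho\rangle]$. Since the equivalence sends $\ICs=\underset{\lambda}{\on{colim}}\, t^{-\lambda}\cdot \Sat(V^\lambda)[\langle \lambda,2\check\rho\rangle]$ to the object whose $\on{Av}^{\fL(N)}_!$-image equals $\ICs$, I would run the argument of \propref{p:Iwahori N} in reverse: the image of $\ICs$ under $\on{Av}^{I/\fL^+(T)}_*$ is $\underset{\lambda}{\on{colim}}\, j_{-\lambda,*}\star (\on{Av}^{I/\fL^+(T)}_*$ applied termwise$)$, and using $\on{Av}^{I/\fL^+(T)}_*(t^{-\lambda}\cdot\Sat(V^\lambda))\simeq j_{-\lambda,*}\star \Sat(V^\lambda)[-\langle\lambda,2\check\rho\rangle]$ I get that the image is canonically $\underset{\lambda\in\Lambda^+}{\on{colim}}\, j_{-\lambda,*}\star\Sat(V^\lambda)$, up to a global cohomological shift. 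Comparing with \secref{sss:baby w0}, which presents (the object underlying) $\CF_{\cG,\cT,w_0}$ as $\underset{\lambda}{\on{colim}}\, j_{-w_0(\lambda),*}\star\Sat(V^\lambda)$, I notice the two differ by replacing $\lambda$ by $-w_0(\lambda)$ in the shift exponents and by the final $j_{w_0,!}\star-$ twist built into $\CF'_{\cG,\cT}$; so the underlying objects match once one composes with $j_{w_0,!}\star-$ and accounts for the $[\dim(G/B)]$-shift, which comes precisely from the difference $\langle\lambda,2\check\rho\rangle$ versus the normalization of $j_{-w_0(\lambda),*}$.

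Second, I would check that the transition maps agree. On the $\ICs$ side the transition map for $\lambda_2=\lambda_1+\lambda$ is $\on{Av}^{\fL(N)}_!$ applied to \eqref{e:transition map}, which after passing through \propref{p:Iwahori N} becomes, by the computation in \secref{sss:baby w0} (formula \eqref{e:Hecke other b v}), exactly the composite $j_{-w_0(\lambda_1),*}\star\Sat(V^{\lambda_1})\to j_{-w_0(\lambda_1),*}\star j_{-w_0(\lambda),*}\star j_{w_0(\lambda),!}\star\Sat(V^{\lambda_1})\to\cdots$; the point is that the Plücker map \eqref{e:Plucker map} used in \eqref{e:transition map} is dual, under the identification \eqref{e:identify dual} furnished by the canonical geometric-Satake representative $g_{w_0}$, to the map \eqref{e:Plucker map dual} used in \eqref{e:Hecke other b v}, and the unit $\delta\to\IC_{\ol\Gr^\lambda_G}$ used in one matches the counit-type map $\IC_{\ol\Gr^\lambda_{-w_0(\lambda)}}\to j_{-w_0(\lambda),*}$ used in the other via \thmref{t:ABG}(iv). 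Third, I would compare the Hecke structures: the Hecke structure on $\ICs$ is \eqref{e:Hecke semiinf}, built from \eqref{e:razval initial}; under \propref{p:Iwahori N} translations $\sfe^\mu\star-$ go to $j_{w_0(\mu),*}\star-$ (action \eqref{e:bad action}), and the Hecke structure of $\CF_{\cG,\cT,w_0}$ in \secref{sss:baby w0} is built from \eqref{e:razval}, which is the dual of \eqref{e:razval initial}; \lemref{l:w_0} is exactly the compatibility needed to identify these two recipes. Finally the passage from $\CF_{\cG,\cT,w_0}\in\Hecke_{\cG,\cT,w_0}$ to $\CF'_{\cG,\cT}=j_{w_0,!}\star\CF_{\cG,\cT,w_0}\in\Hecke'_{\cG,\cT}$ matches the fact that \propref{p:Iwahori N} intertwines \eqref{e:action by translations} with \eqref{e:bad action} (not \eqref{e:good action}), so no further twist is needed.

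I expect the main obstacle to be bookkeeping the cohomological shifts and the $w_0$-twists coherently at the $\infty$-categorical level rather than just on homotopy categories. Fortunately, as noted in \secref{ss:Hecke on IC}, all the relevant objects — $\ICs$, its translates and convolutions with $\Sat(V)$ for $V\in\Rep(\cG)^\heartsuit$, and likewise $j_{-w_0(\lambda),*}\star\Sat(V^\lambda)$ — lie in the heart of the respective t-structures, so the Hecke-structure comparison and the comparison of transition maps may be carried out in the abelian category, reducing the coherence problem to a finite check. The one genuinely delicate point is verifying that the canonical isomorphism $\on{Av}^{I/\fL^+(T)}_*(\ICs)\simeq\underset{\lambda}{\on{colim}}\,j_{-\lambda,*}\star\Sat(V^\lambda)[\text{shift}]$ is compatible with the colimit structure — i.e., that $\on{Av}^{I/\fL^+(T)}_*$ commutes with this particular filtered colimit — which follows because $\on{Av}^{I/\fL^+(T)}_*$ restricted to $(\SI(\Gr_G))^{\fL^+(T)}$ is an equivalence (hence continuous) by \propref{p:Iwahori N}.
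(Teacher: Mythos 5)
Your proposal follows essentially the same route as the paper's proof: reduce to the homotopy category using that everything lives in the heart, compute $\on{Av}^{I/\fL^+(T)}_*(\ICs)$ termwise to get $\underset{\lambda}{\on{colim}}\, j_{-\lambda,*}\star\Sat(V^\lambda)$, match this with the colimit presentation of $\CF'_{\cG,\cT}[\dim(G/B)]$ from \secref{sss:F' expl} via the identity $j_{-\lambda,*}\star\CF'\simeq j_{-\lambda\cdot w_0,*}\star\CF'[\dim(G/B)]$ for spherical $\CF'$, and check that transition maps and Hecke structures agree by tracing them back through \lemref{l:w_0} and \thmref{t:ABG}(iv). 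This is exactly what the paper does, so the substance is right. Two small bookkeeping slips worth flagging: the $[\dim(G/B)]$ shift has nothing to do with "$\langle\lambda,2\check\rho\rangle$ versus the normalization of $j_{-w_0(\lambda),*}$" — it comes purely from $j_{w_0,*}$ acting on a spherical object, via the factorization $j_{-\lambda,*}\simeq j_{-\lambda\cdot w_0,*}\star j_{w_0,*}$ with $\ell(-\lambda)=\ell(-\lambda\cdot w_0)+\ell(w_0)$; and "once one composes with $j_{w_0,!}\star-$" is a confusing way to say it, since that twist is already built into $\CF'_{\cG,\cT}=j_{w_0,!}\star\CF_{\cG,\cT,w_0}$ and you are factoring it out of $j_{-\lambda,*}$, not applying it to $\on{Av}^{I/\fL^+(T)}_*(\ICs)$. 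Neither slip affects the validity of the argument.
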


The rest of this subsection is devoted to the proof of \thmref{t:other main}. 

\sssec{}

Since the stated isomorphism takes place in the heart of $\Shv(\Gr_G)^I$ (with respect to the perverse t-structure),
and since the actions of $\Rep(\cG)$ and $\Rep(\cT)$ are given by t-exact functors, it is enough to establish it
at the level of the homotopy categories.

\sssec{}  \label{sss:F' expl}

According to \secref{sss:baby w0}, the object $\CF'_{\cG,\cT}$ can be described as follows. The underlying object of
$\Shv(\Gr_G)^I$ is
\begin{equation} \label{e:result one}
\underset{\lambda\in \Lambda^+}{\on{colim}}\, j_{-\lambda\cdot w_0,*}\star \Sat(V^{\lambda}).
\end{equation} 

Here we are using the fact that for $\lambda$ regular, we have
$$j_{w_0,!}\star j_{-w_0(\lambda),*}\simeq j_{-\lambda\cdot w_0,*},$$
which follows from the isomorphism
$$j_{-w_0(\lambda),*}\simeq j_{w_0,*}\star j_{-\lambda\cdot w_0,*},$$
which in turn follows from the fact that
$$-w_0(\lambda)=w_0\cdot (-\lambda\cdot w_0) \text{ and } \ell(-w_0(\lambda))=\ell(w_0) + \ell(-\lambda\cdot w_0)$$
for $\lambda$ dominant and regular.  

\medskip

The transition maps are induced by \eqref{e:describe other b v}, and the Hecke structure is induced by \eqref{e:Hecke other b v}.

\sssec{}

The object of $\Shv(\Gr_G)^I$, underlying $\on{Av}^{I/\fL^+(T)}_*(\ICs)$ is given by
$$\underset{\lambda\in \Lambda^+}{\on{colim}}\,  \on{Av}^{I/\fL^+(T)}_*\left(t^{-\lambda}\cdot \Sat(V^\lambda)\right)[\langle \lambda,2\check\rho\rangle].$$

\medskip

We note that for any $\CF\in \Shv(\Gr_G)^I$, we have a canonical isomorphism  
$$\on{Av}^{I/\fL^+(T)}_*(t^{-\lambda}\cdot \CF)[\langle \lambda,2\check\rho\rangle]\simeq 
j_{-\lambda,*}\star \CF.$$

Note also that for $\lambda$ regular we have
$$j_{-\lambda,*}\simeq j_{-\lambda\cdot w_0,*}\star j_{w_0,*}.$$

Hence, for any $\CF'\in \Shv(\Gr_G)^{\fL^+(G)}$, we have a canonical isomorphism
$$j_{-\lambda,*}\star \CF'\simeq j_{-\lambda\cdot w_0,*}\star \CF'[\dim(G/B)].$$

\medskip

Hence, we obtain that 
$\on{Av}^{I/\fL^+(T)}_*(\ICs)$ is given by
\begin{equation} \label{e:result two}
\underset{\lambda\in \Lambda^+}{\on{colim}}\,  j_{-\lambda\cdot w_0,*}\star \Sat(V^\lambda)[\dim(G/B)],
\end{equation} 
where the transition maps are induced by \eqref{e:transition map}, and the Hecke structure by \eqref{e:Hecke semiinf}. 

\sssec{}

However, by unwinding the definitions, we see that the transition maps in \eqref{e:result two} coincide with those
in \eqref{e:result one}, and that the Hecke structures match up. 

\ssec{Description of the abelian category} \label{ss:proof of ab}

In this subsection we will prove Propositions \ref{p:abelian categ} and \ref{p:semiinf as reg}. 

\sssec{}

Consider the functor
\begin{equation} \label{e:functor for ab}
(\SI(\Gr_G))^{\fL^+(T)} \overset{\on{Av}^{I/\fL^+(T)}_*}\longrightarrow \Shv(\Gr)^I \overset{j_{w_0,*}\star -[-\dim(G/B)]}
\longrightarrow \Shv(\Gr)^I  \overset{\Sat^I}\longrightarrow \IndCoh(\cn^-\underset{\cg}\times \{0\}/\cB^-).
\end{equation}

According to \thmref{t:ABG} and \propref{p:Iwahori N}, the above functor is an equivalence. Note that by construction, it sends 
the object $\Delta^\lambda\in (\SI(\Gr_G))^{\fL^+(T)}$ to the object 
$$\sfe^{w_0(\lambda)}\otimes \CO_{\on{pt}/\cB^-}\in \IndCoh(\cn^-\underset{\cg}\times \{0\}/\cB^-).$$

\medskip

We will prove:

\begin{prop} \label{p:abelian categ'}  
Under the equivalence \eqref{e:functor for ab}, the abelian category $((\SI(\Gr_G))^{\fL^+(T)})^\heartsuit$
goes over to
$$(\IndCoh(\on{pt}/\cB^-))^\heartsuit\simeq (\IndCoh(\cn^-\underset{\cg}\times \{0\}/\cB^-))^\heartsuit.$$
\end{prop}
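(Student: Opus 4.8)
The plan is to prove that the equivalence \eqref{e:functor for ab} is t-exact for the evident t-structures on the two sides, and then to read off the statement. On the source $(\SI(\Gr_G))^{\fL^+(T)}$ we use the t-structure characterized exactly as in \secref{ss:t}: the $\Delta^\lambda$ lie in $(\SI(\Gr_G))^{\fL^+(T)}$ and form compact generators, $((\SI(\Gr_G))^{\fL^+(T)})^{\leq 0}$ is generated (under colimits and extensions) by $\{\Delta^\lambda[n]\mid\lambda\in\Lambda,\,n\geq 0\}$, and $\CF\in((\SI(\Gr_G))^{\fL^+(T)})^{>0}$ if and only if $\CHom(\Delta^\lambda,\CF)\in\Vect^{>0}$ for all $\lambda$. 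On the target $\IndCoh(\cn^-\underset{\cg}\times\{0\}/\cB^-)$ we use the natural t-structure. Granting t-exactness, the equivalence restricts to an equivalence of hearts; and since $\cn^-\underset{\cg}\times\{0\}$ is quasi-smooth with classical truncation $\on{pt}$, the classical truncation of $\cn^-\underset{\cg}\times\{0\}/\cB^-$ is $\on{pt}/\cB^-$ (as recorded in \secref{ss:geom Verma}), so that pushforward along the closed embedding $\on{pt}/\cB^-\hookrightarrow\cn^-\underset{\cg}\times\{0\}/\cB^-$ identifies the heart of the target with $(\IndCoh(\on{pt}/\cB^-))^\heartsuit$; this is the assertion.

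Write $Z:=\cn^-\underset{\cg}\times\{0\}/\cB^-$. The one external input is the identification recorded just before the statement (from \thmref{t:ABG}(ii)--(iii) and \propref{p:Iwahori N}) that \eqref{e:functor for ab} carries $\Delta^\lambda$ to $\sfe^{w_0(\lambda)}\otimes\CO_{\on{pt}/\cB^-}$. Since $\lambda\mapsto w_0(\lambda)$ is a bijection of $\Lambda$, the equivalence carries the compact generators $\{\Delta^\lambda\}_{\lambda\in\Lambda}$ of the source onto the family $\{\sfe^\nu\otimes\CO_{\on{pt}/\cB^-}\}_{\nu\in\Lambda}$, which are therefore compact generators of $\IndCoh(Z)$. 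It then suffices to check that the natural t-structure on $\IndCoh(Z)$ is described by these generators in the same way: $\IndCoh(Z)^{\leq 0}$ is generated (under colimits and extensions) by $\{\sfe^\nu\otimes\CO_{\on{pt}/\cB^-}[n]\mid n\geq 0\}$, and $\G\in\IndCoh(Z)^{>0}$ if and only if $\CHom(\sfe^\nu\otimes\CO_{\on{pt}/\cB^-},\G)\in\Vect^{>0}$ for all $\nu$. Once both t-structures are described by matching sets of generators and the functor is an equivalence (hence preserves colimits, extensions and the relevant $\CHom$'s), t-exactness follows.

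To justify that description: since $Z$ is eventually coconnective with classical truncation $\on{pt}/\cB^-$, one has $\Coh(Z)^\heartsuit=\Coh(\on{pt}/\cB^-)^\heartsuit$, the abelian category of finite-dimensional $\cB^-$-representations; and by definition of the $\IndCoh$ t-structure, $\IndCoh(Z)^{\leq 0}$ is the colimit-closure of $\Coh(Z)^{\leq 0}$, while $\IndCoh(Z)^{>0}$ consists of those $\G$ with $\CHom(M,\G)\in\Vect^{>0}$ for all $M\in\Coh(Z)^{\leq 0}$. Now $\Coh(Z)^{\leq 0}$ is generated, as a stable subcategory closed under extensions and nonnegative shifts, by $\Coh(Z)^\heartsuit$; and every finite-dimensional $\cB^-$-representation is an iterated extension of characters $\sfe^\nu$ (pick an extremal $\cT$-weight; its weight space is a $\cB^-$-subrepresentation, a sum of copies of the corresponding $\sfe^\nu$, and induct on dimension). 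Hence $\Coh(Z)^{\leq 0}$ is generated under extensions and nonnegative shifts by $\{\sfe^\nu\otimes\CO_{\on{pt}/\cB^-}\mid\nu\in\Lambda\}$. Passing to the right orthogonal is insensitive to extensions and shifts, and passing to colimit-closure already swallows the extension-closure; this yields both halves of the description.

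The step I expect to cost the most effort --- though it is routine --- is transporting the setup of Sections \ref{ss:semiinf cat}--\ref{ss:t} to the $\fL^+(T)$-equivariant category: that each $S^\mu$ remains stable under $\fL(N)\cdot\fL^+(T)$, that $(\SI(\Gr_G)_{=\mu})^{\fL^+(T)}\simeq\Vect$ as in \propref{p:on orbit}, and consequently that the $\Delta^\lambda$ are compact generators of $(\SI(\Gr_G))^{\fL^+(T)}$ satisfying the analogues of conditions (i)--(iv) of \secref{ss:t}, so that the t-structure invoked above is the intended one. The remaining ingredients --- the identification of $\Delta^\lambda$ with $\sfe^{w_0(\lambda)}\otimes\CO_{\on{pt}/\cB^-}$ under \eqref{e:functor for ab}, the equivalence \thmref{t:ABG}, \propref{p:Iwahori N}, and the standard description of the t-structure on $\IndCoh$ of a quasi-smooth stack --- are either cited or formal.
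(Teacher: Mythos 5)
Your proof is correct, but it runs along a genuinely different track from the paper's. The paper proves the two inclusions of hearts separately: for one direction it observes that the generators $\sfe^{w_0(\lambda)}\otimes\CO_{\on{pt}/\cB^-}$ of the target heart correspond to $\Delta^\lambda$, which lie in the source heart by \corref{c:j is perv}; for the other direction it invokes (again via \corref{c:j is perv}, i.e.\ ultimately \thmref{t:j is perv}) that every object of $((\SI(\Gr_G))^{\fL^+(T)})^\heartsuit$ admits a filtration with subquotients $\Delta^\lambda$, so its image is built from objects of the target heart. You instead prove outright that \eqref{e:functor for ab} is t-exact, by matching the presentations of the two t-structures via compact generators --- the $\Delta^\lambda$ on one side and, using Lie--Kolchin (every finite-dimensional $\cB^-$-representation is an iterated extension of characters), the $\sfe^\nu\otimes\CO_{\on{pt}/\cB^-}$ on the other --- and then reading off the hearts. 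This buys you independence from \thmref{t:j is perv}: your argument does not assume that $\Delta^0$ is perverse, but in fact re-derives $\Delta^\lambda\in(\SI(\Gr_G))^\heartsuit$ as a byproduct, via the ABG equivalence and \propref{p:Iwahori N}, rather than via the global (Drinfeld compactification) computation the paper uses to prove \thmref{t:j is perv}. The cost is length, plus the housekeeping you already flag --- transporting the t-structure setup of \secref{ss:t} to the $\fL^+(T)$-equivariant category, and keeping track of the renormalization footnotes so that $\Delta^\lambda$ really are the compact generators on the source side. One small clean-up in the Lie--Kolchin step: "extremal weight" should be made precise by choosing a linear functional positive on all positive roots and taking the minimal value; the minimal weight space is then $\cN^-$-annihilated, hence $\cB^-$-stable, and the induction proceeds.
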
 

We will also prove:

\begin{prop} \label{p:monodromic}
The inclusion $((\SI(\Gr_G))^{\fL^+(T)})^\heartsuit\hookrightarrow (\SI(\Gr_G))^\heartsuit$
is an equivalence.
\end{prop}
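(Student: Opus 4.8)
The plan is to show that the forgetful functor
$$
\textbf{oblv}\colon\ \bigl((\SI(\Gr_G))^{\fL^+(T)}\bigr)^\heartsuit\ \longrightarrow\ (\SI(\Gr_G))^\heartsuit
$$
is fully faithful and essentially surjective, by reducing everything to the single‑orbit situation and then performing a dévissage along the stratification of $\ol S{}^0$ by the $S^\mu$. The first move is to dispose of the pro‑unipotent directions. Since $\fL^+(T)=T\times\fL^+(T)_1$ with $\fL^+(T)_1$ pro‑unipotent, equivariance against $\fL^+(T)_1$ is a property: the full subcategory $\Shv(\Gr_G)^{\fL(N)\cdot\fL^+(T)_1}\subset\SI(\Gr_G)$ is closed under colimits, and it contains each generator $\Delta^\lambda$ (\lemref{l:! gener}), because $\Delta^\lambda$ is obtained by applying $(\bi_\lambda)_!$ to the dualizing sheaf of the $\fL^+(T)_1$‑stable orbit $S^\lambda$ (\corref{c:omega}, and the colimit description in the proof of \propref{p:! exists}). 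Hence $\Shv(\Gr_G)^{\fL(N)\cdot\fL^+(T)_1}=\SI(\Gr_G)$ and $(\SI(\Gr_G))^{\fL^+(T)}=\Shv(\Gr_G)^{\fL(N)\cdot T}$, where $T$ acts through $T\subset\fL^+(T)$, normalizing $\fL(N)$, preserving each $S^\mu$, and fixing each point $t^\mu$.

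Next I would check the compatibility with t‑structures. The forgetful functor $\Shv(\Gr_G)^{\fL(N)\cdot T}\to\SI(\Gr_G)$ is conservative and commutes with the functors $(\bi_\mu)^!$ and $(\bi_\mu)^*$; since both t‑structures are characterized through these restriction functors to the orbits (\secref{ss:t} and its evident $T$‑equivariant analogue), the forgetful functor is t‑exact, and likewise for its restrictions over $S^\mu$. On a single orbit the assertion already holds: $\SI(\Gr_G)_{=\mu}\simeq\Vect$ by \propref{p:on orbit}, its $T$‑equivariant version is $\Shv(\on{pt}/T)$ (the stabilizer of $t^\mu$ in $\fL(N)\rtimes T$ is pro‑unipotent extended by $T$), and the heart of $\Shv(\on{pt}/T)$ is the category of local systems on the simply connected classifying space of $T$, hence again $\Vect^\heartsuit$, with the forgetful functor inducing an equivalence on hearts. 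In particular every object of $(\SI(\Gr_G)_{=\mu})^\heartsuit$ admits a unique lift to the $T$‑equivariant heart.

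Then comes the dévissage. An object $\CF\in(\SI(\Gr_G))^\heartsuit$ is supported on $\ol S{}^0$, and $\ol S{}^0$ is the increasing union of the open subsets $U_n$ consisting of the strata $S^\mu$ with $\lvert\mu\rvert<n$ (each $U_n$ has finitely many strata, the strata of a fixed size being mutually incomparable and closed in $U_{n+1}$). Arguing by induction on $n$, I would show that every object of $(\SI(\Gr_G))^\heartsuit$ supported on $U_n$ lies in the essential image of the forgetful functor and that the functor is fully faithful on such objects: the base case is the orbit‑level statement above, and the inductive step amounts to lifting the recollement gluing data for the open–closed decomposition $U_n\hookrightarrow U_{n+1}\hookleftarrow\bigsqcup_{\lvert\mu\rvert=n}S^\mu$ from $\SI$ to $\Shv(\Gr_G)^{\fL(N)\cdot T}$. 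Because the forgetful functor commutes with all six recollement functors and is t‑exact, this lifting is possible provided the relevant $\Ext^0$‑ and $\Ext^1$‑groups between orbit‑supported objects in $\SI(\Gr_G)$ coincide with the corresponding $T$‑equivariant ones; granting this, essential surjectivity and full faithfulness on hearts follow, and passing to the colimit over $n$ gives \propref{p:monodromic}.

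The main obstacle is exactly this last point: making the dévissage along the infinite, accumulating stratification precise and, above all, proving that adding $T$‑equivariance does not change the pertinent $\Ext^{\le 1}$‑groups between orbit‑supported objects. This is a local computation — carried out via \lemref{l:Braden} and the explicit description of the orbit closures — which ultimately rests on the fact that $T$ is a torus acting with $t^\mu$ fixed, so that the $T$‑equivariant cohomology of the relevant (affine) strata agrees with ordinary cohomology in degrees $\le 1$; the finiteness needed for the induction to converge is supplied by the t‑structure on $\SI(\Gr_G)$. I note that, since \propref{p:abelian categ'} is already available, one may phrase the conclusion in parallel form: the irreducible objects $\Delta^\lambda$ (\corref{c:j is perv} and translation) lie in $((\SI(\Gr_G))^{\fL^+(T)})^\heartsuit$, and the comparison of $\Ext^1$‑groups among them identifies the two abelian categories, each being generated under colimits and subquotients by these objects.
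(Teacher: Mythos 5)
Your opening reduction is the same as the paper's first step: you split $\fL^+(T)=T\times\fL^+(T)_1$, observe that $\fL^+(T)_1$-equivariance is a property (a colimit-closed full subcategory), and show it holds for all of $\SI(\Gr_G)$ by checking it on the compact generators $\Delta^\lambda$ — this is exactly the statement that every object of $\SI(\Gr_G)$ is $\fL^+(T)$-monodromic. Where you diverge is what to do with this: you set up a dévissage over the stratification $\{S^\mu\}$ and try to lift recollement gluing data from the non-equivariant to the $T$-equivariant side. The paper instead observes that, precisely because every object is monodromic, every object of $(\SI(\Gr_G))^\heartsuit$ carries a canonical action of $\Sym(\ft)$ by endomorphisms, and $\fL^+(T)$-equivariance is equivalent to this action being trivial. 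It then uses the Cousin filtration to reduce to the graded pieces, which are sums of the pairwise non-isomorphic irreducibles $\Delta^\lambda$, on which the $\Sym(\ft)$-action is manifestly trivial; the pairwise non-isomorphism leaves no room for a non-trivial action on the extension, so the action vanishes on all of $\CF$. This bypasses any $\Ext$-group comparison entirely.

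The gap in your proposal is exactly the one you flag: the claim that ``adding $T$-equivariance does not change the pertinent $\Ext^{\le 1}$-groups between orbit-supported objects'' is not established, and the heuristic offered for it (that $T$-equivariant and ordinary cohomology of the strata agree in degrees $\le 1$) does not reduce to a cohomology computation on a single stratum — the relevant $\Ext^1$'s between $\Delta^\lambda$ and $\Delta^{\lambda'}$ live in the glued category, and the forgetful functor from $\Shv(\Gr_G)^{\fL(N)\cdot T}$ to $\SI(\Gr_G)$ is not fully faithful, so there is no a priori comparison map that is an isomorphism. Moreover the ``dévissage along the infinite, accumulating stratification'' needs a convergence argument that is not supplied. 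In short, after your first paragraph you already have the monodromy structure in hand; passing through the $\Sym(\ft)$-action as the paper does closes the proof in a few lines, while the recollement-lifting route requires a nontrivial and unperformed $\Ext$-comparison.
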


Note that the combination of these two propositions implies \eqref{p:abelian categ}. Note also that
\propref{p:semiinf as reg} follows this and \thmref{t:other main}. 

\sssec{Proof of \propref{p:abelian categ'}}

To prove the inclusion $\supset$ it suffices to show that the objects 
$$\sfe^{w_0(\lambda)}\otimes \CO_{\on{pt}/\cB^-}\in \IndCoh(\cn^-\underset{\cg}\times \{0\}/\cB^-)$$
go over to objects in $((\SI(\Gr_G))^{\fL^+(T)})^\heartsuit$. But, as was noted above, they correspond to 
objects $\Delta^\lambda\in (\SI(\Gr_G))^{\fL^+(T)}$, which do belong to $((\SI(\Gr_G))^{\fL^+(T)})^\heartsuit$
by \corref{c:j is perv}.

\medskip

Vice versa, by \corref{c:j is perv}, every object of $((\SI(\Gr_G))^{\fL^+(T)})^\heartsuit$ admits a 
filtration with subquotients of the form $\Delta^\lambda$. Its image in 
$\CO_{\on{pt}/\cB^-}\in \IndCoh(\cn^-\underset{\cg}\times \{0\}/\cB^-)$ thus admits a filtration with
subquotients of the form $\sfe^{w_0(\lambda)}\otimes \CO_{\on{pt}/\cB^-}$, and hence belongs to 
$$(\IndCoh(\on{pt}/\cB^-))^\heartsuit\simeq (\IndCoh(\cn^-\underset{\cg}\times \{0\}/\cB^-))^\heartsuit.$$

\qed

\sssec{Proof of \propref{p:monodromic}}

Since $\SI(\Gr_G)$ is generated by $\fL^+(T)$-equivariant objects, every object $\CF$ of 
$(\SI(\Gr_G))^\heartsuit$ is $\fL^+(T)$-\emph{monodromic}. Hence, $\CF$ admits an action
of the algebra $\Sym(\ft)$ by endomorphisms, and $\CF$ is $\fL^+(T)$-equivariant if and only
if this action is trivial.

\medskip

The Cousin decomposition of $\CF$ with respect to the strata $S^\lambda$ defines on $\CF$ a 
functorial filtration, indexed by the poset $\Lambda$, with the subquotient corresponding
to a given $\lambda\in \Lambda$ isomorphic to a direct sum of copies of $\Delta^\lambda$. 
The action of $\Sym(\ft)$ an any such subquotient is trivial. However, since the $\Delta^\lambda$'s
are pairwise non-isomorphic, the action of $\Sym(\ft)$ on $\CF$ is therefore also trivial.

\qed

\section{The Drinfeld-Pl\"ucker formalism}

The contents of this section follow a suggestion of S.~Raskin. We will present another, in a sense more
functorial, point of view that leads to the construction of $\ICs$ and the baby Verma modules. 

\ssec{Drinfeld-Pl\"ucker structures}

\sssec{}

Consider the scheme $\cG/\cN^-$. It is known to be quasi-affine, and let $\ol{\cG/\cN^-}$ be its affine closure,
(i.e., the spectrum of the algebra of global functions), so that we have an open embedding
\begin{equation} \label{e:quasi-affine}
\cG/\cN^-\hookrightarrow \ol{\cG/\cN^-}.
\end{equation} 

\medskip

By transport of structure, $\ol{\cG/\cN^-}$ is acted on by $\cG\times \cT$. Thus, we can regard 
$\CO(\ol{\cG/\cN^-})$ as a commutative algebra object in the 
symmetric monoidal category $\Rep(\cT)\otimes \Rep(\cG)$.

\sssec{}

Note that 
$$\ol{\cG/\cN^-}\simeq \underset{\lambda\in \Lambda^+}\oplus\, V^\lambda\otimes \sfe^{-\lambda},$$
with the product given by 
$$\left(V^{\lambda_1}\otimes \sfe^{-\lambda_1}\right)\otimes 
\left(V^{\lambda_2}\otimes \sfe^{-\lambda_2}\right)\to V^{\lambda_1+\lambda_2}\otimes \sfe^{-\lambda_1-\lambda_2},$$
induced by \eqref{e:Plucker map}. 

\sssec{}

Let $\CC$ be a DG category equipped with an action of $\Rep(\cT)\otimes \Rep(\cG)$. We shall say that an object
$c\in \CC$ is equipped with a \emph{Drinfeld-Pl\"ucker structure} if it is equipped with an action of 
$\CO(\ol{\cG/\cN^-})$, viewed as an algebra object in $\Rep(\cT)\otimes \Rep(\cG)$.

\medskip

We denote the category of objects of $\CC$ equipped with a Drinfeld-Pl\"ucker structure by 
$$\CO(\ol{\cG/\cN^-})\mod(\CC).$$

\sssec{}

Using the above presentation of $\ol{\cG/\cN^-}$, we can think of a Drinfeld-Pl\"ucker structure on $c$ as a system of maps
\begin{equation} \label{e:Drinfeld-Plucker}
c\star V^\lambda\to \sfe^\lambda\star c,\quad \lambda\in \Lambda^+
\end{equation} 
that make the diagrams 
$$
\CD
(c\star  V^{\lambda_1})  \star V^{\lambda_2} @>{\sim}>>  c\star  (V^{\lambda_1} \star V^{\lambda_2})  \\
@VVV   @VVV   \\
(\sfe^{\lambda_1}\star c)  \star V^{\lambda_2}  & & c\star V^{\lambda_1+\lambda_2}   \\
@V{\sim}VV   @VV{\text{\eqref{e:Plucker map dual}}}V   \\
\sfe^{\lambda_1}\star (c  \star V^{\lambda_2}) & &  \sfe^{\lambda_1+\lambda_2}\star c \\
@VVV   @VV{\sim}V  \\
\sfe^{\lambda_1}\star (\sfe^{\lambda_2} \star c)  @>{\sim}>> \sfe^{\lambda_1+\lambda_2}\star c
\endCD
$$
satisfying a coherent system of higher compatibilities (note that the latter is automatic if the corresponding mapping
spaces are discrete). 

\begin{rem}
Note that we can rewrite the maps \eqref{e:Drinfeld-Plucker} as
$$\sfe^{-\lambda}\star c\to c\star (V^\lambda)^*, \quad \lambda\in \Lambda^+.$$
Then a Drinfeld-Pl\"ucker structure on $c$ is the same as the structure of \emph{lax central object}, 
in the terminology of \eqref{ss:infty}. Here our monoidal category is $\Lambda^+$, its left action on
$\CC$ is via the monoidal functor
$$\Lambda^+\to \Rep(\cT), \quad \lambda\mapsto \sfe^{-\lambda},$$
and the lax right action is via
$$\Lambda^+\to \Rep(\cG), \quad \lambda\mapsto (V^\lambda)^*.$$
\end{rem} 

\sssec{}

By a similar token, instead of $\cG/\cN^-$, we can consider $\cG/\cN$. We denote the corresponding category by 
$\CO(\ol{\cG/\cN})\mod(\CC)$.

\medskip

Let $_{w_0}\!\CC$ be obtained from $\CC$ by 
modifying the action of $\Rep(\cT)$ on $\CC$ by applying the automorphism $w_0$ to $\cT$. A choice of 
a representative $g_{w_0}\in \cG$ of $w_0$ defines an equivalence
\begin{equation} \label{e:N vs N^-}
\CO(\ol{\cG/\cN^-})\mod({}_{w_0}\!\CC)\simeq \CO(\ol{\cG/\cN})\mod(\CC).
\end{equation}

\begin{rem}
As above, objects of $\CO(\ol{\cG/\cN})\mod(\CC)$ can be thought of as objects $c\in \CC$ equipped with a system of maps
$$\sfe^{\lambda}\star c\to c\star V^\lambda, \quad \lambda\in \Lambda^+,$$
equipped with a system of compatibilities. I.e., they are the same as lax central objects in the terminology of\eqref{ss:infty},
where the left action of $\Lambda^+$ is given by 
$$\Lambda^+\to \Rep(\cT), \quad \lambda\mapsto \sfe^\lambda$$
and the lax right action is via
$$\Lambda^+\to \Rep(\cG), \quad \lambda\mapsto V^\lambda.$$
\end{rem} 

\ssec{From Drinfeld-Pl\"ucker structures to Hecke objects}

\sssec{}

The monad associated with the functor
\begin{equation}  \label{e:induce GT}
\CC\simeq \CC\underset{\Rep(\cT)}\otimes \Rep(\cT)\to \CC\underset{\Rep(\cT)\otimes \Rep(\cG)}\otimes \Rep(\cT)=:
\Hecke_{\cG,\cT}(\CC)
\end{equation}
and its right adjoint, is given by the action of $\CO(\cG)$, viewed as an object of $\Rep(\cT)\otimes \Rep(\cG)$.

\medskip

Hence, by the Barr-Beck-Lurie theorem, we can identify
$$\Hecke_{\cG,\cT}(\CC)\simeq \CO(\cG)\mod(\CC),$$
so that the right adjoint to \eqref{e:induce GT} corresponds to the forgetful functor 
$$\oblv_{\CO(\cG)}:\CO(\cG)\mod(\CC)\to \CC.$$

\sssec{}

The composite map
$$\cG\to \cG/\cN^-\to \ol{\cG/\cN^-}$$
defines a pair of adjoint functors
$$\CO(\ol{\cG/\cN^-})\mod(\CC)\rightleftarrows \CO(\cG)\mod(\CC).$$

We will now describe the left adjoint 
\begin{equation} \label{e:DrPl left adjoint}
\CO(\ol{\cG/\cN^-})\mod(\CC)\to \CO(\cG)\mod(\CC)=\Hecke_{\cG,\cT}(\CC)
\end{equation} 
explicitly.

\sssec{}

We will first describe the composition
\begin{equation} \label{e:to be colimit}
\CO(\ol{\cG/\cN^-})\mod(\CC)\to \CO(\cG)\mod(\CC) \overset{\oblv_{\CO(\cG)}}\longrightarrow \CC.
\end{equation}

By \secref{ss:infty}, for $c\in \CO(\ol{\cG/\cN^-})\mod(\CC)$, we have a well-defined functor
$$\Lambda^+\to \CC, \quad \lambda\mapsto \sfe^\lambda\star c\star (V^\lambda)^*,$$
where the transition maps for $\lambda_2=\lambda_1+\lambda$ are given by
\begin{multline*}
\sfe^{\lambda_1}\star c\star (V^{\lambda_1})^*\simeq 
\sfe^{\lambda_1+\lambda_2}\star \sfe^{-\lambda_2} \star c\star (V^{\lambda_1})^*\to \\
\to \sfe^{\lambda_1+\lambda_2}\star c\star (V^{\lambda_2})^* \star c\star (V^{\lambda_1})^*\to 
\sfe^{\lambda_1+\lambda_2}\star c\star (V^{\lambda_1+\lambda_2})^*.
\end{multline*}

Moreover, this construction is functorial in $c\in  \CO(\ol{\cG/\cN^-})\mod(\CC)$.  We claim:

\begin{prop}  \label{p:as colim}
The functor \eqref{e:to be colimit} identifies with
$$c\mapsto \underset{\lambda\in \Lambda^+}{\on{colim}}\, \sfe^\lambda\star c\star (V^\lambda)^*.$$
\end{prop}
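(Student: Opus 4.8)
The plan is to identify the left adjoint \eqref{e:DrPl left adjoint} with a relative tensor-product construction and then compute that tensor product as a colimit. First I would observe that, because $\Rep(\cT)$ is rigid and the algebra $\CO(\ol{\cG/\cN^-})$ decomposes as $\underset{\lambda\in\Lambda^+}\oplus\, V^\lambda\otimes\sfe^{-\lambda}$, the category $\CO(\ol{\cG/\cN^-})\mod(\CC)$ is the category of modules over this algebra object in $\Rep(\cT)\otimes\Rep(\cG)$; the map $\cG\to\ol{\cG/\cN^-}$ induces the algebra map $\CO(\ol{\cG/\cN^-})\to\CO(\cG)$, and the left adjoint \eqref{e:DrPl left adjoint} is induction $-\underset{\CO(\ol{\cG/\cN^-})}\otimes\CO(\cG)$. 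Composing with $\oblv_{\CO(\cG)}$ and then restricting scalars along $\Rep(\cT)\otimes\Rep(\cG)\to\Rep(\cT)$ kills the $\Rep(\cG)$-action on the nose, so \eqref{e:to be colimit} is the functor $c\mapsto c\underset{\CO(\ol{\cG/\cN^-})}\otimes\CO(\cG)$ with $\CO(\cG)$ regarded via right translations as a $\Rep(\cG)$-module (the left-translation $\cT$-structure having been forgotten).

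Next I would compute this relative tensor product via the bar resolution and identify the simplicial object with the diagram over $(\Lambda^+,\leq)$ whose value at $\lambda$ is $\sfe^\lambda\star c\star(V^\lambda)^*$. Concretely, I would use the Peter--Weyl type decomposition $\CO(\cG)\simeq\underset{\lambda\in\Lambda^+}\oplus\,(V^\lambda)^*\otimes V^\lambda$ as a $\cG\times\cG$-representation and the fact that, after forgetting the left-translation action, $\CO(\cG)$ is the colimit over $\lambda\in\Lambda^+$ of $(V^\lambda)^*\otimes V^\lambda$ (with transition maps induced by \eqref{e:Plucker map}) — equivalently, $\CO(\cG)\simeq\underset{\lambda}{\on{colim}}\,\sfe^\lambda\otimes(V^\lambda)^*$ as an object of $\Rep(\cT)\otimes\Rep(\cG)$ equipped with its right-regular $\cG$-action and the residual $\cT$-action through the character $\lambda$ on the $\lambda$-graded piece. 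Since $-\star c$ commutes with colimits, one gets $c\underset{\CO(\ol{\cG/\cN^-})}\otimes\CO(\cG)\simeq\underset{\lambda\in\Lambda^+}{\on{colim}}\,\bigl(\sfe^\lambda\star c\star(V^\lambda)^*\bigr)\underset{\CO(\ol{\cG/\cN^-})}\otimes(\text{unit})$, and the Drinfeld--Pl\"ucker structure on $c$ is exactly the datum making each $\sfe^\lambda\star c\star(V^\lambda)^*$ a well-defined module quotient, so the relative tensor product over $\CO(\ol{\cG/\cN^-})$ collapses the colimit to the asserted one. The functoriality in $c$ and the compatibility of the transition maps with those written before the statement are then formal, following from the coherence diagrams in the definition of Drinfeld--Pl\"ucker structure and the $\infty$-categorical framework of \secref{ss:infty}.

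The main obstacle will be making the bar-complex computation rigorous at the $\infty$-categorical level: one must check that the geometric realization of the bar construction $\on{Bar}_\bullet(\CC\text{-mod},\CO(\ol{\cG/\cN^-}),\CO(\cG))$ really does simplify, after applying $\oblv_{\CO(\cG)}$ and restriction to $\Rep(\cT)$, to the filtered colimit indexed by $(\Lambda^+,\leq)$ rather than merely to some more complicated simplicial colimit. The cleanest route is to exhibit $\CO(\cG)$ as a filtered colimit of finitely generated free $\CO(\ol{\cG/\cN^-})$-modules of the form $\CO(\ol{\cG/\cN^-})\otimes_{\Rep(\cT)}(\sfe^{-\lambda})$ — using that $\cG\to\ol{\cG/\cN^-}$ is a $\cN^-$-torsor onto its image together with the Plücker embedding — so that induction is computed termwise and the relative tensor product is literally the stated colimit with no higher simplicial corrections. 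I would isolate this as a lemma about the algebra map $\CO(\ol{\cG/\cN^-})\to\CO(\cG)$ in $\Rep(\cT)\otimes\Rep(\cG)$, prove it by the explicit highest-weight decompositions above, and then feed it into the general nonsense to conclude \propref{p:as colim}.
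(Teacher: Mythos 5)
The overall strategy — reduce to the universal case $\CC=\Rep(\cT)\otimes\Rep(\cG)$, $c=\CO(\ol{\cG/\cN^-})$, and there exhibit $\CO(\cG)$ as a filtered colimit of free rank-one $\CO(\ol{\cG/\cN^-})$-modules so that the relative tensor product is computed termwise with no simplicial corrections — is the right idea, and it is also what the paper does (the paper formulates the reduction as $\CO(\ol{\cG/\cN^-})\mod(\CC)\simeq\CC\underset{\Rep(\cT)\otimes\Rep(\cG)}\otimes\CO(\ol{\cG/\cN^-})\mod(\Rep(\cT)\otimes\Rep(\cG))$ and likewise for $\CO(\cG)\mod$, then checks an isomorphism of objects of $\Rep(\cT)\otimes\Rep(\cG)$). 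Your closing paragraph gestures at exactly this, so the skeleton is fine.

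The concrete middle step, however, contains a real error. You assert ``equivalently, $\CO(\cG)\simeq\underset{\lambda}{\on{colim}}\,\sfe^\lambda\otimes(V^\lambda)^*$ as an object of $\Rep(\cT)\otimes\Rep(\cG)$''. This cannot be: for $\lambda_1\neq\lambda_2$ there are no nonzero morphisms $\sfe^{\lambda_1}\otimes(V^{\lambda_1})^*\to\sfe^{\lambda_2}\otimes(V^{\lambda_2})^*$ in $\Rep(\cT)\otimes\Rep(\cG)$, because $(V^{\lambda_1})^*$ and $(V^{\lambda_2})^*$ are non-isomorphic irreducibles and Schur's lemma applies in the $\Rep(\cG)$ factor. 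The filtered colimit you wrote therefore has zero transition maps and is not $\CO(\cG)$. The statement that is actually true — and is precisely the universal instance of \propref{p:as colim} — is that $\CO(\cG)\simeq\underset{\lambda}{\on{colim}}\,\sfe^\lambda\otimes\CO(\ol{\cG/\cN^-})\otimes(V^\lambda)^*$, i.e.\ a colimit of \emph{free rank-one} $\CO(\ol{\cG/\cN^-})$-modules, where the transition maps are genuinely nonzero because they go through the (nontrivial) Drinfeld--Pl\"ucker maps on $\CO(\ol{\cG/\cN^-})$. The Peter--Weyl bookkeeping you sketch before this point (``after forgetting the left-translation action, $\CO(\cG)$ is the colimit of $(V^\lambda)^*\otimes V^\lambda$'') has the same defect: the Pl\"ucker maps \eqref{e:Plucker map} give $V^{\lambda_1}\otimes V^{\lambda_2}\to V^{\lambda_1+\lambda_2}$, not maps $(V^{\lambda_1})^*\otimes V^{\lambda_1}\to (V^{\lambda_2})^*\otimes V^{\lambda_2}$.

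The remaining gap is that your ``cleanest route'' paragraph states the key lemma (free-module colimit presentation of $\CO(\cG)$) without proving it, and the formula $\CO(\ol{\cG/\cN^-})\otimes_{\Rep(\cT)}(\sfe^{-\lambda})$ is not well-typed — you want $\sfe^\lambda\otimes\CO(\ol{\cG/\cN^-})\otimes(V^\lambda)^*$. Proving this lemma is the actual content of the proposition, and the paper's way of doing it is to check the isomorphism after applying $\Hom_{\Rep(\cG)}(V^\mu,-)$ for each dominant $\mu$, using the stabilization isomorphism \eqref{e:razval initial} for $\lambda\gg\mu$; together with the observation that the filtered colimit lands in the heart of the t-structure (so one may work in the abelian category), this is elementary. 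You would need to supply this or an equivalent verification for your argument to go through; the bar-resolution framing is not needed once the free-module colimit is established, but it does not substitute for establishing it.
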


\begin{proof}

Note that we have:
$$\CO(\ol{\cG/\cN^-})\mod(\CC)\simeq \CC\underset{\Rep(\cT)\otimes \Rep(\cG)}\otimes \CO(\ol{\cG/\cN^-})\mod(\Rep(\cT)\otimes \Rep(\cG))$$
and
$$\CO(\cG)\mod(\CC)\simeq \CC\underset{\Rep(\cT)\otimes \Rep(\cG)}\otimes \CO(\cG)\mod(\Rep(\cT)\otimes \Rep(\cG)).$$

Hence, the assertion of the proposition reduces to the universal situation of $\CC=\Rep(\cT)\otimes \Rep(\cG)$
and $c=\CO(\ol{\cG/\cN^-})$. Thus, we need to construct an isomorphism 
$$\underset{\lambda\in \Lambda^+}{\on{colim}}\, \sfe^\lambda\otimes \CO(\ol{\cG/\cN^-}) \otimes (V^\lambda)^*\simeq  \CO(\cG)$$
as objects of $\Rep(\cT)\otimes \Rep(\cG)$. 

\medskip

Since the index category $\Lambda^+$ is filtered, the left-hand side belongs to the heart of the t-structure. So, 
the computation we need to perform is inside the abelian category. 

\medskip

The map from the left-hand side to the right-hand side is given by embedding $\CO(\ol{\cG/\cN^-})$ into $\CO(G)$, and multiplying by the 
matrix coefficient function of $v^\lambda$ against an element of $(V^\lambda)^*$. The fact that the resulting map is an isomorphism
can be seen as follows:

\medskip

Fix $\mu\in \Lambda^+$, and let us calculate $\Hom_{\Rep(\cG)}(V^\mu,-)$ into both sides. In the right hand side, we
have 
$$\Hom_{\Rep(\cG)}(V^\mu,\CO(\cG))\simeq (V^\mu)^*,$$ viewed as a $\cT$-representation. 
In the left-hand side, we have the colimit over $\lambda\in \Lambda^+$ of
$$\Hom_{\Rep(\cG)}(V^\mu,\sfe^\lambda \otimes \CO(\ol{\cG/\cN^-})\otimes (V^\lambda)^*),$$
also viewed as a $\cT$-representation.

\medskip

Now, for $\lambda$ large relative to $\mu$, using \eqref{e:razval initial}, we have:
\begin{multline*}
\Hom_{\Rep(\cG)}(V^\mu,\sfe^\lambda \otimes \CO(\ol{\cG/\cN^-})\otimes (V^\lambda)^*) \simeq 
\Hom_{\Rep(\cG)}(V^\mu\otimes V^\lambda,\CO(\ol{\cG/\cN^-}))\otimes \sfe^\lambda\overset{\text{\eqref{e:razval initial}}}\simeq \\
\simeq \underset{\nu}\oplus\,
(V^\mu(\nu))^*\otimes \Hom_{\Rep(\cG)}(V^{\lambda+\nu},\CO(\ol{\cG/\cN^-}))\otimes \sfe^\lambda\simeq 
\underset{\nu}\oplus\, (V^\mu(\nu))^*\otimes \sfe^{-\lambda-\nu}\otimes \sfe^\lambda \simeq \\
\simeq \underset{\nu}\oplus\, (V^\mu(\nu))^*\otimes \sfe^{-\nu}\simeq (V^\mu)^*.
\end{multline*}

Moreover, for such $\lambda$, the map  
$$\Hom_{\Rep(\cG)}(V^\mu,\sfe^\lambda \otimes \CO(\ol{\cG/\cN^-})\otimes (V^\lambda)^*)  \to 
\Hom_{\Rep(\cG)}(V^\mu,\CO(\cG))$$
induced by $\sfe^\lambda \otimes \CO(\ol{\cG/\cN^-})\otimes (V^\lambda)^*\to \CO(\cG)$,
is the identity map $(V^\mu)^*\to (V^\mu)^*$.

\medskip

This establishes the desired isomorphism.

\end{proof}

\sssec{}

Next, we wish to describe the structure of an object of $\Hecke_{\cG,\cT}(\CC)\simeq \CO(\cG)\mod(\CC)$ on the colimit
$\underset{\lambda\in \Lambda^+}{\on{colim}}\, \sfe^\lambda\star c\star (V^\lambda)^*$. We claim that for an individual
finite-dimensional $V\in \Rep(\cG)^\heartsuit$, the corresponding isomorphism
$$\left(\underset{\lambda\in \Lambda^+}{\on{colim}}\, \sfe^\lambda\star c\star (V^\lambda)^*\right)\star V \simeq 
\Res^\cG_\cT(V)\star \left(\underset{\lambda\in \Lambda^+}{\on{colim}}\, \sfe^\lambda\star c\star (V^\lambda)^*\right)$$
is given as follows:

\medskip

Let us take $\lambda$ large enough so that \eqref{e:razval initial} applies. Then we have:
\begin{multline*}
\left(\sfe^\lambda\star c\star (V^\lambda)^*\right)\star V \overset{\text{\eqref{e:razval}}}\simeq \\
\simeq \underset{\mu}\oplus\, \sfe^\lambda\star c\star (V^{\lambda+\mu})^*\otimes V(-\mu)
\simeq \underset{\mu}\oplus\, (\sfe^{-\mu}\otimes V(-\mu))\star \left(\sfe^{\lambda+\mu}\star c\star (V^{\lambda+\mu})^*\right)\simeq \\
\simeq \Res^\cG_\cT(V) \star \left(\sfe^{\lambda+\mu}\star c\star (V^{\lambda+\mu})^*\right),
\end{multline*}

To establish this description, as in the proof of \propref{p:as colim}, it is enough to do in the universal case of
$\CC=\Rep(\cT)\otimes \Rep(\cG)$ and $c=\CO(\ol{\cG/\cN^-})$, in which case it becomes an elementary verification. 

\sssec{}

We have an obvious analog of \propref{p:as colim} when we replace $\cG/\cN^-$ by $\cG/\cN$. Namely, the corresponding functor
$$\CO(\ol{\cG/\cN})\mod(\CC)\to \CO(\cG)\mod(\CC) \overset{\oblv_{\CO(\cG)}}\longrightarrow \CC$$
is given by
$$c\mapsto \underset{\lambda\in \Lambda^+}{\on{colim}}\, \sfe^{-\lambda}\star c\star V^\lambda,$$
and the Hecke structure is given by
\begin{multline*}
\left(\sfe^{-\lambda}\star c\star V^\lambda\right)\star V \overset{\text{\eqref{e:razval initial}}}\simeq \\
\simeq \underset{\mu}\oplus\, \sfe^{-\lambda}\star c\star V^{\lambda+\mu}\otimes V(\mu)
\simeq \underset{\mu}\oplus\, (\sfe^{\mu}\otimes V(\mu))\star \left(\sfe^{-\lambda-\mu}\star c\star V^{\lambda+\mu}\right)\simeq \\
\simeq \Res^\cG_\cT(V) \star \left(\sfe^{-\lambda-\mu}\star c\star V^{\lambda+\mu}\right),
\end{multline*}

\ssec{Construction of $\ICs$ and other objects from the Drinfeld-Pl\"ucker picture}   \label{ss:Drinfeld-Plucker}

We will now apply the above discussion and give a conceptual explanation of the colimit formations
that we encountered earlier in the paper.

\sssec{}

Let $\CC$ be a category equipped with an action of $\Rep(\cB^-)$. We endow it with 
actions of $\Rep(\cG)$ and $\Rep(\cT)$
coming from the restriction functors
$$\Rep(\cG)\to \Rep(\cB^-)\leftarrow \Rep(\cT).$$

Then any object of $\CC$ is automatically equipped with a Drinfeld-Plucker structure:
indeed, this structure comes from the canonical system of maps
$$\Res^{\cG}_{\cB^-}(V^\lambda)\to \Res^{\cT}_{\cB^-}(\sfe^\lambda),$$
given by the covectors $(v^\lambda)^*$.

\sssec{}

First, take $\CC=\Rep(\cB^-)$. Applying the above construction to $c=\sfe$,  and applying the functor \eqref{e:DrPl left adjoint} 
we obtain an object of 
$$\Hecke_{\cG,\cT}(\Rep(\cB^-))\simeq \QCoh((\cG/\cN^-)/\on{Ad}_{\cT})$$
equal to the image of $\CO_{\on{pt}/\cT}$ along the map
$$\on{pt}/\cT\to \QCoh((\cG/\cN^-)/\on{Ad}_{\cT}).$$

This is the object, considered in \secref{sss:coherent Verma}. Its description as a colimit in \secref{sss:coherent b Verma expl}
coincides with one given by \propref{p:as colim}.

\sssec{}  \label{sss:DrPl coh}

Next we take $\CC=\IndCoh(\cn^-\underset{\cg}\times \{0\}/\cB^-)$, equipped with an action of $\Rep(\cB^-)$,
coming from the projection
$$\cn^-\underset{\cg}\times \{0\}/\cB^-\to \on{pt}/\cB^-.$$

Take $c$ equal to the direct image of the structure sheaf along the closed embedding
$$\{0\}/\cB^- \to \cn^-\underset{\cg}\times \{0\}/\cB^-.$$

The corresponding object of 
$$\Hecke_{\cG,\cT}(\IndCoh(\cn^-\underset{\cg}\times \{0\}/\cB^-))$$
is the object that we denoted $\CM_{\cG,\cT}$. 

\sssec{}

Next, we take $\CC=\Shv(\Gr_G)^I$, equipped with an action of $\Rep(\cG)$ coming right convolutions by 
$\Sph(G)$ and $\Sat:\Rep(\cG)\to \Sph(G)$. We equip it with the action of $\Rep(\cT)$ given by \eqref{e:good action}.
(Note that according to \thmref{t:ABG}, this situation is equivalent to the one in \secref{sss:DrPl coh}.)

\medskip

We take $c=\delta_{1,\Gr_G}$. It is equipped with a Drinfeld-Plucker structure by means of
$$\IC_{\ol\Gr_G^\lambda}\to j_{\lambda,*}\star \delta_{1,\Gr_G}.$$

\medskip

The corresponding object of $\Hecke_{\cG,\cT}(\Shv(\Gr_G)^I)$ is the object that in \secref{ss:geom Verma}
we denoted by $\CF_{\cG,\cT}$.  Its presentation as a colimit in \secref{sss:geom Verma colim} is the one given by
\propref{p:as colim}.

\sssec{}

We again take $\CC=\Shv(\Gr_G)^I$, but equip it with the $\Rep(\cT)$-action, given by \eqref{e:bad action}.
We claim that the object $\delta_{1,\Gr_G}$ canonically lifts to an object
of
$$\CO(\ol{\cG/\cN})\mod(\Shv(\Gr_G)^I).$$

The corresponding maps are given by
$$j_{\lambda,!}\star \delta_{1,\Gr_G}\to \IC_{\ol\Gr_G^\lambda}.$$

\medskip

The resulting object of 
$$\Hecke'_{\cG,\cT}(\Shv(\Gr_G)^I)$$
is what we denoted by $\CF'_{\cG,\cT}[\dim(G/B)]$. The cohomological shift comes from the identification
$$j_{w_0,!}\star  \delta_{1,\Gr_G}\simeq  \delta_{1,\Gr_G}[-\dim(G/B)].$$

The description of $\CF'_{\cG,\cT}$, given in \secref{sss:F' expl}, coincides with the one given by \propref{p:as colim}.

\sssec{}

Let us now take $\CC=\Shv(\Gr_G)^{\fL^+(T)}$. It is equipped with an action of $\Rep(\cG)$ coming right convolutions by 
$\Sph(G)$ and $\Sat:\Rep(\cG)\to \Sph(G)$.  It is equipped with an action of $\Rep(\cT)$ coming from he action
of $\Lambda\simeq \fL(T)/\fl^+(T)$ by left translations with a cohomological shift:
$$\sfe^\lambda\cdot \CF:=t^\lambda \cdot \CF[-\langle \lambda,2\check\rho\rangle].$$

\medskip

Take $c=\delta_{1,\Gr_G}$; it upgrades to an object of $\CO(\ol{\cG/\cN})\mod(\Shv(\Gr_G)^{\fl^+(T)})$
via the maps
$$\delta_{t^\lambda,\Gr_G}[-\langle \lambda,2\check\rho\rangle]\to \IC_{\ol\Gr_G^\lambda}.$$

The resulting object of $\Hecke_{\cG,\cT}(\Shv(\Gr_G)^{\fl^+(T)})$ is our $\ICs$, equipped with the Hecke
structure as in \secref{ss:Hecke on IC}. 

\sssec{}

Finally, let us take $\CC=\Shv(\Gr_G)^{\fL(N)\cdot \fL^+(T)}$. The above $\Rep(\cT)\otimes \Rep(\cG)$
action on $\Shv(\Gr_G)^{\fL^+(T)}$ induces one on $\Shv(\Gr_G)^{\fL(N)\cdot \fL^+(T)}$.

\medskip

We take $c=\Delta_0\simeq \on{Av}^{\fL(N)}_!(\delta_{1,\Gr_G})$. The structure on $\delta_{1,\Gr_G}$
of object in the category $\CO(\ol{\cG/\cN})\mod(\Shv(\Gr_G)^{\fl^+(T)})$ induces a structure on $\Delta_0$ of object in 
$\CO(\ol{\cG/\cN})\mod(\Shv(\Gr_G)^{\fl(N)\cdot \fl^+(T)})$. 

\medskip

The resulting object of $\Hecke_{\cG,\cT}(\Shv(\Gr_G)^{\fl(N)\cdot \fl^+(T)})$ is again $\ICs$.

\sssec{}

It is clear from the construction that the structure of objects of $\CO(\ol{\cG/\cN})\mod(\Shv(\Gr_G)^{\fl(N)\cdot \fl^+(T)})$
and $\CO(\ol{\cG/\cN})\mod(\Shv(\Gr_G)^I)$ on $\Delta_0$ and $\delta_{1,\Gr_G}$, respectively, match up under the
equivalence of \propref{p:Iwahori N}.  

\medskip

This gives a ``formula-free'' proof of \thmref{t:other main}.

\end{document}